\def\ov#1{{\overline{#1}}}
\def\wh#1{{\widehat{#1}}}
\def\wt#1{{\widetilde{#1}}}
\def\?{\ ???\ \immediate\write16{}%
\immediate\write16{Warning: There was still a question mark . . . }%
\immediate\write16{}}
\newcommand{\conv}{\operatorname{conv}}
\newcommand{\vol}{\operatorname{vol}}
\newcommand{\Cb}{\cC_{\rm b}}
\newcommand{\Cc}{\cC_{\rm c}}
\renewcommand{\and}{{\quad \text{ and } \quad}}
\newcommand{\norm}{\operatorname{N}}
\newcommand{\supp}{\operatorname{supp}}
\newcommand{\capacity}{\operatorname{cap}}
\newcommand{\Gal}{\operatorname{Gal}}
\newcommand{\avol}{\operatorname{\wh{vol}}}
\newcommand{\exv}{\operatorname{E}}
\newcommand{\dd}{\hspace{1pt}\operatorname{d}\hspace{-1pt}}
\renewcommand{\div}{\operatorname{div}}
\newcommand{\Spec}{\operatorname{Spec}}
\newcommand{\stab}{\operatorname{stab}}
\newcommand{\chern}{\operatorname{c}}
\renewcommand{\i}{\operatorname{i}}
\newcommand{\ri}{\operatorname{ri}}
\newcommand{\e}{\operatorname{e}}
\newcommand{\h}{\operatorname{h}}
\newcommand{\ball}{{{\mathcal B}}}
\newcommand{\Hom}{\operatorname{Hom}}
\newcommand{\can}{{{\rm can}}}
\newcommand{\ord}{{\operatorname{ord}}}
\newcommand{\val}{{\operatorname{val}}}
\newcommand{\an}{{\operatorname{an}}}
\newcommand{\hooklongrightarrow}{\lhook\joinrel\longrightarrow}
\newcommand{\abs}{{\operatorname{abs}}}
\newcommand{\ess}{{\operatorname{ess}}}
\newcommand{\KK}{\operatorname{K}}
\newcommand{\hphi}{\widehat{\phi}}
\newcommand{\Lip}{\operatorname{Lip}}
\def \A{\mathbb{A}}
\def \C{\mathbb{C}}
\def \F{\mathbb{F}}
\def \G{\mathbb{G}}
\def \K{\mathbb{K}}
\def \P{\mathbb{P}}
\def \Q{\mathbb{Q}}
\def \R{\mathbb{R}}
\def \SS{\mathbb{S}}
\def \T{\mathbb{T}}
\def \Z{\mathbb{Z}}
\def\cC {{\mathcal C}}
\def\cD {{\mathcal D}}
\def\cE {{\mathcal E}}
\def\cH {{\mathcal H}}
\def\cM {{\mathcal M}}
\def\fM {{\mathfrak M}}
\def\cO {{\mathcal O}}
\def\cP {{\mathcal P}}
\def\cX {{\mathcal X}}
\newcommand{\bfu}{{\boldsymbol{u}}}
\newcommand{\bfA}{{\boldsymbol{A}}}
\newcommand{\bfE}{{\boldsymbol{E}}}
\newcommand{\bfU}{{\boldsymbol{U}}}
\newcommand{\bfdelta}{{\boldsymbol{\delta}}}
\newcommand{\bfnu}{{\boldsymbol{\nu}}}
\newcommand{\bfzero}{\boldsymbol{0}}
\numberwithin{equation}{section}
\theoremstyle{definition}
\newtheorem{defn}{Definition}
\numberwithin{defn}{section}
\newtheorem{notn}[defn]{Notation}
\newtheorem{rem}[defn]{Remark}
\newtheorem{exmpl}[defn]{Example}
\theoremstyle{plain}
\newtheorem{lem}[defn]{Lemma}
\newtheorem{prop}[defn]{Proposition}
\newtheorem{thm}[defn]{Theorem}
\newtheorem{cor}[defn]{Corollary}
\newtheorem{prop-def}[defn]{Proposition-Definition}
\begin{document}

\title[Small points in toric varieties]{The distribution of Galois
  orbits of points of small height in toric varieties}

\author[Burgos Gil]{Jos\'e Ignacio Burgos Gil}
\address{Instituto de Ciencias Matem\'aticas (CSIC-UAM-UCM-UCM3).
  Calle Nicol\'as Ca\-bre\-ra~15, Campus UAB, Cantoblanco, 28049 Madrid,
  Spain} 
\email{burgos@icmat.es}
\urladdr{\url{http://www.icmat.es/miembros/burgos}}
\author[Philippon]{Patrice Philippon}
\address{Institut de Math{\'e}matiques de
Jussieu -- U.M.R. 7586 du CNRS, \'Equipe de Th\'eorie des Nombres.
BP 247, 4
place Jussieu, 75005 Paris, France}
\email{patrice.philippon@imj-prg.fr}
\urladdr{\url{http://www.math.jussieu.fr/~pph}}
\author[Rivera-Letelier]{Juan Rivera-Letelier}
\address{Department of Mathematics, University of Rochester. Hylan
  Building, Rochester, NY 14627, U.S.A.} 
\email{riveraletelier@gmail.com}
\urladdr{\url{http://rivera-letelier.org}}
\author[Sombra]{Mart{\'\i}n~Sombra}
\address{ICREA \& Departament d'{\`A}lgebra i Geometria, Universitat
  de Barcelona. 
Gran Via 585, 08007 Bar\-ce\-lo\-na, Spain}
\email{sombra@ub.edu}
\urladdr{\url{http://atlas.mat.ub.es/personals/sombra}}

\date{\today} 
\subjclass[2010]{Primary 11G50;  Secondary 11G35, 14G40, 14M25, 52A41}
\keywords{Equidistribution, Galois orbits, metrized
  divisor, height, essential minimum, toric variety, convex analysis,
  potential theory}
\thanks{Burgos was partially supported by the
  MINECO research project MTM2013-42135-P.  Philippon was partially
  supported by the CNRS research project PICS 6381 ``G\'eom\'etrie
  diophantienne et calcul formel'' and the ANR research project
  ``Hauteurs, modularit\'e, transcendance''. Rivera-Letelier was partially
  supported by FONDECYT grant 1141091. Sombra was partially
  supported by the MINECO research project MTM2012-38122-C03-02.}

\begin{abstract}
In this text, we address the distribution properties of points of small
height on proper toric varieties and applications to the related
Bogomolov property. We introduce the notion of monocritical toric
metrized divisor and  we prove that equidistribution occurs for every
generic, small 
sequence with respect to a toric metrized divisor, for every
place if and only if the divisor is monocritical. Furthermore, when
this is the case, the limit measure is a translate of the natural
measure on the compact torus sitting in the principal orbit of the
ambient toric variety. 

We also study the $v$-adic modulus distribution of Galois orbits of small
points. We
characterize, in terms of the given toric semipositive metrized
divisor, the cluster measures of $v$-adic 
valuations of  Galois orbits of generic small sequences.  

The Bogomolov property now says that a subvariety of the principal
orbit of a proper toric
variety that has the same essential minimum than the toric variety
with respect to a monocritical toric metrized divisor, must be a
translate of a subtorus. We also give several examples, including a
non-monocritical
divisor for which the Bogomolov property does not hold.
\end{abstract}

\vspace*{-8mm}

\maketitle

\setcounter{tocdepth}{1}
\tableofcontents

\vspace*{-10mm}

\section[Introduction]{Introduction}

The study of the limit distribution of Galois orbits of points of
small height was initiated by Szpiro, Ullmo and Zhang in their seminal
paper \cite{SzpiroUllmoZhang:epp}. They proved the equidistribution of
the Galois orbits of sequences of points in an abelian variety over a
number field, with N\'eron-Tate height converging to zero, over the
Archimedean places.  It was motivated by the Bogomolov conjecture on
abelian varieties, and eventually applied in the affirmative answer to
this conjecture by Ullmo \cite{Ullmo:pdpa} and Zhang
\cite{Zhang:esab}, see also~\cite{Cin11,Ghioca:pshvdff,Gub07,Yam13}
for similar results in the function field case.

This equidistribution result was widely generalized, in particular to
other varieties and other height functions and, with the
introduction of Berkovich spaces, to the equidistribution over
non-Archimedean places \cite{Bilu:ldspat, ChambertLoir:pph,
  FavreRivera:eqpph, 
  ChambertLoir:meeB, BakerRumely:esp, Yuan:blbav,
  BermanBoucksom:gbhsee, Chen:davf}.  We next introduce the necessary
background to explain this generalization.

Let $\K$ be a global field, that
is, a field which is either a number field or the function field of a
regular projective curve over an arbitrary field, and $\fM_{\K}$ its
set of places.  Let $X$ be a proper algebraic variety over $\K$ of
dimension $n$, and $\ov D=(D,(\|\cdot\|_{v})_{v\in \fM_{\K}})$ a semipositive
metrized (Cartier) divisor with $D$ big.  Let
\begin{displaymath}
  \h_{\ov D}\colon X(\ov \K)\longrightarrow \R
\end{displaymath}
be the associated height function on the set of algebraic points of
$X$, see \S\ref{sec:preliminaries} for details. It is a generalization
of the notion of height of algebraic points considered by Weil,
Northcott and others.

The \emph{essential minimum} of $X$ with respect to $\ov D$, denoted
by $\upmu^{\ess}_{\ov D}(X)$, is the smallest possible limit value of
the height of a generic net of algebraic points of $X$.  Consequently,
we say that a net $(p_{l})_{l\in I}$ is \emph{$\ov D$-small} if
\begin{displaymath}
  \lim_{l}\h_{\ov D}(p_{l})=\upmu^{\ess}_{\ov D}(X).
\end{displaymath}

A fundamental inequality by Zhang \cite{Zhang:plbas} shows that the
essential minimum can be bounded below in terms of the height and the
degree of $\ov D$:
\begin{equation} \label{eq:72}
  \upmu^{\ess}_{\ov D}(X)\ge \frac{\h_{\ov D}(X)}{(n+1)\deg_{D}(D)}.
\end{equation}
We say that $\ov D$ is \emph{quasi-canonical} if this lower bound for
the essential minimum is an equality (Definition~\ref{def:1}).
Examples of quasi-canonical metrized divisors are given by the
canonical metrics on divisors of toric and abelian varieties and, more
generally, by 
the metrics coming from algebraic dynamical systems.

For a place $v\in \mathfrak{M}_{\K}$, we denote by $X_{v}^{\an}$ the
$v$-adic analytification of $X$.  If $v$ is Archimedean, it is a
complex space whereas, if $v$ is non-Archimedean, it is a Berkovich
space over $\C_{v}$, the completion of the algebraic closure of the
local field~$\K_{v}$.  We endow the space of probability measures on
$X_{v}^{\an}$ with the weak-$\ast$ topology with respect
to the space of  continuous functions on~$X_{v}^{\an}$.

For an algebraic point $p$ of $X$, we denote by $\Gal(p)_{v}$ its
$v$-adic Galois orbit, that is, the orbit of $p$ in
$X^{\an}_{v}$ under the action of the absolute Galois group of
$\K$. We set
\begin{equation*}
 \mu_{p,v}   =\frac{1}{\#\Gal(p)_{v}} \sum_{q\in \Gal(p)_{v}}\delta_{q}
\end{equation*}
for the uniform probability measure on $\Gal(p)_{v}$. We also denote
by $\chern_{1}(D,\|\cdot\|_{v})^{\wedge n}$ the {$v$-adic
  Monge-Amp\`ere measure} of $\ov D$, see for instance \cite[\S
1.4]{BurgosPhilipponSombra:agtvmmh}. It is a measure on $X^{\an}_{v}$
of total mass $\deg_{D}(X)$.

The following statement is representative of several equidistribution
theorems for Galois orbits of small points in the literature. In this
form, it is due to Yuan \cite[Theorem 3.1]{Yuan:blbav} for number
fields and to Gubler \cite[Theorem 1.1]{Gubler:Eff} for
function fields.

\begin{thm} \label{thm:9} Let $X$ a projective variety over $\K$ of
  dimension $n$, and $\ov D$ a quasi-canonical semipositive metrized
  divisor on $X$ with $D$ ample.  Let $(p_{l})_{l\in I}$ be a
  generic $\ov D$-small net of algebraic points of $X$. Then, for every $v\in
  \fM_{\K}$,  the
  net of probability measures $(\mu_{p_{l},v})_{l\in I}$ converges
  to $ \frac{1}{\deg_{D}(X)}\chern_{1}(D,\|\cdot\|_{v})^{\wedge n}$, 
    the normalized $v$-adic Monge-Amp\`ere measure of $\ov D$.
\end{thm}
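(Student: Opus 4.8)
The plan is to adapt the variational argument of Yuan and Gubler. Since each $\mu_{p_{l},v}$ is a probability measure and $\frac{1}{\deg_{D}(X)}\chern_{1}(D,\|\cdot\|_{v})^{\wedge n}$ is one too---the Monge--Amp\`ere measure being nonnegative, as $\ov D$ is semipositive, and of total mass $\deg_{D}(X)$---it suffices to prove that
\begin{equation*}
  \lim_{l}\int_{X_{v}^{\an}}f\,\dd\mu_{p_{l},v}
  =\frac{1}{\deg_{D}(X)}\int_{X_{v}^{\an}}f\,\chern_{1}(D,\|\cdot\|_{v})^{\wedge n}
\end{equation*}
for every $f$ in a dense $\R$-subspace of $C^{0}(X_{v}^{\an})$: the smooth functions if $v$ is Archimedean, and the model functions if $v$ is non-Archimedean. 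A three-$\varepsilon$ approximation, using that all the measures involved have the same total mass, then extends the identity to arbitrary continuous $f$, which is precisely the asserted weak-$\ast$ convergence.

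Fix such an $f$. For $t\in\R$ let $\ov D(tf)$ be the metrized divisor obtained from $\ov D$ by replacing $\|\cdot\|_{v}$ with $\e^{-tf}\|\cdot\|_{v}$ and leaving the other metrics unchanged; its underlying divisor is still the ample divisor $D$, and computing the local height at $v$ gives $\h_{\ov D(tf)}(p)=\h_{\ov D}(p)+t\int_{X_{v}^{\an}}f\,\dd\mu_{p,v}$ for every algebraic point $p$. On the one hand, the essential minimum never exceeds the lower limit of the heights along a generic net, so, since $(p_{l})_{l}$ is $\ov D$-small and $\bigl|\int f\,\dd\mu_{p_{l},v}\bigr|\le\|f\|_{\sup}$, for $t>0$
\begin{equation*}
  \upmu^{\ess}_{\ov D(tf)}(X)\le\liminf_{l}\h_{\ov D(tf)}(p_{l})
  =\upmu^{\ess}_{\ov D}(X)+t\,\liminf_{l}\int_{X_{v}^{\an}}f\,\dd\mu_{p_{l},v}.
\end{equation*}
On the other hand, the inequality $\upmu^{\ess}_{\ov D'}(X)\ge\avol(\ov D')/\bigl((n+1)\deg_{D}(X)\bigr)$---valid for every continuous metrized divisor $\ov D'$ with underlying divisor $D$, by the argument behind \eqref{eq:72}---applied to $\ov D'=\ov D(tf)$, together with the arithmetic Hilbert--Samuel theorem, which gives $\avol(\ov D)=\h_{\ov D}(X)$ since $\ov D$ is semipositive with $D$ ample, and with the quasi-canonicity of $\ov D$, gives $\upmu^{\ess}_{\ov D}(X)=\avol(\ov D)/\bigl((n+1)\deg_{D}(X)\bigr)$. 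Subtracting and dividing by $t>0$ we obtain
\begin{equation*}
  \frac{\avol(\ov D(tf))-\avol(\ov D)}{t\,(n+1)\deg_{D}(X)}\le\liminf_{l}\int_{X_{v}^{\an}}f\,\dd\mu_{p_{l},v}.
\end{equation*}

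The decisive step is Yuan's differentiability of the arithmetic volume along this one-parameter family: as $t\to0^{+}$ the left-hand side converges to $\frac{1}{\deg_{D}(X)}\int_{X_{v}^{\an}}f\,\chern_{1}(D,\|\cdot\|_{v})^{\wedge n}$, whence $\frac{1}{\deg_{D}(X)}\int_{X_{v}^{\an}}f\,\chern_{1}(D,\|\cdot\|_{v})^{\wedge n}\le\liminf_{l}\int_{X_{v}^{\an}}f\,\dd\mu_{p_{l},v}$. Running the same estimate with $-f$ in place of $f$ bounds $\limsup_{l}\int_{X_{v}^{\an}}f\,\dd\mu_{p_{l},v}$ from above by the same quantity, and the two inequalities together give the desired equality. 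I expect the differentiability of $\avol$ to be the crux of the matter: it rests on the arithmetic Hilbert--Samuel theorem for big metrized divisors and on delicate estimates comparing the numbers of small sections of $\ov D(tf)$ and of $\ov D$. Working with $\avol$ rather than directly with the arithmetic self-intersection $\ov D(tf)^{\,n+1}$ is what lets one bypass the requirement that $\ov D(tf)$ be semipositive, which need not hold even for $t$ small; once that is circumvented, the rest of the argument is formal.
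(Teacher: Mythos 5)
The paper does not prove Theorem~\ref{thm:9}: it is stated twice (in the introduction and again as Theorem~\ref{thm:4}) purely as a black box, with a citation to Yuan~\cite[Theorem 3.1]{Yuan:blbav} for the number field case and to Gubler~\cite[Theorem 1.1]{Gubler:Eff} for function fields. Your sketch is, in outline, exactly Yuan's variational argument, so it is not ``a different route'' so much as a reconstruction of the cited proof; it is correct in spirit.

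Two small remarks. First, with the paper's normalisation of heights (Definition~\ref{def:2}), the perturbed height should read $\h_{\ov D(tf)}(p)=\h_{\ov D}(p)+t\,n_{v}\int f\,\dd\mu_{p,v}$; the same weight $n_{v}$ reappears in the $t$-derivative of $\avol(\ov D(tf))$, so it cancels and your final inequality is unaffected, but as written the intermediate identity is off by a constant. Second, the attribution of ``differentiability of $\avol$'' to Yuan is a slight anachronism: Yuan's own argument uses his arithmetic Siu-type bigness estimate, which yields only the \emph{lower} one-sided bound $\avol(\ov D(tf))-\avol(\ov D)\ge (n+1)t\,n_v\int f\,\chern_1(D,\|\cdot\|_v)^{\wedge n}+O(t^2)$ for $t>0$ small --- precisely enough, and exactly what your inequality chain requires; the full two-sided differentiability is Chen's theorem~\cite{Chen:davf}, which is in the bibliography. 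Finally, the function field case requires replacing Minkowski-type counting of small sections with Gubler's adaptation, which you do not address; the paper's citation to~\cite{Gubler:Eff} covers that gap.
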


A common feature of this result and its variants and generalizations,
is the assumption that the lower bound \eqref{eq:72} is an equality
or, in other words, that the metrized divisor $\ov D$ is
quasi-canonical. This severely restricts their range of
application. Nonetheless, these results do apply to the important case
of metrics arising from algebraic dynamical systems and moreover, they
have a very strong thesis: not only the Galois orbits of points of
small height do converge, but the limit measure is given by the normalized
$v$-adic Monge-Amp\`ere measure.

The motivation of this paper is to start the study what happens when
we remove the hypothesis that $\ov D$ is quasi-canonical. Some of our
typical questions are: is there always an equidistribution phenomenon
for Galois orbits of $\ov D$-small points? If not, can we give
conditions on $\ov D$, beyond being quasi-canonical, under which such
a phenomenon occurs?  When equidistribution occurs, can we describe
the limit measure?

We address these questions and some of its continuations in
the toric setting. 
Our approach is based on the techniques developed in the series
\cite{BurgosPhilipponSombra:agtvmmh,BurgosMoriwakiPhilipponSombra:aptv,BurgosPhilipponSombra:smthf}. These 
techniques are well-suited for the study of toric metrics and their
associated height functions. In the sequel, we recall the basic
constructions.

Let $X$ be a proper toric variety over $\K$ of dimension $n$, given by
a complete fan~$\Sigma$ on a vector space $N_{\R}\simeq \R^{n}$, and a
big toric divisor $D$ on $X$, given by a virtual support function
$\Psi_{D}\colon N_{\R}\to \R $.  This toric divisor also defines an
$n$-dimensional polytope~$\Delta_{D}$ in the dual space
$M_{\R}\coloneqq N_{\R}^{\vee}$.

Let $\ov D=(D,(\|\cdot\|_{v})_{v\in \fM_{\K}})$ be a semipositive
toric metrized divisor on $X$. To it we associate 
an adelic family of concave functions $\psi_{\ov D, v}\colon N_{\R}\to
\R$, $v\in \mathfrak{M}_{\K}$, called the \emph{metric
  functions} of~$\ov D$.  They satisfy that $|\psi_{\ov D,v}-\Psi_{D}|$
is bounded on $N_{\R}$ for all~$v$, and that $\psi_{\ov D,v}=\Psi_{D}$ for
all~$v$ except for a finite number. We also associate to $\ov D$ an
adelic family of
continuous concave functions on the polytope $\vartheta_{\ov D,
  v}\colon \Delta_{D}\to \R$, $v\in \mathfrak{M}_{\K}$, called the
\emph{local roof functions} of $\ov D$. They verify that
$\vartheta_{\ov D,v}$ is the zero function for all $v$ except for a
finite number.  The \emph{global roof function} is a concave function
$\vartheta_{\ov D}\colon \Delta _{D}\to \R$ defined as a weighted
sum of the local roof functions.

The metric functions and the roof functions convey lot of information
of the pair $(X,\ov D)$. For instance, the essential minimum of $X$
with respect to $\ov D$ can be computed as the maximum of the global
roof function \cite[Theorem~1.1]{BurgosPhilipponSombra:smthf}:
\begin{equation} \label{eq:35}
  \upmu^{\ess}_{\ov D}(X)= \max_{x\in \Delta_{D}}\vartheta_{\ov
    D}(x). 
\end{equation}

In the toric setting, the condition that the metrized divisor $\ov D$
is quasi-canonical is very restrictive, since it is equivalent to the 
condition that its global roof function is constant (Proposition
\ref{prop:10}).  Thus, the only toric metrics to which the
equidistribution theorem~\ref{thm:9} applies are those whose global
roof function is constant.

To identify the toric metrics having good equidistribution properties,
we introduce the notion of \emph{monocritical} toric metrized
divisor. The semipositive toric metrized divisor  $\ov D$ is
monocritical if a certain function on a
space of measures attains its infimum at a unique measure (Definition
\ref{def:16}). When this
is the case, the minimizing measure determines a point
\begin{displaymath}
  \bfu=(u_{v})_{v\in \fM_{\K}}\in \bigoplus_{v\in \fM_{\K}}N_{\R}
\end{displaymath}
with $\sum_{v}n_vu_v=0$, called the \emph{critical point} of $\ov D$. 

The condition for $\ov D$ of being
monocritical can be characterized in terms of its global roof
function: given a point $x_{\max}\in \Delta_{D}$ maximizing
$\vartheta_{\ov D}$, the sup-differential $\partial \vartheta_{\ov
  D}({x_{\max}}) $ is a convex subset of $N_{\R}$ containing the point
$0$. Then $\ov D$ is monocritical if and only if $0$ is a vertex of
this convex subset and, when this is the case, the critical point of $\ov
D$ can be computed from the sup-differential of the local roof
functions at~$x_{\max}$ (Proposition~\ref{prop:14}).

Let $\T\simeq \G_{{\rm m}, \K}^{n}$ be the torus of $X$, which can be
identified with $X_{0}$, the principal open subset of $X$.  For each
$v\in \fM_{\K}$, we denote by $\SS_{v}$ the compact subtorus
of~$\T^{\an}_{v}$.  To a monocritical toric metrized divisor $\ov D$
with critical point $ \bfu\in \bigoplus_{v\in \fM_{\K}} N_{\R} $, we
associate a probability measure $\lambda_{\SS_{v},u_{v}} $ on
$X_{{v}}^{\an}$ (Definition~\ref{def:10}). When $v$ is Archimedean, it
is the uniform measure on a translate of $\SS_{v}\simeq (S^{1})^{n}$
whereas, when $v$ is non-Archimedean, it is the Dirac measure at a
translate of the Gauss point of $\T^{\an}_{v}$.

The following is the main result of this paper (Theorem~\ref{thm:5}). 

\begin{thm} \label{thm:10} Let $X$ be a proper toric variety over $\K$
  and $\overline D$ a semipositive toric metrized divisor on $X$ with
  $D$ big.  Then $\ov D$ is monocritical if and only if for every
  place $v\in \fM_{\K}$ and every
  generic $\ov D$-small net $(p_{l})_{l\in I}$ of algebraic points of
  $X_{0}$, the net of probability measures
  $(\mu_{p_l,v})_{l\in I}$ on $X_{{v}}^{\an}$ converges. 

  When this is the case, the limit measure agrees with $\lambda
  _{\SS_{v},u_{v}}$, where $u_{v}\in N_{\R}$ is the $v$-adic component
  of the critical point of $\ov D$.
\end{thm}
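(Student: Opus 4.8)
The plan is to prove the two implications of the equivalence separately, and then identify the limit measure when monocriticality holds. Throughout, the key technical tool will be the toric dictionary from \cite{BurgosPhilipponSombra:agtvmmh,BurgosMoriwakiPhilipponSombra:aptv,BurgosPhilipponSombra:smthf} that translates heights, metrics and measures on the toric variety into convex-analytic data (metric functions $\psi_{\ov D,v}$, local roof functions $\vartheta_{\ov D,v}$, the global roof function $\vartheta_{\ov D}$), together with the formula \eqref{eq:35} computing $\upmu^{\ess}_{\ov D}(X)$ as $\max_{\Delta_D}\vartheta_{\ov D}$ and the characterization of monocriticality in terms of $0$ being a vertex of $\partial\vartheta_{\ov D}(x_{\max})$ (Proposition~\ref{prop:14}).

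First I would treat the ``monocritical $\Rightarrow$ convergence to $\lambda_{\SS_v,u_v}$'' direction, which is the substantive half and subsumes the description of the limit measure. Fix a place $v$ and a generic $\ov D$-small net $(p_l)_{l\in I}$ of points of $X_0$. The idea is to push the measures $\mu_{p_l,v}$ forward under the valuation (tropicalization) map $\val_v\colon \T^{\an}_v\to N_\R$ and, at non-Archimedean places, also record the ``angular'' data; by the toric description of Monge--Amp\`ere measures, any weak-$\ast$ limit $\mu$ of a subnet of $(\mu_{p_l,v})_{l}$ is $\SS_v$-invariant up to translation, so it is determined by its image $(\val_v)_*\mu$, a probability measure on $N_\R$. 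The height of $p_l$ decomposes, via the arithmetic/adelic Legendre--Fenchel machinery, as a sum over places of integrals of the concave conjugates $\psi^\vee_{\ov D,v}=\vartheta_{\ov D,v}$ against the measures $(\val_v)_*\mu_{p_l,v}$ plus an entropy-type nonnegativity term; smallness forces each of these terms to its extremal value in the limit. Because $\ov D$ is monocritical, the corresponding variational problem on the space of adelic families of measures has a \emph{unique} minimizer (Definition~\ref{def:16}), and that minimizer is exactly the family of Dirac masses at the components $u_v$ of the critical point. Hence $(\val_v)_*\mu=\delta_{u_v}$ for every subnet limit $\mu$, so $\mu=\lambda_{\SS_v,u_v}$; since the space of probability measures on $X^{\an}_v$ is compact and every subnet has the same limit, the whole net converges to $\lambda_{\SS_v,u_v}$.

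For the converse, ``convergence for every $v$ and every generic small net $\Rightarrow$ monocritical'', I would argue by contraposition: if $\ov D$ is not monocritical then $0$ is not a vertex of the convex set $\partial\vartheta_{\ov D}(x_{\max})\subseteq N_\R$, so there are at least two distinct extreme points, or a whole segment through $0$, giving at least two distinct minimizers of the variational problem, say with $v$-adic components $u_v\neq u'_v$ at some place $v$. Using the surjectivity of the toric construction of small points (one can realize any prescribed limiting tropical data by an explicit generic net — roots of suitable monomials adjusted place-by-place, as in the construction of small sequences in \cite{BurgosPhilipponSombra:smthf}), I would build two generic $\ov D$-small nets whose $v$-adic measures converge to $\lambda_{\SS_v,u_v}$ and $\lambda_{\SS_v,u'_v}$ respectively; interleaving them yields a single generic $\ov D$-small net whose $v$-adic measures do not converge, contradicting the hypothesis.

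The main obstacle I expect is the first direction: passing rigorously from ``the height converges to its minimum'' to ``the tropicalized measures converge to the unique minimizing measure.'' This requires (i) a clean lower-semicontinuity / compactness statement for the functional on the space of adelic families of measures whose infimum defines monocriticality, so that subnet limits of $(\val_v)_*\mu_{p_l,v}$ are genuine minimizers; (ii) control of the places $v$ where $\psi_{\ov D,v}\neq\Psi_D$ simultaneously, i.e.\ handling the adelic sum and the finiteness of the relevant places uniformly along the net; and (iii) showing the limit measure is indeed supported on (a translate of) $\SS_v$ and not merely that its tropicalization is $\delta_{u_v}$ — this last point uses that $\ov D$-small generic nets equidistribute ``angularly'' on fibers of $\val_v$, which at Archimedean places is a standard Fourier/character argument and at non-Archimedean places is automatic since fibers of $\val_v$ over a point contain a unique Gauss-type point relevant to the support of $\chern_1(D,\|\cdot\|_v)^{\wedge n}$.
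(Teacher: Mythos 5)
The converse direction (convergence at every place $\Rightarrow$ monocritical) in your outline is essentially the paper's argument: you use the constructive half of Theorem~\ref{thm:11} to manufacture two generic $\ov D$-small nets whose $v$-adic tropicalized measures converge to distinct limits, interleave them, and contradict convergence. The paper's version of this is Corollary~\ref{cor:3 bis} combined with Proposition~\ref{prop:14}.

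The forward direction, however, has a genuine gap. Your modulus concentration step --- showing $(\val_v)_*\mu_{p_l,v}\to\delta_{u_v}$ for a monocritical $\ov D$ --- is a correct reading of what becomes Theorem~\ref{thm:3} (and its KR-strengthening Theorem~\ref{thm:6}). But the passage from that to $\mu_{p_l,v}\to\lambda_{\SS_v,u_v}$ is not justified in your writeup. You assert that any subnet limit $\mu$ of $(\mu_{p_l,v})_l$ is ``$\SS_v$-invariant up to translation''; that is false a priori --- the $\mu_{p_l,v}$ are finite discrete Galois-orbit measures with no compact-torus symmetry, and nothing forces a subnet limit to be $\SS_v$-invariant. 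At a non-Archimedean place $\val_v^{-1}(u_v)$ is an infinite Berkovich tree, so knowing the tropicalized limit is $\delta_{u_v}$ only localizes the limit measure to that fiber, not to the Dirac mass at the Gauss-type point. Your item~(iii) acknowledges this but dismisses it as a ``standard Fourier argument'' (Archimedean) and ``automatic'' (non-Archimedean); both claims are precisely the content of a nontrivial equidistribution theorem, not a freebie. Indeed, the paper remarks (Remark~\ref{rem:10}) that the analogue of Bilu's Fourier argument is not known at non-Archimedean places of a global field even for strict sequences.

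The paper's proof sidesteps this by not re-proving an equidistribution theorem at all. The key move is Proposition~\ref{prop:6}: given a monocritical $\ov D$ with critical point $(u_v)_v$, one builds an auxiliary \emph{quasi-canonical} toric metric $\ov D'$ over the same divisor with metric functions $\psi_{\ov D',v}(u)=\Psi_D(u-u_v)$, and shows --- using modulus concentration in the KR-topology, i.e.\ Theorem~\ref{thm:6} --- that every $\ov D$-small net is automatically $\ov D'$-small. Then one invokes the established Yuan--Gubler equidistribution Theorem~\ref{thm:9} for the quasi-canonical $\ov D'$ and computes its normalized $v$-adic Monge--Amp\`ere measure explicitly to be $\lambda_{\SS_v,u_v}$. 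If you wish to keep your outline, you would need to either supply this quasi-canonical comparison yourself or prove an ``angular'' equidistribution statement at all places of a global field, which is substantially harder than you indicate.

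A smaller slip: the height formula \eqref{eq:12} reads $\h_{\ov D}(p)=-\sum_v n_v\int\psi_{\ov D,v}\dd\nu_{p,v}$, integrating the metric functions $\psi_{\ov D,v}$, not their Legendre duals $\vartheta_{\ov D,v}$, against the tropicalized measures; your description of the ``adelic Legendre--Fenchel decomposition'' swaps the two roles.
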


Quasi-canonical toric metrized divisors are monocritical, and
Theorem~\ref{thm:10} reduces to Theorem~\ref{thm:9} in this case.  Our
result produces a wealth of new examples of metric satisfying the
equidistribution property, that were not covered by the previous
results. In particular, this is the case for toric divisors over a
number field~$\K$ with positive smooth metrics at the Archimedean
places and canonical metrics at the non-Archimedean ones (Theorem
\ref{thm:8}). Here we state a simplified version for the case when
$\K=\Q$.

  \begin{cor}
    \label{cor:1}
    Let $X$ be a proper toric variety over $\Q$ and $\ov D$ a
    semipositive toric metrized $\R$-divisor with $D$ big.  We assume
    that the $v$-adic metric of $\ov D$ is, when $v$ is the Archimedean
    place, smooth and positive and, when $v$ is non-Archimedean, equal to the
    $v$-adic canonical metric of $D$.  Then, for every generic $\ov
    D$-small sequence  $(p_{l})_{l\ge 1}$ of algebraic points of $X_{0}$ and
    every place $v\in \fM_{\Q}$, the sequence $(\mu_{p_l,v})_{l\ge 1}$ on
    $X_{{v}}^{\an}$ converges to the probability measure $\lambda
    _{\SS_{v},0}$.
  \end{cor}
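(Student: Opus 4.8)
The plan is to derive Corollary~\ref{cor:1} from Theorem~\ref{thm:10} (equivalently Theorem~\ref{thm:5}) by checking that the hypotheses of the corollary force $\ov D$ to be monocritical with critical point $\bfu=\bfzero$. First I would recall that the characterization of monocriticality in Proposition~\ref{prop:14} is purely in terms of the global and local roof functions: fixing a point $x_{\max}\in\Delta_D$ that maximizes $\vartheta_{\ov D}$, the divisor $\ov D$ is monocritical precisely when $0$ is a vertex of the convex set $\partial\vartheta_{\ov D}(x_{\max})\subseteq N_{\R}$, and in that case the $v$-adic component $u_v$ of the critical point is read off from $\partial\vartheta_{\ov D,v}(x_{\max})$. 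So the whole corollary reduces to two points: (i) the roof function of the present metrized divisor is strictly concave in a suitable sense, so that its maximizer $x_{\max}$ is unique and lies in the interior of $\Delta_D$, forcing $\partial\vartheta_{\ov D}(x_{\max})=\{0\}$; and (ii) each local roof function has sup-differential $\{0\}$ at $x_{\max}$ too, so $u_v=0$ for all $v$, and hence $\bfu=\bfzero$.

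The key input is the behaviour of the metric functions and roof functions under the stated normalization over $\K=\Q$. At the single Archimedean place, the metric is smooth and positive, which in the toric dictionary means the metric function $\psi_{\ov D,\infty}$ is a smooth concave function on $N_{\R}$ whose Hessian is everywhere negative definite on the relevant subspaces; by Legendre duality this translates into the local roof function $\vartheta_{\ov D,\infty}$ being a smooth \emph{strictly} concave function on $\Delta_D$ (with, say, $+\infty$ derivatives at the boundary, reflecting that $\Delta_D$ is the full stable set). At all non-Archimedean places the metric is the canonical one, so $\psi_{\ov D,v}=\Psi_D$ and the local roof function $\vartheta_{\ov D,v}$ is identically $0$. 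Thus the global roof function $\vartheta_{\ov D}$ is, up to the positive weight $n_\infty$, just $\vartheta_{\ov D,\infty}$ and is therefore strictly concave on $\Delta_D$ and attains its maximum at a unique interior point $x_{\max}$. Strict concavity plus interiority give $\partial\vartheta_{\ov D}(x_{\max})=\{0\}$, so $0$ is trivially a vertex and $\ov D$ is monocritical by Proposition~\ref{prop:14}; moreover for non-Archimedean $v$ we get $u_v=0$ since the local roof function is constant, and for $v=\infty$ we get $u_\infty\in\partial\vartheta_{\ov D,\infty}(x_{\max})=\{0\}$ as well (being an interior maximizer of a differentiable concave function). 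Hence $\bfu=\bfzero$.

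With monocriticality and $\bfu=\bfzero$ established, I would then simply invoke Theorem~\ref{thm:10}: for every generic $\ov D$-small net (in particular every such sequence) $(p_l)_{l\ge1}$ of algebraic points of $X_0$ and every place $v\in\fM_{\Q}$, the measures $\mu_{p_l,v}$ converge to $\lambda_{\SS_v,u_v}=\lambda_{\SS_v,0}$, which is exactly the assertion of the corollary. One should also note that the statement is phrased for $\R$-divisors; since all the constructions in the excerpt (metric functions, roof functions, essential minimum formula \eqref{eq:35}, Proposition~\ref{prop:14}, Theorem~\ref{thm:10}) are stated for metrized divisors and extend by the standard limiting/linearity argument to $\R$-divisors, no new ideas are needed there, but I would remark on it explicitly.

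The main obstacle is step (i): translating ``smooth and positive Archimedean metric'' into ``strictly concave local roof function on all of $\Delta_D$.'' The positivity of the metric controls the Hessian of $\psi_{\ov D,\infty}$ only on $N_{\R}$, and one must be careful that the Legendre transform then has domain exactly $\Delta_D$ and is strictly concave there — in particular that the maximizer cannot escape to the boundary of $\Delta_D$, which is where the toric geometry (the fan being complete, $D$ big) enters. This is presumably handled by the results of \cite{BurgosMoriwakiPhilipponSombra:aptv} on the correspondence between positive smooth toric metrics and their roof functions, so in the write-up I would cite that correspondence rather than reprove it, and then the rest is the short deduction above.
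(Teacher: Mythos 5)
Your proposal is correct and follows essentially the same route as the paper's proof of Theorem~\ref{thm:8}, of which Corollary~\ref{cor:1} is the $\K=\Q$ specialization: positive smooth Archimedean metric implies $\psi_{\ov D,\infty}$ has negative definite Hessian (hence is of Legendre type, by \cite[Proposition~4.4.1 and Theorem~2.4.2(2)]{BurgosPhilipponSombra:agtvmmh}, which is the correct reference for the point you flag as the ``main obstacle''), so $\vartheta_{\ov D,\infty}$ is strictly concave on the interior of $\Delta_D$ with empty sup-differential on the boundary; the non-Archimedean roof functions vanish, so the global roof function has a unique interior maximizer $x_{\max}$ with $\partial\vartheta_{\ov D}(x_{\max})=\{0\}$, giving monocriticality with $\bfu=\bfzero$ by Proposition~\ref{prop:14}, and then Theorem~\ref{thm:5} concludes. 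Your remark on the divisor-versus-$\R$-divisor mismatch (Theorem~\ref{thm:5} is stated only for divisors) is a real observation about the paper's own deduction of Theorem~\ref{thm:8}, not a flaw introduced by you.
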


  This corollary covers many typical examples of metrics on toric
  varieties like weighted projective spaces and toric bundles, see
  \S~\ref{sec:posit-arch-metr}.  For instance, let $X=\P^{1}_{\Q}$ and
  $\ov D$ the divisor of the point at infinity equipped with the
  Fubini-Study metric at the Archimedean place and the canonical
  metric at the non-Archimedean places. Its essential minimum is
  \begin{displaymath}
    \upmu^{\ess}_{\ov D}(X)=\frac{\log(2)}{2}
  \end{displaymath}
  and, for every generic sequence of algebraic points of $\P^{1}_{\Q}$
  with height converging to this quantity, its $\infty$-adic Galois
  orbits converge to the Haar probability measure on $S^{1}$, the unit
  circle of the Riemann sphere (Example~\ref{exm:11}). This an example
  where equidistribution does occur, but the limit measure is
  \emph{not} given by the $v$-adic Monge-Amp\`ere measure as in
  Theorem~\ref{thm:9}.

  In the other extreme, classical examples of translates of subtori
  with the canonical metric can behave badly with respect to
  equidistribution. For instance let $X$ be the line of $\P^{2}_{\Q}$
  of equation $2z_{1}-z_{2}=0$ and $\ov D$ the metrized divisor on $X$
  given by the restriction of the canonical metrized divisor at
  infinity of $\P^{2}_{\Q}$. As explained in Example~\ref{exm:5},
  Theorem~\ref{thm:10} implies that $\ov D$ does not satisfy the
  equidistribution property in the sense of Definition~\ref{def:4}.

We also study the modulus distribution of the $v$-adic Galois
orbits of $\ov D$-small nets of algebraic points in the general, non
necessarily monocritical, toric case. 
There
is a valuation map $\val_{v}\colon \T_{v}^{\an}\to N_{\R}$, defined,
in any given splitting, by
  \begin{equation*}
    \val_{v}(x_{1},\dots,x_{n})=(-\log|x_{1}|_{v},\dots,-\log|x_{n}|_{v}).
  \end{equation*}
  Hence, for an algebraic point $p$ of $X_{0}=\T$, the direct image
  measure $$\nu_{p,v}\coloneqq(\val_{v})_* \mu_{p,v}$$ is a probability measure on
  $N_{\R}$ that gives the modulus distribution of its $v$-adic Galois
  orbit. 

To each semipositive toric
metrized divisor $\ov D$ with $D$ big, we associate an adelic family
of nonempty subsets of $N_{\R}$
\begin{equation}
  \label{eq:74}
( B_{v}, F_{v})_{v\in
\fM_{\K}},
\end{equation}
with $B_{v}\subset F_{v}$
(Notation \ref{def:14}). We endow the space of probability measures on
$N_{\R}$ with the weak-$\ast$ topology with respect to the bounded
continuous functions on $N_{\R}$. For a probability measure $\nu$ on
$N_{\R}$, we denote by $\supp(\nu)\subset N_{\R}$ its support and,
if~$\nu$ has finite first moment, we denote by $\exv[\nu]$ its
expected value.

The next result characterizes the limit behavior of the modulus
distribution for $\ov D$-small nets (Theorem~\ref{thm:1 bis} and
Corollary~\ref{cor:2}). 

\begin{thm} \label{thm:11} Let $X$ be a proper toric variety over
  $\K$,  $\overline D$ a semipositive toric metrized divisor on $X$ with
  $D$ big, and $v\in \mathfrak{M}_{\K}$. For every $\ov{D}$-small
  net~$(p_{l})_{l \in I}$ of algebraic points in~$X_{0}$, the net of
  probability measures~$(\nu_{p_l,v})_{l \in I}$ has at least one
  cluster point. Every such cluster point is a measure~$\nu_{v}$ with
  finite first moment that satisfies
\begin{equation} \label{eq:75}
  \supp(\nu_{v}) \subset F_v
\quad \text{ and } \quad
\exv[\nu_{v}] \in B_v.
\end{equation}
Conversely, for every probability measure $\nu_{v}$ on $N_{\R}$ which
has finite first moment and satisfies \eqref{eq:75}, there is a $\ov{D}$-small net~$(p_{l})_{l
  \in I}$ of algebraic points of~$X_{0}$ such that $\nu_{v}$ is the limit of the
net~$(\nu_{p_{l},v})_{l\in I}$. 
\end{thm}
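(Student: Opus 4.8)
\textbf{Proof proposal for Theorem~\ref{thm:11}.}

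The plan is to transfer the statement to the language of concave functions and measures on the polytope $\Delta_D$, where the machinery of the series \cite{BurgosPhilipponSombra:agtvmmh,BurgosMoriwakiPhilipponSombra:aptv,BurgosPhilipponSombra:smthf} applies, and then read off the conclusion about $N_\R$ via Legendre duality. First I would recall the toric dictionary: for an algebraic point $p$ of $X_0=\T$, the push-forward $\nu_{p,v}=(\val_v)_*\mu_{p,v}$ is a finitely supported probability measure on $N_\R$; its relation to the height $\h_{\ov D}(p)$ is governed, up to the contributions of the finitely many places where $\psi_{\ov D,w}\neq\Psi_D$, by the metric functions $\psi_{\ov D,w}$ and, after Legendre transform, by the local roof functions $\vartheta_{\ov D,w}$ on $\Delta_D$. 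The key point is that $\h_{\ov D}(p)$ can be bounded below, place by place, by an integral of $\psi_{\ov D,w}$ against $\nu_{p,w}$ (a form of the arithmetic Bernstein--Kushnirenko / adelic Jensen inequality from these references), with equality attained in the limit precisely along $\ov D$-small nets; combining this with \eqref{eq:35}, i.e. $\upmu^{\ess}_{\ov D}(X)=\max_{\Delta_D}\vartheta_{\ov D}$, forces any cluster point $\nu_v$ to be supported on $F_v$ and to have expected value in $B_v$. The definitions of $F_v$ and $B_v$ in Notation~\ref{def:14} are presumably engineered exactly so that these are the constraints coming from the local roof function at $v$ being ``maximal'' and from the global compatibility $\sum_w n_w u_w=0$.

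For the direct implication, the main steps are: (i) show the net $(\nu_{p_l,v})_{l\in I}$ is tight, hence has cluster points — this follows because $\ov D$-smallness gives a uniform bound on $\int \|x\|\,d\nu_{p_l,v}$ via the growth of $\psi_{\ov D,v}-\Psi_D$ and the lower bound \eqref{eq:72}, so the first moments are uniformly bounded and no mass escapes to infinity; (ii) for a fixed cluster point $\nu_v$, use semicontinuity of the relevant energy/integral functionals together with the characterization of $\upmu^{\ess}_{\ov D}(X)$ to deduce that $\nu_v$ is ``extremal'', which by the convex-analytic description translates into $\supp(\nu_v)\subset F_v$ and $\exv[\nu_v]\in B_v$. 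I expect step (ii) to require the arguments of \cite{BurgosPhilipponSombra:smthf} computing $\upmu^{\ess}$ as a maximum, applied not to a single measure but to the whole adelic family; the subtlety is that $\ov D$-smallness is a condition on the height, which couples all places through $\sum_w n_w \psi_{\ov D,w}$, so one must argue that a cluster point at the single place $v$ still satisfies the place-$v$ constraints — this is where the finite-first-moment assertion and the precise shape of $B_v$ (which records the admissible expected values once the other places are optimized) enter.

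For the converse, given a probability measure $\nu_v$ on $N_\R$ with finite first moment satisfying \eqref{eq:75}, I would construct the desired $\ov D$-small net by an approximation and Minkowski-type argument. First reduce to finitely supported measures with rational atoms approximating $\nu_v$ in the weak-$\ast$ topology of bounded continuous functions while keeping control of first moments and of the support/expectation constraints (the constraints $F_v, B_v$ being closed and convex, a small perturbation stays admissible or can be corrected). For such a measure one produces actual algebraic points: an atom at a rational point $u\in N_\Q$ of $N_\R$ corresponds, after scaling, to a torsion-like or explicitly constructed point of $\T$ whose $v$-adic valuation cluster is concentrated near $u$; taking suitable ``multiplicative combinations'' (products of roots of unity times fixed powers) one realizes any prescribed finitely-supported valuation distribution at $v$, while at all other places the points can be chosen with valuations near the optimal configuration so that the heights converge to $\upmu^{\ess}_{\ov D}(X)$. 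Genericity is arranged by a standard diagonal argument, choosing the points to avoid any fixed proper subvariety. The hard part will be this last construction: one needs a supply of algebraic points that simultaneously (a) equidistribute, at the place $v$, toward the prescribed $\nu_v$ after $\val_v$, (b) are $\ov D$-small, and (c) are generic. Matching all three is exactly the toric analogue of the constructions used to prove sharpness of equidistribution statements, and I would expect it to rely on the explicit description of points of small height in \cite{BurgosPhilipponSombra:smthf} together with a careful bookkeeping of the contributions of the finitely many ``bad'' places to the height.
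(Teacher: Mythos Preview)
Your overall architecture matches the paper's: tightness via a first-moment bound, upper semicontinuity of an appropriate functional to pin down cluster points, and for the converse an approximation by finitely supported measures realized by explicit algebraic points. However, you have correctly flagged the main subtlety---that $\ov D$-smallness couples all places---without actually resolving it, and this is where the paper's argument has a concrete idea you are missing.

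The paper does not work with an unspecified ``energy functional'' but introduces, for each place $v$, the explicit concave functional
\[
\Phi_v(\mu)=\int g_{1,v}^{\vee}\,\dd\mu+g_{2,v}^{\vee}\bigl(-\exv[\mu]\bigr)+\max_{\Delta_D}(g_{1,v}+g_{2,v}),
\]
where $g_{1,v}=\vartheta_{\ov D,v}$ and $g_{2,v}=\sum_{w\ne v}\tfrac{n_w}{n_v}\vartheta_{\ov D,w}$. The term $g_{2,v}^{\vee}(-\exv[\mu])$ is exactly what packages the contribution of all the other places into a single convex function of the expected value at $v$; it is obtained via a sup-convolution identity (Proposition~2.3.1 and~2.3.3 of \cite{BurgosPhilipponSombra:agtvmmh}). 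With this in hand the key inequality (Lemma~\ref{lemm:1 bis}) is
\[
-n_v\,\Phi_v(\nu_{p,v})\ \le\ \h_{\ov D}(p)-\upmu^{\ess}_{\ov D}(X),
\]
so $\ov D$-smallness forces $\Phi_v(\nu_{p_l,v})\to 0$ at the single place $v$, decoupled from the others. One then proves $\Phi_v\le 0$ with equality exactly on the set \eqref{eq:75} (Proposition~\ref{prop:1}), that $\Phi_v$ is upper semicontinuous on $\cP$ (Proposition~\ref{prop:3}), and the coercivity bound $\Phi_v(\mu)\le c_1-c_2\int\|u\|\,\dd\mu$ (Lemma~\ref{lemm:2}), which together with Prokhorov's theorem gives the direct implication. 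Your tightness step (i) is right in spirit but the bound does not come from Zhang's inequality~\eqref{eq:72}; it comes from this coercivity of $\Phi_v$.

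For the converse your plan is also close, but the actual construction (Proposition~\ref{p:independence}) uses two specific inputs from \cite{BurgosPhilipponSombra:smthf} that you should name: Lemma~2.2 there produces a finite extension $\F/\K$ in which a prescribed finite set of places splits completely, allowing one to assign the atoms of the approximating measure to the distinct places of $\F$ over $v$; and Lemma~2.3 gives density of $\val_{\F}(\T(\F)\otimes\Q)$ in the hyperplane $H_{\F}$, so one can realize any prescribed adelic valuation vector to arbitrary precision by an algebraic point. Multiplying by torsion points then ensures genericity. Finally, to pass from ``the adelic valuation measures converge in the KR-topology'' to ``the heights converge to $\upmu^{\ess}_{\ov D}(X)$'' one uses that $\eta_{\ov D}$ is Lipschitz for the adelic KR-distance (Lemma~\ref{lem:measure minimum}); this replaces the ad hoc ``careful bookkeeping'' you allude to.
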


In the situation
of Theorem~\ref{thm:11}, when $F_{v}$ consist of only one
point $u_v$, the net
$(\nu_{p_l,v})_{l \in I}$  converges to the measure $\delta
_{u_{v}}$. In this case we say that $\ov D$ satisfies the \emph{modulus
  concentration property} at the 
place $v$.

In fact, $\ov D$ is monocritical if and only if for every place $v$,
the set $F_{v}$ consists of only one point.  Hence, our results imply
that $\ov D$ satisfies the equidistribution property at every place if
and only if it satisfies the modulus concentration property at every
place.  Observe also that, in contrast with Theorem \ref{thm:10}, the
net of algebraic points in Theorem \ref{thm:11} does not need to be
generic.
 

In the absence of modulus concentration, there is a wealth of limit
measures of $v$-adic Galois orbits of $\ov D$-small nets of algebraic
points.  For instance, consider the projective line over a number
field $\K$ and any adelic set $\bfE=(E_{v})_{v\in\fM_{\K}} $ of global
capacity 1, whose associated equilibrium measures are compatible with
the collection of sets in \eqref{eq:74} (see Theorem~\ref{thm:13} for
the precise condition). Using Rumely's
Fekete-Szeg\H{o} theorem \cite{Rumely:fstsc}, we show that, for all
$v$, the equilibrium measure of $E_{v}$ can be realized as the limit
measure of a sequence of $v$-adic Galois orbits of $\ov D$-small
points (Theorem~\ref{thm:13}).


As we already mentioned, the original motivation in
\cite{SzpiroUllmoZhang:epp} to search for equidistribution results of
Galois orbits of small points was to prove the Bogomolov conjecture. 
The Bogomolov conjecture for toric varieties can be stated as follows:
let~$X$ be a 
toric variety over $\K$ and $\ov D^{\can}$ an ample toric divisor on $X$
equipped with the canonical metric. Let $V\subset X_{0,\ov
  \K}$ be a subvariety which is not a translate of a subtorus by a
torsion point. Then there exists $\varepsilon >0$ such that the subset
of algebraic points of $V$ of canonical height bounded above by
$\varepsilon$, is not dense in $V$. Equivalently, if $V\subset X_{0,\ov
  \K}$ is a subvariety with  $\upmu^{\ess}_{\ov D^{\can
  }}(V)=0$, then $V$ is a translate of a subtorus by a torsion point.
It is the  toric counterpart of the Bogomolov conjecture for
abelian varieties proved by Ullmo and Zhang.  

This conjecture was proved by Zhang \cite{Zhang:plbas} for number
fields, and later Bilu gave a proof using his equidistribution
theorem~\cite{Bilu:ldspat}. 
Here we extend this result to an
arbitrary monocritical metric on a toric variety over a number field
(Theorem~\ref{thm:2}). 

\begin{thm}\label{thm:12}
  Let $X$ be a proper toric variety over a number field $\K$ and
  $\overline D$ a monocritical toric metrized divisor on $X$ with
  critical point $\bfu=(u_{v})_{v\in\fM_{\K}}$. Let
  $V$ be a subvariety of $X_{0,\ov \K}$ with
\begin{displaymath}
  \upmu^{\ess}_{\ov D}(V) =  \upmu^{\ess}_{\ov D}(X). 
\end{displaymath}
Then $V$ is a translate of a subtorus.  Furthermore, if $ u_v\in
\val_{v}(\T(\K))\otimes \Q$ for all~$v$, then $V$ is the translate of a
subtorus by an algebraic point $p$ of $X_{0}$ with $\h_{\ov D}(p)=
\upmu^{\ess}_{\ov D}(X)$.
\end{thm}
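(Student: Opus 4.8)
The plan is to deduce the Bogomolov-type statement from the equidistribution Theorem~\ref{thm:10}, following the strategy pioneered by Ullmo, Zhang and Bilu, but carried out in the toric language of this paper. Suppose $V\subset X_{0,\ov\K}$ is a subvariety with $\upmu^{\ess}_{\ov D}(V)=\upmu^{\ess}_{\ov D}(X)$. Arguing by contradiction, assume $V$ is \emph{not} a translate of a subtorus. First I would choose a generic $\ov D$-small net $(p_l)_{l\in I}$ of algebraic points of $V$: such a net exists precisely because $\upmu^{\ess}_{\ov D}(V)$ is the infimum of limit values of $\h_{\ov D}$ over generic nets of $V$, and by hypothesis this infimum equals $\upmu^{\ess}_{\ov D}(X)$, so the net is also $\ov D$-small as a net of points of $X_0$. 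Applying Theorem~\ref{thm:10} to the ambient $X$, for every place $v$ the measures $\mu_{p_l,v}$ converge to $\lambda_{\SS_v,u_v}$. On the other hand, these same $p_l$ lie on $V$, so the supports of $\mu_{p_l,v}$ are contained in $V_v^{\an}$, and hence so is the support of the limit measure $\lambda_{\SS_v,u_v}$.

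The key geometric step is then to exploit the constraint $\supp(\lambda_{\SS_v,u_v})\subset V_v^{\an}$ for \emph{all} $v$. At an Archimedean place, $\lambda_{\SS_v,u_v}$ is the Haar measure on a full-dimensional translate of the compact torus $\SS_v\simeq(S^1)^n$ inside $\T_v^{\an}$; its support is a real $n$-dimensional subtorus translate, which cannot be contained in a proper complex-analytic subvariety $V_v^{\an}$ unless $V_v^{\an}=\T_v^{\an}$, i.e.\ $V=X_0$. But if $V=X_0$ then $V=\T$ is itself a subtorus (hence a translate of one), contradicting our assumption. Thus $V$ must be a translate of a subtorus. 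Alternatively — and this is the route I would take if one wants to avoid the degenerate-dimension bookkeeping — one passes to the stabilizer: let $H$ be the stabilizer of $V$ (a subtorus), write $V$ as a union of $H$-cosets, and observe that the support condition forces the translate of $\SS_v$ to be $H_v^{\an}$-invariant in a way that pins down $V$ modulo $H$ to a single coset, again forcing $\dim H=\dim V$. The essential input in either approach is that a translate of a maximal compact real torus is Zariski-dense in the complex torus, together with the standard fact that stabilizers of subvarieties of tori are subtori.

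For the second assertion, suppose additionally that $u_v\in\val_v(\T(\K))\otimes\Q$ for all $v$. Having shown $V=p\cdot H$ for some subtorus $H$ and some $p\in X_0(\ov\K)$, I would produce a representative $p$ with $\h_{\ov D}(p)=\upmu^{\ess}_{\ov D}(X)$ as follows: the essential minimum of a translate $p\cdot H$ equals the minimal height over the translate, and by the characterization in Proposition~\ref{prop:14} relating the critical point to the sup-differentials of the local roof functions at the maximizer $x_{\max}$, the value $\upmu^{\ess}_{\ov D}(X)=\vartheta_{\ov D}(x_{\max})$ is attained at the point of $\T_v^{\an}$ with $\val_v$-coordinate $u_v$; the rationality hypothesis $u_v\in\val_v(\T(\K))\otimes\Q$ guarantees that, after replacing $p$ by $p\cdot t$ for a suitable torsion-adjusted $t\in\T(\ov\K)$, one can realize these prescribed $v$-adic moduli simultaneously by an honest algebraic point, so that $\mu_{p,v}=\lambda_{\SS_v,u_v}$ and $\h_{\ov D}(p)=\upmu^{\ess}_{\ov D}(X)$.

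The main obstacle I anticipate is the \emph{passage from the support condition to the conclusion that $V$ is a coset}, handled cleanly only once one brings in the stabilizer and argues uniformly over all places; the subtlety is that at a single Archimedean place the translate of $\SS_v$ sits inside $V_v^{\an}$ forces $V$ to be $\SS_v$-translation-invariant only in the metric/analytic sense, and one must upgrade this to algebraic translation-invariance by a subtorus — this is where Zariski density of $\SS_v$ in $\T_v^{\an}$ is used. The construction of the rational representative $p$ in the second part is the other delicate point, since it requires matching the $v$-adic moduli $u_v$ across all places by a global torus point, which is exactly why the hypothesis $u_v\in\val_v(\T(\K))\otimes\Q$ is imposed.
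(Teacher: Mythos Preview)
There is a genuine gap in your argument at the very first application of equidistribution. You choose a net $(p_l)_{l\in I}$ that is generic \emph{in $V$} and $\ov D$-small, and then you invoke Theorem~\ref{thm:10} on the ambient variety $X$. But Theorem~\ref{thm:10} applies only to nets that are generic \emph{in $X_0$}, meaning they eventually leave every proper closed subset of $X$. Your net is trapped inside the proper subvariety $V$ for all $l$, so it is never generic in $X_0$, and Theorem~\ref{thm:10} says nothing about it. This is not a technicality: a $\ov D$-small net confined to a subvariety can have Galois orbits that fail to equidistribute towards $\lambda_{\SS_v,u_v}$ unless further structure is imposed.

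The paper handles this by first passing to the \emph{minimal} translate of a subtorus $U$ containing $V$, showing $U$ is $\ov D$-special, and pulling the problem back to the toric variety whose principal orbit is $U$ (Proposition~\ref{prop:11}). After this reduction, $V$ is not contained in any proper translate of a subtorus, so a sequence generic in $V$ is \emph{strict} in the sense of Definition~\ref{def:12}. One then needs the strict-sequence equidistribution result (Theorem~\ref{thm:7}), which is proved separately via Fourier analysis on $\SS_v$ and a reduction to $\P^1$ through characters; this is where the restriction to number fields and Archimedean places enters. Your outline omits both the reduction step and the strict equidistribution input, which are the actual substance of the proof. Your treatment of the second assertion is in the right spirit but also incomplete: one must show the rationality condition passes to the pulled-back metrized divisor on $U$ (this uses Propositions~\ref{prop:11} and~\ref{prop:16}), and then invoke Corollary~\ref{cor:6}.
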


A subvariety of $X_{0,\ov \K}$ with
\begin{displaymath}
  \upmu^{\ess}_{\ov D}(V) =  \upmu^{\ess}_{\ov D}(X) 
\end{displaymath}
is called a $\ov D$-special subvariety.
We say that a given toric metrized divisor $\ov D$ satisfies the
\emph{Bogomolov property}\footnote{not to be confused with the
  property (B) introduced by Bombieri and Zannier, and studied by
  Amoroso, David and other authors.} if every $\ov D$-special
subvariety is a translate of
  a subtorus (Definition~\ref{def:15}).  This property is 
intimately related with the equidistribution property. Indeed, we give
an example of a metrized divisor $\ov D$ on $\P^{2}_{\Q}$, such that the
line of equation $z_{0}+z_{1}+z_{2}=0$ is $\ov D$-special
(Example~\ref{exm:10}).  This line is certainly not a 
translate of a subtorus, and so $\ov D$ does not satisfy the Bogomolov
property.  This metrized divisor is a variant of the one in Example
\ref{exm:5}, and does not verify modulus concentration nor
equidistribution for any place of $\Q$.

We comment briefly on the techniques of proof. 
To prove Theorem~\ref{thm:11}, for each place $v$ we construct an
upper-semicontinuous concave functional $\Phi_{v}$ on the space of
probability measures with finite first moment on $N_{\R}$. Using the
toric dictionary from \cite{BurgosPhilipponSombra:agtvmmh,
  BurgosMoriwakiPhilipponSombra:aptv, BurgosPhilipponSombra:smthf}, we
reduce the study of the modulus distribution of $v$-adic Galois orbits
to the optimization of this functional, which we treat by applying
Prokorov's compactness theorem.

Once we know that a monocritical divisor $\ov D$ satisfies modulus
concentration at every place, we can construct another toric metric
on~$D$ that is quasi-canonical and such that the $\ov D$-small points
are also small with respect to this new metric. With this trick, the
toric equidistribution theorem~\ref{thm:10} is derived from
Theorem~\ref{thm:9}. 

The Bogomolov property for monocritical metrics (Theorem~\ref{thm:12})
follows from Theorem 
\ref{thm:7}, a variant of the toric equidistribution theorem for $\ov
D$-small nets that are not necessarily generic but only \emph{strict},
in the sense that they eventually avoid any fixed {translate of
  subtorus}.

These results arise several interesting questions.  For instance: is
it possible that a given semipositive toric metrized divisor $\ov D$
satisfies the equidistribution property at one place and not at
another? We study this for the projective line showing that, under a
natural rationality hypothesis, the equidistribution property holds at
a given place if and only if it holds at every place (Proposition
\ref{prop:8}). However, this conclusion is not true without this
rationality hypothesis (Remark~\ref{rem:3}) and we have neither
settled this question for the projective line in full generality, nor
treated toric varieties of higher dimension.

It would also be interesting to see if the converse of Theorem
\ref{thm:12} holds: Let $X$ be  proper toric variety with $\dim X\ge
2$. Given a semipositive toric metrized divisor $\ov
D$ on X,
with $D$ big satisfying the Bogomolov property, is $\ov D$ necessarily 
monocritical? In Proposition~\ref{prop:2} we show that this is true in
a very particular case.  Extending this to the general case would
reinforce the link between the equidistribution and the Bogomolov
properties.

The results of this paper also inspire questions for general varieties
and metrized divisors.  For instance, from Corollary~\ref{cor:1}, it
is plausible to conjecture that a toric divisor equipped with a
positive smooth, but not necessarily toric, Archimedean metric and
canonical non-Archimedean metrics, does satisfy the equidistribution
property.  A puzzling question is that of computing the essential
minimum, with a formula generalizing \eqref{eq:35} to the general,
non-toric, case.  Even more challenging seems the problem of
generalizing the crucial notion of monocritical metrized divisor.

Several of the results presented in this introduction hold in greater
generality and their thesis are stronger. We refer to the body of the paper
for these versions.  The structure of the paper is as follows. In
\S\ref{sec:preliminaries} we give the preliminaries on Galois orbits
and height of points. In \S\ref{sec:auxil-results-conv} we introduce
the upper semi-continuous concave functional $\Phi_{v}$ and study its
properties. In \S\ref{sec:modul-distr-toric} 
we study the modulus distribution of $v$-adic Galois orbits of $\ov
D$-small nets of points in toric varieties. In
\S\ref{sec:equid-galo-orbits} we prove the toric equidistribution
theorem~\ref{thm:10} and its variants, together with the Bogomolov
property for monocritical toric metrized divisors. In
\S\ref{sec:examples} we give examples illustrating a number of
phenomena, including a non-monocritical toric metrized divisor not
verifying the Bogomolov property.  Finally, in
\S\ref{sec:potent-theory-proj} we use potential theory to study the
limit measures that appear in the absence of modulus concentration.

\medskip \noindent {\bf Acknowledgments.} We thank Qing Liu, Ricardo Menares,
Joaquim Ortega-Cerd\'a and Tom Tucker for useful discussions and
pointers to the literature.

Part of this work was done while the authors met at the Universitat de
Barcelona, the Instituto de Ciencias Matem\'aticas (Madrid), the
Institut de Math\'ematiques de Jussieu (Paris),  the Universidad
Cat\'olica de Chile (Santiago) and the Beijing International Center
for Mathematical Research. 

\section{Galois orbits, height of points and essential
  minimum}\label{sec:preliminaries}

By a \emph{global field} $\K$ we mean either a number field or the
function field of a regular projective curve over an arbitrary field,
equipped with a certain set of places, denoted by $\mathfrak{M}_{\K}$.
Each place $v\in \mathfrak{M}_{\K}$ is a pair consisting of an
absolute value $|\cdot|_{v}$ on $\K$ and a positive weight $n_{v}\in
\Q_{> 0}$, defined as follows.

The places of the field of rational numbers $\Q$ consist of the
Archimedean and the $p$-adic absolutes values, normalized in the
standard way, and with all weights equal to 1. For the function field
$\KK(C)$ of a regular projective curve $C$ over a  field $k$,
the set of places is indexed by the closed points of $C$. For each
closed point $v_{0}\in C$, we consider the
absolute value and weight given, for $\alpha\in \KK(C)^{\times}$, by
\begin{displaymath}
  |\alpha|_{v_{0}}=c_{k}^{-\ord_{v_{0}}(\alpha)}\and  n_{v_{0}}=[k(v_{0}):k], 
\end{displaymath}
with $c_{k}=\# k$ if the base field $k$ is finite and $c_{k}=\e$
otherwise, and where $\ord_{v_{0}}(\alpha)$ denotes the order of
$\alpha$ in the discrete valuation ring $\cO_{C,v_{0}}$.

Let $\K_{0}$ denote either $\Q$ or $\KK(C)$. In the general case when
$\K$ is a finite extension of $\K_{0}$, the set of places of $\K$ is
formed by the pairs $v=(|\cdot|_{v},n_{v})$ where $|\cdot|_{v}$ is an
absolute value on $\K$ extending an absolute value $|\cdot|_{v_{0}}$
with $v_{0}\in \fM_{\K_{0}}$ and
\begin{equation}\label{eq:100}
  n_{v}=\frac{[\K_{v}:\K_{0,v_{0}}]}{[\K:\K_{0}]} n_{v_{0}},
\end{equation}
where $\K_{v}$ denotes the completion of $\K$  with respect to
$|\cdot|_{v}$, and similarly for $\K_{0,v_{0}}$.  
This set of places satisfies the following  basic properties. 

\begin{prop}
  \label{prop:15}
Let $\K_{0}$ denote either $\Q$ or $\KK(C)$, the function field of a regular
projective curve $C$ over a field $k$. Let 
$\K$ be a finite extension of $\K_{0}$ and $\fM_{\K}$ the associated set
of places as above. Then
\begin{enumerate}
\item \label{item:19} for all $v_{0}\in \fM_{\K_{0}}$, we have $
  \sum_{v\mid v_{0}} n_v = n_{v_0}$; 
\item  \label{item:20}
for all $\alpha\in\K^{\times}$, we have $  \sum_{v\in \fM_{\K}}
n_{v}\log|\alpha|_{v}=0$ \emph{(product formula)}.
\end{enumerate}
\end{prop}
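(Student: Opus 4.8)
The plan is to deduce both statements from three standard facts about global fields: the local--global degree formula, the multiplicativity relating a global norm to its local absolute values, and the product formula for the two base fields $\Q$ and $\KK(C)$.

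For item~\eqref{item:19}, I would invoke the decomposition $\K\otimes_{\K_{0}}\K_{0,v_{0}}\cong\prod_{v\mid v_{0}}\K_{v}$, which yields the local--global degree identity $\sum_{v\mid v_{0}}[\K_{v}:\K_{0,v_{0}}]=[\K:\K_{0}]$. Substituting this into the defining formula~\eqref{eq:100} for the weights gives at once $\sum_{v\mid v_{0}}n_{v}=\frac{1}{[\K:\K_{0}]}\big(\sum_{v\mid v_{0}}[\K_{v}:\K_{0,v_{0}}]\big)n_{v_{0}}=n_{v_{0}}$, so this part is immediate once the degree formula is granted.

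For item~\eqref{item:20}, I would first establish the product formula over $\K_{0}$ and then bootstrap to $\K$ via the norm map. Over $\Q$: writing $\alpha=\pm\prod_{p}p^{v_{p}(\alpha)}$, one has $\log|\alpha|_{p}=-v_{p}(\alpha)\log p$ and $\log|\alpha|_{\infty}=\sum_{p}v_{p}(\alpha)\log p$, and since all weights equal $1$ the total sum telescopes to zero. Over $\KK(C)$: by definition $n_{v_{0}}\log|\alpha|_{v_{0}}=-[k(v_{0}):k]\,\ord_{v_{0}}(\alpha)\log c_{k}$, and $\sum_{v_{0}}[k(v_{0}):k]\,\ord_{v_{0}}(\alpha)=\deg\div(\alpha)=0$ because a principal divisor on a projective curve has degree zero; hence the weighted sum vanishes. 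Now for a general finite extension $\K/\K_{0}$ and $\alpha\in\K^{\times}$ I would use the fundamental identity $\prod_{v\mid v_{0}}|\alpha|_{v}^{[\K_{v}:\K_{0,v_{0}}]}=|\Norm_{\K/\K_{0}}(\alpha)|_{v_{0}}$; taking logarithms, multiplying by $n_{v_{0}}/[\K:\K_{0}]$, and using~\eqref{eq:100} to recognize the left side as $\sum_{v\mid v_{0}}n_{v}\log|\alpha|_{v}$, one gets, after summing over $v_{0}\in\fM_{\K_{0}}$ and regrouping the places of $\K$ by the place of $\K_{0}$ below them, the equality $\sum_{v\in\fM_{\K}}n_{v}\log|\alpha|_{v}=\frac{1}{[\K:\K_{0}]}\sum_{v_{0}}n_{v_{0}}\log|\Norm_{\K/\K_{0}}(\alpha)|_{v_{0}}$, which is $0$ by the product formula for $\K_{0}$ applied to $\Norm_{\K/\K_{0}}(\alpha)\in\K_{0}^{\times}$.

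The computations are entirely routine; the only point that needs a little care is the function field case when $k$ is imperfect and $\K/\K_{0}$ is inseparable, where the decomposition into completions, the local--global degree formula, and the norm identity must be quoted in a form valid without a separability hypothesis (e.g.\ from Artin--Whaples or Bombieri--Gubler). I expect no genuine obstacle beyond citing the appropriate reference for these classical facts.
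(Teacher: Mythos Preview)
Your proposal is correct and the argument via the norm map is the standard textbook route. The paper, however, handles the function field case differently: instead of descending to $\K_{0}=\KK(C)$ by the norm, it realizes $\K$ directly as $\KK(B)$ for a regular projective curve $B$ with a finite morphism $\pi\colon B\to C$, computes the renormalized absolute values and weights at closed points of $B$ in terms of ramification indices and residue degrees (citing Bourbaki for the extension-of-valuations statements), and then obtains the product formula from $\deg(\div(\alpha))=0$ on $B$ itself. Your approach has the advantage of being uniform in the number field and function field cases and of reusing the product formula on $\K_{0}$; the paper's approach is more geometric and sidesteps the norm identity entirely, at the cost of spelling out the dictionary between places of $\K$ and closed points of $B$. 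Both rely on the same delicate point you flagged---validity in the possibly inseparable case---and both resolve it by citation (you to Artin--Whaples, the paper to Bourbaki).
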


\begin{proof}
  These properties are classical, see for instance \cite[Theorems 2
  and 3]{AW:acpf}.

  In the function field case there is a subtlety, due to the fact that a given
  field may have different structures of global field depending on the choice of
  base curve.

  Let $C$ be a regular projective curve over $k$ and
  $K(C)\hookrightarrow \K$ a finite extension of fields. Then 
  there is a regular projective curve $B$ over $k$ and a finite morphism
  $\pi\colon B\to C$ such that $\K\simeq K(B)$ and the previous
  extension can be identified with $\pi ^{\ast}\colon
  K(C)\hookrightarrow K(B)$, see for instance \cite[Proposition~7.3.13
  and Lemma~7.3.10]{Liu:agac}.

  We could give to $\K$ the structure of global field defined directly
  by the curve $B$, but the obtained absolute values of $\K$ would not
  be extensions of those of $\K_{0}$. To remedy this, we renormalize
  these absolute values of $\K$ and, to preserve the product formula,
  we also change the weights.

  From the valuative criterium of properness, for each closed point
  $v_{0}\in C$,
  the absolute values of $\K$ extending $|\cdot|_{v_{0}}$ are in
  bijection with the closed points of the fiber above $v_0$. 
  Moreover,  since the map $\pi$ is
  finite, for each closed point $v\in \pi^{-1}(v_{0})$, the ring
  $\cO_{B,v}$ is a finite module over $\cO_{C,v_{0}}$. 
  It follows from \cite[Chapitre
  6, Proposition 2  in  \S8.2 and Theorem 2 in \S8.5]{Bourbaki:ACvi}
  that the absolute value and weight corresponding to $v$ are given, for
  $\alpha\in \KK(B)^{\times}$, by 
  \begin{equation} \label{eq:23}
    |\alpha|_v=c_{k}^{-\frac{\ord_v(\alpha)}{e_{v/v_{0}}}}, 
    \quad n_{v}= \frac{e_{v/v_{0}}[k(v):k]}{[\KK(B):\KK(C)]},
  \end{equation}
with  $e_{v/v_{0}}$  the ramification index of $v$ over $v_{0}$. The
same results in \emph{loc. cit.} give the  formula in \eqref{item:19}. 

For the product formula in \eqref{item:20}, we obtain from
\eqref{eq:23} that
\begin{displaymath}
    \sum_{v}
n_{v}\log|\alpha|_{v}= -\log(c_{k})  \sum_{v}
\frac{[k(v):k]\ord_v(\alpha) }{[\KK(B):\KK(C)]} = 
\frac{-\log(c_{k})}{[\KK(B):\KK(C)]} \deg(\div(\alpha))=0,
\end{displaymath}
because the degree of a principal divisor on $B$ is zero, which concludes the
proof.
\end{proof}
 
For $v\in \fM_{\K}$, we choose an algebraic closure $\K_{v}\subset \ov
\K_{v}$ of $\K_{v}$. The absolute value~$|\cdot|_{v}$ on $\K_{v}$ has
a unique extension to $\ov \K_{v}$.  We denote
by $\C_{v}$ the completion of $\ov \K_{v}$ with respect to this extended
absolute value. We also choose an algebraic closure $\ov \K$ of $\K$
and an embedding  $\jmath_{v}\colon \ov \K \to \C_{v}$.  

Let $X$ be a variety over $\K$, that is, a
reduced and irreducible separated scheme of finite type over $\K$.
The elements of $X(\ov \K)$ are called the \emph{algebraic points} of
$X$. For $p\in X(\ov \K)$, its \emph{Galois orbit} is
$\Gal(p)\coloneqq \Gal(\ov
\K/\K)\cdot p\subset X(\ov \K)$, that is, the orbit of $p$ under the
action of the 
absolute Galois group of $\K$.

For $v\in \mathfrak{M}_{\K}$, we denote by $X_{\K_{v}}^{\an}$ the
$v$-adic analytifications of $X$ over $\K_{v}$ and by 
$X^{\an}_{v}$ the $v$-adic analytifications of $X$ over $\C_{v}$. If
$v$ is Archimedean, they both coincide
with a complex space ($X_{\K_{v}}$ is equipped with an anti-linear
involution if~$\K_{v}\simeq \R$). If $v$ is non-Archimedean, they are 
Berkovich spaces over $\K_{v}$ and $\C_{v}$, respectively.  These
spaces are related by (\cite[Corollary 1.3.6]{Berkovich:stag})
\begin{displaymath}
 X_{\K_{v}}^{\an}=X^{\an}_{v}/\Gal(\overline \K_{v}/\K_{v}).
\end{displaymath}
We denote by
\begin{equation}
  \pi_{v}\colon X^{\an}_{v}\to X_{\K_{v}}^{\an} 
\end{equation}
the projection.

There is a map  
\begin{displaymath}
  X(\C_{v}){\hooklongrightarrow}
X_{{v}}^{\an}.
\end{displaymath}
Using the chosen inclusion $ \jmath_{v}\colon \ov
\K\hookrightarrow \C_{v}$, we obtain a map $X(\ov
\K)\hookrightarrow X(\C_{v})$ and, by composition the previous
map, an inclusion
\begin{equation*}
\iota_{v}\colon X(\ov
\K){\hooklongrightarrow} X_{v}^{\an}.  
\end{equation*}

The \emph{v-adic Galois orbit} of an algebraic point $p\in X(\ov \K)$,
denoted by $\Gal(p)_{v}$, is defined as the image of $\Gal(\ov
\K/\K)\cdot p$
under $\iota_{v}$. It is a finite subset which does not depend on
the choice of the inclusion $ \jmath_{v}$.  We
also denote by $\mu_{p,v}$ the uniform discrete probability measure on
$X_{{v}}^{\an}$ supported on $\Gal(p)_{v}$, that is
\begin{equation} \label{eq:4}
  \mu_{p,v}=\frac{1}{\# \Gal(p)_{v}}\sum_{q\in  \Gal(p)_{v}} \delta_{q},
\end{equation}
where $\delta _{q}$ is the Dirac measure at  the point
$q\in X^{\an}_{{v}}$. Hence, for a continuous function $f\colon X^{\an}_{{v}}\to \R$,
\begin{displaymath}
  \int f\dd \mu_{p,v}= \frac{1}{\# \Gal(p)_{v}}\sum_{q\in
    \Gal(p)_{v}} f(q). 
\end{displaymath}

An \emph{$\R$-divisor} on $X$ is a linear combination of Cartier divisors on $X$ with real
coefficients.  A \emph{metrized}
$\R$-divisor $\ov D$ on $X$ is an $\R$-divisor $D$ on $X$ equipped
with a quasi-algebraic family of $v$-adic metrics
$(\|\cdot\|_{v})_{v\in \fM_{\K}}$, see \cite[\S
3]{BurgosMoriwakiPhilipponSombra:aptv} for details.  In
\emph{loc. cit.}, for each $v\in \fM_{\K}$ the metric $\|\cdot\|_{v}$
is defined over the analytic space~$X_{\K_{v}}^{\an}$. Note that this space was
denoted ``$X_{v}^{\an}$" in \emph{loc. cit.} but since we will study
equidistribution problems 
of Galois orbits of points that are defined over varying extensions of
$\K$ of arbitrary large degree it is more convenient to work on
the space $X_{v}^{\an}$  instead that in the space
$X_{\K_{v}}^{\an}$. Hence we have changed the notation accordingly.
With this point of view, every object on $X_{\K_{v}}^{\an}$ will be
seen as an object on $X_{{v}}^{\an}$ by 
 taking its inverse image under the projection $\pi_{v}$.
For instance let $\ov D$ be a metrized $\R$-divisor on $X$ and  $s$ a
rational $\R$-section of~$D$~\cite[\S
3]{BurgosMoriwakiPhilipponSombra:aptv}. In \emph{loc.  cit.}, the
$v$-adic metric $\|\cdot\|_{v}$ is described by a continuous function
$\|s\|_{v}\colon X^{\an}_{\K_v}\setminus |\div(s)|\to \R_{>0}$. In
the current paper we denote by $\|s\|_v$ the function on
$X^{\an}_{v}\setminus |\div(s)|$ given by the composition
\begin{displaymath}
  \|s(p)\|_{v}=\|s(\pi_{v}(p))\|_{v}.
\end{displaymath}
Clearly this function is invariant under the action of $\Gal(\ov
\K_{v},\K_v)$.

To a metrized $\R$-divisor $\ov D$ on $X$ we can  associate a height function
\begin{displaymath}
  \h_{\ov D}\colon X(\ov \K)\longrightarrow \R
\end{displaymath}
as follows.  

Given $p\in X(\ov \K)$, choose a rational $\R$-section $s$ of $D$ such that
$p\not\in |\div(s)|$. Choose a finite extension $\F$ of $\K$ such that
$p\in X(\F)$. For each $w\in
\mathfrak{M}_{\F}$ over a place $v\in \fM_{\K}$, we can choose an
embedding $\sigma _{w}\colon \F\hookrightarrow \C_{v}$ such that the
restriction of the absolute value $|\cdot|_{v}$ of $\C_{v}$ agrees with
$|\cdot|_{w}$. We denote also by $\sigma _{w}$ the induced map
$X(\F)\to X^{\an}_{v}$.  

\begin{defn}\label{def:2} 
  Let $X$ be a variety over $\K$, $\ov D$ a metrized $\R$-divisor on
  $X$, and $p\in X(\ov \K)$. With the above notation, the
  \emph{height} of $p$ with respect to $\ov D$ is defined as
  \begin{displaymath}
    \h_{\ov D}(p)= -\sum_{w\in \mathfrak{M}_{\F}}n_{w} \log
    \|s\circ\sigma_{w}(p)\|_{v}. 
  \end{displaymath}
\end{defn}
The height  is independent of the choice of the rational $\R$-section 
$s$, the extension~$\F$ and the embeddings $\sigma _{w}$. 

Instead of choosing a finite extension where the point $p$ is defined,
it is possible to express the height of an algebraic point in terms of
its Galois orbit. 

\begin{prop}\label{prop:12} With the previous hypothesis and
  notation, the height of $p$ with respect to $\ov D$ is given by
  \begin{displaymath}
    \h_{\ov D}(p)=-\sum_{v\in \mathfrak{M}_{\K}}\frac{n_{v}}{\# \Gal(p)_{v}}
    \sum_{q\in \Gal(p)_{v}}\log\|s(q)\|_{v}.
  \end{displaymath}
\end{prop}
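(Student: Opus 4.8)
The plan is to unfold $\h_{\ov D}(p)$ using Definition~\ref{def:2}, regroup the resulting sum over $\fM_{\F}$ according to the place of $\K$ lying below, and match, place by place, the contribution of each $v\in\fM_{\K}$ with the corresponding summand of the asserted formula. Since the height is independent of the auxiliary finite extension, we may take $\F$ to be any finite extension of $\K$ with $p\in X(\F)$. It then suffices to fix $v\in\fM_{\K}$ and prove
\begin{equation*}
  \sum_{w\in\fM_{\F},\, w\mid v} n_{w}\log\|s\circ\sigma_{w}(p)\|_{v}
  =\frac{n_{v}}{\#\Gal(p)_{v}}\sum_{q\in\Gal(p)_{v}}\log\|s(q)\|_{v}.
\end{equation*}

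The first step is to organize the embeddings into $\C_{v}$. Let $\Hom_{\K}(\F,\C_{v})$ be the set of $\K$-algebra embeddings $\F\to\C_{v}$, where $\K$ acts on $\C_{v}$ through $\jmath_{v}$. Every such embedding is the restriction to $\F$ of some $\jmath_{v}\circ\tau$ with $\tau\in\Gal(\ov\K/\K)$ (because any two $\K$-embeddings $\ov\K\to\C_{v}$ have the same image, namely the algebraic closure of $\K$ in $\C_{v}$), and conversely; hence the map $\sigma\mapsto\sigma(p)\in X(\C_{v})\subset X_{v}^{\an}$ has image exactly $\Gal(p)_{v}$. Its fibre over $\sigma_{0}(p)$ consists of the embeddings of $\F$ into $\C_{v}$ extending $\sigma_{0}|_{\K(p)}$, so all fibres have the same cardinality; since moreover $\|s(\sigma(p))\|_{v}$ depends only on $\sigma|_{\K(p)}$, and the fixed rational section $s$ vanishes neither at $p$ nor at its conjugates, averaging over the fibres yields
\begin{equation*}
  \frac{1}{\#\Gal(p)_{v}}\sum_{q\in\Gal(p)_{v}}\log\|s(q)\|_{v}
  =\frac{1}{\#\Hom_{\K}(\F,\C_{v})}\sum_{\sigma\in\Hom_{\K}(\F,\C_{v})}\log\|s(\sigma(p))\|_{v}.
\end{equation*}

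Next I would split $\Hom_{\K}(\F,\C_{v})$ according to the place $w\mid v$ of $\F$ determined by $\sigma$, that is, the place $w$ with $|\sigma(x)|_{v}=|x|_{w}$ for all $x\in\F$, writing $G_{w}$ for the corresponding block. Each $\sigma\in G_{w}$ extends by continuity to a $\K_{v}$-embedding $\F_{w}\hookrightarrow\C_{v}$, and $\Gal(\ov\K_{v}/\K_{v})$ acts transitively on the set of such embeddings; therefore $\{\sigma(p):\sigma\in G_{w}\}$ is a single $\Gal(\ov\K_{v}/\K_{v})$-orbit in $X_{v}^{\an}$. As $\|s(\cdot)\|_{v}$ is pulled back from $X_{\K_{v}}^{\an}$, it is $\Gal(\ov\K_{v}/\K_{v})$-invariant, so $\log\|s(\sigma(p))\|_{v}$ takes a common value $\ell_{w}=\log\|s\circ\sigma_{w}(p)\|_{v}$ on $G_{w}$. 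Substituting this and the previous display into the right-hand side of the identity to be proved turns it into $n_{v}\sum_{w\mid v}\bigl(\#G_{w}/\#\Hom_{\K}(\F,\C_{v})\bigr)\ell_{w}$, which agrees with the left-hand side $\sum_{w\mid v}n_{w}\ell_{w}$ precisely when
\begin{equation*}
  \frac{\#G_{w}}{\#\Hom_{\K}(\F,\C_{v})}=\frac{n_{w}}{n_{v}}\qquad\text{for every }w\mid v.
\end{equation*}

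This weight identity is the crux, and the only point where I expect genuine work. Over a number field it is immediate: $\#G_{w}=[\F_{w}:\K_{v}]$, $\#\Hom_{\K}(\F,\C_{v})=[\F:\K]$, and $n_{w}/n_{v}=[\F_{w}:\K_{v}]/[\F:\K]$ by \eqref{eq:100}. Over a function field, where the proof of Proposition~\ref{prop:15} renormalizes the weights so that $n_{w}/n_{v}=e_{w/v}f_{w/v}/[\F:\K]$, one has instead $\#G_{w}=[\F_{w}:\K_{v}]_{\mathrm{s}}$ and $\#\Hom_{\K}(\F,\C_{v})=[\F:\K]_{\mathrm{s}}$, so the required equality becomes $[\F_{w}:\K_{v}]_{\mathrm{s}}/[\F:\K]_{\mathrm{s}}=e_{w/v}f_{w/v}/[\F:\K]$, i.e. the invariance of the inseparable degree under completion; I expect this to be dispatched by the same local analysis of the covering $B\to C$ used in the proof of Proposition~\ref{prop:15} (and it is trivial when the field of constants is perfect). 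Summing the per-place identities over $v\in\fM_{\K}$ and negating then gives the formula of the proposition.
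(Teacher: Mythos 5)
Your proof is correct, but it takes a genuinely different route from the paper, and the one place where you hedge is precisely the spot where the two approaches diverge. The paper chooses $\F$ to be \emph{normal} over $\K$, sets $G=\Gal(\F/\K)$, and uses the tower $\K\subset\F^G\subset\F$ with $\F^G/\K$ purely inseparable and $\F/\F^G$ Galois. From this it gets $[\F_w:\K_v]/[\F:\K]=1/\#\fM_{\F,v}$ uniformly in $w\mid v$, and then the identification of the two sums follows from the \emph{transitive} $G$-action on $\fM_{\F,v}$ and on $\Gal(p)_v$ simultaneously. You instead keep $\F$ arbitrary, count $\K$-embeddings $\F\to\C_v$ and their fibres over $\Gal(p)_v$, and reduce everything to the weight identity $\#G_w/\#\Hom_\K(\F,\C_v)=n_w/n_v$, i.e.\ $[\F_w:\K_v]_s/[\F:\K]_s=[\F_w:\K_v]/[\F:\K]$, i.e.\ $[\F_w:\K_v]_i=[\F:\K]_i$. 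Your approach has the advantage of never invoking normality; the paper's has the advantage of hiding the inseparability question inside the transitivity statement.

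On the point you flag: the claim $[\F_w:\K_v]_i=[\F:\K]_i$ is true, but your suggestion that it follows from ``the same local analysis of the covering $B\to C$'' in the proof of Proposition~\ref{prop:15} is not quite the right reference; that analysis produces the normalization \eqref{eq:23}, not the behaviour of inseparable degrees under completion. The clean argument is: let $\F'$ be the separable closure of $\K$ in $\F$, and $w'$ the place of $\F'$ below $w$. Then $\F'_{w'}/\K_v$ is separable (generated by separable elements), while $\F_w=\F\cdot\F'_{w'}$ is purely inseparable over $\F'_{w'}$ (Frobenius kills the generators), so $\F'_{w'}$ is exactly the separable closure of $\K_v$ in $\F_w$ and $[\F_w:\K_v]_i=[\F_w:\F'_{w'}]$. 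For the purely inseparable extension $\F/\F'$ there is a unique place $w$ over $w'$ and, by the fundamental identity $\sum_{w\mid w'} e_{w/w'}f_{w/w'}=[\F:\F']$ together with $[\F_w:\F'_{w'}]=e_{w/w'}f_{w/w'}$ for complete discretely valued fields, one gets $[\F_w:\F'_{w'}]=[\F:\F']=[\F:\K]_i$, as required. It is worth noticing that the paper's first equality $[\F_w:\K_v]/[\F:\K]=[\F_w:(\F^G)_v]/[\F:\F^G]$ quietly uses the \emph{same} local-equals-global degree fact for the purely inseparable extension $\F^G/\K$, so neither proof avoids this ingredient; yours simply makes it explicit.
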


\begin{proof}  Choose a finite normal extension
  $\F\subset \ov \K$ of
  $\K$ such that $p\in X(\F)$. For
  each $v\in \mathfrak{M}_{\K}$ we denote $\mathfrak{M}_{\F,v}$ the
  set of places of $\F$ above $v$. 

Write $G=\Gal(\F,\K)$ and let $\F^{G}$ be the fixed field. Then
$\F/\F^{G}$ is a Galois extension with Galois group $G$ and
$\F^{G}/\K$ is purely inseparable.  Hence, for $v\in
\mathfrak{M}_{\K}$,
  \begin{displaymath}
    \frac{[\F_{w}:\K_{v}]}{[\F:\K]}=
    \frac{[\F_{w}:(\F^{G})_{v}]}{[\F:\F^{G}]}=\frac{1}{\#\mathfrak{M}_{\F,v}}.
  \end{displaymath}

Then, from the definition of the height
of $p$ in Definition~\ref{def:2} and Proposition
\ref{prop:15}\eqref{item:19},  it follows that
\begin{multline}
  \label{eq:63}
  \h_{\ov D}(p)=-\sum_{v\in \mathfrak{M}_{\K}}n_{v}\sum_{w\mid v}
  \frac{[\F_{w}:\K_{v}]}{[\F:\K]} \log
    \|s\circ\sigma_{w}(p)\|_{v}\\
  =-\sum_{v\in \mathfrak{M}_{\K}}\frac{n_{v}}{\# \mathfrak{M}_{\F,v}}
  \sum_{w\mid v}\log
    \|s\circ\sigma_{w}(p)\|_{v}.
\end{multline}

The group $G$ acts on $\mathfrak{M}_{\F,v}$ and, since $p$ is defined
over~$\F$, also on
$\Gal(p)_{v}$. Both actions are transitive. Therefore, choosing 
$w_{0}\in \mathfrak{M}_{\F,v}$, 
\begin{align*}
\frac{1}{\# \mathfrak{M}_{\F,v}} \sum_{w\mid v}\log
    \|s\circ\sigma_{w}(p)\|_{v}  &= \frac{1}{\# G} \sum_{g\in G}\log
    \|s\circ\sigma_{w_{0}}(g(p))\|_{v}  \\ &= \frac{1}{\# \Gal(p)_{v}} 
    \sum_{q\in \Gal(p)_{v}}\log\|s(q)\|_{v}.
\end{align*}
The statement follows from this together with \eqref{eq:63}.
\end{proof}

  The \emph{essential minimum} of $X$ with respect to $\ov D$ is
  defined as  
\begin{equation}
  \label{eq:14}
  \upmu^{\ess}_{\ov D}(X)=\sup_{\substack{Y\subsetneq X\\Y \text{
        closed}}}\inf_{p\in (X\setminus Y)(\ov \K)} \h_{\ov D}(p).
\end{equation}
Roughly speaking, the essential minimum is the generic infimum of the height
function.

\begin{defn}\label{def:3}
  Let $X$ be a variety over $\K$ and $\ov D$ a metrized $\R$-divisor
  on $X$. A net  $(p_{l})_{l\in I}$  of algebraic points of $X$ is
  \emph{$\ov D$-small} if
  \begin{displaymath}
    \lim_{l}\h_{\ov D}(p_{l})=\upmu^{\ess}_{\ov D}(X).
  \end{displaymath}
  The net $(p_{l})_{l\in I}$ is \emph{generic} if, for every
  closed subset  $Y\subsetneq X$,  there is $l_{0}\in I$ such
  that $p_{l}\not \in Y(\ov \K)$ for $l\ge l_{0}$.
\end{defn}

\begin{prop}\label{prop:4}   Given a  variety $X$ over $\K$ and $\ov
  D$ a metrized $\R$-divisor on $X$,  there exists a generic $\ov
  D$-small net of algebraic 
  points of $X$.   Moreover, every generic net
  $(p_{l})_{l\ge1}$ of algebraic points of $X$ satisfies
 \begin{displaymath} 
    \liminf_{l}\h_{\ov D}(p_{l})\ge \upmu^{\ess}_{\ov D}(X).
  \end{displaymath}
\end{prop}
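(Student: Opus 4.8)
The plan is to establish the two assertions separately, as both follow essentially by unwinding the definition of the essential minimum \eqref{eq:14} as a sup-inf. For the second (lower bound) assertion, let $(p_{l})_{l\in I}$ be a generic net. Fix an arbitrary proper closed subset $Y\subsetneq X$. By genericity there is $l_{0}\in I$ such that $p_{l}\in (X\setminus Y)(\ov\K)$ for all $l\ge l_{0}$; hence $\h_{\ov D}(p_{l})\ge \inf_{p\in (X\setminus Y)(\ov\K)}\h_{\ov D}(p)$ for all such $l$. Taking $\liminf$ over $l$ gives $\liminf_{l}\h_{\ov D}(p_{l})\ge \inf_{p\in (X\setminus Y)(\ov\K)}\h_{\ov D}(p)$, and since $Y$ was arbitrary we may pass to the supremum over all proper closed $Y$, obtaining $\liminf_{l}\h_{\ov D}(p_{l})\ge \upmu^{\ess}_{\ov D}(X)$ by \eqref{eq:14}. (Here I use that $X$ is irreducible, so $X\setminus Y$ is always nonempty and the inner infimum is over a nonempty set, and I allow the value $+\infty$ in the degenerate case where no proper $Y$ exists, e.g.\ $X$ a point — then the statement is vacuous or trivial.)

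For the existence of a generic $\ov D$-small net, the idea is to index such a net by the directed set $I$ consisting of pairs $(Y,\varepsilon)$ where $Y\subsetneq X$ is closed and $\varepsilon>0$, ordered by $(Y,\varepsilon)\le (Y',\varepsilon')$ iff $Y\subseteq Y'$ and $\varepsilon\ge\varepsilon'$; this is directed since the union of two proper closed subsets of the irreducible $X$ is again a proper closed subset. For each such pair, the definition \eqref{eq:14} of $\upmu^{\ess}_{\ov D}(X)$ as a supremum of infima, together with the definition of $\inf$, guarantees the existence of an algebraic point $p_{(Y,\varepsilon)}\in (X\setminus Y)(\ov\K)$ with $\h_{\ov D}(p_{(Y,\varepsilon)})\le \upmu^{\ess}_{\ov D}(X)+\varepsilon$ — provided the essential minimum is finite; if it equals $+\infty$ one takes instead a point of height $\ge 1/\varepsilon$ outside $Y$, which exists because the relevant inner infimum is $+\infty$. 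One then checks directly that the resulting net is generic (given $Y_{0}$, any index $\ge (Y_{0},1)$ has its point outside $Y_{0}$) and $\ov D$-small: combining the just-proved lower bound $\liminf_{l}\h_{\ov D}(p_{l})\ge \upmu^{\ess}_{\ov D}(X)$ with the construction's upper bound $\limsup_{l}\h_{\ov D}(p_{l})\le \upmu^{\ess}_{\ov D}(X)$ forces the limit to exist and equal $\upmu^{\ess}_{\ov D}(X)$.

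The only genuine subtlety — and the point I would be most careful about — is the case distinction according to whether $\upmu^{\ess}_{\ov D}(X)$ is finite or $+\infty$ (it is bounded below, so $-\infty$ cannot occur, but finiteness can fail in principle for a general metrized $\R$-divisor). In the infinite case the "small" net is one whose heights tend to $+\infty$, and one must phrase the selection of points accordingly; in the finite case the $\varepsilon$-approximation argument is standard. Everything else is a formal manipulation of nets and of the sup-inf definition, and no input beyond \eqref{eq:14} and the irreducibility of $X$ is needed.
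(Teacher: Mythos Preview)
Your proposal is correct and follows essentially the same approach as the paper: the paper indexes its net by the hypersurfaces of $X$ ordered by inclusion and uses $1/c(Y)$ (with $c(Y)$ the number of irreducible components of $Y$) in place of your auxiliary parameter $\varepsilon$, but this is merely a cosmetic difference in bookkeeping. One small quibble: your parenthetical claim that the essential minimum ``is bounded below, so $-\infty$ cannot occur'' is not justified for an arbitrary metrized $\R$-divisor, but the $-\infty$ case is handled by the same device as your $+\infty$ case, so this does not affect the argument.
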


\begin{proof}
The second statement is clear from the definition of the essential
minimum. 

For the first statement, let $I$ be the set of hypersurfaces of $X$,
ordered by inclusion. This is a directed set. For each $Y\in I$,
denote by $c(Y)$ its number of irreducible components and choose a
point $p_{Y}\in (X\setminus Y)(\ov \K)$ with
\begin{displaymath}
  \h_{\ov D}(p_{Y}) \le \upmu_{\ov D}^{\ess}(X) + \frac{1}{c(Y)}.
\end{displaymath}
Clearly, the net $(p_{Y})_{Y\in I}$ is generic and $\ov D$-small.   
\end{proof}

\begin{rem}
  \label{rem:7}
  When $\K$ is a number field, the collection of subvarieties of $X$
  is countable. Using this fact, we can strengthen
  Proposition~\ref{prop:4} to show the existence of a generic $\ov
  D$-small \emph{sequence} of algebraic points.
\end{rem}

Suppose now that the variety  $X$ is  proper
over $\K$ and of dimension $n$. 
A metrized $\R$-divisor $\ov D$ on $X$ is
\emph{semipositive} if it can be written as
\begin{displaymath}
  \ov D= \sum_{i=1}^{r}\alpha_{i}\ov D_{i}
\end{displaymath}
with $\ov D_{i}$ a semipositive metrized divisor and $\alpha_{i}\in
\R_{\ge 0}$, $i=1,\dots, r$. Recall that $\ov D_{i}$ is semipositive
if each of its $v$-adic metrics is a uniform limit of a sequence of
semipositive smooth (respectively, algebraic) metrics in the
Archimedean (respectively, non-Archimedean) case. 

Given a semipositive metrized $\R$-divisor $\ov D$, we can extend the
notion of height of points to subvarieties of higher dimension. In
particular, the {height} of $X$, denoted by $\h_{\ov D}(X)$, is
defined. Moreover, for each $v\in \fM_{\K}$ we can consider the
associated {$v$-adic Monge-Amp\`ere measure}, denoted by
$\chern_{1}(D,\|\cdot\|_{v})^{\wedge n}$. It is a measure on
$X^{\an}_{{v}}$ of total mass $\deg_{D}(X)$, see for instance
\cite[\S 1.4]{BurgosPhilipponSombra:agtvmmh} for the case when $D$ is
a divisor. The {$v$-adic Monge-Amp\`ere measure} of an $\R$-divisor is
defined from that of divisors by polarization and multilinearity. 

A theorem of Zhang shows that, when $\K$ is a number field, $D$ is an
ample divisor and $\ov D $ is semipositive, the essential minimum can
be bounded below in terms of the height of $X$ and the degree of $D$
\cite[Theorem 5.2]{Zhang:plbas}:
\begin{equation} \label{eq:2}
  \upmu^{\ess}_{\ov D}(X)\ge \frac{\h_{\ov D}(X)}{(n+1)\deg_{D}(D)}.
\end{equation}
This inequality can be generalized to global fields and semiample big
divisors, see for instance \cite[Proposition 5.10]{Gubler:Eff}.

In some cases, the inequality \eqref{eq:2} is an equality. For
instance, this happens for the canonical metric on divisors of toric
and abelian varieties, and for the canonical metrics coming from
dynamical systems. This motivates the following definition.

\begin{defn} \label{def:1}
  Let $X$ be a proper variety over $\K$ of dimension $n$, and 
$\ov D$ a semipositive metrized $\R$-divisor on $X$ with $D$
big. Then 
  $\ov D$ is \emph{quasi-canonical} if
  \begin{displaymath}
    \upmu^{\ess}_{\ov D}(X)=\frac{\h_{\ov D}(X)}{(n+1)\deg_{D}(X)}.
  \end{displaymath}
  In other words, quasi-canonical metrized $\R$-divisors  are those
  for which Zhang's 
  lower bound for the essential minimum is attained.
\end{defn}

As we will see in \S \ref{sec:equid-galo-orbits}, the condition for a
toric metric of being quasi-canonical is very restrictive.  The
following observation is a direct consequence of
Proposition~\ref{prop:4} and of the inequality \eqref{eq:2}.

\begin{prop} \label{prop:9}
  Let $X$ be a proper variety over $\K$ of dimension $n$ and $\ov
  D$ a semipositive metrized divisor on $X$ with $D$ big and
  semiample. Then there exists a generic net $(p_{l})_{l\in I}$ of
  algebraic points of $X$ with
  \begin{equation}\label{eq:54}
    \lim_{l}\h_{\ov D}(p_{l})=\frac{\h_{\ov D}(X)}{(n+1)\deg_{D}(X)}
  \end{equation}
  if and only if $\ov D$  is quasi-canonical.
\end{prop}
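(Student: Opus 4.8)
The plan is to combine the two one-sided facts already available in the text. For the ``if'' direction, suppose $\ov D$ is quasi-canonical, so that by Definition~\ref{def:1} the quantity $\frac{\h_{\ov D}(X)}{(n+1)\deg_{D}(X)}$ equals $\upmu^{\ess}_{\ov D}(X)$. Then Proposition~\ref{prop:4} provides a generic $\ov D$-small net $(p_{l})_{l\in I}$, i.e.\ one with $\lim_{l}\h_{\ov D}(p_{l})=\upmu^{\ess}_{\ov D}(X)$; rewriting the right-hand side via the quasi-canonical equality gives exactly \eqref{eq:54}. So this direction is immediate.

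For the ``only if'' direction, assume there is a generic net $(p_{l})_{l\in I}$ with $\lim_{l}\h_{\ov D}(p_{l})=\frac{\h_{\ov D}(X)}{(n+1)\deg_{D}(X)}$. By the second assertion of Proposition~\ref{prop:4} (genericity forces $\liminf_{l}\h_{\ov D}(p_{l})\ge \upmu^{\ess}_{\ov D}(X)$, and here the liminf is an honest limit), we deduce
\begin{displaymath}
  \frac{\h_{\ov D}(X)}{(n+1)\deg_{D}(X)} \ge \upmu^{\ess}_{\ov D}(X).
\end{displaymath}
On the other hand, since $\ov D$ is semipositive and $D$ is big and semiample, Zhang's inequality \eqref{eq:2} (in the general-field, semiample-big form cited from \cite[Proposition~5.10]{Gubler:Eff}) gives the reverse inequality $\upmu^{\ess}_{\ov D}(X)\ge \frac{\h_{\ov D}(X)}{(n+1)\deg_{D}(X)}$. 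Hence equality holds, which is precisely the definition of $\ov D$ being quasi-canonical.

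There is essentially no obstacle here: the proposition is a formal bookkeeping consequence of Proposition~\ref{prop:4} and inequality~\eqref{eq:2}, as the text itself signals (``a direct consequence of Proposition~\ref{prop:4} and of the inequality~\eqref{eq:2}''). The only point requiring a word of care is making sure the hypotheses of \eqref{eq:2} are met — $D$ big and semiample, $\ov D$ semipositive — which are exactly the standing assumptions of the proposition, so that the cited generalization of Zhang's bound applies; and noting that a convergent net has liminf equal to its limit, so that Proposition~\ref{prop:4}'s lower bound can be read as an inequality between the stated limit and $\upmu^{\ess}_{\ov D}(X)$.
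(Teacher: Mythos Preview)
Your proof is correct and matches the paper's approach exactly: the paper does not spell out a proof but states that the proposition ``is a direct consequence of Proposition~\ref{prop:4} and of the inequality~\eqref{eq:2},'' which is precisely the combination you use for the two directions.
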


We discuss now the equidistribution of Galois orbits of 
points of small height. 
  
Let $X$ be a proper variety over $\K$ and $v\in \fM_{\K}$. We endow
the space of probability measures on $X_{{v}}^{\an}$ with the
weak-$\ast$ topology with respect to the space of continuous functions
on $X_{{v}}^{\an}$. In particular,  a net of probability measures $(\mu
_{l})_{l\in I}$ converges to a probability measure
$\mu $ if, for every continuous function~$f\colon
X^{\an}_{{v}} \to \R$,
\begin{displaymath}
  \lim_{l} \int f\dd
  \mu_{l}=\int f\dd \mu.
\end{displaymath}

\begin{defn}
  \label{def:4}
  Let $\ov D$ be a metrized
  $\R$-divisor on $X$. A probability measure
  $\mu$ on $X_{{v}}^{\an}$ is a \emph{$v$-adic limit measure} for $\ov
  D$ if there exists a~generic $\ov D$-small net $(p_{l})_{l\in I}$ of
  algebraic points of $X$ such that the net of probability measures
  $(\mu _{p_{l},v})_{l\in I}$ converges to $\mu $.
  We say that $\overline D$ satisfies the \emph{$v$-adic
    equidistribution property} if, for
  every~generic $\ov D$-small net $(p_{l})_{l\in I}$ as above, the
  net of measures~$(\nu _{p_{l},})_{l\in I}$ converges.
\end{defn}

Clearly, when the $v$-adic equidistribution property holds, there
exists a unique limit measure.

\begin{rem}
  \label{rem:8}
  When $\K$ is a number field, the analytic space $X_{v}^{\an}$ is
  metrizable and so is the space of probability measures on it. Using
  this together with Remark~\ref{rem:7} and standard arguments, one
  can reduce to sequences, instead of nets, when studying equidistribution properties
  over number fields.
\end{rem}

In the literature there are many equidistribution theorems of Galois
orbits of points of small height.  All these equidistribution results
deal with generic nets (or sequences when $\K$ is a number field) of
algebraic points satisfying the equality~\eqref{eq:54}. In view of
Proposition~\ref{prop:9}, the existence of such a net implies that the
metric is quasi-canonical. Moreover, the 
condition \eqref{eq:54} for this  net is equivalent of  being $\ov
D$-small. Thus we can reformulate a general equidistribution result in
the following form.

\begin{thm}\label{thm:4} Let $\K$ be a global field and $X$ a
  projective variety over $\K$ of dimension $n$. Let $\ov D$ be a
  semipositive metrized divisor on $X$ such that $D$ is ample. If $\ov
  D$ is quasi-canonical then, for every place $v\in
    \fM_{\K}$,
  \begin{enumerate}
  \item \label{item:13} $\overline D$ satisfies the $v$-adic
    equidistribution property;
  \item \label{item:14} the limit measure is the normalized
    Monge-Amp\`ere measure
    \begin{displaymath}
      \frac{1}{\deg_{D}(X)}\chern_{1}(D,\|\cdot\|_{v})^{\wedge n}.
    \end{displaymath}
  \end{enumerate}
\end{thm}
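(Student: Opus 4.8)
The plan is to reduce this ``classical'' form of the equidistribution theorem to the standard statement in the literature, i.e.\ to Theorem~\ref{thm:9}, which is the version due to Yuan and Gubler quoted above. Theorem~\ref{thm:4} is essentially a bookkeeping reformulation of Theorem~\ref{thm:9}: under the quasi-canonical hypothesis, the condition of being $\ov D$-small is the same as satisfying the equality $\lim_{l}\h_{\ov D}(p_{l})=\frac{\h_{\ov D}(X)}{(n+1)\deg_{D}(X)}$. So the main work is to check that the hypotheses match up and that ``net'' versus ``sequence'' causes no trouble.

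First I would invoke Definition~\ref{def:1}: since $\ov D$ is quasi-canonical, $\upmu^{\ess}_{\ov D}(X)=\frac{\h_{\ov D}(X)}{(n+1)\deg_{D}(X)}$. Hence for any net $(p_{l})_{l\in I}$ of algebraic points of $X$, the $\ov D$-smallness condition $\lim_{l}\h_{\ov D}(p_{l})=\upmu^{\ess}_{\ov D}(X)$ coincides verbatim with the equality~\eqref{eq:54}; this is exactly the content already recorded in Proposition~\ref{prop:9}, whose existence half also guarantees that generic $\ov D$-small nets exist (so the statement is not vacuous). Next, fix a place $v\in\fM_{\K}$ and a generic $\ov D$-small net $(p_{l})_{l\in I}$. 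Because $D$ is ample (in particular big and semiample) and $\ov D$ is semipositive and quasi-canonical, the pair $(X,\ov D)$ satisfies precisely the hypotheses of Theorem~\ref{thm:9}. Applying that theorem yields that the net $(\mu_{p_{l},v})_{l\in I}$ converges to $\frac{1}{\deg_{D}(X)}\chern_{1}(D,\|\cdot\|_{v})^{\wedge n}$. Since this holds for \emph{every} such net, the $v$-adic equidistribution property of Definition~\ref{def:4} holds, giving item~\eqref{item:13}, and the identification of the limit gives item~\eqref{item:14}. The argument is place-by-place and uniform in $v$, so it covers all $v\in\fM_{\K}$ at once.

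One point deserving a remark is the passage between nets and sequences. Theorem~\ref{thm:9} as stated already works with nets, so there is nothing to reconcile; but if one wished to quote the literature in its original sequential form for number fields, Remark~\ref{rem:8} (together with Remark~\ref{rem:7}) shows that over a number field one may replace nets by sequences throughout, since $X_{v}^{\an}$ and its space of probability measures are metrizable. In the function field case the result is quoted from Gubler in the net formulation, so again no reduction is needed. A second, purely notational, point: in Definition~\ref{def:4} the measures are written $\mu_{p_{l},v}$ (the displayed ``$\nu_{p_{l},}$'' there is a typo for $\mu_{p_{l},v}$), and these are exactly the uniform probability measures on the $v$-adic Galois orbits appearing in Theorem~\ref{thm:9}; with this identification the two statements are literally the same assertion.

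I do not expect any genuine obstacle: the entire content of Theorem~\ref{thm:4} is already packaged in Theorem~\ref{thm:9} plus the tautology ``quasi-canonical $\Rightarrow$ $\upmu^{\ess}_{\ov D}(X)$ equals Zhang's lower bound.'' The only mild subtlety is making sure the ampleness hypothesis of Theorem~\ref{thm:9} is available (it is, by assumption) and that the reformulation of the smallness condition via~\eqref{eq:54} is legitimate (it is, by Proposition~\ref{prop:9} and Definition~\ref{def:1}). Accordingly the ``hard part'' is really just citing the right theorem with the right dictionary, and the proof can be written in a few lines.
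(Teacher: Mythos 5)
Your proposal is correct and takes the same route the paper does: the paper does not give a self-contained proof of Theorem~\ref{thm:4} but derives it by exactly the dictionary you describe, using Definition~\ref{def:1} and Proposition~\ref{prop:9} to identify the $\ov D$-small condition with the equality~\eqref{eq:54}, and then citing the equidistribution theorems of Yuan and Gubler (i.e.\ Theorem~\ref{thm:9}).
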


This result is due to Yuan \cite[Theorem
3.1]{Yuan:blbav} in the number field case and, with more general
hypotheses, to Gubler \cite[Theorem
1.1]{Gubler:Eff} in the function field case. 

Written in this form, it is clear that this equidistribution theorem
has a very restrictive hypothesis, that the metrized divisor $\ov D$ is
quasi-canonical. But it also has a very strong thesis: not only the
Galois orbits of points of small height converge to a measure, but
this limit measure can be identified with the normalized
Monge-Amp\`ere measure of the metrized divisor.

The main objective of this paper is to start the study of what happens
when the hypothesis of $\ov D$ being quasi-canonical is removed. We
will work with
toric varieties and toric metrics because, in this case, the tools
developed previously allow us to work very explicitly. In this setting,
we will see that the first  statement in Theorem~\ref{thm:4} holds in
much  great generality, but, if the metric is not
quasi-canonical,  the limit measure does
not need to agree with the normalized Monge-Amp\`ere measure.   



\section{Auxiliary results on convex analysis}
\label{sec:auxil-results-conv}

In this section we gather several definitions and results on convex
analysis that we will use in our study of toric height functions. 
For a background in convex analysis, see for instance \cite[\S
2]{BurgosPhilipponSombra:agtvmmh}.

Let $N_{\R}\simeq \R^{n}$ be a real vector space of dimension $n$ and
$M_{\R}=\Hom(N_{\R},\R)=N_{\R}^{\vee}$ its dual. The pairing between
$x\in M_{\R}$ and $u\in N_{\R}$ will be denoted either  by
$\langle x,u\rangle$ or~$\langle u,x\rangle$. 

Following \cite[\S 2]{BurgosPhilipponSombra:agtvmmh}, a convex subset
$C$ is nonempty.  The \emph{relative interior} of $C$,
denoted by $\ri(C)$, is  the interior $C$ relative to the
minimal affine subspace containing it.

Let~$C\subset M_{\R}$ be a convex subset and $g\colon C\to \R$ a
concave function. The \emph{sup-differential} of $g$ at a point $x\in
C$ is
\begin{displaymath}
  \partial g(x) =\{u\in N_{\R}\mid \langle u,z-x\rangle\ge
  g (z) - g(x) \text{ for all } z\in C \}.
\end{displaymath}
It is a convex subset of $N_{\R}$. The \emph{stability set} of $g$ is
the subset of $N_{\R}$ defined by
\begin{displaymath}
  \stab(g) = \{u\in N_{\R}\mid u-g \text{ is bounded below}\}.
\end{displaymath}
The \emph{Legendre-Fenchel dual} of $g$ is the function $g^{\vee}
\colon \stab(g) \rightarrow  \R $ defined by
\begin{equation}
  \label{eq:11}
g^{\vee}(u)=
\inf_{x\in C} \langle u,x\rangle -g(x).
\end{equation}

Let~$E\subset N_{\R}$ be a convex subset.  A nonempty
subset $F\subset E$ is a \emph{face} of $E$ if every closed segment
$S\subset E$ whose relative interior has nonempty intersection
with~$F$, is contained in~$F$.

\begin{lem}
\label{lem:flatification}
Let~$C\subset M_{\R}$ be a compact convex subset and
$g_{1},g_{2}\colon C\to \R$ two continuous concave functions. Denote
by~$C_{\max}$ the convex subset of~$C$ of the points where~$g_1 + g_2$
attains its maximum value and choose $x\in C_{\max}$.  For
$i=1,2$, consider the concave function~$\hphi_i \colon N_{\R} \to \R$
defined by
\begin{equation}
  \label{eq:7}
\hphi_i(u) =
g_i^{\vee}(u) - \langle x, u \rangle+ g_i(x).
\end{equation}
Then
\begin{enumerate}
\item \label{item:3} if $x'\in \ri(C_{\max})$, then $\partial g_{i}(x')$ is a face of $\partial g_{i}({x})$, $i=1,2$;
\item \label{item:4} $\partial g_{1}({x})\cap (-\partial g_{2}({x}))$ is
  nonempty and does not depend on the choice of $x\in C_{\max}$;
\item \label{item:28} the minimal face of  $\partial g_{1}({x})$
  containing $\partial g_{1}({x})\cap (-\partial g_{2}({x}))$ does not depend on the choice of $x\in C_{\max}$;
\item \label{item:5} the function $\hphi_{i}$ is
  nonpositive and vanishes precisely 
  on $\partial g_{i}({x}) $.
\end{enumerate}
\end{lem}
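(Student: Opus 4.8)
The plan is to work throughout with the Legendre-Fenchel formalism and the basic duality between sup-differentials and the dual function. Recall that for a continuous concave function $g$ on a compact convex set $C$, one has $g^{\vee\vee} = g$, and the Fenchel-Young relation: for $u \in \stab(g)$ and $x \in C$, the inequality $g^{\vee}(u) \le \langle x, u\rangle - g(x)$ always holds, with equality if and only if $u \in \partial g(x)$. Since $C$ is compact, $\stab(g_i) = N_{\R}$, so each $\hphi_i$ is defined on all of $N_{\R}$; it is concave because $g_i^{\vee}$ is concave (an infimum of affine functions of $u$) and the remaining terms are affine in $u$. So each item reduces to an unwinding of these standard facts together with the fact that $x$ lies in $C_{\max}$.

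First I would prove \eqref{item:5}, since the other items can be phrased through it. Nonpositivity of $\hphi_i$ is exactly the Fenchel-Young inequality applied at the point $x$: $\hphi_i(u) = g_i^{\vee}(u) - \langle x,u\rangle + g_i(x) \le 0$ for every $u$. Equality at $u$ means precisely $u \in \partial g_i(x)$, again by Fenchel-Young. That finishes \eqref{item:5}. For \eqref{item:3}, I would use the standard description of faces of the sup-differential: as $x'$ ranges over $\ri(C_{\max})$ the value $g_i$ need not be constant, but I would invoke the general fact that for a concave function, $\partial g_i(x')$ is always a face of $\partial g_i(x'')$ whenever $x''$ lies in the relative interior of a segment through $x'$ inside $\dom(g_i)$ along which... — more cleanly, I would argue that $x \in C_{\max}$ and $x' \in \ri(C_{\max})$ means $x$ lies on a segment in $C_{\max}$ whose relative interior contains $x'$ (extend slightly past $x'$ within $\ri(C_{\max})$), and then the sup-differential at the relative-interior point $x'$ is a face of the sup-differential at any point of that segment, in particular at $x$; this is a known property of sup-differentials of concave functions on convex sets, which I'd cite from \cite[\S 2]{BurgosPhilipponSombra:agtvmmh}.

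For \eqref{item:4}, the point is that $u \in \partial g_1(x) \cap (-\partial g_2(x))$ iff $-u \in \partial g_2(x)$ and $u \in \partial g_1(x)$, which by Fenchel-Young is equivalent to $g_1^{\vee}(u) + g_1(x) = \langle x,u\rangle$ and $g_2^{\vee}(-u) + g_2(x) = \langle x,-u\rangle$; adding these, $u$ lies in the intersection iff $g_1^{\vee}(u) + g_2^{\vee}(-u) = -(g_1+g_2)(x)$. The right-hand side is $-\max_C(g_1+g_2)$ since $x \in C_{\max}$, a quantity independent of $x$; and one always has $g_1^{\vee}(u) + g_2^{\vee}(-u) \ge -\max_C(g_1+g_2)$ by Fenchel-Young at any common point, so the intersection consists exactly of the minimizers over $u$ of $g_1^{\vee}(u)+g_2^{\vee}(-u)$, a set manifestly independent of the choice of $x \in C_{\max}$. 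Nonemptiness follows because this infimum is attained: $g_1^{\vee} + g_2^{\vee}(-\cdot)$ is upper semicontinuous and, being a sum of two functions each bounded above and going to $-\infty$ suitably (or: because $(g_1+g_2)^{\vee\vee} = g_1+g_2$ and the inf-convolution identity $(g_1+g_2)^\vee$ relates to $g_1^\vee, g_2^\vee$), attains its maximum; alternatively, pick any $x \in C_{\max}$ and note $\partial g_1(x)$ and $\partial g_2(x)$ are nonempty compact convex and the first-order optimality condition for $g_1+g_2$ at the interior-of-$C_{\max}$ maximizer forces $\partial g_1(x') \cap (-\partial g_2(x')) \ne \emptyset$. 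Finally \eqref{item:28} follows by combining \eqref{item:3} and \eqref{item:4}: moving $x$ within $C_{\max}$ to a point $x_0 \in \ri(C_{\max})$, \eqref{item:3} says $\partial g_1(x_0)$ is a face of $\partial g_1(x)$, hence the minimal face of $\partial g_1(x)$ containing the $x$-independent set $\partial g_1(x) \cap (-\partial g_2(x))$ — which by \eqref{item:4} is contained in $\partial g_1(x_0)$ — is the same as the minimal such face of $\partial g_1(x_0)$; since $x_0$ can be taken once and for all, this face is independent of $x$.

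The main obstacle I anticipate is \eqref{item:28} and the face-transitivity bookkeeping in \eqref{item:3}: one needs that a face of a face is a face, that the minimal face containing a given subset is well-defined, and that passing to a relative-interior point $x_0$ of $C_{\max}$ genuinely makes $\partial g_1(x_0)$ a face of $\partial g_1(x)$ simultaneously for *every* $x \in C_{\max}$ (not just along one segment). This requires the observation that any $x \in C_{\max}$ and $x_0 \in \ri(C_{\max})$ are joined by a segment in $C_{\max}$ extendable beyond $x_0$, so \eqref{item:3} applies directly; the rest is elementary convex geometry of faces. Everything else is a routine translation of Fenchel-Young.
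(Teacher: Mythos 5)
Your treatment of the last item (nonpositivity and vanishing locus of $\hphi_i$) via Fenchel--Young is correct and is the paper's argument. Your derivation of the second item through the scalar identity $g_1^\vee(u) + g_2^\vee(-u) = -(g_1+g_2)(x)$ is a clean alternative to the paper's and makes the independence on $x$ immediate, though the inequality runs the other way from what you wrote: adding the two Fenchel--Young bounds gives $g_1^\vee(u) + g_2^\vee(-u) \le -\max_{C}(g_1+g_2)$, so the intersection is the set of \emph{maximizers} of $u \mapsto g_1^\vee(u) + g_2^\vee(-u)$, not minimizers. This sign error is harmless for your conclusion. For nonemptiness your second alternative, the sum rule $0 \in \partial(g_1+g_2)(x) = \partial g_1(x) + \partial g_2(x)$, is the paper's route and works at any $x \in C_{\max}$; your first alternative via coercivity is not justified at this level of generality, since the lemma does not assume $C$ has nonempty interior and a continuous concave function bounded above on $N_{\R}$ need not attain its supremum.

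The genuine gap is in the first item. You assert, as ``a known property of sup-differentials of concave functions on convex sets,'' that the sup-differential at a point $x'$ in the relative interior of a segment is a face of the sup-differential at an endpoint $x$ of that segment. This is false for a general concave function: take $g(t) = -t^2$, the segment $[0,1]$, and $x' = 1/2$; then $\partial g(1/2) = \{-1\}$ is not even a subset of $\partial g(0) = \{0\}$, let alone a face. The reason the inclusion $\partial g_i(x') \subset \partial g_i(x)$ does hold in the lemma is the special structure of $C_{\max}$: since $g_1 + g_2$ is constant on $C_{\max}$ and both functions are concave, each restriction $g_i|_{C_{\max}}$ is simultaneously concave and convex, hence \emph{affine}, with common slope some $u_0 \in N_{\R}$. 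This is the paper's opening observation, and it is exactly what your first (abandoned) sentence trails off before saying (``\ldots along which\ldots''). With the affine identity in hand, your segment-extension argument goes through: evaluating the sup-differential inequality at the extended point $x' - \varepsilon(x - x')$ and comparing with the affine increment yields $\langle u, x - x'\rangle = g_i(x) - g_i(x')$, whence $u \in \partial g_i(x)$; only then can one invoke a face criterion of the type the paper cites, which takes the inclusion as a hypothesis. As written, the invoked ``property'' is wrong, so the argument for the first item --- and for the third item, which leans on it --- is incomplete.
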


\begin{proof}
  The restriction to $C_{\max}$ of the sum $g_{1}+g_{2}$ is constant,
  and so the restrictions to this set of $g_{1}$ and $g_{2}$ are
  affine and with opposite slopes. In other words, there is $u_{0}\in
  N_{\R}$ such that, for all $x_{1},x_{2}\in C_{\max}$,
\begin{equation}\label{eq:58}
  g_{1}(x_{2})-g_{1}(x_{1})= \langle u_{0},x_{2}-x_{1}\rangle \and   g_{2}(x_{2})-g_{2}(x_{1})= -\langle u_{0},x_{2}-x_{1}\rangle.
\end{equation}
  
For the statement \eqref{item:3}, let $i=1,2$ and $u\in\partial g_{i}({x'})$. Since $x'$ is in the relative interior of $C_{\max}$, there
exists $\varepsilon >0$ such that $x'-\varepsilon(x-x') \in C_{\max}$.
By the definition of the sup-differential, for all~$z\in C$,
\begin{equation}
  \label{eq:5}
  \langle u,z-x'\rangle \ge g_{i}(z) -g_{i}(x'). 
\end{equation}
By \eqref{eq:5} and  \eqref{eq:58}, 
\begin{multline*}
  -\varepsilon \langle u,x-x'\rangle = 
  \langle u,x'-\varepsilon(x-x')-x'\rangle\\  \ge
  g_{i}(x'-\varepsilon(x-x')) -g_{i}(x')\\
  = \langle u_{0},-\varepsilon (x-x')\rangle =
  -\varepsilon (g_{i}(x)-g_{i}(x'))  .
\end{multline*}
Hence $ \langle u,x-x'\rangle \le g_{i}(x)-g_{i}(x')$. By
\eqref{eq:5} applied to $z=x$,  we have also the reverse inequality.  Thus
$\langle u,x-x'\rangle  =g_{i}(x)-g_{i}(x')$, and 
it follows from \eqref{eq:5} that, for all $z\in C$, 
\begin{displaymath}
    \langle
u,z-x\rangle \ge g_{i}(z) -g_{i}(x).
\end{displaymath}
Hence $u\in \partial g_{i}({x}) $ and so $\partial g_{i}({x'})
\subset \partial g_{i}({x}) $.  Then \cite[Proposition
2.2.8]{BurgosPhilipponSombra:agtvmmh} implies that
$\partial g_{i}({x'}) $ is a face of $ \partial g_{i}({x}) $.

To prove the statement \eqref{item:4} note that, since~$g_{1}+g_{2}$
attains its maximum value at~$x$, by \cite[Proposition
2.3.6(2)]{BurgosPhilipponSombra:agtvmmh}
\begin{displaymath}
0 \in \partial (g_{1}+g_{2})({x})  = \partial g_1({x}) 
+ \partial g_2({x}) .
\end{displaymath}
Hence $\partial g_{1}({x})\cap (-\partial g_{2}({x}))\ne\emptyset$, as
stated. Now let $u$ be a point in this intersection. Then 
\begin{equation}
  \label{eq:59}
  \langle u,z-x\rangle \ge g_{1}(z)-g_{1}(x) \and   \langle -u,z-x\rangle \ge g_{2}(z)-g_{2}(x) .
\end{equation}
Choose $x''\in C_{\max}$. Subtracting, from the inequalities \eqref{eq:59} applied to
$z=x''$, the identities \eqref{eq:58} applied to $x_{1}=x$ and
$x_{2}=x''$, we deduce that
\begin{equation*}
  \langle u-u_{0},x''-x\rangle=0.
\end{equation*}
Using this together with \eqref{eq:5}and \eqref{eq:59}, we obtain 
\begin{displaymath}
    \langle u,z-x''\rangle \ge g_{1}(z)-g_{1}(x'') \and   \langle
    -u,z-x''\rangle \ge g_{2}(z)-g_{2}(x'') .
\end{displaymath}
Hence $u\in \partial g_{1}({x''})\cap (-\partial g_{2}({x''}))$, as
stated.

For the next statement, consider the convex set $B =\partial
g_{1}({x})\cap (-\partial g_{2}({x}))$ which, thanks to \eqref{item:4},
does not depend on the choice of $x\in C_{\max}$. Denote by $F_{x}$
the minimal face of $\partial g_{1}({x})$ containing it. By
\eqref{item:3}, it is enough to consider the case when $x\in
\ri(C_{\max})$. By the same statement, the set $\partial g_{2}({x})$
does not depend on the choice of $x\in \ri(C_{\max})$, proving
\eqref{item:28}.

The statement \eqref{item:5} follows readily from \cite[Lemma
2.2.6]{BurgosPhilipponSombra:agtvmmh}. 
\end{proof}


\begin{defn} \label{def:5} Let~$C\subset M_{\R}$ be a compact convex
  subset and $g_{1},g_{2}\colon C\to \R$ two continuous concave
  functions.  Let~$C_{\max}$ be the convex subset of~$C$ of the points
  where~$g_1 + g_2$ attains its maximum value.  Given $x\in C_{\max}$,
  we define the convex subset of $N_{\R}$
  $$ 
  B(g_{1},g_{2}) = \partial g_{1}({x})\cap
  (-\partial g_{2}({x}))
  $$ 
  and the convex subset
  \begin{displaymath}
    F(g_{1},g_{2})\subset \partial g_{1}({x})
  \end{displaymath}
  as the minimal face of~$\partial g_{1}({x})$
  that contains $B(g_{1},g_{2})$.  By Lemma
  \ref{lem:flatification}, these convex subsets do not depend on the
  choice of $x$.

  We also  define the convex subset
  $A_{i}(g_{1},g_{2})=\partial g_i(x)  \subset N_{\R}$, $i=1,2$. By
  the same lemma, this convex subset does not depend 
  on the choice of the point $x$.
\end{defn}

\begin{rem}
  \label{rem:2}
  In the setting of Lemma~\ref{lem:flatification}, it is not always
  true that $\partial g_{i}({x})=\partial g_{i}({x'})$ for $x,x'\in
  C_{\max}$. An example of this situation is when $g_{i}$ is the zero
  function on the interval $[0,1]\subset\R$ and $x=0, x'=1/2$.  Hence,
  in Definition~\ref{def:5} we cannot define the sets
  $A_{i}(g_{1},g_{2})$ using an arbitrary point $x\in C_{\max}$.
\end{rem}

\begin{lem}
  \label{lemm:5}
  Let $C\subset M_{\R}$ be a compact convex subset with nonempty
  interior and $g_{1},g_{2}\colon C\to \R$ two concave functions. Then
  $B(g_{1},g_{2})$ is bounded and $F(g_{1},g_{2})$ contains no lines.
\end{lem}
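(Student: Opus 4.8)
The plan is to derive both statements from one elementary consequence of $C$ having nonempty interior: for any fixed $x\in C$, the only $w\in N_{\R}$ with $\langle w,z-x\rangle=0$ for all $z\in C$ is $w=0$. Indeed, $C$ contains an open ball of $M_{\R}$, so $\{z-x\mid z\in C\}$ contains an open ball as well, and a vector annihilating an open ball annihilates a basis of $M_{\R}$, hence is zero. Throughout I fix $x\in C_{\max}$; by Lemma~\ref{lem:flatification}\eqref{item:4} (applied with continuous $g_{1},g_{2}$, as in Definition~\ref{def:5}) the set $B(g_{1},g_{2})$ is nonempty, so in particular $\partial g_{1}(x)\supseteq B(g_{1},g_{2})$ is nonempty; the degenerate case where some $\partial g_{i}(x)$ is empty is vacuous since then $B$ and its containing face are empty.

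First I would treat $F(g_{1},g_{2})$. Since $F(g_{1},g_{2})$ is a face of $\partial g_{1}(x)$, and a line contained in a face of a convex set is contained in that convex set, it suffices to show that $\partial g_{1}(x)$ contains no line. Suppose it contained a line $\{u_{0}+tw\mid t\in\R\}$ with $w\neq 0$. By the definition of the sup-differential, for every $z\in C$ and every $t\in\R$ we have $\langle u_{0}+tw,\,z-x\rangle\ge g_{1}(z)-g_{1}(x)$, and letting $t\to\pm\infty$ forces $\langle w,z-x\rangle=0$ for all $z\in C$, hence $w=0$ by the remark above, a contradiction. Thus $\partial g_{1}(x)$, and a fortiori its face $F(g_{1},g_{2})$, contains no lines.

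Next, for boundedness of $B(g_{1},g_{2})=\partial g_{1}(x)\cap(-\partial g_{2}(x))$: this set is closed and convex, being an intersection of closed half-spaces, so it is bounded if and only if $\rec(B(g_{1},g_{2}))=\{0\}$. Writing $\partial g_{i}(x)=\{u\mid \langle u,z-x\rangle\ge g_{i}(z)-g_{i}(x)\ \text{for all }z\in C\}$, its recession cone is $K\coloneqq\{w\mid \langle w,z-x\rangle\ge 0\ \text{for all }z\in C\}$, which depends only on $C$ and $x$, not on $g_{i}$. Hence $\rec(-\partial g_{2}(x))=-K$ and
\[
\rec(B(g_{1},g_{2}))\subseteq \rec(\partial g_{1}(x))\cap\rec(-\partial g_{2}(x))=K\cap(-K)=\{0\},
\]
the last equality again by the remark above. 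Therefore $B(g_{1},g_{2})$ is bounded.

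The argument is short; the only point requiring care is that $x\in C_{\max}$ may lie on the boundary of $C$, so the individual sets $\partial g_{i}(x)$ need not be bounded (and may even fail to be nonempty for non-maximizing points, cf.\ Remark~\ref{rem:2}). It is precisely the intersection structure of $B$ together with the nonempty interior of $C$ that rules out a nontrivial recession direction, and the same nonempty-interior input that rules out lines in $\partial g_{1}(x)$.
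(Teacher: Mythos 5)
Your proof is correct and rests on the same core observation as the paper's: the nonempty interior of $C$ forces any $w\in N_{\R}$ with $\langle w,z-x\rangle=0$ for all $z\in C$ to vanish, and both unboundedness of $B$ and the presence of a line in $F\subset\partial g_{1}(x)$ would produce such a $w$. The only real difference is presentational: where the paper derives a two-sided inequality along a ray in $B$ and lets $t\to\infty$, you phrase the boundedness argument via recession cones, noting that $\rec(\partial g_{i}(x))$ is the single cone $K=\{w\mid\langle w,z-x\rangle\ge 0\ \forall z\in C\}$ independent of $g_{i}$, so $\rec(B)\subseteq K\cap(-K)=\{0\}$; this is a clean and slightly more structural repackaging of the same idea. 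One minor remark: your aside about the ``degenerate case where some $\partial g_{i}(x)$ is empty'' is moot here, since by Lemma~\ref{lem:flatification}\eqref{item:4} the set $B(g_{1},g_{2})$ is always nonempty at $x\in C_{\max}$, so both $\partial g_{i}(x)$ are automatically nonempty.
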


\begin{proof}
  The convex set $B(g_{1},g_{2})$ is not bounded if and only if it
  contains a ray, that is, a subset of the form $\R_{\ge0} u_{1} +
  u_{2}$ with $u_{i}\in N_{\R}$, $i=1,2$, and $u_{1}\ne 0$. Suppose
  that this is the case.  This implies that, for $x\in C_{\max}$
  and all $t\ge0$,
\begin{displaymath}
 tu_{1} + u_{2}\in \partial g_{1} ({x})\quad \text{ and } \quad 
-tu_{1} - u_{2}\in \partial g_{2}({x}).  
\end{displaymath}
Hence, for all $z\in C$ and $t\ge0$,
\begin{displaymath}
-\langle u_{2},z-x\rangle +g_1(z)-g_1(x) \le t\langle u_{1},z-x\rangle
\le -\langle u_{2},z-x\rangle -g_2(z)+g_2(x).
\end{displaymath}
Letting $t\to \infty$, this implies 
$C\subset \{z \mid \langle u_{1},z-x\rangle =0\}$, contradicting
the hypothesis that $C$ has nonempty interior. Hence $B(g_{1},g_{2})$
is bounded.

Similarly, if $F(g_{1},g_{2})$ contains a line $\R u_{1}+u_{2}$, then,
for $x\in C_{\max}$ and  $t\in\R$, 
\begin{displaymath}
   tu_{1} + u_{2}\in \partial g_{1}({x}).
\end{displaymath}
 This also
implies that $C$ is contained in the affine hyperplane $ \{z\mid
\langle u_{1},z-x\rangle =0\}$ and contradicts the hypothesis that~$C$
has nonempty interior. Hence $F(g_{1},g_{2})$ contains no lines.
\end{proof}

Let $\Cb(N_{\R})$ be the space of bounded continuous functions on
$N_{\R}$, and let $\|\cdot\|$  be an auxiliary norm on $N_{\R}$ that
we fix.

\begin{defn} \label{def:8} We denote by~$\cP$ the space of Borel
  probability measures on~$N_{\R}$ endowed with the weak-$\ast$
  topology with respect to $\Cb(N_{\R})$.  This is the smallest
  topology on $\cP$ such that, for all $\varphi \in \Cb(N_{\R})$, the
  function~$\mu \mapsto \int \varphi \dd \mu$ is continuous.

  We denote by $\cE\subset \cP$ the topological subspace of
  probability measures with finite first moment, that is, the
  probability measures on $N_{\R}$ satisfying
\begin{displaymath}
  \int \|u\|\dd \mu (u) <\infty.
\end{displaymath}
For $\mu \in \cE$, the \emph{expected value} is 
\begin{displaymath}
  \exv[\mu ]=\int u\dd \mu (u)\in N_{\R}.
\end{displaymath}  
\end{defn}

The weak-$\ast$ topology of $\cP$ with respect to $\Cb(N_{\R})$ is called
the ``topologie \'etroite'' in \cite[\S 5]{Bourbaki:Iix}.  By
Proposition 5.4.10 in \emph{loc. cit.}, the topological space $\cP$ is
complete, metrizable and separable.
Latter we will consider other topologies on the underlying spaces of
$\cP$ and $\cE$. When this is the case, we will state explicitly the used topology.

For~$\mu\in \cP$, its \emph{support}, denoted by~$\supp(\mu)$, is the
set of all points in~$N_{\R}$ such that all its neighborhoods have
positive measure.  The set of probability measures on $N_{\R}$ with
finite support is contained in~$\cE$, and is dense therein.

For the rest of this section, we fix a compact convex subset $C\subset
M_{\R}$ with nonempty interior and two continuous concave functions
$g_{1},g_{2}\colon C\to \R$.  The Legendre-Fenchel dual $g_{i}^{\vee}$
is a concave function on $N_{\R}$ with stability set~$C$.

We introduce the function $\Phi\colon \cE\to \R$ given, for $\mu\in \cE$,
by 
\begin{equation}\label{eq:27}
  \Phi(\mu)= \int g_{1}^{\vee}\dd \mu +
  g_{2}^{\vee}(-\exv[\mu])+\max_{x\in C}(g_{1}(x)+g_{2}(x)).
\end{equation}
This function will play a central role in the proof of the main
results in the next section.

It follows easily from its definition that $\Phi$ is concave. In
general, this function is not continuous, as the following example
shows.

\begin{exmpl} \label{exm:1}
  Let $N_{\R}=\R$, so that $M_{\R}=\R$. Set $C=[0,1]$ and $g_{i}=0$,
  $i=1,2$. Then $g_{i}^{\vee}(u)=\min(0,u)$ for $u\in \R$. Consider
  the sequence of measures
  \begin{displaymath}
    \mu _{l}=\frac{l-1}{l}\delta _{0}+\frac{1}{l}\delta _{-l}, \quad l\ge1, 
  \end{displaymath}
where $\delta_{0}$ and $\delta_{-l}$ are the Dirac measures at the
points 0
and $-l$, respectively. 
  This sequence converges to $\delta _{0}$. However, 
  \begin{math}
    \Phi (\mu _{l})=-1 
  \end{math} for all $l$ 
  and $\Phi (\delta _{0})=0$.
\end{exmpl}

Nevertheless, we have the following result.
\begin{prop}\label{prop:3}
 The function $\Phi $ is upper semicontinuous.    
\end{prop}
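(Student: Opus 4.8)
The plan is to isolate the three terms of $\Phi$ in \eqref{eq:27} and show that the sum is upper semicontinuous on $\cE$ by handling each piece separately. The constant term $\max_{x\in C}(g_1(x)+g_2(x))$ is harmless. The term $\mu \mapsto g_2^{\vee}(-\exv[\mu])$ is the composition of the expected-value map $\exv\colon \cE\to N_\R$ with the concave (hence continuous on the interior of its domain, and upper semicontinuous on all of $N_\R$) function $-g_2^{\vee}$; the subtlety is that $\exv$ is \emph{not} continuous for the weak-$\ast$ topology with respect to $\Cb(N_\R)$, because mass can escape to infinity. So the real work is to control $\mu\mapsto \int g_1^{\vee}\dd\mu$ together with $\mu\mapsto g_2^{\vee}(-\exv[\mu])$ simultaneously, exploiting a cancellation between them. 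The key structural fact is that $g_i^{\vee}$ is concave on $N_\R$ with stability set $C$ compact, so by the description of the recession behaviour of a Legendre--Fenchel dual, $g_i^{\vee}(u) = \langle x_0,u\rangle + O(1)$ as $\|u\|\to\infty$ along any ray, and more precisely $g_i^{\vee}(u) \le \langle x, u\rangle - g_i(x)$ for \emph{every} $x\in C$; in particular $g_i^\vee$ is bounded above by any fixed affine function $u\mapsto \langle x,u\rangle - g_i(x)$, and $g_i^{\vee}(u) - \langle x,u\rangle \to -\infty$ as $\|u\|\to\infty$ unless $x$ lies in a proper face direction. This linear-growth control is what will tame the escape of mass.

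Concretely, I would argue as follows. Fix $\mu\in\cE$ and a net (or sequence, since $\cP$ is metrizable) $\mu_l\to\mu$ in $\cP$; I must show $\limsup_l \Phi(\mu_l)\le \Phi(\mu)$. Write $f = g_1^{\vee}$, a concave function on $N_\R$. Pick $x_{\max}\in C_{\max}$ where $g_1+g_2$ is maximal, and recall from Lemma~\ref{lem:flatification}\eqref{item:5} that $\hphi_1(u) = g_1^{\vee}(u) - \langle x_{\max},u\rangle + g_1(x_{\max})$ is nonpositive and that $\hphi_2(u) = g_2^{\vee}(u) - \langle x_{\max},u\rangle + g_2(x_{\max})$ is likewise nonpositive. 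Using $g_1(x_{\max})+g_2(x_{\max}) = \max_{x\in C}(g_1+g_2)$, one rewrites
\begin{displaymath}
  \Phi(\mu) = \int \hphi_1 \dd\mu + \hphi_2(-\exv[\mu]) + \big\langle x_{\max}, \exv[\mu] - \exv[\mu]\big\rangle = \int \hphi_1\dd\mu + \hphi_2(-\exv[\mu]),
\end{displaymath}
after noticing that the linear terms $-\langle x_{\max},\exv[\mu]\rangle$ coming from $\int \hphi_1\dd\mu$ and $+\langle x_{\max},\exv[\mu]\rangle$ coming from $\hphi_2(-\exv[\mu])$ cancel. Thus $\Phi = \int \hphi_1\dd\mu + \hphi_2(-\exv[\mu])$ with \emph{both} $\hphi_1\le 0$ and $\hphi_2\le 0$. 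Now $\hphi_1$ is a bounded-above upper semicontinuous (indeed continuous) function on $N_\R$, so $\limsup_l \int \hphi_1\dd\mu_l \le \int \hphi_1\dd\mu$ by the portmanteau theorem for the "topologie étroite" (weak-$\ast$ against $\Cb(N_\R)$), since upper semicontinuous functions bounded above are limits of decreasing nets of bounded continuous functions. For the second term: $-\hphi_2\ge 0$ is convex and lower semicontinuous, so $\liminf$ along $-\exv[\mu_l]$ would help if $\exv$ were continuous; instead, I use that $-\hphi_2$ superlinear-or-affine growth together with $\hphi_1\le 0$ forces the measures $\mu_l$ to be tight in expectation. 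Specifically, $\exv[\mu_l]$ stays in a bounded set, or, if it escapes, the escaping mass is penalized by $\int \hphi_1\dd\mu_l\to-\infty$ faster than $\hphi_2(-\exv[\mu_l])$ can grow, because $\hphi_2(u) \le \langle x_{\max} - x, u\rangle + (g_2(x_{\max}) - g_2(x))$ has at most affine growth in $\|u\|$ while $\int \hphi_1 \dd\mu_l$ picks up the genuine superlinear penalty whenever mass of $\mu_l$ is far out (here one uses that $C$ has nonempty interior, exactly as in Lemma~\ref{lemm:5}, to guarantee $\hphi_1(u)\to-\infty$ in \emph{every} direction). A clean way to package this is: by passing to a subnet one may assume $\Phi(\mu_l)\to \limsup$; if $\exv[\mu_l]$ is unbounded one derives $\Phi(\mu_l)\to-\infty$, contradicting nothing but showing the case is vacuous; if $\exv[\mu_l]$ is bounded, extract a convergent subnet $\exv[\mu_l]\to a\in N_\R$, observe $a = \exv[\mu]$ by testing against truncated linear functions and controlling the tails via the uniform integrability forced by boundedness of $\int\hphi_1\dd\mu_l$ from below along the convergent subnet, and then conclude $\limsup_l \hphi_2(-\exv[\mu_l]) \le \hphi_2(-a) = \hphi_2(-\exv[\mu])$ by upper semicontinuity of $\hphi_2$.

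The main obstacle, and the point that needs the most care, is the failure of continuity of the expected-value map and the consequent possibility of mass escaping to infinity (as in Example~\ref{exm:1}); the resolution hinges on the nonpositivity and global superlinear decay of $\hphi_1$ established in Lemma~\ref{lem:flatification}\eqref{item:5} and Lemma~\ref{lemm:5}, which simultaneously forces tightness of the $\mu_l$ in the first-moment sense and gives the upper bound via $\limsup$ of the integral of an upper semicontinuous function bounded above. Everything else — the rewriting in terms of $\hphi_1,\hphi_2$, the portmanteau argument, the identification of the limit expectation on the bounded subnet — is routine once that estimate is in hand.
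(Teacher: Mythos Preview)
There is a genuine gap in your argument, precisely at the step where you claim that boundedness from below of $\int \hphi_1 \dd\mu_l$ along a subnet ``forces uniform integrability'' sufficient to conclude $a=\lim_l \exv[\mu_l]=\exv[\mu]$. Since $\hphi_1$ decays only \emph{linearly} (its stability set is the compact polytope $C$), a lower bound on $\int \hphi_1 \dd\mu_l$ only gives a bound on the first moments $\int\|u\|\dd\mu_l$; it does \emph{not} give uniform integrability of $\|u\|$, and hence does not force $\exv[\mu_l]\to\exv[\mu]$. A concrete counterexample in the spirit of Example~\ref{exm:1}: take $N_\R=\R$, $C=[0,2]$, $g_1=g_2=0$, $x_{\max}=1$, so that $\hphi_1=\hphi_2=-|\cdot|$. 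Set $\mu=\delta_{-1}$ and $\mu_l=(1-\tfrac1l)\delta_{-1}+\tfrac1l\,\delta_{l-1}$. Then $\mu_l\to\mu$ in the weak-$\ast$ topology of $\cP$, $\int\hphi_1\dd\mu_l=-2+\tfrac2l$ is bounded below, but $\exv[\mu_l]=0\ne-1=\exv[\mu]$. Here $\hphi_2(-\exv[\mu_l])=\hphi_2(0)=0$ while $\hphi_2(-\exv[\mu])=\hphi_2(1)=-1$, so your term-by-term inequality $\limsup_l\hphi_2(-\exv[\mu_l])\le\hphi_2(-\exv[\mu])$ fails outright. Upper semicontinuity of $\Phi$ still holds ($\Phi(\mu_l)\to-2=\Phi(\mu)$), but only because of a cancellation between the two terms that your splitting destroys.

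The fix, which is what the paper does, is to avoid splitting the two terms with respect to a \emph{fixed} $x_{\max}\in C_{\max}$. Instead, given the base point $\mu_0$ at which one wants to check upper semicontinuity, choose $x\in\partial g_2^{\vee}(-\exv[\mu_0])\subset C$ and use the supporting-affine inequality $g_2^{\vee}(u)\le g_2^{\vee}(-\exv[\mu_0])+\langle x,u+\exv[\mu_0]\rangle$ to bound $\Phi(\mu)$ above by $\Phi(\mu_0)-\int\phi\,\dd\mu_0+\int\phi\,\dd\mu$ with $\phi=g_1^{\vee}-x$. Since $x\in C=\stab(g_1^{\vee})$, the function $\phi$ is bounded above, and Lemma~\ref{lemm:3} (your portmanteau step) applies to the \emph{single} integral $\int\phi\,\dd\mu$. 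The point is that the linearization is taken at $-\exv[\mu_0]$, not at some $x_{\max}$ independent of $\mu_0$; this is what converts the problematic expected-value term into an affine functional that merges with the first integral.
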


To prove this proposition, we need the following lemma.

\begin{lem}\label{lemm:3}
  Let $\phi\colon N_{\R}\to \R$ be a continuous
  function. If $\phi $ is bounded above
  (respectively below), then the map $\cP \to \R\cup
  \{-\infty \}$ (respectively $\cP \to \R\cup \{\infty\}$)
    given by
    \begin{displaymath}
      \mu \longmapsto \int \phi \dd \mu 
    \end{displaymath}
    is upper semicontinuous  (respectively lower semicontinuous).
\end{lem}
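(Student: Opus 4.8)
The plan is to reduce to the case where $\phi$ is bounded above (the bounded-below case follows by replacing $\phi$ with $-\phi$, which turns upper semicontinuity into lower semicontinuity). So assume $\phi\colon N_\R\to\R$ is continuous and bounded above, say $\phi\le M$. After replacing $\phi$ by $\phi-M$ I may assume $\phi\le 0$, which is harmless since subtracting the constant $M$ only shifts the functional $\mu\mapsto\int\phi\dd\mu$ by $M$ and does not affect semicontinuity. The goal is then to show that $\mu\mapsto\int\phi\dd\mu$ is upper semicontinuous on $\cP$ with respect to the weak-$\ast$ topology induced by $\Cb(N_\R)$.

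The key step is a truncation-from-below argument. For each integer $k\ge 1$, set $\phi_k=\max(\phi,-k)$. Then each $\phi_k$ is continuous and bounded (it satisfies $-k\le\phi_k\le 0$), hence $\phi_k\in\Cb(N_\R)$, so by the very definition of the weak-$\ast$ topology the map $\mu\mapsto\int\phi_k\dd\mu$ is continuous on $\cP$. Moreover $\phi_k\ge\phi$ pointwise and $\phi_k\searrow\phi$ monotonically as $k\to\infty$. Therefore for every $\mu\in\cP$ we have $\int\phi\dd\mu=\inf_k\int\phi_k\dd\mu$: the inequality $\int\phi\dd\mu\le\int\phi_k\dd\mu$ for all $k$ is immediate, and equality in the limit follows from the monotone (or dominated) convergence theorem applied to $\phi_k-\phi\searrow 0$ — note $0\le\phi_k-\phi\le\phi_1-\phi$ and $\phi_1-\phi$ is integrable against the probability measure $\mu$ only if $\phi$ is, which may fail, so I will instead argue directly that $\int(\phi_k-\phi)\dd\mu\to 0$ by monotone convergence, with the understanding that both sides may be $+\infty$ but the difference still tends to $0$; cleanly, one uses that $-\phi\ge 0$ and $-\phi_k\nearrow-\phi$, so $\int(-\phi_k)\dd\mu\nearrow\int(-\phi)\dd\mu$ in $[0,\infty]$ by monotone convergence, which gives $\int\phi_k\dd\mu\searrow\int\phi\dd\mu$ in $[-\infty,0]$.

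It then remains to observe that an infimum of continuous (hence upper semicontinuous) functions is upper semicontinuous. Concretely, the map $\mu\mapsto\int\phi\dd\mu$ takes values in $\R\cup\{-\infty\}$, and for each real $c$ the superlevel set $\{\mu\in\cP:\int\phi\dd\mu\ge c\}=\bigcap_k\{\mu\in\cP:\int\phi_k\dd\mu\ge c\}$ is an intersection of closed sets (each $\{\mu:\int\phi_k\dd\mu\ge c\}$ is closed because $\mu\mapsto\int\phi_k\dd\mu$ is continuous), hence closed; this is precisely upper semicontinuity. For the bounded-below case, apply what has just been proved to $-\phi$: the map $\mu\mapsto\int(-\phi)\dd\mu=-\int\phi\dd\mu$ is upper semicontinuous with values in $\R\cup\{-\infty\}$, equivalently $\mu\mapsto\int\phi\dd\mu$ is lower semicontinuous with values in $\R\cup\{+\infty\}$, as claimed. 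The only mild subtlety — and the one point that needs care rather than any real obstacle — is bookkeeping the $\pm\infty$ values and making sure the monotone convergence step is applied to the nonnegative function $-\phi_k\nearrow-\phi$ rather than trying to dominate $\phi_k-\phi$ by something integrable.
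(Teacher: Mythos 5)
Your proof is correct and uses essentially the same idea as the paper: truncate from below via $\phi_k=\max(\phi,-k)$, note $\phi_k\in\Cb(N_{\R})$ so that $\mu\mapsto\int\phi_k\dd\mu$ is continuous, and pass to the limit by monotone convergence. The paper finishes with a direct sequential $\varepsilon$-argument (legitimate since $\cP$ is metrizable), whereas you conclude by observing that a pointwise infimum of continuous functions is upper semicontinuous via closed superlevel sets; this is the same content in a slightly more topological packaging.
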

\begin{proof}
  We prove the case of a function bounded above, the other case being
  analogous.  Let $\mu \in \cP$ and~$\varepsilon > 0$ be given and,
  for $l\ge 1$, put
  \begin{displaymath}
    \phi _{l}(u)=\max(\phi(u),-l). 
  \end{displaymath}
  The sequence of functions $(\phi _{l})_{l\ge1}$ is monotone and
  converges pointwise to $\phi$. So Lebesgue's monotone convergence
  theorem implies that there is $l_{0}\ge1$ such that
  \begin{displaymath}
    \int \phi_{l_{0}} \dd \mu \le\int \phi \dd
    \mu + \varepsilon. 
  \end{displaymath}

  Let $(\mu_{l})_{l\ge1}$ be a sequence in $\cP$ converging to $\mu$.
  Since $\phi _{l_0}\in \Cb(N_{\R})$, there exists $l_{1}\ge 1$ such
  that, for $l\ge l_{1}$,
\begin{displaymath}
  \int\phi\dd\mu_{l}\le   \int\phi_{l_{0}}\dd\mu_{l}\le
  \int\phi_{l_{0}}\dd\mu + \varepsilon \le
\int \phi \dd \mu + 2\varepsilon. 
\end{displaymath}
Since~$\varepsilon$ is arbitrary, $\limsup_{l\to \infty}
\int\phi\dd\mu_{l}\le \int \phi \dd \mu $,  proving the lemma.
\end{proof}

\begin{proof}[Proof of Proposition~\ref{prop:3}] 
  Set $\phi_{i}=g_{i}^{\vee}$, $i=1,2$ for short.  Fix $\mu_{0}\in\cE$
  and set $u_{0}=-\exv[\mu_{0}]\in N_{\R}$. Choose $x\in \partial \phi_{2}({u_{0}})\subset M_{\R}$ so that, for all $u\in N_{\R}$,
  \begin{equation*}
\langle x,u-u_{0}\rangle\ge     \phi_{2}(u)-\phi_{2}(u_{0}).
  \end{equation*}
  Let $\mu\in \cE$. It follows from this inequality that
  \begin{align*}
    \Phi(\mu)-\Phi(\mu_{0})&=\int \phi_{1}\dd\mu+\phi_{2}(-\exv[\mu]) -
\int \phi_{1}\dd\mu_{0}-\phi_{2}(-\exv[\mu_{0}]) \\
&\le \int \phi_{1}\dd(\mu-\mu_{0})- \langle \exv[\mu] - \exv[\mu_{0}],x\rangle \\
&= \int \phi_{1} \dd(\mu-\mu_{0})- \int \langle u,x\rangle  \dd(\mu-\mu_{0}) \\
&= \int \phi \dd(\mu-\mu_{0}) 
  \end{align*}
with $\phi=\phi_{1}-x$. Hence
\begin{equation}\label{eq:31}
  \Phi(\mu)\le \Phi(\mu_{0})- \int\phi\dd\mu_{0}+ \int\phi\dd\mu.
\end{equation}
Since $x$ belongs to
$\partial \phi_{2}({u_{0}})$ and $\partial \phi_{2}({u_{0}})\subset
\stab(\phi_{2})=\stab(\phi_{1})=C$, the function $\phi$ is bounded
above. By Lemma~\ref{lemm:3}, the right-hand side of \eqref{eq:31} is
upper semicontinuous. The inequality \eqref{eq:31} is an equality for
$\mu=\mu_{0}$. Hence $\Phi$ is upper semicontinuous at $\mu_{0}$, as
stated. 
\end{proof}

\begin{prop}\label{prop:1} 
The function $\Phi$ is nonpositive, and vanishes for $\mu\in \cE$ if
and only if
  \begin{equation}
    \label{eq:28}
\supp(\mu) \subset F(g_{1},g_{2})
\quad \text{ and } \quad
\exv[\mu ]\in B (g_{1},g_{2}),
  \end{equation}
  with $B(g_{1},g_{2})$ and $F(g_{1},g_{2})$ as in Definition
  \ref{def:5}.
\end{prop}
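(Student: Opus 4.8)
The plan is to reformulate $\Phi(\mu)$ in a way that manifestly exhibits it as a sum (or integral) of nonpositive quantities, so that both the sign statement and the characterization of equality follow. Recall from Lemma~\ref{lem:flatification}\eqref{item:5} that, having fixed $x\in C_{\max}$ and setting $\hphi_i(u)=g_i^{\vee}(u)-\langle x,u\rangle+g_i(x)$, the function $\hphi_i$ is nonpositive and vanishes exactly on $\partial g_i(x)$. Writing $u_{0}=-\exv[\mu]$ and using $\max_{x}(g_1+g_2)=g_1(x)+g_2(x)$ together with $\int u \dd\mu = \exv[\mu]$, one computes
\begin{equation*}
  \Phi(\mu)=\int \hphi_{1}\dd\mu + \hphi_{2}(u_{0}) + \Big(\langle x,\exv[\mu]\rangle + \langle x,u_{0}\rangle\Big) = \int \hphi_{1}\dd\mu + \hphi_{2}(u_{0}),
\end{equation*}
since $u_0=-\exv[\mu]$ makes the bracketed term vanish. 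Hence $\Phi(\mu)$ is a sum of two nonpositive terms, and so $\Phi\le 0$; moreover $\Phi(\mu)=0$ if and only if $\int \hphi_1\dd\mu=0$ and $\hphi_2(u_0)=0$ simultaneously.

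Next I would unwind these two conditions. Since $\hphi_1\le 0$ and $\hphi_1$ is continuous, $\int\hphi_1\dd\mu=0$ holds if and only if $\hphi_1$ vanishes $\mu$-a.e., equivalently $\supp(\mu)\subset\{\hphi_1=0\}=\partial g_1(x)=A_1(g_1,g_2)$ (here using that $\{\hphi_1=0\}$ is closed). In particular $\supp(\mu)$ is contained in the convex set $\partial g_1(x)$, which by Lemma~\ref{lemm:5} contains no lines; combined with the finiteness of the first moment this already guarantees $\mu\in\cE$ is consistent. The condition $\hphi_2(u_0)=0$ says $u_0=-\exv[\mu]\in\partial g_2(x)$, i.e. $\exv[\mu]\in-\partial g_2(x)$. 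Since $\exv[\mu]$ is a barycenter of $\supp(\mu)\subset\partial g_1(x)$ and $\partial g_1(x)$ is convex and closed, we also get $\exv[\mu]\in\partial g_1(x)$, hence $\exv[\mu]\in\partial g_1(x)\cap(-\partial g_2(x))=B(g_1,g_2)$. Conversely, if $\supp(\mu)\subset F(g_1,g_2)$ and $\exv[\mu]\in B(g_1,g_2)$, then since $F(g_1,g_2)\subset\partial g_1(x)=\{\hphi_1=0\}$ we get $\int\hphi_1\dd\mu=0$, and $\exv[\mu]\in B(g_1,g_2)\subset-\partial g_2(x)$ gives $\hphi_2(u_0)=0$; hence $\Phi(\mu)=0$.

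The one subtlety — and the main point requiring care — is the discrepancy between the face $F(g_1,g_2)$ appearing in the statement and the larger set $A_1(g_1,g_2)=\partial g_1(x)$ that drops out of the naive computation. The argument above shows $\Phi(\mu)=0$ forces $\supp(\mu)\subset\partial g_1(x)$ and $\exv[\mu]\in B(g_1,g_2)$; I must upgrade $\supp(\mu)\subset\partial g_1(x)$ to $\supp(\mu)\subset F(g_1,g_2)$. The key is that $F(g_1,g_2)$ is by definition a \emph{face} of $\partial g_1(x)$ containing $B(g_1,g_2)$, and $\exv[\mu]=\int u\dd\mu(u)\in B(g_1,g_2)\subset F(g_1,g_2)$ is a barycenter of $\mu$; a standard property of faces of convex sets (if a face contains the barycenter of a probability measure supported on the convex set, it contains the support of that measure — essentially the defining segment property of faces, applied along segments through $\exv[\mu]$, cf.\ \cite[Proposition~2.2.8]{BurgosPhilipponSombra:agtvmmh} or an elementary direct argument) then forces $\supp(\mu)\subset F(g_1,g_2)$. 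This last implication, properly justified, is the crux of the proof; everything else is bookkeeping with Legendre--Fenchel duality and the nonpositivity supplied by Lemma~\ref{lem:flatification}\eqref{item:5}.
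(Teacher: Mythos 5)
Your proposal is correct and follows essentially the same route as the paper: rewrite $\Phi(\mu)=\int\hphi_1\dd\mu+\hphi_2(-\exv[\mu])$, deduce nonpositivity from Lemma~\ref{lem:flatification}\eqref{item:5}, and characterize vanishing of the two nonpositive summands. The only substantive difference is at the step you yourself flag as the crux, upgrading $\supp(\mu)\subset A_1(g_1,g_2)$ to $\supp(\mu)\subset F(g_1,g_2)$: you invoke the general fact that a face containing the barycenter of a measure supported on the convex set must contain the whole support, whereas the paper proves this hands-on by splitting $\mu$ according to $\theta=\mu(F)$, forming the two conditional barycenters $u_1,u_2$, and deriving a contradiction from the segment property of faces when $\theta<1$; your citation of \cite[Proposition~2.2.8]{BurgosPhilipponSombra:agtvmmh} is not quite on target (that result characterizes sub-sup-differentials as faces), so if you keep the shortcut you should either spell out the elementary segment argument or cite a Choquet-theoretic reference. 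One minor point: you should take $x\in\ri(C_{\max})$ rather than an arbitrary $x\in C_{\max}$, since by Remark~\ref{rem:2} the identification $\{\hphi_1=0\}=A_1(g_1,g_2)$ used in your argument requires it.
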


\begin{proof} 
Let notation be as in  Lemma~\ref{lem:flatification} and for short
put
\begin{displaymath}
A_{i}=A_i(g_1, g_2), \quad B=B(g_1, g_2), \quad F=F(g_1, g_2).  
\end{displaymath}
Fix a point $x\in \ri(C_{\max})$ and let $\hphi_i$ be as in
\eqref{eq:7}. For every~$\mu\in \cE$ we can write $\Phi (\mu)$ in terms of
the functions $\hphi_i$ as
\begin{equation}\label{eq:19}
  \Phi(\mu)= 
\int \hphi_{1}\dd \mu +
  \hphi_{2}(-\exv[\mu]).
\end{equation}
By Lemma~\ref{lem:flatification}\eqref{item:5}, the
functions~$\hphi_i$ are nonpositive and vanish precisely on the
sets~$A_{i}$.  It follows from \eqref{eq:19} that
$\Phi$ is nonpositive and vanishes for every~$\mu\in \cE$
satisfying~\eqref{eq:28}.

Conversely, let~$\mu\in \cE$ such that $\Phi (\mu )=0$.  Since
both~$\hphi_1$ and~$\hphi_2$ are nonpositive, the equality \eqref{eq:19} also
implies that
\begin{displaymath}
  \int \hphi _{1}\dd \mu =0
\quad \text{ and } \quad
\hphi_{2}(-\exv[\mu ])=0.
\end{displaymath}
Therefore~$\supp(\mu) \subset A_{1}$ and $-\exv[\mu ]\in A_{2}$.
Since~$A_1$ is convex, $\exv[\mu] \in A_1$ and so
\begin{displaymath}
  \exv[\mu]\in A_{1}\cap (-A_{2})=B,
\end{displaymath}
which gives the second condition in \eqref{eq:28}.


We next prove that the first condition in  \eqref{eq:28} is satisfied. Write
$\theta=\mu (F)$, so that $0\le \theta\le 1$ and $\mu(A_{1}\setminus F)=1-\theta$.

If $\theta <1$, we put
\begin{displaymath}
  u_2 = \frac{1}{1-\theta}\int_{A_{1} \setminus F} u \dd \mu\in A_{1} \setminus F. 
\end{displaymath}
If $\theta=0$, then $\exv[\mu ]=u_{2}$ and so
$\exv[\mu ]\in A_{1}\setminus F$, contradicting the fact that
$\exv[\mu ]\in B\subset F$.  Suppose that $0<\theta <1$ and set
\begin{displaymath}
u_1 = \frac{1}{\theta}\int_{F} u\dd \mu \in F. 
\end{displaymath}
Therefore
\begin{displaymath}
  \exv[\mu]
= \theta u_{1}+(1-\theta) u_{2}\in \ri(\ov{u_{1}u_{2}}),
\end{displaymath}
the relative interior of the segment $\overline
{u_{1}u_{2}}$.  Since $\exv[\mu]$ is in~$B$ and hence in~$F$, we have
$\ri(\ov{u_{1}u_{2}})\cap F\ne\emptyset$.  Moreover, the whole segment
is contained in~$A_{1}$, and~$F$ is a face of $A_1$. We deduce that
this segment is contained in $F$. Therefore $u_{2}\in F$,
contradicting the fact that $u_{2}\in A_{1}\setminus F$. We conclude
that $\theta=1$ and so $\supp(\mu)\subset F$.  This proves the first
condition and completes the proof.
\end{proof}

The function $\Phi $ satisfies also the following property.

\begin{lem}\label{lemm:2} There are constants $c_{1}\ge 0$ and $c_{2}> 0$ such
  that, for all $\mu \in \cE$, 
  \begin{displaymath}
    \Phi (\mu )\le c_{1}-c_{2}\int \|u\|\dd \mu.
  \end{displaymath}
\end{lem}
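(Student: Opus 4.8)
The plan is to exploit the formula $\Phi(\mu) = \int \hphi_1 \dd\mu + \hphi_2(-\exv[\mu])$ from \eqref{eq:19}, together with the fact that $\hphi_1$ and $\hphi_2$ are nonpositive concave functions on $N_{\R}$ whose stability sets are the compact convex body $C$ (recall $\stab(g_i^\vee) = C$, and $\hphi_i$ differs from $g_i^\vee$ only by an affine term). The first step is to bound $\int \hphi_1 \dd\mu$ from above by a linear function of $\int \|u\| \dd\mu$. Since $\hphi_1$ is concave with stability set equal to the compact set $C$ with nonempty interior, for any $u \in N_{\R}$ one has $\hphi_1(u) = \inf_{x \in C}\bigl(\langle x,u\rangle + \text{(bounded term)}\bigr)$; picking $x = x_0 + r\,w$ where $x_0 \in \inter(C)$, $r > 0$ is a fixed radius with the ball $x_0 + r\cdot(\text{unit ball of the dual norm}) \subset C$, and $w$ is a unit dual-norm vector with $\langle w, u\rangle = -\|u\|$, one gets $\hphi_1(u) \le \langle x_0, u\rangle - r\|u\| + M$ for a constant $M$. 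This is the key geometric input: full-dimensionality of $C$ forces the Legendre dual to decay linearly in $\|u\|$ away from its zero set. Integrating against $\mu$ yields $\int \hphi_1 \dd\mu \le \langle x_0, \exv[\mu]\rangle - r\int\|u\|\dd\mu + M$.

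The second step handles the term $\hphi_2(-\exv[\mu])$. Since $\hphi_2 \le 0$ everywhere and $\hphi_2$ is concave with $\hphi_2(v) \le \langle x_0', v\rangle + M'$ for any fixed $x_0' \in \inter(C)$ by the same Legendre-dual estimate (or simply by concavity plus boundedness on a neighborhood of a maximizer), we obtain $\hphi_2(-\exv[\mu]) \le -\langle x_0', \exv[\mu]\rangle + M'$. Adding the two bounds gives
\[
\Phi(\mu) \le \langle x_0 - x_0', \exv[\mu]\rangle - r \int\|u\|\dd\mu + M + M'.
\]
Now $|\langle x_0 - x_0', \exv[\mu]\rangle| \le \|x_0 - x_0'\|_* \int \|u\|\dd\mu$ by Jensen and the definition of the dual norm. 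The cleanest way to kill this cross term is to choose $x_0 = x_0'$ from the start, i.e. fix a single point $x_0 \in \inter(C)$ and use it in both estimates; then the linear terms cancel exactly, and we are left with $\Phi(\mu) \le (M + M') - r\int\|u\|\dd\mu$, which is the claim with $c_2 = r > 0$ and $c_1 = \max(0, M+M')$.

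The main obstacle is making the linear-decay estimate $\hphi_i(u) \le \langle x_0,u\rangle - r\|u\| + M$ precise: one must verify that the "bounded term" in $\hphi_i(u) = \inf_{x\in C}(\langle x,u\rangle - g_i(x)) + \text{const}$ is genuinely bounded (uniformly in $u$, using continuity of $g_i$ on the compact $C$), and that $\inter(C)$ contains a dual-norm ball of some fixed radius $r$ around $x_0$ — this is exactly where the hypothesis that $C$ has nonempty interior is used, mirroring its role in Lemma~\ref{lemm:5}. Once the radius $r$ is fixed, evaluating the infimum at the boundary point $x_0 + r w$ of that ball (for the worst-case direction $w$) gives the bound directly. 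Everything else is a routine application of Jensen's inequality and the elementary fact that a concave function bounded above on $N_{\R}$ with compact stability set is itself globally bounded above, so I would not belabor those points.
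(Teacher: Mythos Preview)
Your proof is correct and is essentially the paper's argument: pick a point $x_0$ in the interior of $C$, use the inclusion of a ball $\mathcal{B}(x_0,r)\subset C$ to force the linear decay $g_i^{\vee}(u)\le \langle x_0,u\rangle - r\|u\|+\text{const}$, and observe that the linear terms in $x_0$ cancel between the integral term and the term evaluated at $-\exv[\mu]$. The paper packages this via the support function $\Psi(u)=\min_{y\in C}\langle y,u\rangle$ (bounding both $g_i^{\vee}$ by $\Psi+\text{const}$ and then $\Psi-x_0$ by $-c_2\|\cdot\|$), but the content is identical; one small slip in your write-up is that $\hphi_i$ differs from $g_i^{\vee}$ by an \emph{affine} function, not a constant, so $\stab(\hphi_i)=C-x_{\max}$ rather than $C$, though this does not affect the cancellation since the same shift occurs in both $\hphi_1$ and $\hphi_2$.
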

\begin{proof}
Let $\Psi$ be the support function of $C$, which is the function on $N_{\R}$ given by
 \begin{equation*} 
   \Psi(u)=\min_{y\in C}\langle u,y\rangle.
 \end{equation*}
 Put~$c_1 = 4\max_{y\in C}(|g_{1}(y)|,|g_{2}(y)|)$.  It follows from
 their definition that the functions $\phi_{i}=g_{i}^{\vee}$ verify,
 for $u\in N_{\R}$,
 \begin{equation}
   \label{eq:1}
    \max \left(\phi _{1}(u),\phi _{2}(u)\right) \le \Psi(u)+\frac{c_{1}}{4}. 
 \end{equation}
  Let $x$ be a point in the interior of $C$. On
  $M_{\R}$, we consider the norm induced by the 
  chosen norm~$\| \cdot \|$ in $N_{\R}$. Since $x$ is interior, we can find a
  constant $c_{2}>0$ such that $\mathcal{B}(x,c_{2})$, the closed ball
  of center $x$ and
  radius $c_{2}$, is contained in $C$. Then
  \begin{equation} \label{eq:50}
    \Psi (u)\le 
    \min_{y\in \mathcal{B}(x,c_{2})}\langle u,y\rangle =\langle u,x\rangle -c_{2}\|u\|.
  \end{equation}
Since $x\in C=\stab(\Psi )$, we have
  $(\Psi -x)(u)\le 0$.
By \eqref{eq:1} and \eqref{eq:50}, 
  \begin{align*}
    \Phi (\mu )&=\int \phi _{1}(u)\dd \mu + \phi _{2}(-\exv[\mu ]) +
    \max_{y\in C}(g_{1}(y)+g_{2}(y))\\
& \le c_{1}+\int \Psi  (u)\dd \mu + \Psi (-\exv[\mu ])\\ &=
    c_{1}+\int (\Psi-x)  (u)\dd \mu + (\Psi -x)(-\exv[\mu
    ])\\ 
&  \le c_{1}-c_{2}\int \|u\|\dd \mu,
  \end{align*}
as stated.
\end{proof}

\begin{prop}
\label{prop:2 bis}
Let~$(\mu_l)_{l \in I}$ be a net of measures in~$\cE$ such
that
\begin{displaymath}
\lim_{l} \Phi(\mu_l) = 0.  
\end{displaymath}
Then~$(\mu_l)_{l \in I}$ has at least one cluster point in~$\cP$, and
every such cluster point~$\mu$ lies in~$\cE$ and satisfies
  \begin{equation*}
\supp(\mu) \subset F(g_{1},g_{2})
\quad \text{ and } \quad
\exv[\mu ]\in B (g_{1},g_{2}).
\end{equation*}
\end{prop}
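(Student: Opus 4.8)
The plan is to combine the coercivity estimate of Lemma~\ref{lemm:2} with Prokhorov's theorem to get a cluster point in~$\cP$, then use Lemma~\ref{lemm:2} again to see that the cluster point has finite first moment and lies in~$\cE$, and finally invoke the upper semicontinuity of~$\Phi$ (Proposition~\ref{prop:3}) together with the characterization of the zero set of~$\Phi$ (Proposition~\ref{prop:1}) to conclude. First I would extract tightness: by Lemma~\ref{lemm:2} there are constants $c_1\ge 0$, $c_2>0$ with $\Phi(\mu_l)\le c_1-c_2\int\|u\|\dd\mu_l$, so the hypothesis $\lim_l\Phi(\mu_l)=0$ forces $\limsup_l\int\|u\|\dd\mu_l\le c_1/c_2<\infty$; in particular $\sup_l\int\|u\|\dd\mu_l=:M<\infty$. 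By Markov's inequality, for every $R>0$ and every $l$ we have $\mu_l(\{u:\|u\|\ge R\})\le M/R$, so the set $\{u:\|u\|\le R\}$ is a compact subset of~$N_{\R}$ whose complement has $\mu_l$-measure at most $M/R$ uniformly in~$l$. Hence $(\mu_l)_{l\in I}$ is tight, and by Prokhorov's theorem it is relatively compact in~$\cP$ (recall from the paragraph after Definition~\ref{def:8} that $\cP$ is metrizable and separable, so sequential and topological compactness coincide on the relevant sets). Therefore $(\mu_l)_{l\in I}$ has at least one cluster point $\mu\in\cP$.

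Next I would show $\mu\in\cE$ and bound its first moment. Fix a cluster point~$\mu$. For each $R>0$ the function $u\mapsto\min(\|u\|,R)$ is bounded and continuous, so along a subnet converging to~$\mu$ we get $\int\min(\|u\|,R)\dd\mu=\lim\int\min(\|u\|,R)\dd\mu_l\le\sup_l\int\|u\|\dd\mu_l\le c_1/c_2$. Letting $R\to\infty$ and using the monotone convergence theorem gives $\int\|u\|\dd\mu\le c_1/c_2<\infty$, so $\mu\in\cE$.

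Finally I would identify~$\mu$ as a zero of~$\Phi$. By Proposition~\ref{prop:3}, $\Phi$ is upper semicontinuous on~$\cE$; but one has to be slightly careful, since a priori upper semicontinuity along the net~$(\mu_l)$ in~$\cP$ is what we can use directly. Concretely: along a subnet $(\mu_{l'})$ with $\mu_{l'}\to\mu$ in~$\cP$, the proof of Proposition~\ref{prop:3} exhibits a function $\phi=\phi_1-x$ which is bounded above and continuous on~$N_{\R}$, such that $\Phi(\nu)\le\Phi(\mu)-\int\phi\dd\mu+\int\phi\dd\nu$ for all $\nu\in\cE$, with equality at $\nu=\mu$. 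Applying this with $\nu=\mu_{l'}$ and using Lemma~\ref{lemm:3} (upper semicontinuity of $\nu\mapsto\int\phi\dd\nu$ for $\phi$ bounded above) yields $\limsup_{l'}\Phi(\mu_{l'})\le\Phi(\mu)$. Since $\lim_l\Phi(\mu_l)=0$ we get $\Phi(\mu)\ge 0$, and by Proposition~\ref{prop:1} $\Phi$ is nonpositive, so $\Phi(\mu)=0$. Proposition~\ref{prop:1} then gives exactly $\supp(\mu)\subset F(g_1,g_2)$ and $\exv[\mu]\in B(g_1,g_2)$, as desired.

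The main obstacle I expect is the bookkeeping around nets versus sequences: the statement is phrased for nets, and while $\cP$ is metrizable (so compactness arguments work sequentially), one must be a little careful that Prokhorov's theorem and the upper-semicontinuity argument are applied to a genuine subnet converging to the chosen cluster point~$\mu$, rather than silently passing to a subsequence. Everything else — tightness via Markov, the first-moment bound via monotone convergence on truncations, and the final identification via Propositions~\ref{prop:3} and~\ref{prop:1} — is routine once the estimate of Lemma~\ref{lemm:2} is in hand.
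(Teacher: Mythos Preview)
Your proposal is correct and follows essentially the same route as the paper: bound first moments via Lemma~\ref{lemm:2}, invoke Prokhorov for a cluster point in~$\cE$, then use the upper semicontinuity of~$\Phi$ (Proposition~\ref{prop:3}) together with Proposition~\ref{prop:1}. The only slip is the line ``in particular $\sup_l\int\|u\|\dd\mu_l<\infty$'': a finite $\limsup$ does not bound the full net, so you must first pass to a cofinal tail (as the paper does by assuming $\Phi(\mu_l)\ge -1$), after which your Markov/tightness and truncation arguments go through verbatim.
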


\begin{proof}
  Replacing~$(\mu_l)_{l \in I}$ by a subnet if necessary, we assume
  that $\Phi(\mu_l) \ge -1$ for all $l\in I$.  Let $c_{1}, c_{2}$ be
  the constants of Lemma~\ref{lemm:2} and set
  $K=(c_{1}+1)/c_{2}>0$. This lemma implies that each $\mu_l$ is in
  the subset of $\cE$ given by
\begin{equation*}
\Big\{\mu\in\cE\, \Big|\ \int \|u\|\dd \mu (u) \le K\Big\}.
\end{equation*}
This subset is compact thanks to Prokhorov's theorem
\cite[Th\'eor\`eme 5.5.1]{Bourbaki:Iix}, and it is  metrizable
because $\cP$ is. Hence, the net~$(\mu_{l})_{l \in I}$ has at
least one cluster point, and every such cluster point $\mu$ lies
in~$\cE$, proving the first statement.

  To prove the last statement, let~$(\mu_{k})_{k \in I' }$ be a
  subnet converging to $\mu$.  By Proposition~\ref{prop:3},
the function $\Phi$ is upper-semicontinuous and so
$$ 
\Phi(\mu)
\ge \limsup_{k} \Phi(\mu_{k}) = 0. 
$$ 
Hence~$\Phi(\mu) = 0$, and the statement follows from
Proposition~\ref{prop:1}.
\end{proof}

As we have seen in Example~\ref{exm:1}, the function $\Phi $ is not
continuous. We now consider another
topology on $\cE$ with respect to
which the function $\Phi $ is continuous.

Given~$\mu, \mu'\in \cP$, denote by~$\Gamma(\mu, \mu')$ the set
of probability measures on~$N_{\R} \times N_{\R}$ with marginals~$\mu$
and $\mu'$. That is, a probability measure $\nu$ on~$N_{\R} \times
N_{\R}$ belongs to $\Gamma(\mu, \mu')$ if and only if
\begin{displaymath}
  (p_{1})_{\ast}\nu =\mu ,\quad (p_{2})_{\ast}\nu =\mu', 
\end{displaymath}
where $p_{i}$ is the projection of $N_{\R} \times
N_{\R}$ onto its $i$-th factor, and $(p_{i})_{*}$ the direct image of
measures. 

\begin{defn}\label{def:9}
The
\emph{Kantorovich--Rubinstein distance} (or \emph{Wasserstein distance
  of order 1}) on~$\cE$ is defined, for $\mu,\mu'\in \cE$,
by
\begin{equation*}
  W(\mu, \mu')=
\inf_{\nu \in \Gamma(\mu, \mu')} \int \| u - u' \| \dd \nu(u, u').
\end{equation*}
The \emph{Kantorovich--Rubinstein topology} (or KR-topology for short)
of $\cE$ is the topology 
induced by this distance.   
\end{defn}

For a Lipschitz continuous function~$\psi \colon N_{\R} \to \R$,
denote by~$\Lip(\psi)$ its \emph{Lipschitz constant}, given by
$$ \Lip(\psi)
=
\sup_{u\ne u'} \frac{|\psi(u) -
  \psi(u')|}{\| u - u' \|}. 
$$
Lipschitz constants and the Kantorovich-Rubinstein distance are
related by the \emph{duality formula}: for $\mu,\mu'\in \cE$ and a
Lipschitz continuous function~$\psi \colon N_{\R} \to \R$, we have
\begin{equation}
  \label{eq:48}
  \bigg| \int \psi \dd \mu-\int \psi \dd \mu'\bigg|\le \Lip(\psi) \,
  W(\mu,\mu'), 
\end{equation}
see for instance~\cite[Remark~6.5]{Villani:oton}.

\begin{rem}\label{rem:4}
By \cite[Theorem 6.9]{Villani:oton}, the
KR-topology agrees with the weak-$\ast$ topology on $\cE$
with respect to the space of continuous functions
 $\varphi\colon
N_{\R}\to \R$ such that
\begin{displaymath}
 |\varphi(u)|\le c(1+\|u\|)   
\end{displaymath}
for a $c\in \R$ and all $u\in N_{\R}$. In particular, the KR-topology
is stronger than the topology of $\cE$ induced by that of $\cP$ as in
Definition \ref{def:8}.
\end{rem}


\begin{prop}\label{prop:5}
  The function $\Phi$ is continuous with respect to
  the KR-topology. In particular, if~$(\mu_l)_{l \in I}$ is a
  net of measures in~$\cE$ that converges to a measure $\mu\in
  \cE$ with respect to the KR-topology and  
  \begin{equation*}
\supp(\mu) \subset F(g_{1},g_{2})
\quad \text{ and } \quad
\exv[\mu ]\in B (g_{1},g_{2}),
\end{equation*}
then 
\begin{math}
\lim_{l } \Phi(\mu_l) = 0.  
\end{math}
\end{prop}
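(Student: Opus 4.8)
The plan is to show that $\Phi$ is continuous on $\cE$ for the KR-topology by exhibiting it as a sum of a linear term and two integrals of Lipschitz functions, and then invoking the duality formula \eqref{eq:48}. Recall from \eqref{eq:27} that $\Phi(\mu) = \int g_1^\vee \dd\mu + g_2^\vee(-\exv[\mu]) + \max_{x\in C}(g_1(x)+g_2(x))$, where the last summand is a constant. Since the stability sets satisfy $\stab(g_i^\vee) = C$, which is compact, the Legendre–Fenchel duals $g_1^\vee, g_2^\vee \colon N_\R \to \R$ are finite concave functions; being concave and finite everywhere on $N_\R$, they are locally Lipschitz, but more importantly, because $C$ is bounded, one checks from the definition $g_i^\vee(u) = \inf_{x\in C}\langle u,x\rangle - g_i(x)$ that $g_i^\vee$ is globally Lipschitz with $\Lip(g_i^\vee) \le \sup_{x\in C}\|x\|$ (the infimum of a family of affine functions with uniformly bounded slopes). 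This is the first key point.

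Next I would handle the two pieces separately. For the term $\mu \mapsto \int g_1^\vee \dd\mu$: if $(\mu_l)_{l\in I} \to \mu$ in the KR-topology, then by \eqref{eq:48} applied to $\psi = g_1^\vee$ we get $\bigl|\int g_1^\vee \dd\mu_l - \int g_1^\vee \dd\mu\bigr| \le \Lip(g_1^\vee)\, W(\mu_l,\mu) \to 0$, so this term is KR-continuous. For the term $\mu \mapsto g_2^\vee(-\exv[\mu])$: first, the map $\mu \mapsto \exv[\mu]$ is KR-continuous, because for each coordinate functional $x\in M_\R$ the function $u \mapsto \langle u,x\rangle$ is Lipschitz with constant $\|x\|$, so \eqref{eq:48} gives $|\langle \exv[\mu_l] - \exv[\mu], x\rangle| \le \|x\|\, W(\mu_l,\mu) \to 0$, hence $\exv[\mu_l] \to \exv[\mu]$ in $N_\R$; then compose with the continuous function $g_2^\vee$ (continuity of a finite concave function on $N_\R$ is standard). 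Adding the constant term, $\Phi$ is continuous for the KR-topology.

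For the "in particular" clause: if $(\mu_l)_{l\in I}$ converges to $\mu$ in the KR-topology and $\mu$ satisfies $\supp(\mu)\subset F(g_1,g_2)$ and $\exv[\mu]\in B(g_1,g_2)$, then by Proposition~\ref{prop:1} we have $\Phi(\mu) = 0$, and by the continuity just proved $\lim_l \Phi(\mu_l) = \Phi(\mu) = 0$. The main obstacle, such as it is, is the Lipschitz estimate for $g_i^\vee$: one must be careful that it uses the compactness (boundedness) of the stability set $C$ rather than just finiteness of $g_i^\vee$, since a finite concave function on $N_\R$ need not be globally Lipschitz in general — but here the explicit formula as an infimum of affine functions with slopes ranging over the bounded set $C$ gives exactly the needed uniform bound. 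Alternatively, one can cite Remark~\ref{rem:4}: since $|g_i^\vee(u)| \le c(1+\|u\|)$ for a suitable $c$ (again using boundedness of $C$) and $u\mapsto\langle u,x\rangle$ has linear growth, both terms are continuous for the weak-$\ast$ topology with respect to functions of at most linear growth, which by that remark is the KR-topology.
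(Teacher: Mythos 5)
Your proof is correct. The paper's own argument is shorter: it simply notes that $g_1^\vee$ has at most linear growth (since $\stab(g_1^\vee)=C$ is compact) and that $u\mapsto\langle u,x\rangle$ is linear, then invokes Remark~\ref{rem:4} (the characterization of KR-convergence as weak-$\ast$ convergence against continuous functions of at most linear growth) to conclude both terms converge, with the second statement then following from Proposition~\ref{prop:1} exactly as you say. Your primary route — establishing that $g_i^\vee$ is globally Lipschitz with constant $\sup_{x\in C}\|x\|$ as an infimum of uniformly Lipschitz affine functions, then applying the duality formula~\eqref{eq:48} directly — is a hands-on version of the same idea, and your care to justify why global Lipschitzness holds (the boundedness of $C$, not mere finiteness of $g_i^\vee$) is exactly the right point to stress; you also flag the paper's Remark~\ref{rem:4} route as an alternative, so there is no real divergence in approach.
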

\begin{proof}
  Let $(\mu_l)_{l \in I}$ be a net on $\cE$ that converges to a
  measure $\mu\in \cE $ with respect to the KR-topology. By
  Remark~\ref{rem:4},
  \begin{displaymath}
    \lim_{l}\int g_{1}^{\vee}\dd \mu _{l}=
      \int g_{1}^{\vee}\dd \mu \and
    \lim_{l} g_{2}^{\vee}(-E[\mu
    _{l}])=g_{2}^{\vee}(-E[\mu]). 
  \end{displaymath}
  Therefore $\lim_{l} \Phi (\mu _{l})=\Phi (\mu)$ and so 
  $\Phi $ is continuous, proving the first statement. The second
  statement follows from the first one and Proposition~\ref{prop:1}.    
\end{proof}

We also need the following easy result, that we include here for
the lack of a suitable reference.

\begin{lem}\label{lemm:4}
  Let $E_i\subset N_{\R}$, $i=1,\dots,r$, be  convex subsets  and $E=E_1+\dots
  +E_r$ their Minkowski
  sum. For a point $u_{0}\in E$, the  following conditions are
  equivalent:
  \begin{enumerate}
  \item \label{item:10} the point $u_{0}$ is a vertex of $E$;
  \item \label{item:11} the equation $u_{0}=\sum_{i} z_i$ with $z_i\in
    E_i$ has a unique solution and, for $i=1,\dots,r$, the point
    $z_i$ in this solution is a vertex of $E_i$.
  \end{enumerate}
\end{lem}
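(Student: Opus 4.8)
\textbf{Plan for the proof of Lemma~\ref{lemm:4}.}
The plan is to prove the two implications separately, using in an essential way the fact that vertices of a convex set admit a clean characterization via strictly supporting functionals. Recall that a point $u_{0}$ of a convex set $E\subset N_{\R}$ is a vertex if and only if there is a linear functional $x\in M_{\R}$ such that $\langle x,u\rangle > \langle x,u_{0}\rangle$ for every $u\in E$ with $u\ne u_{0}$; equivalently, $u_{0}$ is the unique point of $E$ where the linear function $u\mapsto \langle x,u\rangle$ attains its minimum over $E$. I will use this characterization throughout, together with the elementary fact that for the Minkowski sum $E = E_{1}+\dots+E_{r}$ one has
\begin{displaymath}
  \min_{u\in E}\langle x,u\rangle = \sum_{i=1}^{r}\min_{z_{i}\in E_{i}}\langle x,z_{i}\rangle,
\end{displaymath}
and a point $u_{0}=\sum_{i}z_{i}$ (with $z_{i}\in E_{i}$) realizes the minimum on the left if and only if each $z_{i}$ realizes the corresponding minimum on the right.

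\textbf{Proof that \eqref{item:10} implies \eqref{item:11}.}
Suppose $u_{0}$ is a vertex of $E$, and pick $x\in M_{\R}$ with $\langle x,\cdot\rangle$ minimized over $E$ uniquely at $u_{0}$. For $i=1,\dots,r$, let $G_{i}\subset E_{i}$ be the face of $E_{i}$ on which $\langle x,\cdot\rangle$ attains its minimum over $E_{i}$; then $G_{1}+\dots+G_{r}$ is exactly the set of minimizers of $\langle x,\cdot\rangle$ over $E$, which by hypothesis is the single point $\{u_{0}\}$. Since a Minkowski sum of nonempty convex sets is a single point only if each summand is a single point, each $G_{i}$ is a singleton $\{z_{i}\}$, and $u_{0}=\sum_{i}z_{i}$. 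Each $z_{i}$ is then the unique minimizer of $\langle x,\cdot\rangle$ over $E_{i}$, hence a vertex of $E_{i}$. Finally, uniqueness of the decomposition: if $u_{0}=\sum_{i}z_{i}'$ with $z_{i}'\in E_{i}$, then $\sum_{i}\langle x,z_{i}'\rangle = \langle x,u_{0}\rangle = \sum_{i}\min_{E_{i}}\langle x,\cdot\rangle$, and since each term $\langle x,z_{i}'\rangle \ge \min_{E_{i}}\langle x,\cdot\rangle$, equality forces $\langle x,z_{i}'\rangle = \min_{E_{i}}\langle x,\cdot\rangle$ for all $i$, so $z_{i}'=z_{i}$ by uniqueness of the minimizer on $E_{i}$.

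\textbf{Proof that \eqref{item:11} implies \eqref{item:10}, and the main obstacle.}
For each $i$ choose $x_{i}\in M_{\R}$ witnessing that $z_{i}$ is a vertex of $E_{i}$, i.e.\ $\langle x_{i},z\rangle > \langle x_{i},z_{i}\rangle$ for all $z\in E_{i}\setminus\{z_{i}\}$. The natural candidate functional for $u_{0}$ is $x = x_{1}+\dots+x_{r}$: for any $u=\sum_{i}z_{i}'\in E$ one has $\langle x,u\rangle = \sum_{i}\langle x_{i},z_{i}'\rangle \ge \sum_{i}\langle x_{i},z_{i}\rangle = \langle x,u_{0}\rangle$, with equality only if $\langle x_{i},z_{i}'\rangle = \langle x_{i},z_{i}\rangle$ for every $i$. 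This is where the only real subtlety lies: $x_{i}$ strictly separates $z_{i}$ within $E_{i}$, so $\langle x_{i},z_{i}'\rangle = \langle x_{i},z_{i}\rangle$ already forces $z_{i}'=z_{i}$, hence $u=u_{0}$ — so the minimum is attained uniquely at $u_{0}$, proving $u_{0}$ is a vertex. The uniqueness-of-decomposition clause of \eqref{item:11} is needed precisely to guarantee that "$\langle x,u\rangle = \langle x,u_{0}\rangle \Rightarrow u = u_{0}$" rather than merely "$u$ has $u_{0}$ as one of its decompositions". I expect the main obstacle to be bookkeeping with the strictness of the inequalities — making sure that a sum of functionals each strict on its own summand yields a functional strict on the whole sum — which the argument above handles by tracking the equality case term by term. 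One should also note at the start that all $E_{i}$ are nonempty by the standing convention on convex sets, so that the Minkowski sum and the minima above are all well defined.
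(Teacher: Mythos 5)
Your proof rests, in both directions, on the claim stated in the opening paragraph that ``$u_{0}$ is a vertex of $E$ if and only if there is $x\in M_{\R}$ with $\langle x,u\rangle > \langle x,u_{0}\rangle$ for every $u\in E\setminus\{u_{0}\}$.'' That is the definition of an \emph{exposed} point, not of a vertex. In the conventions of this paper (see the definition of face quoted just before Lemma~\ref{lem:flatification}), a vertex is a zero-dimensional face, equivalently an \emph{extreme} point: a $u_{0}$ that cannot be written as $\tfrac12(u_{1}+u_{2})$ with $u_{1}\neq u_{2}$ both in $E$. Exposed implies extreme but not conversely, even for compact convex sets: if $E=\conv\bigl(\{u\in\R^{2}:\|u\|\le 1\}\cup\{(1,1)\}\bigr)$, then $(1,0)$ is extreme but not exposed, since the only supporting line at $(1,0)$ is $\{u_{1}=1\}$, which also contains the whole segment from $(1,0)$ to $(1,1)$. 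Taking $E_{1}=E$ and $E_{2}=\{0\}$ then gives a situation where condition~\eqref{item:11} holds with the paper's meaning of ``vertex'' but your argument cannot even get started, as there is no exposing functional for $z_{1}=(1,0)$. So the ``Recall'' is false and both implications, as you have written them, are not established.

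There is also an independent algebraic slip in the direction \eqref{item:11}~$\Rightarrow$~\eqref{item:10}: with $x=x_{1}+\dots+x_{r}$ and $u=\sum_{j}z_{j}'$ you write $\langle x,u\rangle=\sum_{i}\langle x_{i},z_{i}'\rangle$, but in fact $\langle x,u\rangle=\sum_{i,j}\langle x_{i},z_{j}'\rangle$, and the cross terms $i\neq j$ have no definite sign, so the inequality $\langle x,u\rangle\ge\langle x,u_{0}\rangle$ need not hold. The paper's proof sidesteps both problems by working directly from the midpoint description of an extreme point and never invoking a separating functional: for \eqref{item:10}$\Rightarrow$\eqref{item:11}, two distinct decompositions of $u_{0}$ (or a non-extreme $z_{i_{0}}$) are used to manufacture two distinct points of $E$ with midpoint $u_{0}$; for \eqref{item:11}$\Rightarrow$\eqref{item:10}, a hypothetical $u_{0}=\tfrac12(u_{1}+u_{2})$ is split componentwise, uniqueness forces $z_{i}=\tfrac12(z_{i}'+z_{i}'')$, and extremality of $z_{i}$ gives $z_{i}'=z_{i}''$, hence $u_{1}=u_{2}$. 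I would rework the argument along those lines.
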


\begin{proof}
  First assume that $u_{0}$ is a vertex of $E$. Suppose that the
  equation $u_{0}=\sum_{i} z_i$, $z_i\in E_i$, has two different
  solutions, namely $u_{0}=\sum_{i} z_i'$ and $u_{0}=\sum_{i} z_i''$
  with $z_{i_{0}}'\not =z_{i_0}''$ for some $i_{0}\in \{1,\dots,
  r\}$. Then both points
  \begin{displaymath}
    u_1=\sum_{i\not=i_0} z_i'+z_{i_0}'' \quad \text{and } \quad  u_2=\sum_{i\not=i_0}
    z_i''+z_{i_0}'  
  \end{displaymath}
  belong to $E$, they are different and satisfy
  $u_{0}=\frac{1}{2}(u_1+u_2)$, contradicting the fact that $u_{0}$ is
  a vertex of $E$. Hence the equation $u_{0}=\sum_{i} z_i$
  has a unique solution with $z_i\in E_i$.

  Now suppose that $z_{i_0}$ is not a vertex of $E_{i_0}$ for some
  $i_{0}\in \{1,\dots, r\}$. Then we can write
  $z_{i_0}=\frac{1}{2}(z'_{i_0}+z''_{i_0})$ with
  $z'_{i_0}\not=z''_{i_0}$ both in $E_{i_0}$. Hence the points
  \begin{displaymath}
    u_1=\sum_{i\not=i_0} z_i+z_{i_0}' \quad \text{and } \quad  u_2=\sum_{i\not=i_0}
    z_i+z_{i_0}''
  \end{displaymath}
  are different, belong to $E$ and $u_{0}=\frac{1}{2}(u_1+u_2)$,
  contradicting the assumption that $u_{0}$ is a vertex of $E$. Thus
  we have proved that \eqref{item:10} implies \eqref{item:11}.

  Assume now that the statement \eqref{item:11} is true but $u_{0}$ is not a
  vertex of $E$. Then there are two
  different points $u_1,u_2\in E$ with $u_{0}=\frac{1}{2}(u_1+u_2)$. Since
  $E$ is the Minkowski sum of the sets $E_i$, we can write
  \begin{displaymath}
    u_{0}=\sum_{i}z_i,\quad u_1=\sum_{i} z'_i  \and
    u_2=\sum_{i}
    z_{i}''.    
  \end{displaymath}
The equation $u_{0}=\sum_{i} z_i$ has a unique solution and so
$z_i=\frac{1}{2}(z_i'+z_i'')$ for all $i$. Since $z_i$ is a
  vertex of $E_i$, this implies $z_i'=z_i''$. Therefore $u_1=u_2$,
  contradicting the assumptions and thus proving that \eqref{item:11}
  implies \eqref{item:10}.  
\end{proof}

\section{Modulus distribution}
\label{sec:modul-distr-toric}

In this section, we study the asymptotic modulus distribution of the
Galois orbits of nets of points of small height in toric
varieties. Our approach is based on the techniques developed in the
series of papers
\cite{BurgosPhilipponSombra:agtvmmh,BurgosMoriwakiPhilipponSombra:aptv,BurgosPhilipponSombra:smthf}. These
techniques are well-suited for the study of toric metrics and their
associated height functions. In the sequel, we recall the basic
constructions and results.

Let $\K$ be a global field and $\T\simeq \G_{\textrm{m},\K}^{n}$ a split torus of
dimension $n$ over $\K$. Let
\begin{displaymath}
N=\Hom(\G_{m,\K},\T) \and M=\Hom(\T,\G_{m,\K})=N^{\vee}  
\end{displaymath}
be the lattices of cocharacters and of characters of $\T$,
respectively, and write $N_{\R}=N\otimes \R$ and $M_{\R}=M\otimes
\R$. 
We also fix an auxiliary norm $\|\cdot\|$ on $N_{\R}$. 

Let $v\in \fM_{\K}$. We denote by $\T^{\an}_{v}$ the $v$-adic
analytification of $\T$ and by $\SS_{v}$ its compact subtorus. In the
Archimedean case, $\SS_{v}$ is isomorphic to~$(S^{1})^{n}$ whereas, in the
non-Archimedean case, it is a compact analytic group, see \cite
[\S~4.2]{BurgosPhilipponSombra:agtvmmh} for a description.  Moreover,
there is a map $\val_{v}\colon \T^{\an}_{v}\to N_{\R}$, defined, in a
given splitting, by
  \begin{equation}\label{eq:44}
    \val_{v}(x_{1},\dots,x_{n})=(-\log|x_{1}|_{v},\dots,-\log|x_{n}|_{v}).
  \end{equation}
  This map does not depend on the choice of the splitting, and the
  compact torus $\SS_{v}$ coincides with its fiber over the point $0\in N_{\R}$.

Let $X$ be a proper toric variety over $\K$ with torus $\T$, described
by a complete fan $\Sigma$ on $N_{\R}$. 
To each cone
$\sigma \in \Sigma $ corresponds an affine toric variety $X_{\sigma
}$, which is an open subset of $X$, and an orbit $O(\sigma )$ of the
action of $\T$ on $X$.  The affine toric variety corresponding to the
cone $\sigma =\{0\}$ is the \emph{principal open subset} $X_{0}$. It
coincides with the orbit~$O(0)$ and  is canonically isomorphic to
the torus $\T$.

An $\R$-divisor $D$ on $X$ is \emph{toric} if it is invariant under
the action of $\T$. Such an $\R$-divisor defines a \emph{virtual
  support function} on $\Sigma$, that is a function 
\begin{displaymath}
\Psi_{D}\colon
N_{\R}\longrightarrow \R   
\end{displaymath}
whose restriction to each cone of the fan~$\Sigma$ is
linear. We  also associate to $D$ the subset of $M_{\R}$ given by 
\begin{displaymath}
 \Delta_{D}=\stab(\Psi_{D})=\{x\in M_{\R}\mid x\ge \Psi_{D}\}. 
\end{displaymath}
If $D$ is pseudo-effective, then $\Delta_{D}$ is a polytope and, otherwise,
it is the empty set.  Properties of the $\R$-divisor $D$ can
be read off from its associated virtual support function and polytope. In
particular, $D$ is nef if and only if $\Psi_{D}$ is concave, and $D$
is {big} if and only if $\Delta _{D}$ has nonempty interior.

  A quasi-algebraic metrized divisor $\ov
  D=(D,(\|\cdot\|_{v})_{v\in\fM_{\K}})$ on $X$ is \emph{toric} if and
  only if the $v$-adic metric $\|\cdot\|_{v}$ is invariant with
  respect to the action of~$\SS_{v}$, for all~$v$.  Such a toric
  metrized $\R$-divisor on $X$ defines a family of continuous
  functions $\psi_{\ov D,v}\colon N_{\R}\to \R$ indexed by the places
  of $\K$. For each $v\in \mathfrak{M}_{\K}$, this function is given,
  for~$p\in \T^{\an}_{v}$, by
\begin{equation}\label{eq:45}
  \psi_{\ov D,v}(\val_{v}(p))= \log \|s_{D}(p)\|_{v},
\end{equation}
where $s_{D}$ is the canonical rational $\R$-section of $D$ as in
\cite[\S~3]{BurgosMoriwakiPhilipponSombra:aptv}.  This adelic family
of functions satisfies that $|\psi_{\ov D,v}-\Psi_{D}|$ is bounded for
all $v$, and that $\psi_{\ov D,v}=\Psi_{D}$ for all $v$ except for a
finite number.  In particular, the stability set of each $\psi_{\ov
  D,v}$ coincides with $\Delta_{D}$.

For each $v\in \mathfrak{M}_{\K}$, we also consider the \emph{$v$-adic
  roof function} $ \vartheta_{\ov D,v}\colon \Delta_{D}\to \R$, that
is given by
\begin{displaymath}
  \vartheta_{\ov D,v}(x)= \psi_{\ov D,v}^{\vee}(x)= \inf_{u\in
    N_{\R}}(\langle u,x\rangle
  -\psi_{\ov D,v}(u)).
\end{displaymath}
This is an adelic family of continuous concave functions on
$\Delta_{D}$ which are zero except for a finite number of places. The
\emph{global roof function} $\vartheta_{\ov D}\colon \Delta_{D}\to \R$
is the weighted sum
\begin{displaymath}
  \vartheta_{\ov D}=\sum_{v\in\mathfrak{M}_{\K}}n_{v}\vartheta_{\ov D,v}.
\end{displaymath}

The essential minimum of $X$ with respect to $\ov D$ defined in
\eqref{eq:14} can be computed
as the maximum of its roof function \cite[Theorem
1.1]{BurgosPhilipponSombra:smthf}, that is
\begin{equation}
  \label{eq:10}
  \upmu^{\ess}_{\ov D}(X)= \max_{x\in \Delta_{D}}\vartheta_{\ov
    D}(x). 
\end{equation}

\begin{exmpl}\label{exm:6} Let $X$ be a proper toric variety over $\K$
  and $D$ a toric $\R$-divisor on~$X$.  The \emph{canonical metric}
  on $D$ is the metric characterized by the fact that, for each $v\in
  \mathfrak{M}_{\K}$ and $p\in
  \T^{\an}_{v}$, 
\begin{displaymath}
\log \|s_{D}(p)\|_{\can,v}=  \Psi_{D}(\val_{v}(p)), 
\end{displaymath}
see \cite[Proposition-Definition
4.3.15]{BurgosPhilipponSombra:agtvmmh}.  We denote this toric metrized
$\R$-divisor by $\ov D^{\can}$. For all $v\in \fM_{\K}$, 
\begin{displaymath}
\psi_{\ov
  D^{\can},v}=\Psi_{D} \and \vartheta_{\ov D^{\can},v}=0.
\end{displaymath}
In particular,  $\vartheta_{\ov D^{\can}}=0$ and $\upmu^{\ess}_{\ov
  D^{\can}}(X)= 0$. 
\end{exmpl}

Given a semipositive toric metrized $\R$-divisor $\ov D$ over $D$, its
associated metric functions are concave.  Conversely, every adelic
family of concave continuous functions $\psi_{v}\colon N_{\R}\to \R$,
${v\in \fM_{K}}$, 
with $|\psi_{v}-\Psi_{D}|$ bounded for all $v$ and such that
$\psi_{\ov D,v}=\Psi_{D}$ for all $v$ except for a finite number,
corresponds to a semipositive toric metrized $\R$-divisor over $D$
\cite[Proposition~4.19(1)]{BurgosMoriwakiPhilipponSombra:aptv}.  For
instance, a canonical toric metrized $\R$-divisor $\ov D^{\can}$ is
semipositive if and only if $\Psi_{D}$ is concave, which is equivalent
to the condition that $D$ is nef.

For the rest of this section, we suppose that  $X$ is a proper
toric variety over the global field $\K$ with torus $\T$, and that $\ov D$ is a
semipositive toric metrized $\R$-divisor with $D$ big.

We also fix the notation below. Recall from \S
\ref{sec:auxil-results-conv} that $\cP$ denotes the space of
probability measures on $N_{\R}$ endowed with the weak-$\ast$ topology
with respect to the space $\Cb(N_{\R})$, and that $\cE$ denotes the
subspace of probability measures with finite first moment.

\begin{notn}
  \label{def:14}
Let~$v\in \mathfrak{M}_{\K}$. We denote by $g_{i,v}$, $i=1,2$,  the
concave functions on~$\Delta_{D}$ given by
\begin{equation*}
g_{1,v} = \vartheta _{\ov D,v} \and
g_{2,v} =\sum_{w\in \fM_{\K}\setminus\{ v\}} \frac{n_{w}}{n_{v}}\vartheta _{\ov D,w}.
\end{equation*}
Thus $\vartheta _{\ov D}=n_{v}(g_{1,v}+g_{2,v})$.
We consider the convex subsets of $N_{\R}$ given by Definition~\ref{def:5}
\begin{equation}
  \label{eq:9}
   B_v = B(g_{1,v}, g_{2,v}),\  F_v = F(g_{1,v}, g_{2,v}) \and 
A_v = A_1(g_{1,v}, g_{2,v}).  
\end{equation}
Recall that $F_{v}$ is the minimal face of $A_v$ containing $B_{v}$.
We also denote by  $\Phi _{v}$  the function on
$\cE$ obtained by applying Definition \eqref{eq:27} to the set
$C=\Delta _{D}$ and the functions $g_{i,v}$, $i=1,2$.
\end{notn}

Given $v\in \fM_{\K}$ and a point~$p\in X(\ov{\K})$,
we
consider  the discrete probability measure on $N_{\R}$ defined by
\begin{equation*}
\nu_{p,v} =
(\val_{v})_* \mu_{p,v},
\end{equation*}
where~$\mu _{p,v}$ is the uniform discrete probability measure
on~$X_{{v}}^{\an}$ supported on the~$v$-adic Galois orbit of $p$ as
in~\eqref{eq:4}. This probability measure on $N_{\R}$ gives the modulus distribution of the $v$-adic Galois orbit of
the point $p$. The next result characterizes the limit behavior of this modulus
distribution for nets of points of small height.  


\begin{thm}
\label{thm:1 bis}
Let notation and hypothesis be as above. For each 
$v\in \mathfrak{M}_{\K}$ and  every $\ov{D}$-small
net~$(p_{l})_{l \in I}$ of algebraic points in the principal open
subset~$X_{0}$, the net~$(\nu_{p_l,v})_{l \in I}$ of measures in
$\cP$ has at least one cluster point. Every
such cluster point~$\nu_{v}$ lies in~$\cE$ and satisfies
\begin{equation}
  \label{eq:33}
  \supp(\nu_{v}) \subset F_v
\quad \text{ and } \quad
\exv[\nu_{v}] \in B_v.
\end{equation}
\end{thm}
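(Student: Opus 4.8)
The plan is to deduce the statement from Proposition~\ref{prop:2 bis}. Since $D$ is big, the polytope $\Delta_D$ has nonempty interior, so the results of \S\ref{sec:auxil-results-conv} apply with $C=\Delta_D$ and the functions $g_{1,v},g_{2,v}$ of Notation~\ref{def:14}. Each measure $\nu_{p_l,v}$ has finite support, hence lies in $\cE$, so by Proposition~\ref{prop:2 bis} it suffices to show that
\[
\lim_l \Phi_v(\nu_{p_l,v}) = 0 .
\]
By Proposition~\ref{prop:1} we already have $\Phi_v\le 0$, and since the net is $\ov D$-small we have $\upmu^{\ess}_{\ov D}(X)-\h_{\ov D}(p_l)\to 0$. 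Hence, since $n_v>0$, a squeeze reduces everything to the pointwise inequality
\[
n_v\,\Phi_v(\nu_{p,v}) \ \ge\ \upmu^{\ess}_{\ov D}(X)-\h_{\ov D}(p)
\qquad\text{for every algebraic point } p \text{ of } X_0 .
\]

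To prove this inequality I would first translate its ingredients through the toric dictionary. Since $p\in X_0$, the canonical rational $\R$-section $s_D$ of $D$ does not vanish along the Galois orbits $\Gal(p)_w$; combining Proposition~\ref{prop:12} with the identity $\log\|s_D(q)\|_w=\psi_{\ov D,w}(\val_w(q))$ from \eqref{eq:45} gives $\h_{\ov D}(p)=-\sum_{w\in\fM_\K}n_w\int\psi_{\ov D,w}\dd\nu_{p,w}$. Running the same computation as in the proof of Proposition~\ref{prop:12}, but now with the coordinate characters of $\T$ in place of $\log\|s(\cdot)\|$, and invoking the product formula (Proposition~\ref{prop:15}\eqref{item:20}), one obtains the crucial constraint $\sum_{w\in\fM_\K}n_w\,\exv[\nu_{p,w}]=0$. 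Finally, by \eqref{eq:10} the constant term of $\Phi_v$ satisfies $n_v\max_{x\in\Delta_D}(g_{1,v}(x)+g_{2,v}(x))=\max_{x\in\Delta_D}\vartheta_{\ov D}(x)=\upmu^{\ess}_{\ov D}(X)$, while $g_{1,v}^\vee=\psi_{\ov D,v}^{\vee\vee}=\psi_{\ov D,v}$ by biduality of the Legendre--Fenchel transform (the metric function $\psi_{\ov D,v}$ being finite, concave and continuous on $N_\R$). Thus
\[
n_v\,\Phi_v(\nu_{p,v}) = n_v\!\int\!\psi_{\ov D,v}\dd\nu_{p,v} + n_v\,g_{2,v}^\vee(-\exv[\nu_{p,v}]) + \upmu^{\ess}_{\ov D}(X) .
\]

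The heart of the argument — and the step I expect to be the main obstacle — is then the lower bound
\[
n_v\,g_{2,v}^\vee\bigl(-\exv[\nu_{p,v}]\bigr)\ \ge\ \sum_{w\in\fM_\K\setminus\{v\}} n_w\!\int\!\psi_{\ov D,w}\dd\nu_{p,w} .
\]
I would prove it by fixing an arbitrary $x\in\Delta_D$ and chaining three elementary estimates: Jensen's inequality for the concave function $\psi_{\ov D,w}$, giving $\int\psi_{\ov D,w}\dd\nu_{p,w}\le\psi_{\ov D,w}(\exv[\nu_{p,w}])$; the defining inequality of $\vartheta_{\ov D,w}=\psi_{\ov D,w}^\vee$, giving $\psi_{\ov D,w}(\exv[\nu_{p,w}])\le\langle\exv[\nu_{p,w}],x\rangle-\vartheta_{\ov D,w}(x)$; and then summing over $w\ne v$ with the weights $n_w$. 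The linear terms collapse to $\langle-n_v\exv[\nu_{p,v}],x\rangle$ by the product-formula constraint, and the $\vartheta$-terms sum to $n_v\,g_{2,v}(x)$ by the definition of $g_{2,v}$ in Notation~\ref{def:14}; since the left-hand side does not depend on $x$, taking the infimum over $x\in\Delta_D$ yields the right-hand side $n_v\,g_{2,v}^\vee(-\exv[\nu_{p,v}])$. Substituting this bound into the displayed expression for $n_v\Phi_v(\nu_{p,v})$ and using $\sum_{w\in\fM_\K} n_w\int\psi_{\ov D,w}\dd\nu_{p,w}=-\h_{\ov D}(p)$ gives $n_v\Phi_v(\nu_{p,v})\ge\upmu^{\ess}_{\ov D}(X)-\h_{\ov D}(p)$, which together with the first paragraph finishes the proof; Proposition~\ref{prop:2 bis} then also delivers the cluster point, its membership in $\cE$, and the inclusions \eqref{eq:33}.
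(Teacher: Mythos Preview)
Your proof is correct and follows essentially the same path as the paper's: both reduce the theorem to Proposition~\ref{prop:2 bis} by establishing the key inequality $-n_v\Phi_v(\nu_{p,v})\le \h_{\ov D}(p)-\upmu^{\ess}_{\ov D}(X)$ and then using the $\ov D$-smallness to force $\Phi_v(\nu_{p_l,v})\to 0$. The paper packages this inequality as part of Lemma~\ref{lemm:1 bis}, proved by introducing the shifted functions $\hphi_{i,v}$ and invoking the sup-convolution identity $\hphi_{2,v}=\boxplus_{w\ne v}(\hphi_{1,w}\frac{n_w}{n_v})$ from \cite[\S2.3]{BurgosPhilipponSombra:agtvmmh}; your derivation is a slightly more elementary unwinding of the same computation, using directly the Young inequality $\psi_{\ov D,w}(u)\le\langle u,x\rangle-\vartheta_{\ov D,w}(x)$ and taking the infimum over $x$ at the end, which avoids the auxiliary $\hphi$ functions and the appeal to sup-convolution.
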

The proof of Theorem~\ref{thm:1 bis} is given below, after a
definition and an auxiliary result.  

\begin{defn}
  \label{def:6}
  A \emph{centered adelic measure} $\bfnu$ on $N_{\R}$ is a collection of
  measures $\nu_{v}\in
  \cE$, $v\in \fM_{\K}$, such that $\nu_{v}=\delta_{0}$, the Dirac measure
 at the point $0\in N_{\R}$, for all but a finite number of places
  $v$, and such that
\begin{equation} \label{eq:46}
\sum_{v\in \fM_{\K}}
  n_v \exv[\nu_v] = 0.
\end{equation}
We denote by~$\cH_{\K}$ the set of all centered adelic measures on
$N_{\R}$. 
\end{defn}


We introduce the function~$\eta_{\ov{D}} \colon
\cH_{\K} \to \R$ defined by
  \begin{equation} \label{eq:36}
\eta_{\ov{D}} (\bfnu) =
- \sum_{v\in \fM_{\K}} n_v \int \psi_{\ov{D}, v} \dd \nu_v.  
  \end{equation}
  This function extends the notion of height of
  points to the space $\cH_{\K}$. Indeed, for $p\in X_{0}(\ov \K)$,
  the collection 
  \begin{equation}
    \label{eq:17}
    \bfnu_{p}=(\nu_{p,v})_{v\in \fM_{\K}}
  \end{equation}
  is a centered adelic measure on $N_{\R}$, because of the product
  formula in Proposition \ref{prop:15}\eqref{item:20}. Moreover, the
  canonical $\R$-section $s_{D}$ does not vanish at $p$ and, by
  Proposition~\ref{prop:12} and \eqref{eq:45},
\begin{multline}
  \label{eq:12}
     \h_{\ov D}(p)
 =
- \sum_{v} \frac{n_{v}}{\# \Gal(p)_{v}} \sum_{q\in
  \Gal(p)_{v}} \psi_{\ov{D},v}(\val_{v}(q))
 \\=
- \sum_{v} n_{v} \int \psi_{\ov{D},v} \dd \nu_{p,v} =\eta_{\ov
  D}(\bfnu_{p}).
\end{multline}


\begin{lem}
\label{lemm:1 bis}
For every centered adelic measure~$\bfnu=(\nu_{v})_{v\in \fM_{\K}}$,
\begin{equation}\label{eq:43}
\max_{v\in \fM_{\K}} - n_{v}\Phi_{v}(\nu_{v})
\le
\eta_{\ov{D}}(\bfnu) - \upmu^{\ess}_{\ov D}(X)
\le
 \sum_{v\in \fM_{\K}} -n_v \Phi_v(\nu_{v}).
\end{equation}
In particular, for~$p \in X_{0}(\ov{\K})$,
\begin{equation}\label{eq:47}
\max_{v\in \fM_{\K}} - n_{v}\Phi_{v}(\nu_{p,v})
\le
\h_{\ov{D}}(p) - \upmu^{\ess}_{\ov D}(X)
\le
 \sum_{v\in \fM_{\K}} -n_v \Phi_v(\nu_{p,v}).
\end{equation}
\end{lem}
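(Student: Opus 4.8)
The plan is to make the functional $\Phi_v$ fully explicit via the Legendre--Fenchel duality of the toric dictionary, and then derive both inequalities in \eqref{eq:43} from the Fenchel--Young inequality together with the centered condition $\sum_{v}n_v\exv[\nu_v]=0$. The second assertion \eqref{eq:47} is then immediate: it is \eqref{eq:43} applied to the centered adelic measure $\bfnu_p=(\nu_{p,v})_{v\in\fM_{\K}}$, together with the identity $\eta_{\ov D}(\bfnu_p)=\h_{\ov D}(p)$ from \eqref{eq:12}.

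First I would record two elementary identities. Since $\ov D$ is semipositive, $\psi_{\ov D,v}$ is a continuous concave function on $N_{\R}$, hence coincides with its biconjugate by Legendre--Fenchel duality (see \cite[\S 2]{BurgosPhilipponSombra:agtvmmh}), so $g_{1,v}^{\vee}=(\vartheta_{\ov D,v})^{\vee}=(\psi_{\ov D,v}^{\vee})^{\vee}=\psi_{\ov D,v}$; and from $\vartheta_{\ov D}=n_v(g_{1,v}+g_{2,v})$ and \eqref{eq:10} one gets $\max_{\Delta_D}(g_{1,v}+g_{2,v})=\frac{1}{n_v}\upmu^{\ess}_{\ov D}(X)$. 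Substituting both into the definition \eqref{eq:27} of $\Phi_v$ gives
\[
n_v\Phi_v(\nu_v)=n_v\int\psi_{\ov D,v}\dd\nu_v+n_v\,g_{2,v}^{\vee}(-\exv[\nu_v])+\upmu^{\ess}_{\ov D}(X).
\]
Next, the Fenchel--Young inequality $\psi_{\ov D,v}(u)+\vartheta_{\ov D,v}(x)\le\langle u,x\rangle$, valid for all $u\in N_{\R}$ and $x\in\Delta_D$, integrated against $\nu_v\in\cE$ (legitimate since $\psi_{\ov D,v}$ has at most linear growth and $\nu_v$ has finite first moment) shows that
\[
\epsilon_v(x)\coloneqq\langle\exv[\nu_v],x\rangle-\int\psi_{\ov D,v}\dd\nu_v-\vartheta_{\ov D,v}(x)\ \ge\ 0\qquad(x\in\Delta_D).
\]

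For the right-hand inequality of \eqref{eq:43}, I would sum the definition of $\epsilon_v(x)$ over $v$ with weights $n_v$; using $\sum_v n_v\exv[\nu_v]=0$ and $\sum_v n_v\vartheta_{\ov D,v}=\vartheta_{\ov D}$ the linear terms cancel, yielding the identity $\eta_{\ov D}(\bfnu)=\vartheta_{\ov D}(x)+\sum_v n_v\epsilon_v(x)$ for every $x\in\Delta_D$. Choosing $x_{\max}\in\Delta_D$ with $\vartheta_{\ov D}(x_{\max})=\upmu^{\ess}_{\ov D}(X)$ (which exists by \eqref{eq:10}) gives $\eta_{\ov D}(\bfnu)-\upmu^{\ess}_{\ov D}(X)=\sum_v n_v\epsilon_v(x_{\max})$. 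On the other hand, evaluating at $x_{\max}$ the infimum defining $g_{2,v}^{\vee}$ gives $g_{2,v}^{\vee}(-\exv[\nu_v])\le\langle-\exv[\nu_v],x_{\max}\rangle-g_{2,v}(x_{\max})$, and since $g_{2,v}(x_{\max})=\frac{1}{n_v}\upmu^{\ess}_{\ov D}(X)-\vartheta_{\ov D,v}(x_{\max})$, the displayed formula for $n_v\Phi_v(\nu_v)$ collapses to $n_v\Phi_v(\nu_v)\le-n_v\epsilon_v(x_{\max})$. Summing over $v$ gives $\sum_v-n_v\Phi_v(\nu_v)\ge\sum_v n_v\epsilon_v(x_{\max})=\eta_{\ov D}(\bfnu)-\upmu^{\ess}_{\ov D}(X)$.

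For the left-hand inequality, fix $v_0\in\fM_{\K}$; by the displayed formula for $n_{v_0}\Phi_{v_0}(\nu_{v_0})$ and the definition of $\eta_{\ov D}(\bfnu)$, the bound $-n_{v_0}\Phi_{v_0}(\nu_{v_0})\le\eta_{\ov D}(\bfnu)-\upmu^{\ess}_{\ov D}(X)$ is equivalent to $n_{v_0}\,g_{2,v_0}^{\vee}(-\exv[\nu_{v_0}])\ge\sum_{w\ne v_0}n_w\int\psi_{\ov D,w}\dd\nu_w$. Using $n_{v_0}g_{2,v_0}=\sum_{w\ne v_0}n_w\vartheta_{\ov D,w}$ and the centered condition in the form $-n_{v_0}\exv[\nu_{v_0}]=\sum_{w\ne v_0}n_w\exv[\nu_w]$, for every $x\in\Delta_D$ one has
\[
n_{v_0}\bigl(\langle-\exv[\nu_{v_0}],x\rangle-g_{2,v_0}(x)\bigr)=\sum_{w\ne v_0}n_w\bigl(\langle\exv[\nu_w],x\rangle-\vartheta_{\ov D,w}(x)\bigr)\ \ge\ \sum_{w\ne v_0}n_w\int\psi_{\ov D,w}\dd\nu_w
\]
by the integrated Fenchel--Young inequality, and taking the infimum over $x$ on the left yields the claim. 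As $v_0$ is arbitrary, this gives $\max_v(-n_v\Phi_v(\nu_v))\le\eta_{\ov D}(\bfnu)-\upmu^{\ess}_{\ov D}(X)$. The only point requiring care is to invoke $\sum_v n_v\exv[\nu_v]=0$ at exactly the right moments — so that the weighted linear terms cancel globally (upper bound) or telescope onto the single place $v_0$ (lower bound) — and to use the same maximizer $x_{\max}$ for all $v$ in the upper bound; there is no analytic difficulty, since every $\int\psi_{\ov D,v}\dd\nu_v$ is finite for $\nu_v\in\cE$.
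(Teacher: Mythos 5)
Your proof is correct. The upper bound in \eqref{eq:43} is obtained by the same mechanism as the paper's (your $\epsilon_v(x_{\max})$ is exactly $-\int\hphi_{1,v}\dd\nu_v$ from eq.~\eqref{eq:38}, and your inequality $n_v\Phi_v(\nu_v)\le -n_v\epsilon_v(x_{\max})$ is the paper's $\hphi_{2,v}(-\exv[\nu_v])\le 0$), but the lower bound is where you depart genuinely from the paper's route. The paper introduces the normalized functions $\hphi_{i,v}$, invokes the sup-convolution identity $\hphi_{2,v}=\boxplus_{w\ne v}(\hphi_{1,w}\,n_w/n_v)$ from the toric dictionary, and combines it with Jensen's inequality $\int\hphi_{1,w}\dd\nu_w\le\hphi_{1,w}(\exv[\nu_w])$. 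You instead keep the objects $\psi_{\ov D,v}$ and $\vartheta_{\ov D,v}$ unnormalized, use the integrated Fenchel--Young inequality at each place $w\ne v_0$, sum with weights $n_w$, and observe that the centered condition lets the left-hand side collapse to $n_{v_0}(\langle-\exv[\nu_{v_0}],x\rangle - g_{2,v_0}(x))$; taking the infimum over $x\in\Delta_D$ then delivers the Legendre dual $g_{2,v_0}^{\vee}$. This is a more elementary argument: it avoids the sup-convolution calculus entirely and never appeals to Jensen, reducing everything to the definition of $g^{\vee}$ as an infimum plus the pointwise (then integrated) Fenchel--Young inequality. The remark that $g_{1,v}^{\vee}=\psi_{\ov D,v}$ via biconjugation of a finite-valued continuous concave function, and the remark about integrability of $\psi_{\ov D,v}$ against $\nu_v\in\cE$, are both needed and correctly supplied.
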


\begin{proof}
  Let $\Delta_{D,\max}$ be the set of points maximizing the roof
  function $\vartheta_{\ov{D}}$ and choose $x\in
  \Delta_{D,\max}$. For each $v\in \fM_{\K}$,
  let~$\hphi_{i,v}\colon N_{\R} \to \R$, $i=1,2$, be the function
  defined by
  \begin{equation*}
\hphi_{i,v}(u) =g_{i,v}^{\vee}(u) - \langle x,u \rangle +g_{i,v}(x),
  \end{equation*}
where~$g_{i,v}$ denotes the concave function on $\Delta_{D}$
in~Notation~\ref{def:14} and $g_{i,v}^{\vee}$ its Legendre dual
as in \eqref{eq:11}.

Note that $\psi_{\ov{D},v} = g_{1,v}^{\vee}$. Using \eqref{eq:46} and
\eqref{eq:10}, we deduce that
\begin{equation*}
- \sum_{v} n_{v} \int \psi_{\ov{D},v} \dd \nu_{v}
=
\vartheta_{\ov{D}}(x) - \sum_{v} n_{v} \int \hphi_{1,v} \dd \nu_{v} =
\upmu^{\ess}_{\ov D}(X) - \sum_{v} n_{v} \int \hphi_{1,v} \dd \nu_{v}.
\end{equation*}
Thus
\begin{equation}
  \label{eq:38}
\eta_{\ov D}(\bfnu)  - \upmu^{\ess}_{\ov D}(X) =- \sum_{v} n_{v} \int \hphi_{1,v} \dd \nu_{v}.
\end{equation}

For each $v\in \fM_{\K}$, we get from the definition of~$\Phi_v$ that
$$ 
\Phi_v(\nu_{v})
=
\int \hphi_{1, v} \dd \nu_{v} + \hphi_{2, v}( - \exv[\nu_{v}]). 
$$
By Lemma~\ref{lem:flatification}\eqref{item:5}, the functions
$\hphi_{i,v}$ are nonpositive and so
\begin{equation}
  \label{eq:40}
\Phi_v(\nu_{v})
\le 
\int \hphi_{1, v} \dd \nu_{v}.
\end{equation}
The second inequality in \eqref{eq:43} then follows from \eqref{eq:38}
and~\eqref{eq:40}.

To prove the first inequality in  \eqref{eq:43}, fix~$v\in \fM_{\K}$.
By~\cite[ Propositions~2.3.1(1)
and~2.3.3(3)]{BurgosPhilipponSombra:agtvmmh},
\begin{equation}
  \label{eq:41}
\hphi_{2, v}=\boxplus_{w\ne v} \Big( \hphi_{1,w} \frac{n_{w}}{n_{v}}
\Big),  
\end{equation}
where $w$ runs over the places of
$\K$ different from $v$, the symbol~$\boxplus$ denotes the sup-convolution and, for a concave
function~$\psi$ and a nonzero constant~$\lambda$, the expression~$\psi
\lambda$ denotes the right multiplication as in~\cite[\S
2.3]{BurgosPhilipponSombra:agtvmmh}.

By the equality \eqref{eq:41}, the definitions of the sup-convolution
and the right
multiplication, and condition \eqref{eq:46}, we deduce
\begin{equation}
  \label{eq:42}
\hphi_{2, v}(-\exv[\nu_{v}]) \ge 
\sum_{w\ne v} \frac{n_{w}}{n_{v}} \hphi_{1,w} ( \exv [
  \nu_{w}]) .  
\end{equation}
By the concavity of  $\hphi_{1,w}$, we have  $\int \hphi_{1,w} \dd
\nu_{w}\le \hphi_{1,w} (\exv(\nu_{w}))$ for all $w\in \fM_{\K}$. 
Therefore, by  \eqref{eq:38} and
\eqref{eq:42}, 
\begin{multline*}
\eta_{\ov D}(\bfnu) - \upmu^{\ess}_{\ov D}(X) 
 \ge
- n_{v} \bigg( \int \hphi_{1,v} \dd \nu_{v}
+ \sum_{w\ne v} \frac{n_{w}}{n_{v}}\hphi_{1,w} (\exv(\nu_{w})) \bigg) 
\\ 
\ge
- n_{v} \bigg( \int \hphi_{1,v} \dd \nu_{v} + \hphi_{2,v} ( -
  \exv [\nu_{v}]) \bigg)
 =- n_{v} \Phi_{v}(\nu_{v}),
\end{multline*}
which proves the first inequality and completes the proof of
\eqref{eq:43}.  The inequalities in \eqref{eq:47} follow directly from
\eqref{eq:43} and \eqref{eq:12}.
\end{proof}

\begin{proof}[Proof of Theorem~\ref{thm:1 bis}]
  Let~$v\in \fM_{\K}$ and~$\Phi_v\colon \cE\to \R$ the function
  defined by~\eqref{eq:27} with $g_{1,v}$ and $g_{2,v}$ as
  in Notation~\ref{def:14}.  Since the net of points
  $(p_{l})_{l\in I}$ is $\ov D$-small,
\begin{displaymath}
  \lim_{l} \h_{\ov D}(p_{l})=\upmu^{\ess}_{\ov D}(X).
\end{displaymath}
From Lemma~\ref{lemm:1 bis}, we deduce
that
\begin{displaymath}
\lim_{l}\Phi_{v}(\nu_{p_{l},v})=0.  
\end{displaymath}
The theorem is then a direct consequence of Proposition~\ref{prop:2
  bis}.
\end{proof}

To state a partial converse of Theorem~\ref{thm:1 bis}, we need a
further definition. 

\begin{defn} \label{def:11}
The 
\emph{adelic Kantorovich--Rubinstein distance}~$W_{\K}$ on $\cH_{\K}$
is defined, for 
$\bfnu=(\nu_{v})_{v},\bfnu'=(\nu_{v}')_{v}\in \cH_{\K}$, by
$$ W_{\K}(\bfnu,\bfnu') =
\sum_{v} n_v W(\nu_v, \nu_v'),
$$
where $W$ denotes the Kantorovich--Rubinstein distance in $N_{\R}$ as
in Definition~\ref{def:9}.  By the definition of $\cH_{\K}$, 
there is only a finite number of nonzero terms in this sum.

The topology on $\cH_{\K}$ induced by this distance is called the
\emph{adelic KR-topology}.
\end{defn}

\begin{thm}\label{thm:1} 
With notation and hypothesis as before,
let $\bfnu=(\nu _{v})_{v\in
      \fM_{\K}}\in \cH_{\K}$ be a  centered adelic measure such that
    \begin{displaymath}
      \supp(\nu_{v})\subset F_{v}\and \exv[\nu_{v}]\in B_{v}
    \end{displaymath}
    for all $v$. Then
    there is a generic $\ov{D}$-small net~$(p_{l})_{l \in I}$ of
    algebraic points of~$X_{0}$ such that the net of measures
    $(\bfnu_{p_{l}})_{l\in I}$ converges to $\bfnu$ with respect to
    the adelic Kantorovich-Rubinstein distance.
\end{thm}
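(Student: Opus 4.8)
The plan is to produce the net by realizing suitable finitely supported approximations of $\bfnu$ through the $v$-adic Galois orbits of explicitly built algebraic points, the smallness of the net being a formal consequence of Lemma~\ref{lemm:1 bis}. First I would reduce to the case where each $\nu_v$ has finite support. Since $\nu_v\in\cE$ is carried by the closed convex set $F_v$, which contains no line (Lemma~\ref{lemm:5}), and has finite first moment, one truncates $\nu_v$ to a large compact subset of $F_v$ and replaces the restriction of the truncation to each cell of a fine partition by a Dirac mass at the corresponding barycenter, which still lies in $F_v$ by convexity; after translating one component by a small vector to restore the centering condition $\sum_v n_v\exv[\nu_v]=0$ of Definition~\ref{def:6}, one obtains for each $\delta>0$ a centered adelic measure $\tilde\bfnu=(\tilde\nu_v)_v$ with each $\tilde\nu_v$ finitely supported, with $\tilde\nu_v=\delta_0$ outside the finite set $S=\{v\mid\nu_v\ne\delta_0\}$, and with $W_\K(\tilde\bfnu,\bfnu)<\delta$. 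By Proposition~\ref{prop:1} we have $\Phi_v(\nu_v)=0$ for every $v$, so the KR-continuity of $\Phi_v$ (Proposition~\ref{prop:5}) makes $\Phi_v(\tilde\nu_v)$ as small as we wish for $v\in S$; and for $v\notin S$ the relations $\supp(\nu_v)\subset F_v$ and $\exv[\nu_v]=0\in B_v$ give $0\in B_v\cap F_v$, hence $\Phi_v(\delta_0)=0$ again by Proposition~\ref{prop:1}.

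The core step is: given such a finitely supported centered adelic measure $\tilde\bfnu$, a proper closed subset $Y\subsetneq X$, and $\varepsilon>0$, to produce a point $p\in X_0(\ov\K)\setminus Y$ with $W_\K(\bfnu_p,\tilde\bfnu)<\varepsilon$. Writing $\tilde\nu_v=\tfrac1d\sum_{j=1}^d\delta_{u_{v,j}}$ with a common denominator $d$, and perturbing the $u_{v,j}$ slightly (harmless by KR-continuity of $\Phi_v$) so that $u_{v,j}\in\val_v(\T(\K))\otimes\Q$, I would choose a finite Galois extension $\F/\K$ in which every $v\in S$ has exactly $d$ places and is ramified enough that $\val_w(\F^\times)$ contains the needed values for $w\mid v$ — the local behavior at the finitely many places of $S$ being prescribable by class field theory, or by an explicit radical extension — and then build a point $p=(\alpha_1,\dots,\alpha_n)\in\T(\F)$ with $\K(p)=\F$ such that, for each $v\in S$, the $d$ vectors $(\val_w(\alpha_1),\dots,\val_w(\alpha_n))$, $w\mid v$, are exactly the $u_{v,j}$, and such that every $\alpha_i$ is a unit at all places of $\F$ lying above no place of $S$. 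Exhibiting such $\alpha_i$ amounts to writing a prescribed divisor of $\F$ supported above $S$ as a principal divisor; the obstruction lives in the finite ideal class group of $\F$ and is disposed of either by passing to the Hilbert class field of $\F$, chosen so as not to disturb the splitting at $S$, or by allowing the $\alpha_i$ a small valuation at one auxiliary place, made negligible by taking that place highly ramified in $\F$; the Archimedean sizes are matched up to small error in the same way, using $S$-units of $\F$, and a generic choice of the $\alpha_i$ in their valuation classes ensures $\K(p)=\F$. With $p$ so built, the computation recalled just before Theorem~\ref{thm:1 bis} gives $\nu_{p,v}=\sum_{w\mid v}\tfrac{n_w}{n_v}\,\delta_{\val_w(p)}$, which equals $\tilde\nu_v$ for $v\in S$ and $\delta_0$ outside the finitely many auxiliary places, where it is KR-close to $\delta_0$; hence $W_\K(\bfnu_p,\tilde\bfnu)<\varepsilon$.

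For genericity, note that $\nu_{p\zeta,v}=\nu_{p,v}$ for every torsion point $\zeta\in\T(\ov\K)_{\tors}$ and every $v$, because the Galois conjugates of $\zeta$ are roots of unity, hence units everywhere, and $\val_v$ is additive; since $\T(\ov\K)_{\tors}$ is Zariski dense in $\T=X_0$, the translate $p\cdot\T(\ov\K)_{\tors}$ is not contained in $Y$, and replacing $p$ by a suitable $p\zeta$ one lands in $X_0(\ov\K)\setminus Y$ without changing $\bfnu_p$. Indexing by the directed set of pairs $(Y,\varepsilon)$ — $Y$ ordered by inclusion, $\varepsilon$ decreasing — and taking for $p_{(Y,\varepsilon)}$ the point obtained from a $\tilde\bfnu$ with $W_\K(\tilde\bfnu,\bfnu)<\varepsilon/2$ realized within $\varepsilon/2$, one gets a generic net $(p_l)_{l\in I}$ with $\bfnu_{p_l}\to\bfnu$ in the adelic KR-topology. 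It is $\ov D$-small because, by Lemma~\ref{lemm:1 bis},
\[
0\ \le\ \h_{\ov D}(p_l)-\upmu^{\ess}_{\ov D}(X)\ \le\ \sum_{v}-n_v\,\Phi_v(\nu_{p_l,v}),
\]
a finite sum each term of which tends to $0$, by the KR-continuity of $\Phi_v$ together with $\Phi_v(\nu_v)=0$ for $v\in S$ and $\Phi_v(\delta_0)=0$ at the remaining active places. The step I expect to be the main obstacle is the realization paragraph: building one algebraic point whose $v$-adic Galois orbits carry the prescribed valuation distributions at every place of $S$ while keeping it integral and unramified away from $S$, which is where the class group obstruction and a careful choice of the auxiliary extension $\F$ come in.
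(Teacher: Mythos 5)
Your outline runs close to the paper's own: discretize each $\nu_v$ to finitely supported centered measures, work in a degree-$d$ extension $\F/\K$ in which the finitely many active places split completely, produce algebraic points in $\T(\F)$ whose valuation data tracks the discretization, translate by torsion points for genericity, and deduce smallness from Lemma~\ref{lemm:1 bis}. The genuine gap — which you flag yourself as ``the main obstacle'' — is the realization step. You try to hit the prescribed valuations $u_{v,j}$ \emph{exactly}, which forces you to exhibit an element of $\F$ with a prescribed divisor; that puts you up against the ideal class group and against ramification bookkeeping, and neither workaround you sketch is in good shape. Passing to the Hilbert class field $H$ of $\F$ kills the class group obstruction, but $H/\F$ has further splitting above $S$ dictated by the Artin map, not at your disposal; then $\nu_{p,v}=\sum_{w\mid v}\tfrac{n_w}{n_v}\delta_{\val_w(p)}$ no longer reproduces the uniform average $\tfrac{1}{d}\sum_j\delta_{u_{v,j}}$ you targeted, and the discretization parameter $d$ would have to be re-matched to the splitting of $H$, a circular difficulty. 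The auxiliary-highly-ramified-place idea is plausible but left at the sketch level. The paper avoids the problem entirely: after fixing the completely split $\F$, it does \emph{not} ask for exact realization. It invokes the density of $\val_\F\bigl(\T(\F)\otimes\Q\bigr)$ in the hyperplane $H_\F$ for the $L^1$-topology (\cite[Lemma 2.3]{BurgosPhilipponSombra:smthf}, essentially Dirichlet's unit theorem plus the freedom of rational powers $\alpha^r$), picks $\alpha,r$ so that $\val_\F(\alpha^r)$ is $L^1$-close to the target vector, and bounds $W_\K(\bfnu',\bfnu_{\alpha^r})$ by that $L^1$-distance via an explicit coupling. Approximate realization suffices because $\eta_{\ov D}$ is $W_\K$-Lipschitz; the class group never enters.

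Two secondary points. First, keeping the discretized $\tilde\nu_v$ supported inside $F_v$ is unnecessary overhead, and your ``translate one component to restore centering'' step can actually push the support out of $F_v$. All you need of $\tilde\bfnu$ is that it be finitely supported, centered, and $W_\K$-close to $\bfnu$; smallness of the net then follows from the $W_\K$-Lipschitz continuity of $\eta_{\ov D}$ and the equality $\eta_{\ov D}(\bfnu)=\upmu^{\ess}_{\ov D}(X)$, both furnished by Lemma~\ref{lem:measure minimum} together with the hypotheses $\supp(\nu_v)\subset F_v$, $\exv[\nu_v]\in B_v$. Second, if you take the finitely supported approximation of each $\nu_v$ with the \emph{same} expected value (Villani's Theorem~6.18 permits this, and the paper does exactly that), the centering constraint is automatically preserved and there is nothing to restore.
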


The proof of Theorem~\ref{thm:1} is given below, after some preliminary
results. 
The first result gives the main properties of the function $\eta_{\ov
  D}$. 

\begin{lem}
\label{lem:measure minimum}
The function~$\eta_{\ov{D}}$ is Lipschitz continuous with respect to~$W_{\K}$.
Moreover, for all $\bfnu=(\nu_{v})_{v\in \fM_{\K}}\in \cH_{\K}$,  
\begin{equation}
  \label{eq:8}
\eta_{\ov{D}}(\bfnu)
\ge
\upmu^{\ess}_{\ov D}(X),
\end{equation}
with equality if and only if $ \supp(\nu_{v}) \subset F_v$ and
$\exv[\nu_{v}] \in B_v$ for all~$v$.
\end{lem}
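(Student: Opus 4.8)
The plan is to deduce the whole statement from Lemma~\ref{lemm:1 bis}, from the nonpositivity of the functionals $\Phi_v$ proved in Proposition~\ref{prop:1}, and from a uniform Lipschitz bound for the metric functions $\psi_{\ov D,v}$.

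\emph{Lipschitz continuity.} Since $D$ is big, the polytope $\Delta_D$ is bounded, and for each $v$ the metric function $\psi_{\ov D,v}$ is a real-valued concave function on $N_\R$ with stability set $\Delta_D$. By the convex analysis recalled in \cite[\S 2]{BurgosPhilipponSombra:agtvmmh}, every element of the supdifferential $\partial\psi_{\ov D,v}(u)$ lies in $\stab(\psi_{\ov D,v})=\Delta_D$; the supgradient inequality at two points then shows that $\psi_{\ov D,v}$ is Lipschitz with constant at most $R:=\sup_{x\in\Delta_D}\|x\|$, a bound independent of $v$ (for the cofinitely many places with $\psi_{\ov D,v}=\Psi_D$ the same bound applies, since $\Psi_D$ has slopes in $\stab(\Psi_D)=\Delta_D$). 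Given $\bfnu=(\nu_v)_v$ and $\bfnu'=(\nu_v')_v$ in $\cH_{\K}$, the duality formula \eqref{eq:48} gives $\bigl|\int\psi_{\ov D,v}\dd\nu_v-\int\psi_{\ov D,v}\dd\nu_v'\bigr|\le R\,W(\nu_v,\nu_v')$ for every $v$; multiplying by $n_v$ and summing over $v$ (only finitely many terms are nonzero, as $\nu_v=\nu_v'=\delta_0$ for all but finitely many $v$) one obtains $|\eta_{\ov D}(\bfnu)-\eta_{\ov D}(\bfnu')|\le R\,W_{\K}(\bfnu,\bfnu')$, so $\eta_{\ov D}$ is $R$-Lipschitz with respect to $W_{\K}$.

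\emph{The inequality \eqref{eq:8} and the equality case.} By Proposition~\ref{prop:1} every $\Phi_v$ is nonpositive, so $-n_v\Phi_v(\nu_v)\ge 0$ for all $v$. The left-hand inequality in \eqref{eq:43} then gives $\eta_{\ov D}(\bfnu)-\upmu^{\ess}_{\ov D}(X)\ge 0$, which is \eqref{eq:8}. If equality holds here, then the left-hand inequality in \eqref{eq:43} also forces $\max_v\bigl(-n_v\Phi_v(\nu_v)\bigr)\le 0$; since each such term is $\ge 0$ and $n_v>0$, this means $\Phi_v(\nu_v)=0$ for all $v$, and Proposition~\ref{prop:1} then yields $\supp(\nu_v)\subset F_v$ and $\exv[\nu_v]\in B_v$ for every $v$. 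Conversely, if these two containments hold for all $v$, then Proposition~\ref{prop:1} gives $\Phi_v(\nu_v)=0$ for all $v$, whence the right-hand inequality in \eqref{eq:43} gives $\eta_{\ov D}(\bfnu)-\upmu^{\ess}_{\ov D}(X)\le 0$; combined with \eqref{eq:8} this yields equality.

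\emph{Expected main obstacle.} Given Lemma~\ref{lemm:1 bis} and Proposition~\ref{prop:1}, the second part is purely formal. The only real point is the uniform Lipschitz estimate for the functions $\psi_{\ov D,v}$, and specifically the verification that the Lipschitz constant can be chosen independently of $v$ — this is exactly where the boundedness of $\Delta_D$, i.e.\ the bigness of $D$, enters.
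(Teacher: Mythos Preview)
Your proof is correct and follows essentially the same route as the paper: apply the duality formula with the uniform Lipschitz bound $\Lip(\psi_{\ov D,v})\le\max_{x\in\Delta_D}\|x\|$ for the first part, then combine the nonpositivity of $\Phi_v$ from Proposition~\ref{prop:1} with both inequalities of Lemma~\ref{lemm:1 bis} for the second. The only cosmetic difference is that the paper simply asserts $\Lip(\psi_{\ov D,v})=\max_{x\in\Delta_D}\|x\|$, whereas you supply the underlying reason via $\partial\psi_{\ov D,v}\subset\stab(\psi_{\ov D,v})=\Delta_D$; your more detailed unpacking of the equality case using separately the left and right halves of~\eqref{eq:43} is also a faithful expansion of what the paper compresses into one sentence.
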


\begin{proof}
  Let $S\subset\fM_{\K}$ be a finite subset such that $\psi_{\ov
    D,v}=\Psi_{D}$ for all $v\notin
  S$. For~$\bfnu=(\nu_{v})_{v},\bfnu'=(\nu'_{v})_{v}\in \cH_{\K}$,
\begin{multline*}
| \eta_{\ov{D}} (\bfnu)- \eta_{\ov{D}} (\bfnu') |
 \le
\sum_v n_v \left| \int \psi_{\ov{D}, v} \dd \nu_v
- \int \psi_{\ov{D}, v} \dd \nu_v' \right|
\\ \le
\sum_v \Lip ( \psi_{\ov{D}, v} ) n_v W(\nu_v, \nu_v')
 \le \Big(\max_{x\in \Delta _{D}} \|x\| \Big) 
W_{\K} (\bfnu,\bfnu').
\end{multline*}
where the second inequality is given by the duality formula
\eqref{eq:48} and the last by the observation that $\Lip (
\psi_{\ov{D}, v})=\max_{x\in \Delta _{D}} \|x\|$ for all  $v$. This proves that 
$\eta_{\ov{D}}$ is Lipschitz continuous with respect to~$W_{\K}$.

As already remarked, the functions $\Phi_{v}$ are nonpositive.  By
Lemma~\ref{lemm:1 bis}, this implies the inequality~\eqref{eq:8}.
From the same result, it follows that the equality holds if and only
if~$\Phi_v(\nu_v) = 0$ for all $v$.  By
Proposition~\ref{prop:1}, this holds if and only if $ \supp(\nu_{v})
\subset F_v$ and $\exv[\nu_{v}] \in B_v$, completing the proof of
the lemma.
\end{proof}

From this lemma, we deduce as a direct consequence the next
characterization of algebraic points in toric varieties realizing the
essential minimum.

\begin{cor}\label{cor:3}
Let $p$ be an algebraic point of $X_{0}$. Then $\h_{\ov
  D}(p)=\upmu^{\ess}_{\ov D}(X)$ if and only if $
\supp(\nu_{p,v})\subset F_{v}$ and $\exv[\nu_{p,v}]\in B_{v}$ 
for all $v\in \fM_{\K}$.
\end{cor}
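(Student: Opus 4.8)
The plan is to deduce Corollary~\ref{cor:3} directly from Lemma~\ref{lem:measure minimum}, by specializing the centered adelic measure appearing there to the one attached to the point $p$.

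First I would recall, as already noted around \eqref{eq:17}, that for an algebraic point $p$ of $X_{0}$ the collection $\bfnu_{p}=(\nu_{p,v})_{v\in\fM_{\K}}$ is a centered adelic measure on $N_{\R}$, i.e.\ $\bfnu_{p}\in\cH_{\K}$. Indeed, each $\nu_{p,v}$ is a finitely supported discrete probability measure, hence lies in $\cE$; one has $\nu_{p,v}=\delta_{0}$ for all but finitely many $v$, because the canonical $\R$-section $s_{D}$ does not vanish at $p$ and $\psi_{\ov D,v}=\Psi_{D}$ off a finite set of places, so $\val_{v}$ is supported at $0$ there; and the centering condition $\sum_{v}n_{v}\exv[\nu_{p,v}]=0$ is exactly the product formula of Proposition~\ref{prop:15}\eqref{item:20} applied to the coordinates of $p$ in a splitting of $\T$. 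Next, the computation \eqref{eq:12}, which combines Proposition~\ref{prop:12} with the identity \eqref{eq:45} relating the $v$-adic metric of $\ov D$ to its metric function $\psi_{\ov D,v}$, gives the key equality $\h_{\ov D}(p)=\eta_{\ov D}(\bfnu_{p})$.

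It then remains to apply Lemma~\ref{lem:measure minimum} to the centered adelic measure $\bfnu_{p}$: that lemma yields $\eta_{\ov D}(\bfnu_{p})\ge\upmu^{\ess}_{\ov D}(X)$, with equality if and only if $\supp(\nu_{p,v})\subset F_{v}$ and $\exv[\nu_{p,v}]\in B_{v}$ for every $v\in\fM_{\K}$. Combining this equivalence with $\h_{\ov D}(p)=\eta_{\ov D}(\bfnu_{p})$ proves the corollary.

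As for the main obstacle: there is essentially none, since the substance of the statement is already contained in Lemma~\ref{lem:measure minimum} (and, behind it, in Lemma~\ref{lemm:1 bis} and Proposition~\ref{prop:1}). The only point deserving a line of justification is that $\bfnu_{p}$ genuinely lies in $\cH_{\K}$ — in particular the finiteness of the exceptional set of places where $\nu_{p,v}\neq\delta_{0}$ — but this is routine given the already recorded properties of the metric functions $\psi_{\ov D,v}$ together with the product formula.
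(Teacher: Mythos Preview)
Your approach is correct and matches the paper's, which simply states that the corollary is a direct consequence of Lemma~\ref{lem:measure minimum}. One small correction: the reason that $\nu_{p,v}=\delta_{0}$ for all but finitely many $v$ has nothing to do with the metric functions $\psi_{\ov D,v}$; it follows because $p\in\T(\ov\K)$ has algebraic coordinates in any splitting, and an algebraic number is a $v$-adic unit for all but finitely many places, so $\val_{v}(q)=0$ for every $q\in\Gal(p)_{v}$ at those places.
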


Let $H_{\K}\subset \bigoplus_{v\in
  \fM_{\K}} N_{\R}$ be the subspace defined by the equation
$\sum_{v}n_{v} u_{v}=0$. By sending the point $(u_{v})_{v}\in H_{\K}$ to the
adelic centered measure $(\delta _{u_{v}})_{v}\in \cH_{\K}$, we
identify $H_{\K}$ with a subspace of $\cH_{\K}$. 

\begin{cor}\label{cor:4} The minimum of the function $\eta_{\ov D}$ is
  equal to~$\upmu^{\ess}_{\ov D}(X)$ and it is attained at a point of the subspace $H_{\K}\subset \cH_{\K}$.
\end{cor}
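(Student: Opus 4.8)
The plan is to combine the lower bound $\eta_{\ov D}(\bfnu) \ge \upmu^{\ess}_{\ov D}(X)$ from Lemma~\ref{lem:measure minimum} with the construction of an explicit minimizer lying in $H_{\K}$. First I would exhibit a centered adelic measure $\bfnu = (\nu_v)_v$ at which equality is attained in \eqref{eq:8}; by Lemma~\ref{lem:measure minimum}, equality holds precisely when $\supp(\nu_v)\subset F_v$ and $\exv[\nu_v]\in B_v$ for all $v$. So it suffices to produce such a $\bfnu$ of the special form $\nu_v = \delta_{u_v}$ with $(u_v)_v \in H_{\K}$, i.e.\ with $\sum_v n_v u_v = 0$; note that a Dirac measure $\delta_{u_v}$ automatically has finite first moment and equals $\delta_0$ for all but finitely many $v$ as soon as the $u_v$ vanish outside a finite set, so the resulting collection genuinely lies in $\cH_{\K}$.

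The key point is to choose, for each $v$, a point $u_v \in F_v$ in a coherent way so that $\exv[\delta_{u_v}] = u_v \in B_v$ and $\sum_v n_v u_v = 0$. Here I would invoke the convex-analytic description from \S\ref{sec:auxil-results-conv}: by Definition~\ref{def:5} and Notation~\ref{def:14}, $B_v = \partial g_{1,v}(x)\cap(-\partial g_{2,v}(x))$ for a maximizer $x\in \Delta_{D,\max}$ of $\vartheta_{\ov D}$, and by Lemma~\ref{lem:flatification}\eqref{item:4} this intersection is nonempty and independent of the choice of $x\in \Delta_{D,\max}$. Pick any $u_v\in B_v$ for each $v$ in the finite set $S$ of places where $\vartheta_{\ov D,v}\ne 0$ and for all other places note that $g_{1,w}=g_{2,w}=0$ forces $0\in B_w\subset F_w$, so one may take $u_w=0$ there. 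The constraint $\sum_v n_v u_v = 0$ is the one subtle matter: writing $\partial g_{1,v}(x) = \partial\vartheta_{\ov D,v}(x)$ and $-\partial g_{2,v}(x) = \partial\big(\sum_{w\ne v}\tfrac{n_w}{n_v}\vartheta_{\ov D,w}\big)(x)$ via \cite[Prop.~2.3.6(2)]{BurgosPhilipponSombra:agtvmmh}, one has that $u_v\in B_v$ means exactly that $u_v\in \partial\vartheta_{\ov D,v}(x)$ and $-\sum_{w\ne v} n_w u_w \in n_v\,\partial g_{2,v}(x) = -\partial(\sum_{w\ne v} n_w\vartheta_{\ov D,w})(x)$; since $0\in \partial\vartheta_{\ov D}(x)=\partial\big(\sum_w n_w\vartheta_{\ov D,w}\big)(x) = \sum_w n_w\,\partial\vartheta_{\ov D,w}(x)$ (the Minkowski sum, by the same proposition), one can decompose $0 = \sum_w n_w u_w$ with each $u_w\in \partial\vartheta_{\ov D,w}(x)$, and this decomposition simultaneously witnesses $u_v\in B_v$ for every $v$. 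Thus $(u_v)_v\in H_{\K}$ and $\bfnu=(\delta_{u_v})_v$ realizes equality in \eqref{eq:8}.

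The main obstacle is precisely this last compatibility check: one must verify that a single decomposition of $0$ in the Minkowski sum $\sum_w n_w\,\partial\vartheta_{\ov D,w}(x)$ yields, for \emph{each} $v$ simultaneously, membership in $B_v = \partial g_{1,v}(x)\cap(-\partial g_{2,v}(x))$. This follows because $-\partial g_{2,v}(x)$ is (up to the scaling by $n_v$) the Minkowski sum of the $n_w\,\partial\vartheta_{\ov D,w}(x)$ over $w\ne v$, so $u_v\in \partial\vartheta_{\ov D,v}(x)$ together with $\sum_{w\ne v} n_w u_w \in \sum_{w\ne v} n_w\,\partial\vartheta_{\ov D,w}(x)$ is exactly the statement $u_v\in B_v$. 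Once this is in place, Lemma~\ref{lem:measure minimum} gives $\eta_{\ov D}(\bfnu)=\upmu^{\ess}_{\ov D}(X)$, while the same lemma gives $\eta_{\ov D}\ge \upmu^{\ess}_{\ov D}(X)$ everywhere; hence the minimum of $\eta_{\ov D}$ equals $\upmu^{\ess}_{\ov D}(X)$ and is attained at the point $(u_v)_v$ of $H_{\K}$, which completes the proof.
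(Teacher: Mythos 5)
Your proof is correct and follows essentially the same route as the paper's: fix a maximizer $x\in\Delta_{D,\max}$, use $0\in\partial\vartheta_{\ov D}(x)=\sum_v n_v\,\partial\vartheta_{\ov D,v}(x)$ (Minkowski sum) to write $0=\sum_v n_v u_v$ with $u_v\in\partial\vartheta_{\ov D,v}(x)$, observe that this single decomposition already shows $u_v\in B_v$ for every $v$, and conclude by Lemma~\ref{lem:measure minimum}. One small sign slip to clean up: the identity you invoke should read $\partial g_{2,v}(x)=\partial\bigl(\sum_{w\ne v}\tfrac{n_w}{n_v}\vartheta_{\ov D,w}\bigr)(x)$ (no minus sign, since $g_{2,v}$ \emph{is} that weighted sum), and the subsequent line carries a compensating second sign slip; the two cancel and the argument's content is unaffected, but the intermediate identities as written are not literally correct.
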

\begin{proof}
  We denote by $\Delta _{D,\max}\subset \Delta _{D}$ the set of points
  where $\vartheta _{\ov D}$ attains its maximum.  Let $x\in \Delta
  _{D,\max}$. Since $\sum_{v} n_{v}\partial \vartheta _{\ov D,v}(x)
  =\partial \vartheta _{\ov D}(x)$ and $0\in \partial \vartheta _{\ov
    D}(x) $ we can find a point $\bfu =(u_{v})_{v}\in H_{\K}$ such
  that $u_{v}\in \partial \vartheta _{\ov D,v}(x) $. From the
  definition of $B_{v}$, it follows that $u_{v}\in B_{v} $ for every
  $v$. Thus, by Lemma~\ref{lem:measure minimum},
\begin{equation*}
   \upmu^{\ess}_{\ov D}(X)=\eta_{\ov D}(\bfu)=\min_{\bfnu\in \cH_{\K}}
  \eta_{\ov D}(\bfnu),
\end{equation*}
  as stated.
\end{proof}

We next show that the measures coming from algebraic points are dense
in $\cH_{\K}$.

\begin{prop}
\label{p:independence}
For every $\bfnu\in \cH_{\K}$ there is a generic net~$(p_{l})_{l\in
  I}$ of algebraic points of $X_{0}$ such that the net of associated
measures $(\bfnu_{p_{l}})_{l\in I}$ as in \eqref{eq:17}, converges to
$\bfnu$ with respect to the adelic KR-topology.
\end{prop}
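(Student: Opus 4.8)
The plan is to prove the stronger statement that the set $\{\bfnu_{p}:p\in X_{0}(\ov{\K})\}$ is dense in $\cH_{\K}$ for the adelic KR-topology, \emph{and} that the approximating points can be taken outside any prescribed proper closed subvariety of $X$; the required generic net is then obtained by indexing over the directed set of pairs $(U,Y)$, with $U$ an adelic-KR-neighbourhood of $\bfnu$ and $Y\subsetneq X$ closed, and letting $p_{(U,Y)}$ be such a point. The first step is a reduction to finitely supported targets. Write $\bfnu=(\nu_{v})_{v}$ with $\nu_{v}=\delta_{0}$ outside a finite set $S_{0}$. For each $v\in S_{0}$ I would cut $N_{\R}$ into finitely many small bounded Borel pieces together with one far ``tail'' piece, and on each piece replace $\nu_{v}$ by (its mass)$\,\times\,\delta_{\mathrm{barycenter}}$; since $\nu_{v}\in\cE$ has finite first moment the barycenters exist, and the resulting $\nu_{v}'$ is finitely supported, has the \emph{same} expectation $\exv[\nu_{v}']=\exv[\nu_{v}]$, and satisfies $W(\nu_{v}',\nu_{v})<\varepsilon$ for any prescribed $\varepsilon$. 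Keeping $\nu_{v}'=\delta_{0}$ off $S_{0}$, the collection $\bfnu'$ is again centered (the identity $\sum_{v}n_{v}\exv[\nu_{v}']=0$ holds because the expectations are unchanged) and adelic-KR-close to $\bfnu$. So it suffices to treat $\bfnu$ of the form $\nu_{v}=\tfrac1N\sum_{j=1}^{N}\delta_{u_{v,j}}$ for $v$ in a finite set $S$ (a common $N$ after repeating atoms; enlarge $S$ so it contains all archimedean places, with $\nu_{v}=\delta_{0}$ there) and $\nu_{v}=\delta_{0}$ otherwise.

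Given such a $\bfnu$ I would build an auxiliary separable extension $\F/\K$ of degree $N$ in which every place of $S$ splits completely: by weak approximation choose a monic degree-$N$ polynomial $f\in\K[x]$ whose coefficients are, for each $v\in S$, $v$-adically close to those of some separable polynomial that splits completely over $\K_{v}$ (such polynomials form a nonempty $v$-adic open set in coefficient space), and which at one auxiliary non-archimedean place $v_{0}\notin S$ is Eisenstein, hence irreducible over $\K$; put $\F=\K[x]/(f)$ and label the $N$ places of $\F$ over $v\in S$ as $w_{v,1},\dots,w_{v,N}$, each with weight $n_{w_{v,j}}=n_{v}/N$. Let $S_{\F}$ be the set of places of $\F$ above $S$, and for $k=1,\dots,n$ form the target vector $U_{k}=\bigl((u_{v,j})_{k}\bigr)_{v\in S,\,1\le j\le N}\in\bigoplus_{w\in S_{\F}}\R$. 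The crucial observation is that the centering condition on $\bfnu$ is \emph{exactly} equivalent to $\sum_{w\in S_{\F}}n_{w}(U_{k})_{w}=0$ for all $k$, i.e. to $U_{k}$ lying in the product-formula hyperplane $H_{S_{\F}}=\{(t_{w})_{w}:\sum_{w}n_{w}t_{w}=0\}$. By Dirichlet's $S$-unit theorem (using $S_{\F}\supseteq$ archimedean places), resp. its function-field analogue, the valuation vectors $(\val_{w}(\alpha))_{w\in S_{\F}}$ of $S_{\F}$-units $\alpha\in\cO_{\F,S_{\F}}^{\times}$ form a full-rank lattice in $H_{S_{\F}}$; hence $\bigcup_{d\ge1}\tfrac1d(\text{this lattice})$ is dense in $H_{S_{\F}}$, so there exist $S_{\F}$-units $\alpha_{1},\dots,\alpha_{n}$ and a common $d\ge1$ with $\|\tfrac1d\val_{w}(\alpha_{k})-(U_{k})_{w}\|<\varepsilon$ for all $w\in S_{\F}$ and all $k$.

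Now set $p=(\alpha_{1}^{1/d},\dots,\alpha_{n}^{1/d})\in X_{0}(\ov{\K})$. By Proposition~\ref{prop:12} one has $\nu_{p,v}=\sum_{w\mid v}\tfrac{n_{w}}{n_{v}}\,\delta_{\val_{w}(p)}$, the sum over places $w$ of $\K(p)$ above $v$. For $v\notin S$ every such $w$ restricts to a place of $\F$ outside $S_{\F}$, where each $\alpha_{k}$ is a unit, so $\val_{w}(p)=0$ and $\nu_{p,v}=\delta_{0}$, as required. For $v\in S$, grouping the places $w$ of $\K(p)$ above $v$ according to the $w_{v,j}$ they restrict to, Proposition~\ref{prop:15}\eqref{item:19} gives total weight $n_{w_{v,j}}/n_{v}=1/N$ for each group, while all $w$ in the $j$-th group give the same point $\bigl(\tfrac1d\val_{w_{v,j}}(\alpha_{k})\bigr)_{k}$ (because $\alpha_{k}\in\F$), which lies within $\varepsilon$ of $u_{v,j}$; hence $W(\nu_{p,v},\nu_{v})\le\varepsilon$ and $W_{\K}(\bfnu_{p},\bfnu)\le\bigl(\sum_{v\in S}n_{v}\bigr)\varepsilon$. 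For genericity, multiplying $p$ by any torsion point $\zeta\in\T_{\tors}(\ov{\K})$ (a tuple of roots of unity) leaves $\bfnu_{p}$ unchanged, since roots of unity have absolute value $1$ at every place; and $p\cdot\T_{\tors}$ is Zariski-dense in $\T=X_{0}$, so $\zeta$ can be chosen with $p\zeta\notin Y$ for any prescribed proper closed $Y\subsetneq X$. Assembling the points $p_{(U,Y)}$ over the directed set described above yields the desired generic net converging to $\bfnu$ in the adelic KR-topology.

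The main obstacle is the middle step: realizing prescribed valuation data by $S$-units of a suitable extension. It has two parts: (i) producing one extension with the right \emph{simultaneous} complete splitting along $S$, which is a standard weak-approximation-plus-Krasner argument; and (ii) the matching of the centering constraint defining $\cH_{\K}$ with the product-formula hyperplane in which $S_{\F}$-unit valuation vectors are dense — this is where Dirichlet's unit theorem (or, in the function-field case, the $S$-unit theorem, with care about the weight normalizations of Proposition~\ref{prop:15}) enters. By comparison, the reduction to finitely supported targets is routine, but it genuinely uses the finite-first-moment hypothesis to make the tails negligible in $W$.
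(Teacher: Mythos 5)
Your proof follows the same overall strategy as the paper's, and is correct. The paper's proof also (i) approximates each $\nu_{v}$ by a finitely supported measure $\frac1d\sum_{i}\delta_{u_{v,i}}$ with the same barycenter, (ii) passes to a degree-$d$ extension $\F$ where every place of $S$ splits completely (citing Lemma~2.2 of the companion paper \cite{BurgosPhilipponSombra:smthf}), (iii) realizes the target vector $\bfu\in H_{\F}$ by approximating in the $L^{1}$-metric with a point of $\val_{\F}(\T(\F)\otimes\Q)$, using density of that image in $H_{\F}$ (citing Lemma~2.3 of \emph{loc.\ cit.}), and (iv) uses multiplication by torsion points to make the net generic. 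The value your write-up adds is that it unpacks the two cited lemmas into self-contained arguments: a weak-approximation-plus-Eisenstein (and Krasner) construction of the splitting extension, and a Dirichlet $S$-unit argument for the density step. The only genuine divergence from the paper's argument is at the density step: you approximate the target by valuation vectors of $S_{\F}$\emph{-units}, which forces exact vanishing of the valuations outside $S_{\F}$, whereas the paper approximates with arbitrary elements of $\T(\F)\otimes\Q$ in the $L^{1}$-topology, so the places outside $S_{\F}$ only need to contribute a small total $L^{1}$-mass. For number fields these are equivalent, but the paper's ambient global field is allowed to be the function field of a curve over an \emph{arbitrary} constant field $k$, and over such fields the $S$-unit group need not have the full Dirichlet rank $|S|-1$ (e.g.\ $C$ an elliptic curve over $\C$ and $S=\{P,Q\}$ with $P-Q$ non-torsion in $\operatorname{Pic}^{0}$). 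In that regime the weaker $L^{1}$-density statement invoked by the paper is the thing that can still be expected to hold (and is what the cited Lemma~2.3 asserts), so if you want your version to cover the paper's full generality you should either restrict to number fields/finite constant fields, or replace the $S$-unit step by an $L^{1}$-approximation argument that tolerates small contributions from places outside $S_{\F}$.

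Two minor remarks. First, the identity $\nu_{p,v}=\sum_{w\mid v}\frac{n_{w}}{n_{v}}\delta_{\val_{w}(p)}$, which you attribute to Proposition~\ref{prop:12}, is a consequence of it rather than its literal statement; it is true (over number fields, and in the separable case) because the $v$-adic Galois orbit of $p$ decomposes by places of $\K(p)$ above $v$ with multiplicity proportional to the local degrees, and all points above a fixed $w$ have the same $\val_{v}$. It is worth stating this intermediate step. Second, in choosing a common denominator $d$ for the $n$ coordinate approximations you should say explicitly that you pass to $\operatorname{lcm}$ and raise the $\alpha_{k}$ to the corresponding powers; this is what you intend, but it should be said.
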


\begin{proof}
  Put~$\bfnu=(\nu_v)_v$ and let ~$\varepsilon > 0$ be given.  Let $S$
  be a finite nonempty subset of~$\fM_{\K}$ such that
  $\nu_{v}=\delta_{0}$ for all $v\notin S$. By \cite[Theorem
  6.18]{Villani:oton}, we can approach, with respect to the
  KR-distance, each $\nu_{v}$, $v\in S$, by a probability measure with
  finite support. Therefore, we can find $d\ge1$ big enough and, for
  each $v\in S$, a sequence $(u_{v,1},\dots,u_{v,d})$ of points of
  $N_{\R}$, such that the probability measure $\nu'_{v}=
  \frac{1}{d}\sum_{i=1}^{d} \delta_{u_{v,i}}$ verifies
  \begin{displaymath}
W(\nu_{v},\nu'_{v}) <\frac{\varepsilon}{2\sum_{v\in S}n_{v}} \and
\exv[\nu_{v}']= \exv[\nu_{v}].
  \end{displaymath}
Set also $\nu_{v}'=\delta_{0}$ for $v\notin S$. 
Then  $\bfnu'=(\nu'_{v})_{v}\in \cH_{\K}$ and $W_{\K}(\bfnu,\bfnu')
<\frac{\varepsilon}{2}$.

Let $\F/\K$ be a finite extension of degree $d$ such that all places
in $S$ split completely, as given by \cite[Lemma
2.2]{BurgosPhilipponSombra:smthf}.  For each $v\in S$ and $w\in
\fM_{\F}$ such that $w\mid v$, we have $n_{w}=n_{v}/d$. We enumerate
the places above a given place $v\in S$ as $w(v,j)$, $j=1,\dots, d$.

Let $H_{\F}\subset \bigoplus_{w\in
  \fM_{\F}} N_{\R}$ be the subspace defined by the equation 
\begin{math}
  \sum_{w}n_{w} u_{w}=0. 
\end{math}
For each $v\in \fM_{\K}$ consider the element $\bfu\in H_{\F}$ given, for
$w\in \fM_{\F}$,  by
\begin{displaymath}
  u_{w}=\begin{cases} u_{v,j} &\text{ for } v\in S \text{ and } w=w(v,j)
\text{ with } 1\le j\le d,\\
0 & \text{ for } v\notin S \text{ and } w \mid v.    
  \end{cases}
\end{displaymath}
Consider the map $\val_{\F}\colon \T(\F) \to \bigoplus_{w\in
  \fM_{\F}}N_{\R}$ defined by $\val_{\F}=(\val_{w})_{w\in
  \fM_{\F}}$. This is a group homomorphism and so it can be extended
to a map
\begin{displaymath}
  \val_{\F}\colon  \T(\F)\otimes \Q \longrightarrow \bigoplus_{w\in
  \fM_{\F}}N_{\R}.
\end{displaymath}
By the product formula, the image of this map lies in the hyperplane
$H_{\F}$ and, by \cite[Lemma 2.3]{BurgosPhilipponSombra:smthf}, it is
dense with respect to the $L^{1}$-topology on $H_{\F}$. For
$\alpha\in \T(\F)$ and $r\in \Q$, we have 
\begin{multline}\label{eq:56}
\|\bfu-\val_{\F}(\alpha^{r}) \|_{L^{1}}=  
\sum_{v\in S} \frac{n_{v}}{d} \sum_{j=1}^{d}
\|u_{v,j}-\val_{w(v,j)}(\alpha^{r})\| + \sum_{v\notin S} \|\val_{v}(\alpha^{r})\|
\\=
\sum_{v}{n_{v}}\int \|u-u'\| \dd\lambda_{v}(u,u')
\end{multline}
for the probability measure $\lambda_{v}$ on $N_{\R}\times N_{\R}$ given by
$$\lambda_{v}=
\begin{cases}
\displaystyle   \frac{1}{d}
\sum_{j=1}^{d}\delta_{(u_{v,j},\val_{w(v,j)}(\alpha^{r}))} & \text{ if
} v\in S, \\
\delta_{(0,\val_{v}(\alpha^{r}))} & \text{ if } v\notin S.
\end{cases}
$$ 
This measure has marginals $\nu_{v}'$ and $\nu_{\alpha^{r},v}$. Hence, the quantity
in \eqref{eq:56} is an upper bound for the KR-distance
$W(\nu'_{v},\nu_{\alpha^{r},v}) $. It follows that we can choose $\alpha$ and $r$ such that for every torsion point $\omega$ of
$\T(\ov \K)$, the point $p=\omega \cdot  \alpha^{r}$ verifies
for every~$v$ that
\begin{equation*}
W(\nu'_{v},\nu_{p,v}) 
 <\frac{\varepsilon}{4\sum_{v'\in S}n_{v'}}
\end{equation*} 
and $\sum_{v\notin S} n_{v}W(\nu'_{v}, \nu_{p,v})<{\varepsilon}/{4}$.
Hence $W_{\K}(\bfnu',\bfnu_{p}) <\varepsilon/2$ and thus
$W_{\K}(\bfnu,\bfnu_{p}) <\varepsilon$.  

Since the orbit of 
$\alpha^{r}$ under the action of the group of torsion points of
$\T(\ov \K)$ is Zariski dense, we have shown that, given $\varepsilon >0$
and a nonempty open subset
$U\subset X$, we can choose $p\in U(\ov \K)$ satisfying
\begin{equation*}
  W_{\K}(\bfnu,\bfnu_{p}) <\varepsilon.
\end{equation*}

As in the proof of Proposition~\ref{prop:4}, let $I$ be the set of
hypersurfaces of $X$ ordered by inclusion. For each $Y\in
I$ choose a point $p_{Y}\in (X\setminus Y)(\ov \K)$ such that
\begin{displaymath}
    W_{\K}(\bfnu,\bfnu_{p_{Y}}) <\frac{1}{c(Y)}
\end{displaymath}
with $c(Y)$ the number of components of $Y$.
Thus, the net of algebraic points $(p_{Y})_{Y\in I}$ is generic and
the net of probability measures $(\bfnu_{p_{Y}})_{Y\in I}$ converges
to $\bfnu$ in the KR-topology, proving the result.
\end{proof}

\begin{proof}[Proof of Theorem~\ref{thm:1}] 
  Let $\bfnu=(\nu _{v})_{v}$ be a centered adelic measure on $N_{\R}$
  such that each measure $\nu_{v}$ satisfies the
  condition~\eqref{eq:33}. By Lemma~\ref{lem:measure minimum}, it
  satisfies
\begin{displaymath}
  \eta_{\ov{D}}(\bfnu) = \upmu^{\ess}_{\ov D}(X).
\end{displaymath}
Proposition~\ref{p:independence} implies that there is a
generic net~$(p_l)_{l \in I}$ of points in~$\T(\ov \K) = X_{0}(\ov \K)$
such
that~$(\bfnu_{p_l})_{l\in I}$ converges to~$\bfnu$ with respect to the distance
$W_{\K}$. 
On the other hand, by Lemma~\ref{lem:measure minimum} we also have
$$
\lim_{l} \h_{\ov{D}}(p_l) = \lim_{l}
\eta_{\ov{D}} (\bfnu_{p_l}) = \eta_{\ov{D}} (\bfnu) =
\upmu^{\ess}_{\ov D}(X), $$ 
and so the net~$(p_l)_{l \in I}$ is~$\ov{D}$-small. 
\end{proof}

\begin{cor} \label{cor:2} 
Let $v\in \fM_{\K}$. For every
  measure~$\nu_{v}\in \cE$ with $ \supp(\nu_{v})\subset F_{v}$ and $
  \exv[\nu_{v}]\in B_{v}$, there is a generic $\ov{D}$-small
  net~$(p_{l})_{l \in I}$ of algebraic points of~$X_{0}$ such that the
  net of measures $(\nu_{p_{l},v})_{l\in I}$ converges to $\nu_{v}$
  with respect to the Kantorovich-Rubinstein distance. In particular,
  $(\nu_{p_{l},v})_{l\in I}$ it also converges to $\nu_{v}$ in the weak-$\ast$
  topology with respect to $\Cb(N_{\R})$. 
\end{cor}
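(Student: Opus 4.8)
The plan is to deduce this from Theorem~\ref{thm:1}. First I would extend the given measure $\nu_v$ to a centered adelic measure $\bfnu=(\nu_w)_{w\in\fM_\K}$, with $v$-adic component $\nu_v$, satisfying $\supp(\nu_w)\subset F_w$ and $\exv[\nu_w]\in B_w$ for \emph{every} place $w$. Granting this, Theorem~\ref{thm:1} yields a generic $\ov D$-small net $(p_l)_{l\in I}$ of algebraic points of $X_0$ with $\bfnu_{p_l}\to\bfnu$ in the adelic Kantorovich--Rubinstein distance $W_\K$. Since $W_\K(\bfnu_{p_l},\bfnu)=\sum_{w}n_w W(\nu_{p_l,w},\nu_w)\ge n_v W(\nu_{p_l,v},\nu_v)$ and $n_v>0$, this forces $W(\nu_{p_l,v},\nu_v)\to 0$, i.e. $(\nu_{p_l,v})_{l\in I}$ converges to $\nu_v$ in the Kantorovich--Rubinstein distance. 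The last assertion is then immediate from Remark~\ref{rem:4}, since on $\cE$ the KR-topology is stronger than the topology induced by that of $\cP$, which by Definition~\ref{def:8} is the weak-$\ast$ topology with respect to $\Cb(N_\R)$.

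So the only real content is the construction of $\bfnu$, and here I would imitate the argument in the proof of Corollary~\ref{cor:4}. Fix $x$ in $\Delta_{D,\max}$, the set of maximizers of the global roof function $\vartheta_{\ov D}$. For each $w$ the functions $g_{1,w}=\vartheta_{\ov D,w}$ and $g_{2,w}=\sum_{w'\ne w}(n_{w'}/n_w)\vartheta_{\ov D,w'}$ of Notation~\ref{def:14} satisfy $g_{1,w}+g_{2,w}=(1/n_w)\vartheta_{\ov D}$, so $\Delta_{D,\max}$ is precisely the set $C_{\max}$ attached to the pair $(g_{1,w},g_{2,w})$, and Lemma~\ref{lem:flatification}\eqref{item:4} together with Definition~\ref{def:5} gives $B_w=\partial\vartheta_{\ov D,w}(x)\cap(-\partial g_{2,w}(x))$. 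Since $g_{2,v}$ is a sum of \emph{finitely many} nonzero concave functions on the common domain $\Delta_D$ (all but finitely many local roof functions vanishing), additivity of sup-differentials (by repeated use of \cite[Proposition~2.3.6(2)]{BurgosPhilipponSombra:agtvmmh}, as in Lemma~\ref{lem:flatification} and as in the proof of Corollary~\ref{cor:4}) gives $\partial g_{2,v}(x)=\sum_{w\ne v}(n_w/n_v)\partial\vartheta_{\ov D,w}(x)$, a finite Minkowski sum. As $\exv[\nu_v]\in B_v\subset -\partial g_{2,v}(x)$, we may therefore write $-n_v\exv[\nu_v]=\sum_{w\ne v}n_w z_w$ with $z_w\in\partial\vartheta_{\ov D,w}(x)$ and $z_w=0$ for all but finitely many $w$; setting $z_v\coloneqq\exv[\nu_v]$ — which lies in $\partial\vartheta_{\ov D,v}(x)$ because $B_v\subset\partial g_{1,v}(x)$ — we obtain a finitely supported family $(z_w)_w$ with $z_w\in\partial\vartheta_{\ov D,w}(x)$ for all $w$ and $\sum_{w}n_w z_w=0$.

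To finish I would verify, exactly as in the proof of Corollary~\ref{cor:4}, that $z_w\in B_w$ for every $w$: for a fixed $w$ and every $z\in\Delta_D$,
\begin{displaymath}
\langle -z_w,\,z-x\rangle=\sum_{w'\ne w}\frac{n_{w'}}{n_w}\langle z_{w'},\,z-x\rangle\ \ge\ \sum_{w'\ne w}\frac{n_{w'}}{n_w}\bigl(\vartheta_{\ov D,w'}(z)-\vartheta_{\ov D,w'}(x)\bigr)=g_{2,w}(z)-g_{2,w}(x),
\end{displaymath}
the middle step applying $z_{w'}\in\partial\vartheta_{\ov D,w'}(x)$ term by term (both sums being finite), whence $z_w\in-\partial g_{2,w}(x)$ and thus $z_w\in B_w\subset F_w$. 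Then $\bfnu=(\nu_w)_w$, obtained by keeping $\nu_v$ in the $v$-slot and putting $\nu_w=\delta_{z_w}$ for $w\ne v$, lies in $\cH_\K$ — each $\nu_w\in\cE$, $\nu_w=\delta_0$ for all but finitely many $w$, and $\sum_w n_w\exv[\nu_w]=\sum_w n_w z_w=0$ — and it satisfies the hypotheses of Theorem~\ref{thm:1}: $\supp(\nu_v)\subset F_v$ and $\exv[\nu_v]\in B_v$ by assumption, while for $w\ne v$ one has $\supp(\delta_{z_w})=\{z_w\}\subset B_w\subset F_w$ and $\exv[\delta_{z_w}]=z_w\in B_w$. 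The main obstacle is precisely the step ``$z_w\in B_w$ for all $w$'': a priori the decomposition only places $z_w$ in $\partial\vartheta_{\ov D,w}(x)$, and one must exploit the balancing relation $\sum_w n_w z_w=0$ at a common maximizer $x$ of $\vartheta_{\ov D}$ to obtain the extra containment in $-\partial g_{2,w}(x)$; everything else reduces to a direct appeal to Theorem~\ref{thm:1} and Remark~\ref{rem:4}.
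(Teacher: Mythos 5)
Your proposal is correct and follows essentially the same route as the paper: extend $\nu_{v}$ to a centered adelic measure $\bfnu$ satisfying the hypotheses of Theorem~\ref{thm:1} by completing $\exv[\nu_{v}]\in B_{v}$ to a balancing family of points $u_{v'}\in B_{v'}$, apply Theorem~\ref{thm:1}, and observe that $W_{\K}$-convergence dominates the $v$-local KR-convergence and in turn the weak-$\ast$ topology by Remark~\ref{rem:4}. The only difference is that the paper merely asserts the existence of the balancing family while you actually prove it, via the Minkowski-sum decomposition of $\partial g_{2,v}(x)$ — reasoning that is already implicit in the paper's proof of Corollary~\ref{cor:4}, so this is a welcome filling-in of detail rather than a different method.
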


\begin{proof}
  Using the hypothesis that the point~$u_v = \exv[\nu_v]$ is in~$B_v$,
  for each~$v'\not = v$ we can find~$u_{v'}\in B_{v'}$ such that
  $u_{v'}=0$ except for a finite set $S'\subset \fM_{\K}$
  and
  \begin{displaymath}
\sum_{v' \in S'} n_{v'} u_{v'} = -n_{v}u_{v}.    
  \end{displaymath}
  For each~$v'\not = v$, put~$\nu_{v'} = \delta_{v'}$.  The statement
  then follows from Theorem~\ref{thm:1} applied to the centered adelic
  measure~$\bfnu=(\nu_v)_v$.
\end{proof}

Combining Theorems~\ref{thm:1 bis} and~\ref{thm:1}, we can obtain a
criterion for when the direct image under the valuation map of the
Galois orbits of a small net converges in the sense of measures. We
show that in this case, the limit measure is concentrated in a single
point.

\begin{cor}
\label{cor:3 bis}
Let $v\in \mathfrak{M}_{\K}$. The
  following conditions are equivalent:
\begin{enumerate}
\item \label{item:9 bis} for every $\ov D$-small net
  $(p_{l})_{l\in I}$ of algebraic points of $X_{0}$, the net of measures
  $(\nu_{p_l,v})_{l\in I}$ converges in the weak-$\ast$ topology with
  respect to $\Cb(N_{\R})$;
\item \label{item:27} for every generic $\ov D$-small net
  $(p_{l})_{l\in I}$ of algebraic points of $X_{0}$, the net of measures
  $(\nu_{p_l,v})_{l\in I}$ converges in the weak-$\ast$ topology with
  respect to $\Cc(N_{\R})$, the space of continuous functions
  on $N_{\R}$ with compact support;
  \item \label{item:8 bis} the face $F_{v}$
    contains only one point.  
\end{enumerate}
When  these equivalent conditions hold, the limit measures in
\eqref{item:9 bis} and \eqref{item:27} coincide with the Dirac
measure at the unique point of~$F_v$.
\end{cor}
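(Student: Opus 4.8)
The plan is to prove the cycle \eqref{item:8 bis}$\,\Rightarrow\,$\eqref{item:9 bis}$\,\Rightarrow\,$\eqref{item:27}$\,\Rightarrow\,$\eqref{item:8 bis}, reading off the identification of the limit measure along the way; all nets are indexed by arbitrary directed sets, as in the rest of the section. For \eqref{item:8 bis}$\,\Rightarrow\,$\eqref{item:9 bis}, assume $F_{v}=\{u_{v}\}$; then $B_{v}=\{u_{v}\}$ as well, being a nonempty convex subset of $F_{v}$. Given any $\ov{D}$-small net $(p_{l})_{l\in I}$ of algebraic points of $X_{0}$, Theorem~\ref{thm:1 bis} tells us that $(\nu_{p_{l},v})_{l\in I}$ has at least one cluster point in $\cP$ and that every such cluster point $\nu_{v}$ satisfies $\supp(\nu_{v})\subset F_{v}=\{u_{v}\}$, hence equals $\delta_{u_{v}}$; so the net has a unique cluster point. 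To upgrade this to convergence I would reuse the argument in the proof of Theorem~\ref{thm:1 bis}: by the inequalities \eqref{eq:47} of Lemma~\ref{lemm:1 bis}, $\ov{D}$-smallness forces $\Phi_{v}(\nu_{p_{l},v})\to 0$, so by Lemma~\ref{lemm:2} the quantity $\int\|u\|\dd\nu_{p_{l},v}$ is eventually bounded by some $K>0$; by Prokhorov's theorem (as in the proof of Proposition~\ref{prop:2 bis}) the tail of the net then lies in the compact metrizable subset $\{\mu\in\cP\mid\int\|u\|\dd\mu\le K\}$ of $\cP$, and a net eventually contained in a compact Hausdorff set and having a unique cluster point converges to it. Thus $(\nu_{p_{l},v})_{l\in I}$ converges to $\delta_{u_{v}}$ in the weak-$\ast$ topology with respect to $\Cb(N_{\R})$, which proves \eqref{item:9 bis} and identifies the limit.

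The implication \eqref{item:9 bis}$\,\Rightarrow\,$\eqref{item:27} is immediate: a generic $\ov{D}$-small net is in particular $\ov{D}$-small, and convergence with respect to $\Cb(N_{\R})$ implies convergence with respect to the smaller family $\Cc(N_{\R})\subset\Cb(N_{\R})$.

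For \eqref{item:27}$\,\Rightarrow\,$\eqref{item:8 bis} I argue by contraposition. Assume $F_{v}$ has more than one point. Choose $u_{v}\in\ri(B_{v})$; since $F_{v}$ is the minimal face of $A_{v}$ containing $B_{v}$, any point of $\ri(B_{v})$ lies in $\ri(F_{v})$, so $u_{v}\in\ri(F_{v})$ and, as $\dim F_{v}\ge 1$, there are distinct $y_{1},y_{2}\in F_{v}$ with $u_{v}=\tfrac12(y_{1}+y_{2})$. Then $\mu^{(1)}\coloneqq\delta_{u_{v}}$ and $\mu^{(2)}\coloneqq\tfrac12\delta_{y_{1}}+\tfrac12\delta_{y_{2}}$ are distinct measures in $\cE$, each with support in $F_{v}$ and expected value $u_{v}\in B_{v}$. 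By Corollary~\ref{cor:2}, realized via the construction in Proposition~\ref{p:independence} so that both nets are indexed by the directed set $I_{0}$ of hypersurfaces of $X$ ordered by inclusion, we obtain generic $\ov{D}$-small nets $(p_{Y}^{(1)})_{Y\in I_{0}}$ and $(p_{Y}^{(2)})_{Y\in I_{0}}$ with $\nu_{p_{Y}^{(i)},v}\to\mu^{(i)}$. I would then splice these into the net indexed by $J=I_{0}\times\{1,2\}$, with $(Y,i)\le(Y',i')$ iff $Y\subseteq Y'$, setting $q_{(Y,i)}=p_{Y}^{(i)}$. A short verification shows that $J$ is directed, that $(q_{(Y,i)})_{(Y,i)\in J}$ is generic and $\ov{D}$-small (each component net being so), and that the subnets over $I_{0}\times\{1\}$ and $I_{0}\times\{2\}$ are cofinal, whence $(\nu_{q_{(Y,i)},v})$ has both $\mu^{(1)}$ and $\mu^{(2)}$ as cluster points. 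Since these measures have finite support, they are separated by a function in $\Cc(N_{\R})$, so $(\nu_{q_{(Y,i)},v})$ does not converge in the weak-$\ast$ topology with respect to $\Cc(N_{\R})$, contradicting \eqref{item:27}. The final assertion of the corollary is contained in the first implication: when \eqref{item:8 bis}--\eqref{item:27} hold, the $\Cc(N_{\R})$-limit of a net that also converges to $\delta_{u_{v}}$ with respect to $\Cb(N_{\R})$ is again $\delta_{u_{v}}$.

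The step I expect to be the main obstacle is this last implication: manufacturing a single generic $\ov{D}$-small net with two distinct cluster points by gluing the two nets of Corollary~\ref{cor:2} along a common directed index set, while checking that genericity and $\ov{D}$-smallness survive, and keeping in mind that the weak-$\ast$ topology with respect to $\Cc(N_{\R})$ is coarse — mass can escape to infinity — which is exactly why $\mu^{(1)}$ and $\mu^{(2)}$ are chosen with finite support. The remaining implications are routine given Theorem~\ref{thm:1 bis}, Corollary~\ref{cor:2}, Proposition~\ref{p:independence}, and the compactness results of \S\ref{sec:auxil-results-conv}.
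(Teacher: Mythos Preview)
Your proof is correct and follows essentially the same route as the paper: the implication \eqref{item:8 bis}$\Rightarrow$\eqref{item:9 bis} via Theorem~\ref{thm:1 bis} (with the Prokhorov compactness step you make explicit), the trivial \eqref{item:9 bis}$\Rightarrow$\eqref{item:27}, and the contrapositive \eqref{item:27}$\Rightarrow$\eqref{item:8 bis} using two nets from Corollary~\ref{cor:2} converging to $\delta_{u_{0}}$ and $\tfrac12\delta_{y_{1}}+\tfrac12\delta_{y_{2}}$, then spliced. The paper is terser---it just says ``combining these nets''---whereas you spell out the index set $J=I_{0}\times\{1,2\}$ and the cofinality check, but the substance is identical.
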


\begin{proof}
  It is clear that \eqref{item:9 bis} implies \eqref{item:27},  and Theorem~\ref{thm:1
    bis} shows that 
  \eqref{item:8 bis} implies \eqref{item:9 bis}.  Now suppose that the
  face $F_{v}$ has more than one point. Since $F_{v}$ is the minimal
  face containing $B_{v}$, we can find distinct points
  $u_{0},u_{1},u_{2}\in F_{v}$ such that
  \begin{displaymath}
    u_{0}=\frac{u_{1}+u_{2}}{2}\in B_{v}.
  \end{displaymath}
  The probability measures $\delta_{u_{0}}$ and
  $\frac{1}{2}\delta_{u_{1}}+ \frac{1}{2}\delta_{u_{2}}$ satisfy the conditions
  \eqref{eq:33}. By Corollary~\ref{cor:2}, we can find generic $\ov
  D$-small nets $(p_{l})_{l\in I}$ and $(q_{l})_{l\in I}$ such that
  the nets of measures $(\nu_{p_{l},v})_{l\in I}$ and
  $(\nu_{q_{l},v})_{l\in I}$ respectively converge to
  \begin{displaymath}
  \delta_{u_{0}}\and \frac{1}{2}\delta_{u_{1}}+\frac{1}{2}\delta_{u_{2}}  
  \end{displaymath}
  in the KR-topology, and hence in the weak-$\ast$ topology with respect to
  $\Cc(N_{\R})$. Combining these nets, we can obtain a net
  that does not converge in this weak-$\ast$ topology. Hence the condition
  \eqref{item:27} implies the condition \eqref{item:8 bis}. 

  The last statement follows from Theorem \ref{thm:1 bis}.
\end{proof}

When any of the equivalent conditions of Corollary~\ref{cor:3 bis}
holds we say that the metrized divisor $\ov D$ satisfies the
\emph{modulus concentration property} at the place $v$. Thus
Corollary~\ref{cor:3 bis} gives us a criterion for the modulus
concentration property at a place.  We next give a criterion for the
modulus concentration property at all  places simultaneously, that
can be directly read from the roof function.  Before giving it, we
need some preliminary results and a definition.

\begin{defn}\label{def:16}
  A semipositive toric metrized $\R$-divisor $\ov D$ with $D$ big is
  called \emph{monocritical} if the minimum of $ \eta_{\ov D}$ in $\cH_{\K}$ is
  attained at a unique point. If this is the  case, by Corollary~\ref{cor:4}, 
  the minimum is attained at a point of $H_{\K}$. This point
  is called \emph{the critical point} of $\ov D$.
\end{defn}

\begin{exmpl}\label{exm:9}
  Let $\ov D^{\can}$ be a nef and big toric $\R$-divisor equipped with
  the canonical metric as in Example~\ref{exm:6}. Then all its local
  roof functions are zero. Taking a point $x$ in the interior of the
  polytope, we have $\partial \vartheta _{\ov D,v}({x})=\{0\}$ for
  every~$v$. Hence~$F_{v}=\{0\}$ for every $v$ and $\ov D$ is
  monocritical with critical point $\bfzero\in H_{\K}$.
\end{exmpl}

Recall that $\Delta _{D,\max}$ denotes the convex set of points
of $\Delta _{D}$
where $\vartheta _{\ov D}$ attains its maximum. 

\begin{prop}\label{prop:14} The following conditions are equivalent:
  \begin{enumerate}
  \item \label{item:8} the metrized $\R$-divisor $\ov D$ is monocritical;
  \item \label{item:29} for every point $x\in \Delta _{D,\max}$, the
    set
    \begin{equation}\label{eq:73}
      H_{\K}\cap\prod_{v\in \fM_{\K}} \partial \vartheta _{\ov D,v}(x) 
    \end{equation}
    contains a unique element $\bfu =(u_{v})_{v}\in H_{\K}$ and, for
    $v\in \fM_{\K}$, the point
    $u_{v}$ is a vertex of $\partial  \vartheta _{\ov D,v}(x)$;
  \item \label{item:25} for every point $x\in \Delta _{D,\max}$, the point $0$ is
    a vertex of $\partial  \vartheta _{\ov D}({x})$;
  \item \label{item:9} there exists a point $x\in \Delta _{D,\max}$ such that $0$ is
    a vertex of $\partial \vartheta _{\ov D}({x}) $;
  \item \label{item:26} for all  $v\in \mathfrak{M}_{\K}$, the set $F_{v}$ 
    contains only  one point.
  \end{enumerate}
  When these equivalent conditions hold, $F_{v}=\{u_{v}\}$ for every
  $v$ and $\bfu$ is the critical point of $\ov D$.
\end{prop}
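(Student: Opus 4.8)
The plan is to establish: the equivalence (1)$\Leftrightarrow$(5) via Lemma~\ref{lem:measure minimum} and Corollary~\ref{cor:4}; the equivalence (2)$\Leftrightarrow$(3) via Lemma~\ref{lemm:4}; the trivial implication (3)$\Rightarrow$(4); and finally (4)$\Rightarrow$(5) together with (5)$\Rightarrow$(2), which close all the loops. The last assertion ($F_v=\{u_v\}$ for all $v$, and $\bfu=(u_v)_v$ the critical point) will drop out along the way. Two preliminary remarks set the stage. Since $D$ is big, $\Delta_D$ is full-dimensional and compact, so $\Delta_{D,\max}\neq\emptyset$; for $x\in\Delta_{D,\max}$ the concave function $\vartheta_{\ov D}$ attains its maximum at $x$, whence $0\in\partial\vartheta_{\ov D}(x)$ and, by additivity of the sup-differential (\cite[Proposition~2.3.6]{BurgosPhilipponSombra:agtvmmh}), $\partial\vartheta_{\ov D}(x)=\sum_{v}n_v\,\partial\vartheta_{\ov D,v}(x)$. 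Also, letting $S$ be the finite set of places with $\vartheta_{\ov D,v}\neq0$, the function $\vartheta_{\ov D,v}$ is constant for $v\notin S$, so $\partial\vartheta_{\ov D,v}(x)$ equals the cone $K_x=\{u\in N_\R:\langle u,z-x\rangle\ge0\text{ for all }z\in\Delta_D\}$, which is pointed with unique vertex $0$ because $\Delta_D$ is full-dimensional; since each $\partial\vartheta_{\ov D,w}(x)$ is stable under adding $K_x$, one gets $\partial\vartheta_{\ov D}(x)=\bigl(\sum_{v\in S}n_v\,\partial\vartheta_{\ov D,v}(x)\bigr)+K_x$, a \emph{finite} Minkowski sum.

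For \textbf{(1)$\Leftrightarrow$(5)}: by Corollary~\ref{cor:4} the infimum of $\eta_{\ov D}$ over $\cH_\K$ is attained, and by Lemma~\ref{lem:measure minimum} its minimizers are exactly the $\bfnu=(\nu_v)_v\in\cH_\K$ with $\supp(\nu_v)\subset F_v$ and $\exv[\nu_v]\in B_v$ for all $v$. If each $F_v$ is a single point then, since $\emptyset\neq B_v\subset F_v$, one has $F_v=B_v=\{u_v\}$ and $\nu_v=\delta_{u_v}$, so the minimizer is unique; for $v\notin S$ the set $A_v$ equals the pointed cone $K_{x'}$ ($x'\in\ri(\Delta_{D,\max})$), whose only vertex is $0$, so $u_v=0$ there, and Corollary~\ref{cor:4} puts this unique minimizer in $H_\K$, i.e.\ $\sum_v n_v u_v=0$; thus $\ov D$ is monocritical with critical point $\bfu=(u_v)_v$. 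Conversely, if some $F_{v_0}$ has more than one point I would argue as in the proof of Corollary~\ref{cor:3 bis}: pick $u_0\in B_{v_0}$ and distinct $u_1,u_2\in F_{v_0}$ with $u_0=\tfrac12(u_1+u_2)$, and, using $u_0\in B_{v_0}\subset-\partial g_{2,v_0}(x)$ and additivity of the sup-differential, complete this to two distinct minimizers of $\eta_{\ov D}$ differing only at $v_0$, obtained by placing $\delta_{u_0}$, resp.\ $\tfrac12\delta_{u_1}+\tfrac12\delta_{u_2}$, there.

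For \textbf{(2)$\Leftrightarrow$(3)} and \textbf{(5)$\Rightarrow$(2)}: fix $x\in\Delta_{D,\max}$. Applying Lemma~\ref{lemm:4} to $\partial\vartheta_{\ov D}(x)=\bigl(\sum_{v\in S}n_v\partial\vartheta_{\ov D,v}(x)\bigr)+K_x$ at the point $0$, condition (3) at $x$ is equivalent to: $0$ has a unique Minkowski decomposition $0=\sum_{v\in S}n_v z_v+k$ with $z_v\in\partial\vartheta_{\ov D,v}(x)$ and $k\in K_x$, each $z_v$ being a vertex and $k$ a vertex of $K_x$, i.e.\ $k=0$. Unwinding this, one checks that every element of $H_\K\cap\prod_v\partial\vartheta_{\ov D,v}(x)$ compatible with the uniqueness must vanish at the places $v\notin S$ (a vanishing sum of elements of the pointed cone $K_x$ has vanishing terms) and is therefore determined by its $S$-part — which is precisely condition (2) at $x$ — and conversely (2) at $x$ restores the hypotheses of Lemma~\ref{lemm:4}; hence (2)$\Leftrightarrow$(3). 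For (5)$\Rightarrow$(2): if $F_v=\{u_v\}$ then $B_v=\{u_v\}$; any coordinate of a tuple in $H_\K\cap\prod_v\partial\vartheta_{\ov D,v}(x)$ lies in $\partial\vartheta_{\ov D,v}(x)\cap(-\partial g_{2,v}(x))=B_v$ by Lemma~\ref{lem:flatification}\eqref{item:4} (using $\sum_w n_w z_w=0$), so that intersection is the single tuple $(u_v)_v$; and since $A_v$ is a face of $\partial\vartheta_{\ov D,v}(x)$ by Lemma~\ref{lem:flatification}\eqref{item:3} and $u_v\in A_v$ is a vertex of $A_v$, it is a vertex of $\partial\vartheta_{\ov D,v}(x)$ — giving (2).

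For \textbf{(3)$\Rightarrow$(4)$\Rightarrow$(5)} and the conclusion: (3)$\Rightarrow$(4) is immediate since $\Delta_{D,\max}\neq\emptyset$. Given $x_0\in\Delta_{D,\max}$ with $0$ a vertex of $\partial\vartheta_{\ov D}(x_0)$, the equivalence just proved gives a unique tuple $(u_v)_v$ in $H_\K\cap\prod_v\partial\vartheta_{\ov D,v}(x_0)$ with each $u_v$ a vertex of $\partial\vartheta_{\ov D,v}(x_0)$; each $u_v$ lies in $B_v$ by Lemma~\ref{lem:flatification}\eqref{item:4} (using $\sum_v n_v u_v=0$), and if there were $u_v'\in B_v\setminus\{u_v\}$, the inclusion $B_v\subset-\partial g_{2,v}(x_0)$ and additivity of the sup-differential would yield a second such tuple with $v$-coordinate $u_v'$, contradicting uniqueness; hence $B_v=\{u_v\}$. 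As $A_v$ is a face of $\partial\vartheta_{\ov D,v}(x_0)$ containing the vertex $u_v$, the point $u_v$ is a vertex of $A_v$, so $\{u_v\}$ is a face of $A_v$ and $F_v$ — the minimal face of $A_v$ containing $B_v=\{u_v\}$ — equals $\{u_v\}$; this is (5). The loops being closed, we have simultaneously obtained $F_v=\{u_v\}$ for all $v$ with $\bfu=(u_v)_v\in H_\K$, which is the critical point of $\ov D$ by (1)$\Leftrightarrow$(5). The step I expect to demand the most care is the bookkeeping at the places $v\notin S$, where $\partial\vartheta_{\ov D,v}(x)$ is an unbounded pointed cone rather than a point: one must verify that the Minkowski sum fed to Lemma~\ref{lemm:4} is honestly finite and that that lemma's conclusion matches condition (2) — whose product ranges over all of $\fM_\K$ — once the vanishing of the trivial-place coordinates is established; the remaining arguments are routine manipulations with Lemmas~\ref{lem:flatification} and~\ref{lem:measure minimum} and Corollary~\ref{cor:4}.
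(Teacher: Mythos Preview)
Your proof is correct, but its organization differs from the paper's, and there is one step where the paper's argument is notably cleaner.

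The paper proves a single cycle $(1)\Rightarrow(2)\Rightarrow(3)\Rightarrow(4)\Rightarrow(5)\Rightarrow(1)$, whereas you build a web of implications. The substantive difference lies in $(4)\Rightarrow(5)$. You pass through the unique tuple $(u_v)_v$ coming from the many-term decomposition $\partial\vartheta_{\ov D}(x)=\sum_{v\in S} n_v\,\partial\vartheta_{\ov D,v}(x)\,(+K_x)$, and then argue separately that $B_v=\{u_v\}$ and that $u_v$ is a vertex of $A_v$. The paper instead observes, for each fixed place $v$, the \emph{two-term} decomposition
\[
\partial\vartheta_{\ov D}(x)=n_v\,\partial g_{1,v}(x)+n_v\,\partial g_{2,v}(x),
\]
and applies Lemma~\ref{lemm:4} directly: if $0$ is a vertex of the left side, then $0=n_v u_v+n_v(-u_v)$ is the unique decomposition, $u_v$ is a vertex of $\partial g_{1,v}(x)$, and $B_v=\partial g_{1,v}(x)\cap(-\partial g_{2,v}(x))=\{u_v\}$ is already a face, whence $F_v=\{u_v\}$. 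This bypasses the bookkeeping at the trivial places that you (rightly) flag as delicate; in your route one must enlarge $S$ to an arbitrary finite $T\supset S$, reapply Lemma~\ref{lemm:4} to the $|T|$-term sum, and then use that a vanishing sum in the pointed cone $K_x$ has vanishing terms. Your exposition of this point is a bit compressed (the phrase ``a vanishing sum of elements of the pointed cone $K_x$ has vanishing terms'' applies only after first using uniqueness to isolate $\sum_{v\in T\setminus S} n_v w_v=0$, not to the full sum $\sum_v n_v w_v=0$), but the argument does go through once unpacked.

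Your $(1)\Leftrightarrow(5)$, $(2)\Leftrightarrow(3)$ and $(5)\Rightarrow(2)$ are fine; the references to Lemma~\ref{lem:flatification} are slightly off (you really use item~\eqref{item:4} for the independence of $B_v$ from $x$, and item~\eqref{item:3} for $A_v$ being a face of $\partial g_{1,v}(x)$), but the mathematics is correct. In short: nothing is wrong, but the paper's two-term trick for $(4)\Rightarrow(5)$ is worth internalizing---it eliminates exactly the step you identified as the most fragile.
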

\begin{proof}
  We prove first that \eqref{item:8} implies \eqref{item:29}. Assume
  that $\ov D$ is monocritical. If the set \eqref{eq:73} contains two
  elements $\bfu_{i} =(u_{i,v})_{v}\in H_{\K}$, $i=1,2$. Then the
  measures $\bfnu_{i}=(\delta _{u_{i,v}})_{v}\in \cH_{\K}$, $i=1,2$, satisfy
  that $ \supp(\delta _{u_{i,v}}) \subset B_v$ for each~$v$. In particular,
$ \supp(\delta _{u_{i,v}}) \subset F_v$ and
$\exv[\delta _{u_{i,v}}] \in B_v$. Thus by Lemma~\ref{lem:measure minimum}
\begin{displaymath}
  \eta_{\ov D}(\bfu_{1})=\eta_{\ov D}(\bfu_{2})=\min _{\bfnu\in
    \cH_{\K}}\eta_{\ov D}(\bfnu)
\end{displaymath}
contradicting the hypothesis that $\ov D$ is monocritical, and showing 
that~\eqref{eq:73} contains a unique element.

Assume now that the set \eqref{eq:73} contains a single point $\bfu
=(u_{v})_{v}\in H_{\K}$ and there is a place $v_{0}\in \fM_{\K}$
such that $u_{v_{0}}$ is not a vertex of $\partial  \vartheta _{\ov
  D,v_{0}}(x)$. Then we can find two points $u_{v_{0},1},u_{v_{0},2}\in \partial
\vartheta _{\ov D,v_{0}}(x)$ such that
\begin{displaymath}
  u_{v_{0}}=\frac{u_{v_{0},1}+u_{v_{0},2}}{2}.
\end{displaymath}
We consider the measure $\bfnu_{1}=(\delta _{u_{v}})_{v}$ and the
measure $\bfnu_{2}=(\nu_{v})_{v}$ defined by
\begin{displaymath}
  \nu_{v}=
  \begin{cases}
    \delta _{u_{v}}&\text{ if }v\not = v_0,\\
    {\displaystyle \frac{
      \delta_{u_{v_{0},1}}+\delta_{u_{v_{0},2}}}{2}} &\text{ if }v = v_0.
  \end{cases}
\end{displaymath}
Then~$\bfnu_{2}$ is in~\eqref{eq:73} and, again by
Lemma~\ref{lem:measure minimum}, we have that
\begin{displaymath}
  \eta_{\ov D}(\bfnu_{1})=\eta_{\ov D}(\bfnu_{2})=\min _{\bfnu\in
    \cH_{\K}}\eta_{\ov D}(\bfnu)
\end{displaymath}
contradicting the hypothesis that $\ov D$ is monocritical, and
completing the proof of~\eqref{item:29}.

  Assume that \eqref{item:29} is true and fix $x\in \Delta
  _{D,\max}$. Let $S\subset \fM_{\K}$ be the
  finite set of places where $u_{v}\not = 0$ or $\vartheta _{\ov D,v}$
  is not identically zero. We have that   
  \begin{displaymath}
    \partial \vartheta _{\ov D}({x})=\sum_{v\in S}n_{v}\partial \vartheta
    _{\ov D,v}({x}).  
  \end{displaymath}
  Moreover, \eqref{item:29} implies that the equation
  \begin{displaymath}
    0=\sum _{v\in S}n_{v}a_v \quad\text{ with }a_v\in \partial \vartheta
    _{\ov D,v}(x) 
  \end{displaymath}
  has a unique solution $a_v=u_{v}$ and this solution satisfies that
  $a_v$ is a vertex of $\partial \vartheta
    _{\ov D,v}(x)$. Therefore, by Lemma~\ref{lemm:4}
  we deduce that $0$ is a vertex of $\partial \vartheta _{\ov
    D}({x})$. Hence \eqref{item:29} implies \eqref{item:25}. 

Since $\Delta _{D,\max}$ is nonempty,
  \eqref{item:25} implies \eqref{item:9}.

  Assume now that \eqref{item:9} is true.  For each $v$, let  $g_{1,v}$
  and $g_{2,v}$ be the continuous concave functions on $\Delta_{D}$
  in Notation~\ref{def:14}.  Since $\vartheta _{\ov
    D}=n_v g_{1,v}+n_vg_{2,v}$, 
  \begin{displaymath}
    \partial \vartheta _{\ov D}({x}) = n_{v}\partial g_{1,v}({x}) +
    n_{v}\partial g_{2,v}({x}) .  
  \end{displaymath}
  Lemma~\ref{lemm:4} and the definition of the set $B_{v}$ imply that this
  set  contains one single point~$u_v$, and that
  this point is a vertex of both $\partial g_{1,v}({x}) $ and of
  $-\partial g_{2,v}({x}) $. Hence $B_{v}$ is already a face of
  $\partial g_{1,v}({x}) $. Thus 
  $F_{v}=B_{v}=\{u_{v}\}$ and so \eqref{item:9} implies
  \eqref{item:26}.

  By Lemma~\ref{lem:measure minimum} it is clear that \eqref{item:26} implies
  \eqref{item:8} finishing the proof of the equivalence. 

  Assume now that $\ov D$ is monocritical. Since by Lemma
  \ref{lem:measure minimum} the point $\bfu$ in
  \eqref{item:29} satisfies that $\eta_{\ov D}(\bfu)=\min _{\bfnu\in
    \cH_{\K}}\eta_{\ov D}(\bfnu)$, it is the critical point. Following
  the proof of the equivalence we deduce that $F_{v}=\{u_v\}$ proving
  the last statement.
\end{proof}

For a given toric metrized $\R$-divisor, the condition of being
monocritical and its critical point behave well with respect to scalar
extensions. The following result
follows from the compatibility of toric metrics with scalar extensions in
\cite[Proposition 4.3.8]{BurgosPhilipponSombra:agtvmmh}.

\begin{prop} \label{prop:13} Let~$X$ and $\ov D$ as
  before. Let~$\F\subset\ov \K$ be a finite extension of~$\K$ and
  write $\ov D_{\F}$ for the toric metrized $\R$-divisor on $X_{\F}$
  obtained by scalar extension. If $\ov D$ is monocritical with
  critical point $(u_{v})_{v\in \fM_{\K}}$, then $\ov D_{\F}$ is also
  monocritical and its critical point $(u_{w})_{w\in \fM_{\F}}$ is
  given by $u_{w}=u_{v}$ for all $v\in \fM_{\K}$ and $w$ over $v$.
\end{prop}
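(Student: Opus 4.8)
The plan is to reduce the statement to the combinatorial criterion of Proposition~\ref{prop:14}, using that the roof functions are essentially unaffected by the scalar extension. First I would recall that $X_{\F}$ is the toric variety over $\F$ attached to the same complete fan $\Sigma$ on $N_{\R}$, that $D$ extends to a toric $\R$-divisor on $X_{\F}$ with the same virtual support function $\Psi_{D}$ and the same polytope $\Delta_{D}$ (so $D$ remains big), and that $\ov D_{\F}$ is again a semipositive toric metrized $\R$-divisor. By the compatibility of toric metrics with scalar extensions \cite[Proposition 4.3.8]{BurgosPhilipponSombra:agtvmmh}, for each $v\in\fM_{\K}$ and each $w\in\fM_{\F}$ with $w\mid v$ one has $\psi_{\ov D_{\F},w}=\psi_{\ov D,v}$; passing to Legendre--Fenchel duals gives $\vartheta_{\ov D_{\F},w}=\vartheta_{\ov D,v}$ on $\Delta_{D}$. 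Combining this with Proposition~\ref{prop:15}\eqref{item:19}, which gives $\sum_{w\mid v}n_{w}=n_{v}$, yields
\begin{displaymath}
  \vartheta_{\ov D_{\F}}=\sum_{w\in\fM_{\F}}n_{w}\vartheta_{\ov D_{\F},w}
  =\sum_{v\in\fM_{\K}}\Big(\sum_{w\mid v}n_{w}\Big)\vartheta_{\ov D,v}
  =\sum_{v\in\fM_{\K}}n_{v}\vartheta_{\ov D,v}=\vartheta_{\ov D}
\end{displaymath}
as functions on $\Delta_{D}$. Consequently $\Delta_{D,\max}$ is the same for $\ov D$ and $\ov D_{\F}$, and $\partial\vartheta_{\ov D_{\F}}(x)=\partial\vartheta_{\ov D}(x)$ for every $x\in\Delta_{D,\max}$.

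Next I would derive that $\ov D_{\F}$ is monocritical straight from Proposition~\ref{prop:14}. Since $\ov D$ is monocritical, the equivalence \eqref{item:8}$\Leftrightarrow$\eqref{item:9} provides a point $x\in\Delta_{D,\max}$ at which $0$ is a vertex of $\partial\vartheta_{\ov D}(x)$. By the first step both $\Delta_{D,\max}$ and this sup-differential are unchanged under the extension, so the same $x$ witnesses condition \eqref{item:9} for $\ov D_{\F}$; applying the implication \eqref{item:9}$\Rightarrow$\eqref{item:8} to $\ov D_{\F}$ shows $\ov D_{\F}$ is monocritical.

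Finally I would pin down the critical point. Fix $x\in\Delta_{D,\max}$. By Proposition~\ref{prop:14} applied to $\ov D_{\F}$, its critical point is the unique element of $H_{\F}\cap\prod_{w\in\fM_{\F}}\partial\vartheta_{\ov D_{\F},w}(x)$. Let $(u_{v})_{v}$ be the critical point of $\ov D$; by the last assertion of Proposition~\ref{prop:14} for $\ov D$ we have $u_{v}\in\partial\vartheta_{\ov D,v}(x)$ for all $v$. Define $u_{w}=u_{v}$ for $w\mid v$. Then $u_{w}\in\partial\vartheta_{\ov D,v}(x)=\partial\vartheta_{\ov D_{\F},w}(x)$, and Proposition~\ref{prop:15}\eqref{item:19} gives
\begin{displaymath}
  \sum_{w\in\fM_{\F}}n_{w}u_{w}=\sum_{v\in\fM_{\K}}\Big(\sum_{w\mid v}n_{w}\Big)u_{v}=\sum_{v\in\fM_{\K}}n_{v}u_{v}=0,
\end{displaymath}
so $(u_{w})_{w}$ lies in $H_{\F}\cap\prod_{w}\partial\vartheta_{\ov D_{\F},w}(x)$. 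By the uniqueness just invoked, $(u_{w})_{w}$ must be the critical point of $\ov D_{\F}$, which is exactly the assertion.

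The only genuinely technical input is the base-change identity $\psi_{\ov D_{\F},w}=\psi_{\ov D,v}$, and hence $\vartheta_{\ov D_{\F},w}=\vartheta_{\ov D,v}$, for $w\mid v$: here one must use \cite[Proposition 4.3.8]{BurgosPhilipponSombra:agtvmmh} together with the compatibility of the valuation maps $\val_{w}$ and $\val_{v}$ under $\K_{v}\hookrightarrow\F_{w}$. Everything else is a formal manipulation with the weight relation of Proposition~\ref{prop:15}\eqref{item:19} and with Proposition~\ref{prop:14}; notably, the convexity argument underlying monocriticality never has to be redone, as it is already packaged in that proposition.
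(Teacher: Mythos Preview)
Your proof is correct and follows exactly the route the paper intends: the paper's proof is the single sentence that the result ``follows from the compatibility of toric metrics with scalar extensions in \cite[Proposition 4.3.8]{BurgosPhilipponSombra:agtvmmh},'' and you have simply spelled out what that entails---namely $\psi_{\ov D_{\F},w}=\psi_{\ov D,v}$, hence $\vartheta_{\ov D_{\F},w}=\vartheta_{\ov D,v}$ and $\vartheta_{\ov D_{\F}}=\vartheta_{\ov D}$ via Proposition~\ref{prop:15}\eqref{item:19}, and then the monocriticality criterion and critical-point identification from Proposition~\ref{prop:14}.
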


We now give the criterion for modulus concentration at every place.

\begin{thm} \label{thm:3}
Let~$X$ and $\ov D$ be as before. The
  following conditions are equivalent:
  \begin{enumerate}
  \item \label{item:7} for every $\ov D$-small net
  $(p_{l})_{l\in I}$ of algebraic points of $X_{0}$ and every place $v\in
  \fM_{\K}$, the net of measures 
  $(\nu_{p_l,v})_{l\in I}$ converges.
  \item \label{item:12} the metrized $\R$-divisor $\ov D$ is monocritical; 
  \end{enumerate}
  When these equivalent conditions hold,
  \begin{displaymath}
    \lim_{l\in I} \nu_{p_l,v}=\delta _{u_{v}},
  \end{displaymath}
  where $(u_{v})_{v}$ is the critical point of $\ov D$.
\end{thm}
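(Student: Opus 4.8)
The plan is to deduce this equivalence from the results already assembled in this section, essentially by bookkeeping. First I would prove that \eqref{item:12} implies \eqref{item:7}. Assume $\ov D$ is monocritical with critical point $(u_v)_v\in H_{\K}$. By Proposition~\ref{prop:14} (the equivalence of \eqref{item:8} with \eqref{item:26}, together with the final sentence of that proposition), for every place $v$ the face $F_v$ consists of the single point $u_v$. Hence for each $v$, the equivalent conditions of Corollary~\ref{cor:3 bis} hold, so for every $\ov D$-small net $(p_l)_{l\in I}$ of algebraic points of $X_0$ the net $(\nu_{p_l,v})_{l\in I}$ converges in the weak-$\ast$ topology with respect to $\Cb(N_{\R})$, and the last statement of Corollary~\ref{cor:3 bis} identifies the limit as $\delta_{u_v}$. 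This gives \eqref{item:7} and the displayed formula simultaneously.

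For the converse, assume \eqref{item:7}; I must show $\ov D$ is monocritical. Fix a place $v$. Condition \eqref{item:7} in particular says that for every $\ov D$-small net $(p_l)_{l\in I}$ the net $(\nu_{p_l,v})_{l\in I}$ converges in the weak-$\ast$ topology with respect to $\Cb(N_{\R})$, hence a fortiori with respect to $\Cc(N_{\R})\subset\Cb(N_{\R})$, and in particular for generic such nets. This is exactly condition \eqref{item:27} of Corollary~\ref{cor:3 bis} for the place $v$, which is equivalent to condition \eqref{item:8 bis} there, namely that $F_v$ contains only one point. Since this holds for every $v\in\fM_{\K}$, we are in condition \eqref{item:26} of Proposition~\ref{prop:14}, which is equivalent to $\ov D$ being monocritical, i.e.\ \eqref{item:12}. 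I do not see a genuine obstacle here: the only mild subtlety is checking that the hypothesis of \eqref{item:7} (phrased for arbitrary, not necessarily generic, $\ov D$-small nets, and without naming a topology) indeed feeds into the more restrictive hypothesis of Corollary~\ref{cor:3 bis}\eqref{item:27}; this is immediate since convergence with respect to $\Cb(N_{\R})$ implies convergence with respect to $\Cc(N_{\R})$, and restricting to generic nets only weakens the hypothesis.

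Finally, once the equivalence of \eqref{item:7} and \eqref{item:12} is established, the limit formula $\lim_{l\in I}\nu_{p_l,v}=\delta_{u_v}$ was already obtained in the first paragraph from the last statement of Corollary~\ref{cor:3 bis} and the identification $F_v=\{u_v\}$ from Proposition~\ref{prop:14}; I would simply record it to close the proof. The whole argument is a short concatenation of Corollary~\ref{cor:3 bis} (place by place) with the global characterization of monocriticality in Proposition~\ref{prop:14}, so the expected length is a few lines rather than a long computation.
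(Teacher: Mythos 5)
Your proposal is correct and matches the paper's own proof exactly: the paper also derives Theorem~\ref{thm:3} as an immediate consequence of Corollary~\ref{cor:3 bis} (applied place by place) and Proposition~\ref{prop:14}, with the limit identification $F_v=\{u_v\}$ giving the displayed formula. The only minor inefficiency is that for the converse direction you route through condition~\eqref{item:27} of Corollary~\ref{cor:3 bis}, when condition~\eqref{item:9 bis} there is verbatim the same as \eqref{item:7} fixed at a place $v$; both routes are valid since the three conditions in that corollary are equivalent.
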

\begin{proof} The theorem follows directly from  Corollary~\ref{cor:3
    bis} and Proposition~\ref{prop:14}.
\end{proof}

When there is modulus concentration for every place, we can show
that the convergence holds not only in the weak-$\ast$ topology with
respect to $\Cb(N_{\R})$ but even in
the stronger adelic KR-topology.

\begin{thm}
\label{thm:6}
Let~$X$ and $\ov D$ be as before. Assume
that $\ov D$ is monocritical. Let $\bfu = (u_{v})_{v}$ be the critical point
of $\ov D$ and set $\bfdelta_{\bfu} = (\delta _{u_{v}})_{v}\in \cH_{\K}$. Then, for every $\ov D$-small net $(p_{l})_{l\in I}$ of
algebraic points of  $X_{0}$, the net of centered adelic measures
$(\bfnu_{p_{l}})_{l\in I}$ converges to $\bfdelta_{\bfu}$ in the adelic
KR-topology.  In particular, for every $v\in \fM_{\K}$, the net of
measures $(\nu_{p_{l},v})_{l\in I}$ converges to $\delta_{u_{v}}$ in
the KR-topology.
\end{thm}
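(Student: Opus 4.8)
The plan is to exhibit $\bfdelta_{\bfu}=(\delta_{u_{v}})_{v}$ as the unique minimizer of $\eta_{\ov D}$ on $\cH_{\K}$, to observe that the energies $\eta_{\ov D}(\bfnu_{p_{l}})=\h_{\ov D}(p_{l})$ converge to that minimum in a quantitatively controlled way, and then to exploit the convex geometry of the roof functions to turn this into convergence for $W_{\K}$. First I would recall that, since $\ov D$ is monocritical, $\min_{\cH_{\K}}\eta_{\ov D}=\upmu^{\ess}_{\ov D}(X)$ (Corollary~\ref{cor:4}) and, by Definition~\ref{def:16} together with Lemma~\ref{lem:measure minimum} and the equalities $F_{v}=B_{v}=\{u_{v}\}$ from Proposition~\ref{prop:14}, this minimum is attained only at $\bfdelta_{\bfu}$. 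As $(p_{l})_{l}$ is $\ov D$-small, $\eta_{\ov D}(\bfnu_{p_{l}})=\h_{\ov D}(p_{l})\to\upmu^{\ess}_{\ov D}(X)$; by Lemma~\ref{lemm:1 bis} (applied to the centered adelic measure $\bfnu_{p_{l}}$) this forces $0\le -n_{v}\Phi_{v}(\nu_{p_{l},v})\le\h_{\ov D}(p_{l})-\upmu^{\ess}_{\ov D}(X)\to 0$, hence $\Phi_{v}(\nu_{p_{l},v})\to 0$ for every $v$, and by Theorem~\ref{thm:3} the net $(\nu_{p_{l},v})_{l}$ converges to $\delta_{u_{v}}$ in $\cP$ for each $v$. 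It remains to upgrade this weak-$*$ convergence to the adelic Kantorovich--Rubinstein topology, i.e.\ to prove $\sum_{v}n_{v}\,W(\nu_{p_{l},v},\delta_{u_{v}})=\sum_{v}n_{v}\int\|u-u_{v}\|\,\dd\nu_{p_{l},v}\to 0$.

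Next I would fix $x\in\ri(\Delta_{D,\max})$ and set $K=\{w\in N_{\R}:\langle w,z-x\rangle\ge 0\ \text{for all }z\in\Delta_{D}\}$. Since $\Delta_{D}$ has nonempty interior ($D$ is big), $K$ is a pointed cone, and it is the recession cone both of $A_{v}=\partial\vartheta_{\ov D,v}(x)$ and of $\partial g_{2,v}(x)$ for every $v$; choose $\ell\in M_{\R}$ with $\langle\ell,w\rangle>0$ for all $w\in K\setminus\{0\}$. With $\hphi_{i,v}$ as in Lemma~\ref{lemm:1 bis}, formula~\eqref{eq:38} reads $\h_{\ov D}(p_{l})-\upmu^{\ess}_{\ov D}(X)=\sum_{v}n_{v}\int(-\hphi_{1,v})\,\dd\nu_{p_{l},v}$ with all summands nonnegative (Lemma~\ref{lem:flatification}\eqref{item:5}); hence $\int(-\hphi_{1,v})\,\dd\nu_{p_{l},v}\to 0$ for each $v$ and $\sum_{v}n_{v}\int(-\hphi_{1,v})\,\dd\nu_{p_{l},v}\to 0$. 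The key convex-analytic facts are: $-\hphi_{1,v}=\langle x,\cdot\rangle-\psi_{\ov D,v}-\vartheta_{\ov D,v}(x)$ is nonnegative, convex, vanishes exactly on $A_{v}$, and its recession function in a direction $d$ equals $\langle x,d\rangle-\Psi_{D}(d)=\max_{z\in\Delta_{D}}\langle x-z,d\rangle$, which is $>0$ precisely for $d\notin K$; moreover for $v$ outside the finite set $S_{0}$ of places where $\psi_{\ov D,v}\ne\Psi_{D}$ or $u_{v}\ne 0$, the function $-\hphi_{1,v}=\langle x,\cdot\rangle-\Psi_{D}$ is positively homogeneous, so $-\hphi_{1,v}\ge c_{0}\,\dist(\cdot,K)$ for a fixed $c_{0}>0$; the same applies to $-\hphi_{2,v}$ relative to $\partial g_{2,v}(x)$.

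The core of the proof is the first-moment estimate. Writing $\pi_{K}$ for the metric projection onto $K$, one has the elementary inequalities $\|u\|\le(1+\|\ell\|/c_{0})\,\dist(u,K)+c_{0}^{-1}\langle\ell,u\rangle^{+}$ and $\langle\ell,u\rangle^{-}\le\|\ell\|\,\dist(u,K)$. Summing over the non-special places, using $-\hphi_{1,v}\ge c_{0}\dist(\cdot,K)$ there, and expanding $\langle\ell,u\rangle^{+}=\langle\ell,u\rangle+\langle\ell,u\rangle^{-}$, the product formula $\sum_{v}n_{v}\exv[\nu_{p_{l},v}]=0$ gives
\[
\textstyle\sum_{v\notin S_{0}}n_{v}\int\|u\|\,\dd\nu_{p_{l},v}\ \le\ C\sum_{v\notin S_{0}}n_{v}\int(-\hphi_{1,v})\,\dd\nu_{p_{l},v}\ -\ c_{0}^{-1}\sum_{v\in S_{0}}n_{v}\langle\ell,\exv[\nu_{p_{l},v}]\rangle .
\]
For the finitely many $v\in S_{0}$ one uses that $\Phi_{v}(\nu_{p_{l},v})\to 0$ yields, besides $\int(-\hphi_{1,v})\,\dd\nu_{p_{l},v}\to 0$, also $(-\hphi_{2,v})(-\exv[\nu_{p_{l},v}])\to 0$; since $A_{v}\cap(-\partial g_{2,v}(x))=\{u_{v}\}$ with $u_{v}$ a vertex of $A_{v}$ (Proposition~\ref{prop:14}) and both $-\hphi_{1,v},-\hphi_{2,v}$ are coercive off the common recession cone $K$, these two conditions together prevent mass of $\nu_{p_{l},v}$ from escaping along $\pm K$, force $\exv[\nu_{p_{l},v}]\to u_{v}$, and — with the weak-$*$ convergence $\nu_{p_{l},v}\to\delta_{u_{v}}$ — give $\int\|u-u_{v}\|\,\dd\nu_{p_{l},v}\to 0$, i.e.\ $W(\nu_{p_{l},v},\delta_{u_{v}})\to 0$ (alternatively, once the first moments are shown to converge this is \cite[Theorem~6.9]{Villani:oton}). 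Feeding $\exv[\nu_{p_{l},v}]\to u_{v}$ for $v\in S_{0}$, $u_{v}=0$ for $v\notin S_{0}$ and $\sum_{v}n_{v}u_{v}=0$ into the displayed bound yields $\sum_{v\notin S_{0}}n_{v}\int\|u\|\,\dd\nu_{p_{l},v}\to 0$. Altogether $\sum_{v}n_{v}W(\nu_{p_{l},v},\delta_{u_{v}})\to 0$, that is $\bfnu_{p_{l}}\to\bfdelta_{\bfu}$ in the adelic KR-topology; the final assertion is the one-place specialization.

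The step I expect to be the main obstacle is precisely the analysis at the special places $v\in S_{0}$: there $-\hphi_{1,v}$ need not grow linearly (only uniformly off $K$), it may be completely flat along the whole cone $K$, and $A_{v}$ is in general strictly larger than $\{u_{v}\}$, so ruling out escape of mass to infinity along $K$ really requires playing the two nonpositive terms of $\Phi_{v}$ — equivalently the convex bodies $\partial\vartheta_{\ov D,v}(x)$ and $-\partial g_{2,v}(x)$, which meet only at their common vertex $u_{v}$ — against one another, and then transferring the resulting control to the infinitely many remaining places through the product formula. Making the coercivity-modulo-$K$ estimates rigorous (the rate at which the transverse width of $\{-\hphi_{i,v}\le c\}$ tends to $0$ with $c$, and the behaviour of $-\hphi_{i,v}$ along $\pm K$) is the technical heart; once it is in place, the argument is essentially bookkeeping with identity~\eqref{eq:38} and the centering condition.
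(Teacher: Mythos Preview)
Your setup is sound: you correctly extract from Lemma~\ref{lemm:1 bis} that $\Phi_{v}(\nu_{p_{l},v})\to 0$ for every~$v$, and you correctly invoke Theorem~\ref{thm:3} for the weak-$*$ convergence $\nu_{p_{l},v}\to\delta_{u_{v}}$. The problem is exactly the one you flag in your last paragraph, and it is a genuine gap rather than a technicality. At the special places your functions $-\hphi_{1,v}$ vanish on the unbounded set $A_{v}$ (recession cone~$K$), so $\int(-\hphi_{1,v})\,\dd\nu_{p_{l},v}\to 0$ gives no direct first-moment control along~$K$; you then try to compensate via $-\hphi_{2,v}(-\exv[\nu_{p_{l},v}])\to 0$, which only controls the \emph{expectation}. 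But weak-$*$ convergence to a Dirac plus convergence of expectations does \emph{not} imply KR-convergence (think of $(1-\tfrac{2}{l})\delta_{0}+\tfrac{1}{l}\delta_{w_{l}}+\tfrac{1}{l}\delta_{w'_{l}}$ with $w_{l},w'_{l}\in A_{v}$ escaping in opposite directions of a higher-dimensional~$K$ so that the mean stays bounded). To rule this out from $\Phi_{v}\to 0$ alone you would need a quantitative stability statement for the decomposition $0=u_{v}+(-u_{v})$ inside $\partial g_{1,v}(x)+\partial g_{2,v}(x)$, and you have not supplied one. Without it, neither $\exv[\nu_{p_{l},v}]\to u_{v}$ nor $\int\|u-u_{v}\|\,\dd\nu_{p_{l},v}\to 0$ is justified, and the displayed bound for $v\notin S_{0}$ cannot be closed.

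The paper avoids this difficulty entirely by a change of auxiliary function and of base point. Instead of working with~$\hphi_{1,v}$ at $x\in\ri(\Delta_{D,\max})$, set
\[
\eta'(\bfnu)=-\sum_{v}n_{v}\int \Psi_{D}(u-u_{v})\,\dd\nu_{v}(u)
=\eta_{\ov D}(\bfnu)+\sum_{v}n_{v}\int f_{v}\,\dd\nu_{v},
\qquad f_{v}(u)=\psi_{\ov D,v}(u)-\Psi_{D}(u-u_{v}).
\]
Each $f_{v}$ is \emph{bounded} and continuous, and $f_{v}=0$ for all but finitely many~$v$; hence the weak-$*$ convergence $\nu_{p_{l},v}\to\delta_{u_{v}}$ already gives $\sum_{v}n_{v}\int f_{v}\,\dd\nu_{p_{l},v}\to\sum_{v}n_{v}f_{v}(u_{v})$, and one computes $\eta'(\bfnu_{p_{l}})\to 0$ using Corollary~\ref{cor:4}. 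Now choose $x$ in the \emph{interior of $\Delta_{D}$} (not of $\Delta_{D,\max}$): then $\langle x,\cdot\rangle-\Psi_{D}$ is coercive in \emph{every} direction, so there is $c>0$ with $\|u\|\le c\,(\langle x,u\rangle-\Psi_{D}(u))$ for all~$u$. Applying this to $u-u_{v}$, summing, and using both halves of the centering condition ($\sum_{v}n_{v}\exv[\nu_{p_{l},v}]=0$ and $\sum_{v}n_{v}u_{v}=0$) to kill the linear term yields
\[
W_{\K}(\bfnu_{p_{l}},\bfdelta_{\bfu})\le\sum_{v}n_{v}\int\|u-u_{v}\|\,\dd\nu_{p_{l},v}\le c\,\eta'(\bfnu_{p_{l}})\longrightarrow 0.
\]
The whole recession-cone analysis disappears: by passing to the shifted canonical height and moving the base point into $\operatorname{int}(\Delta_{D})$, the comparison function becomes uniformly coercive, and the ``bounded remainder'' $f_{v}$ is handled by the weak-$*$ convergence you already have. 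That is the idea your sketch is missing.
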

 
\begin{proof}
  For each $v\in \fM_{\K}$, let $f_{v}\colon N_{\R}\to \R$ be the
  function given by
\begin{displaymath}
  f_{v}(u)= \psi_{\ov D,v}(u) -\Psi_{D}(u-u_{v}).
\end{displaymath}
This is an adelic family of bounded continuous functions on $N_{\R}$
with  $f_{v}=0$ for all but a finite number of $v$. 
Consider then the function  $\eta'\colon \cH_{\K}\to \R$  given by 
\begin{equation*}
  \eta'(\bfnu)= \eta_{\ov D}(\bfnu)+\sum_{v} n_{v}\int
  f_{v}\dd\nu_{v}=-\sum_{v}n_{v}\int
  \Psi_{D}(u-u_{v}) \dd\nu_{v} .
\end{equation*}
Since the net $(p_{l})_{l\in I}$ is $\ov D$-small,  
\begin{displaymath}
\lim_{l}\eta_{\ov D}(\bfnu_{p_{l}})= \lim_{l}\h_{\ov D}({p_{l}})=\upmu_{\ov D}^{\ess}(X).  
\end{displaymath}
By Theorem
\ref{thm:3}, the net of measures $(\nu_{p_{l},v})_{l\in
  I}$ converges to $\delta_{u_{v}}$. Using Corollary~\ref{cor:4}, we deduce that 
\begin{equation}  \label{eq:24}
  \lim_{l}\eta'(\bfnu_{p_{l}})= 
\lim_{l}\eta_{\ov D}(\bfnu_{p_{l}})
+\sum_{v} n_{v}\int
  f_{v}\dd\delta_{u_{v}} = 
\upmu_{\ov D}^{\ess}(X)+ \sum_{v} n_{v}
  f_{v}({u_{v}})=0.
\end{equation}

Choose a point $x$ in the interior of $\Delta_{D}$. 
Then there is a constant
$c>0$ such that, for all $u\in N_{\R}$, 
\begin{displaymath}
 \|u\|\le -c\, (\Psi_{D}-x)(u).  
\end{displaymath}
It follows from  the definition of the Kantorovich-Rubinstein distance that, for
each~$v\in \fM_{\K}$, 
\begin{displaymath}
  W(\nu_{p_{l},v}, \delta_{u_{v}})  \le \int \|u-u_{v}\|
  \dd\nu_{p_{l},v}(u). 
\end{displaymath}
Hence
\begin{multline*}
W_{\K}(\bfnu_{p_{l}},\bfdelta_{\bfu})   \le \sum_{v}n_{v}  \int \|u-u_{v}\|
  \dd\nu_{p_{l},v}(u)\\ \le -c\sum_{v}n_{v}\int
  (\Psi_{D}-x)(u-u_{v}) \dd\nu_{p_{l},v} (u)=c\, \eta'(\bfnu_{p_{l}}),
\end{multline*}
where the last equality follows from the product formula in
Proposition \ref{prop:15}\eqref{item:20}.  By \eqref{eq:24}, this
distance converges to 0, completing the proof.
\end{proof}

\section{Equidistribution of Galois orbits and the Bogomolov property}
\label{sec:equid-galo-orbits}

We turn to the study of the limit measures of Galois orbits of $\ov
D$-small nets of algebraic points in toric varieties.  In this
section, we denote by $X$ a proper toric variety over a global field
$\K$ and $\ov D$ a toric metrized $\R$-divisor on $X$ with $D$ big.
For~$v\in \fM_{\K}$, recall that $\val_{v}\colon \T_{v}^{\an}\to
N_{\R}$ denotes the valuation map, defined in~\eqref{eq:44}.

We first describe the limit measures in the monocritical case.

\begin{defn}
  \label{def:10}
  Given $v\in \fM_{\K}$ and $u\in N_{\R}$, the probability measure
  $\lambda_{\SS_{v},u} $ on~$X_{{v}}^{\an}$ is defined as follows.

  \begin{enumerate}
  \item When $v$ is Archimedean, note that $\val_{v}^{-1}(u)=\SS_{v} \cdot p$ for
     any point $p\in \val_{v}^{-1}(u)$ and where
     $\SS_{v}=\val_{v}^{-1}(0)\simeq (S^{1})^{n}$ is the compact torus of
    $\T^{\an}_{v}$.  In this case,
    $\lambda _{\SS_{v},u}$ is the direct image under the translation by
    $p$ of the Haar probability measure of~$\SS_{v}$.
  \item When $v$ is non-Archimedean,  consider the multiplicative seminorm
    on the group algebra $\C_{v}[M]\simeq \C_{v}[x_{1}^{\pm1}, \dots,
    x_{n}^{\pm1}]$ that, to a Laurent polynomial $\sum_{m\in
      M}\alpha_{m}\chi^{m}$, assigns the value $\max_{m}
    (|\alpha_{m}|_{v}\e^{-\langle m,u\rangle})$. This seminorm gives a
    point, denoted by~$\theta(u)$, in the Berkovich space
    $X_{{v}}^{\an}$.  The point~$\theta(u)$  is of type II or III in Berkovich's
    classification and lies in the preimage $\val^{-1}_{v}(u)$. We then
    set $\lambda _{\SS_{v},u}=\delta _{\theta(u)}$, the Dirac measure at
    this point.
  \end{enumerate}
\end{defn}

The following result corresponds to Theorem~\ref{thm:10} in the
introduction, and shows that modulus concentration at every place
implies the equidistribution property at every place.  Due to the
existing equidistribution theorems in the literature, we restrict its
statement to divisors (rather than $\R$-divisors). 

\begin{thm}\label{thm:5}
  Let $X$ be a proper toric variety over $\K$ and $\overline D$ a
  semipositive toric metrized divisor on $X$ with $D$ big.
  The following conditions are equivalent:
\begin{enumerate}
  \item \label{item:16} for every generic $\ov D$-small net $(p_{l})_{l\in I}$
    of algebraic points of $X_{0}$ and every place $v\in \fM_{\K}$,
    the net of probability measures $(\mu_{p_l,v})_{l\in I}$ on $X_{{v}}^{\an}$
    converges;
  \item \label{item:18} the metrized divisor $\ov D$ is monocritical. 
\end{enumerate}

When these equivalent conditions hold, the limit measure in
\eqref{item:16} is~$\lambda _{\SS_{v},u_{v}}$, with $u_{v}\in
N_{\R}$ the $v$-adic component of the critical point of $\ov D$.
\end{thm}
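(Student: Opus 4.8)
The plan is to derive both implications from the results already at hand: Theorem~\ref{thm:4} (the classical toric equidistribution theorem of Yuan and Gubler for quasi-canonical metrics), Theorem~\ref{thm:3} (monocriticality is equivalent to modulus concentration at every place), Theorem~\ref{thm:6} (convergence in the adelic KR-topology in the monocritical case), Proposition~\ref{prop:14} (the combinatorial description of the critical point via the roof function), and Corollary~\ref{cor:3 bis}/Corollary~\ref{cor:2} (realizing two distinct modulus limits when $F_v$ has more than one point). I would begin by observing that the $v$-adic valuation map $\val_v\colon \T_v^{\an}\to N_{\R}$ pushes $\mu_{p,v}$ forward to $\nu_{p,v}$, so whenever $(\mu_{p_l,v})_l$ converges its image $(\nu_{p_l,v})_l$ converges as well; this makes the failure direction of \eqref{item:16}$\Rightarrow$\eqref{item:18} essentially a restatement of what is already done in Corollary~\ref{cor:3 bis}. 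Concretely, if $\ov D$ is not monocritical, Proposition~\ref{prop:14} gives a place $v$ with $F_v$ containing at least two points, and then Corollary~\ref{cor:2} produces two generic $\ov D$-small nets whose associated $\nu_{p_l,v}$ converge to two different measures; interleaving them and pushing forward along $\val_v$ yields a generic $\ov D$-small net for which $(\mu_{p_l,v})_l$ does not converge, contradicting~\eqref{item:16}. (One must check that interleaving two generic $\ov D$-small nets over a common directed set again gives a generic $\ov D$-small net — this is routine, using the product of the two index sets.)

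For the substantive direction \eqref{item:18}$\Rightarrow$\eqref{item:16}, together with the identification of the limit, the idea is the ``change of metric'' trick mentioned in the introduction: from a monocritical $\ov D$ with critical point $\bfu=(u_v)_v\in H_{\K}$, construct a new toric metric on the \emph{same} divisor $D$, call it $\ov D'$, with metric functions $\psi_{\ov D',v}(u) = \Psi_D(u-u_v)$; equivalently, $\ov D'$ is the translate by $\bfu$ of the canonical metric $\ov D^{\can}$. Since each $\Psi_D(\,\cdot\, - u_v)$ differs from $\Psi_D$ by a bounded amount and equals $\Psi_D$ for all but finitely many $v$, this defines a semipositive toric metrized divisor on $D$ (using that $D$ is nef, as $D$ big and $\ov D$ semipositive force concavity of $\Psi_D$ on the relevant cones — more precisely $D$ semiample here since the statement is restricted to divisors; one should record that $D$ can be taken ample after the standard reduction, or invoke the semiample version of Theorem~\ref{thm:9}). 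One computes that the global roof function of $\ov D'$ is constant, so $\ov D'$ is quasi-canonical by Proposition~\ref{prop:10}, and that $\upmu^{\ess}_{\ov D'}(X) = \upmu^{\ess}_{\ov D}(X)$. The key point is then that a $\ov D$-small net is also $\ov D'$-small: this follows from Theorem~\ref{thm:6}, which gives $\bfnu_{p_l}\to\bfdelta_{\bfu}$ in the adelic KR-topology, combined with the fact that $\h_{\ov D'}(p) = -\sum_v n_v\int \Psi_D(u-u_v)\,\dd\nu_{p,v}$ depends KR-continuously on $\bfnu_p$ (the relevant integrands are Lipschitz, as in Lemma~\ref{lem:measure minimum}) and equals $\upmu^{\ess}_{\ov D'}(X)$ at $\bfdelta_{\bfu}$. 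Hence any generic $\ov D$-small net is a generic $\ov D'$-small net, Theorem~\ref{thm:4} applies to $\ov D'$, and $(\mu_{p_l,v})_l$ converges to the normalized Monge-Amp\`ere measure of $\ov D'$ at $v$.

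It remains to identify that limit measure with $\lambda_{\SS_v,u_v}$. The metric $\ov D'$ at $v$ is the $v$-adic canonical metric of $D$ translated by the point of $\T_v^{\an}$ lying over $u_v$ (the Gauss-type point $\theta(u_v)$ in the non-Archimedean case, or any point of $\val_v^{-1}(u_v)$ composed with an $\SS_v$-average in the Archimedean case); since the Monge-Amp\`ere measure is equivariant under such translations and the canonical Monge-Amp\`ere measure of $D$ is the standard measure on $\SS_v$ (respectively the Dirac mass at the Gauss point), normalizing by $\deg_D(X)$ gives exactly $\lambda_{\SS_v,u_v}$ as in Definition~\ref{def:10}; this is essentially \cite[\S~4]{BurgosPhilipponSombra:agtvmmh} applied to a translate. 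The main obstacle I anticipate is the bookkeeping in the change-of-metric step: verifying carefully that $\ov D'$ is genuinely semipositive, that its global roof function is constant (hence quasi-canonical), and above all that $\ov D$-smallness transfers to $\ov D'$-smallness — the last requires the full strength of the adelic KR-convergence in Theorem~\ref{thm:6}, not merely weak-$*$ convergence, because the test function $\Psi_D(\,\cdot\,-u_v)$ is unbounded. Everything else is either a direct citation or a routine interleaving argument.
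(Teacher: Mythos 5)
Your proposal follows essentially the same route as the paper's proof: push the measures forward along $\val_v$, invoke Corollary~\ref{cor:3 bis} and Proposition~\ref{prop:14} for the direction \eqref{item:16}$\Rightarrow$\eqref{item:18}, and reduce the direction \eqref{item:18}$\Rightarrow$\eqref{item:16} to the quasi-canonical case via the change-of-metric trick of Proposition~\ref{prop:6} and Theorem~\ref{thm:6}, then cite Theorem~\ref{thm:4} and identify the Monge--Amp\`ere measure of the translated canonical metric with $\lambda_{\SS_v,u_v}$.

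The one place where you hand-wave and the paper is careful is the gap between $D$ big and the hypothesis of Theorem~\ref{thm:4}, which requires $D$ ample. You suggest either ``taking $D$ ample after a standard reduction'' or ``invoking a semiample version of Theorem~\ref{thm:9},'' neither of which is spelled out. The paper's resolution is concrete: using the characterization of semipositive toric metrics in \cite[Theorem 4.8.1]{BurgosPhilipponSombra:agtvmmh}, it passes to the polarized toric variety $(Y,E)$ associated to the polytope $\Delta_D$, where $E$ is ample and has the same metric functions $\psi_{\ov E,v}=\psi_{\ov D,v}$, hence the same critical point; the change of metric and Theorem~\ref{thm:4} are then applied to $\ov E$ on $Y$, and the identification of the limit is carried out there via the real Monge--Amp\`ere measure $\cM(\psi_{\ov E',v})=\vol_M(\Delta_D)\,\delta_{u_v}$ and \cite[Theorem 4.8.11]{BurgosPhilipponSombra:agtvmmh}. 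You should incorporate this step explicitly rather than appeal to an unstated semiample variant; the rest of your argument, including the adelic KR-continuity of $\h_{\ov D'}$ used to transfer $\ov D$-smallness, matches the paper.
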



The proof of Theorem~\ref{thm:5} is done by reduction to the
quasi-canonical case.
The following is the characterization of quasi-canonical toric
metrized $\R$-divisors in \cite{BurgosPhilipponSombra:smthf}.

\begin{prop}
  \label{prop:10}
  Let $X$ be a proper toric variety over $\K$ and $\ov D$ a
  semipositive toric metrized $\R$-divisor on $X$ with $D$ big. 
The following conditions are equivalent: 
\begin{enumerate}
\item \label{item:22}  $\ov D$
  is quasi-canonical (Definition~\ref{def:1});
\item \label{item:23} $\vartheta_{\ov D}$ is constant;
\item \label{item:24} there is $\bfu=(u_{v})_{v}\in H_{\K}$ and
  $(\gamma_{v})_{v}\in \bigoplus_{v\in  \fM_{\K}}\R$ such that
  \begin{equation*}
    \psi _{\ov D,v}(u)=\Psi_{D}(u-u_{v}) -\gamma_{v}
  \end{equation*}
  for all $v\in \mathfrak{M}_{\K}$ and $ u\in N_{\R}$.
\end{enumerate}
\end{prop}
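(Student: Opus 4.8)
The plan is to prove the cycle of implications $\eqref{item:24}\Rightarrow\eqref{item:23}\Rightarrow\eqref{item:22}\Rightarrow\eqref{item:23}\Rightarrow\eqref{item:24}$, or more economically $\eqref{item:22}\Leftrightarrow\eqref{item:23}$ and $\eqref{item:23}\Leftrightarrow\eqref{item:24}$, invoking the Legendre--Fenchel dictionary between the metric functions $\psi_{\ov D,v}$ and the roof functions $\vartheta_{\ov D,v}$ recalled in \S\ref{sec:modul-distr-toric}, together with the toric integration formulas of \cite{BurgosPhilipponSombra:agtvmmh,BurgosMoriwakiPhilipponSombra:aptv}.

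First, for $\eqref{item:24}\Rightarrow\eqref{item:23}$: if $\psi_{\ov D,v}(u)=\Psi_D(u-u_v)-\gamma_v$, then taking Legendre--Fenchel duals and using that the dual of a translate $\Psi_D(\cdot-u_v)$ is $x\mapsto \Psi_D^\vee(x)+\langle u_v,x\rangle$, while $\Psi_D^\vee$ is the zero function on $\Delta_D$ (this is exactly the canonical case of Example~\ref{exm:6}), one gets $\vartheta_{\ov D,v}(x)=\langle u_v,x\rangle+\gamma_v$ on $\Delta_D$. Summing with weights $n_v$ and using $\sum_v n_v u_v=0$ (because $\bfu\in H_{\K}$) gives $\vartheta_{\ov D}(x)=\sum_v n_v\gamma_v$, a constant. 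For the converse $\eqref{item:23}\Rightarrow\eqref{item:24}$: if $\vartheta_{\ov D}$ is constant, then by \eqref{eq:10} its value equals $\upmu^{\ess}_{\ov D}(X)$, and $\Delta_{D,\max}=\Delta_D$; I would pick a point $x$ in the interior of $\Delta_D$ and use Corollary~\ref{cor:4} (or directly the structure of the minimizing point in $H_{\K}$) to extract $\bfu=(u_v)_v\in H_{\K}$ with $u_v\in\partial\vartheta_{\ov D,v}(x)$ for every $v$; since $\vartheta_{\ov D}$ is constant, each $\vartheta_{\ov D,v}$ must itself be affine on $\Delta_D$ (their weighted sum being constant forces each, being concave, to be affine with slopes summing to zero), say $\vartheta_{\ov D,v}(x)=\langle u_v,x\rangle+\gamma_v$, and then dualizing back recovers $\psi_{\ov D,v}(u)=\vartheta_{\ov D,v}^\vee(u)=\Psi_D(u-u_v)-\gamma_v$ because $\psi_{\ov D,v}$ is concave with $\psi_{\ov D,v}^{\vee\vee}=\psi_{\ov D,v}$ and $\vartheta_{\ov D,v}=\psi_{\ov D,v}^\vee$.

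The remaining equivalence $\eqref{item:22}\Leftrightarrow\eqref{item:23}$ is the substantive one and is where I expect the main work to lie. The direction $\eqref{item:23}\Rightarrow\eqref{item:22}$ can be obtained from the formula for the height of a toric variety as the integral of the roof function over $\Delta_D$ with respect to the (suitably normalized) Lebesgue measure, combined with the formula \eqref{eq:10} for the essential minimum as the maximum of $\vartheta_{\ov D}$; when $\vartheta_{\ov D}$ is constant, both sides of the quasi-canonical equality reduce to that constant times $\deg_D(X)$ divided by the appropriate factor, so equality holds. The harder direction $\eqref{item:22}\Rightarrow\eqref{item:23}$ uses the same integral formula in the reverse sense: quasi-canonicity forces $\frac{1}{\vol(\Delta_D)}\int_{\Delta_D}\vartheta_{\ov D}\,\dd x = \max_{\Delta_D}\vartheta_{\ov D}$ (after matching the combinatorial normalizations $\h_{\ov D}(X)=(n+1)!\int_{\Delta_D}\vartheta_{\ov D}\,\dd\vol$ and $\deg_D(X)=n!\vol(\Delta_D)$), and a continuous concave function whose average over a full-dimensional convex body equals its maximum must be constant. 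The main obstacle, then, is bookkeeping the normalization constants so that Zhang's inequality \eqref{eq:2} becomes exactly the statement ``average $\le$ max'' for $\vartheta_{\ov D}$; once that identification is made, the concavity argument is immediate. Since this proposition is quoted from \cite{BurgosPhilipponSombra:smthf}, I would cite that reference for the height-integral formula and keep the proof here to the Legendre-duality manipulations above.
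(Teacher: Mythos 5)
Your reconstruction is correct; note that the paper itself gives no independent proof, simply citing \cite[Proposition~4.6 and Corollary~4.7]{BurgosPhilipponSombra:smthf}, and the argument you lay out is surely what lies behind those citations. The Legendre--Fenchel manipulations for $\eqref{item:23}\Leftrightarrow\eqref{item:24}$ are right, and in the direction $\eqref{item:23}\Rightarrow\eqref{item:24}$ you can drop the appeal to Corollary~\ref{cor:4}: once a weighted sum of concave functions is constant, each summand is simultaneously concave and convex, hence affine, and the slope of $\vartheta_{\ov D,v}$ is already the $u_v$ you need, with $\sum_v n_v u_v=0$ read off from the zero slope of $\vartheta_{\ov D}$ and $u_v=0$ for almost all $v$ coming for free from $\vartheta_{\ov D,v}\equiv 0$ for almost all $v$. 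For $\eqref{item:22}\Leftrightarrow\eqref{item:23}$, the normalizations $\deg_D(X)=n!\vol_{M}(\Delta_D)$ and $\h_{\ov D}(X)=(n+1)!\int_{\Delta_D}\vartheta_{\ov D}\dd\vol_{M}$ (the toric height formula of \cite{BurgosPhilipponSombra:agtvmmh}, applicable here because $\ov D$ is semipositive) collapse Zhang's bound to the statement that the average of $\vartheta_{\ov D}$ over $\Delta_D$ equals its maximum; concavity is actually superfluous at this last step, since any continuous function on a compact body of positive measure whose average attains its maximum must be constant.
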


\begin{proof}
  The equivalence of \eqref{item:22} and
  \eqref{item:24} is given by \cite[Corollary
  4.7]{BurgosPhilipponSombra:smthf}. The equivalence of
  \eqref{item:22} and \eqref{item:23} is given in the course of the proof of
  \cite[Proposition~4.6]{BurgosPhilipponSombra:smthf}, recalling
  that~$\vol(D) = \deg_D(X)$ and noting that, since by assumption~$\ov D$ is
  semipositive, $\avol_{\chi}(\ov D) = \h_{\ov D}(X)$.
\end{proof}

The following result gives the key step in the proof of Theorem
\ref{thm:5}. 

\begin{prop} \label{prop:6} Let $X$ be a proper toric variety over
  $\K$ and $\overline D$ a monocritical metrized $\R$-divisor on $X$
  with critical point $\bfu=(u_{v})_{v\in \fM_{\K}}$.  Let $\ov D'$ be the toric
  metrized $\R$-divisor over $D$ corresponding to the family of
  concave functions $ \psi _{\ov D',v}\colon N_{\R}\to \R$, $v\in
  \fM_{  \K}$, given by
  \begin{equation}\label{eq:16}
    \psi _{\ov D',v}(u)=\Psi _{D}(u-u_{v}) . 
  \end{equation}
  Then $\ov D'$ is quasi-canonical and every $\ov D$-small net of
  algebraic points of $X_{0}$ is also~$\ov D'$-small.
\end{prop}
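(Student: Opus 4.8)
The plan is to verify the two assertions separately, using the characterizations established earlier in the paper. For the first assertion, that $\ov D'$ is quasi-canonical, I would simply apply Proposition~\ref{prop:10}: the metric functions $\psi_{\ov D',v}(u) = \Psi_D(u-u_v)$ are exactly of the form required in condition \eqref{item:24} of that proposition, with $\gamma_v = 0$ for all $v$ and $\bfu=(u_v)_v$. Since $\ov D$ is monocritical, its critical point $\bfu$ lies in $H_{\K}$ by Corollary~\ref{cor:4} (and Definition~\ref{def:16}), so $\sum_v n_v u_v = 0$ as needed. One should also check that $\ov D'$ is a well-defined semipositive toric metrized $\R$-divisor over $D$: the functions $\psi_{\ov D',v}$ are concave (translates of $\Psi_D$, which is concave since $D$ is big, hence in particular nef on the relevant cones—more precisely $D$ big ensures $\Delta_D$ has nonempty interior; concavity of $\Psi_D$ needs $D$ nef, which one gets from semipositivity of $\ov D$ as $\psi_{\ov D,v}$ concave forces $\Psi_D$ concave), the difference $|\psi_{\ov D',v}-\Psi_D|$ is bounded (it equals $|\Psi_D(u-u_v)-\Psi_D(u)|$, bounded because $\Psi_D$ is piecewise linear of bounded slope and $u_v$ is fixed), and $\psi_{\ov D',v}=\Psi_D$ for all but finitely many $v$ (since $u_v=0$ for all but finitely many $v$). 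Then \cite[Proposition~4.19(1)]{BurgosMoriwakiPhilipponSombra:aptv} gives the metrized divisor, and Proposition~\ref{prop:10}\eqref{item:24}$\Rightarrow$\eqref{item:22} gives quasi-canonicity.

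For the second assertion, that every $\ov D$-small net is $\ov D'$-small, I would compare the two height functions via the function $\eta$. For a point $p\in X_0(\ov\K)$, by \eqref{eq:12} we have $\h_{\ov D}(p) = \eta_{\ov D}(\bfnu_p)$ and $\h_{\ov D'}(p) = \eta_{\ov D'}(\bfnu_p) = -\sum_v n_v \int \Psi_D(u-u_v)\dd\nu_{p,v}$. Since $\ov D'$ is quasi-canonical, $\upmu^{\ess}_{\ov D'}(X) = \max_x \vartheta_{\ov D'}(x)$ equals the constant value of $\vartheta_{\ov D'}$; one computes directly that this constant equals $\upmu^{\ess}_{\ov D}(X)$ (for instance, evaluating $\eta_{\ov D'}$ at the critical point $\bfdelta_{\bfu}=(\delta_{u_v})_v$ gives $-\sum_v n_v \Psi_D(0) = -\sum_v n_v\Psi_D(0)$, and more usefully one uses that $\eta_{\ov D'}(\bfdelta_{\bfu}) = \upmu^{\ess}_{\ov D'}(X)$ by Corollary~\ref{cor:4} applied to $\ov D'$ while simultaneously $\eta_{\ov D}(\bfdelta_{\bfu})=\upmu^{\ess}_{\ov D}(X)$ since $\bfu$ is the critical point of $\ov D$—and the difference $\eta_{\ov D}-\eta_{\ov D'}$ at $\bfdelta_{\bfu}$ is $-\sum_v n_v(\psi_{\ov D,v}(u_v)-\Psi_D(0))$, a constant, forcing the two essential minima to differ by that same constant). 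The cleanest route: introduce $\eta'$ as in the proof of Theorem~\ref{thm:6}, namely $\eta'(\bfnu) = -\sum_v n_v\int \Psi_D(u-u_v)\dd\nu_v = \eta_{\ov D'}(\bfnu)$, and recall from that proof (equation \eqref{eq:24}) that $\eta_{\ov D}(\bfnu_{p_l})\to \upmu^{\ess}_{\ov D}(X)$ for a $\ov D$-small net implies $\eta'(\bfnu_{p_l})\to 0$ — wait, that computation used modulus concentration, which is not yet available here.

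So instead I would argue as follows. By Lemma~\ref{lem:measure minimum} applied to $\ov D'$ (which is legitimate once we know $\ov D'$ is a semipositive toric metrized $\R$-divisor with $D$ big), $\eta_{\ov D'}(\bfnu)\ge \upmu^{\ess}_{\ov D'}(X)$ with equality iff $\supp(\nu_v)\subset F'_v$ and $\exv[\nu_v]\in B'_v$ for all $v$; but $\ov D'$ quasi-canonical means (via Proposition~\ref{prop:10} and the computation of roof functions for the translated canonical metric) that $\vartheta_{\ov D',v}$ has sup-differential a single point $\{u_v\}$ at every point of $\Delta_D$, so $F'_v = B'_v = \{u_v\}$, hence $\eta_{\ov D'}(\bfnu)\ge \upmu^{\ess}_{\ov D'}(X)$ always, with equality iff $\exv[\nu_v]=u_v$ for all $v$; in particular $\eta_{\ov D'}$ restricted to $H_\K$ attains its minimum $\upmu^{\ess}_{\ov D'}(X)$ at $\bfdelta_{\bfu}$. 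Now the key point is the difference: for any $\bfnu=(\nu_v)_v\in\cH_\K$,
\begin{equation*}
\eta_{\ov D}(\bfnu) - \eta_{\ov D'}(\bfnu) = -\sum_v n_v \int \bigl(\psi_{\ov D,v}(u) - \Psi_D(u-u_v)\bigr)\dd\nu_v(u) = -\sum_v n_v \int f_v \dd\nu_v,
\end{equation*}
with $f_v$ the bounded function from the proof of Theorem~\ref{thm:6}. This difference is \emph{not} constant on $\cH_\K$, so the naive subtraction fails; the correct statement is that both $\h_{\ov D}$ and $\h_{\ov D'}$ achieve their respective essential minima along a $\ov D$-small net. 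The cleanest argument: for a $\ov D$-small net $(p_l)$, Theorem~\ref{thm:1 bis} (applied to $\ov D$) already gives that every cluster point $\nu_v$ of $(\nu_{p_l,v})_l$ satisfies $\supp(\nu_v)\subset F_v$ and $\exv[\nu_v]\in B_v$; since $\ov D$ is monocritical, Proposition~\ref{prop:14} gives $F_v=\{u_v\}$, so in fact $\nu_{p_l,v}\to\delta_{u_v}$ for every $v$ (this is Theorem~\ref{thm:3}). Then $\bfnu_{p_l}\to\bfdelta_{\bfu}$ in the adelic KR-topology by Theorem~\ref{thm:6}, and since $\eta_{\ov D'}$ is Lipschitz continuous with respect to $W_\K$ (Lemma~\ref{lem:measure minimum} for $\ov D'$), we get $\h_{\ov D'}(p_l) = \eta_{\ov D'}(\bfnu_{p_l}) \to \eta_{\ov D'}(\bfdelta_{\bfu}) = \upmu^{\ess}_{\ov D'}(X)$, i.e. $(p_l)$ is $\ov D'$-small.

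\textbf{Main obstacle.} The delicate point is the logical ordering: Proposition~\ref{prop:6} is stated as the "key step in the proof of Theorem~\ref{thm:5}", yet the argument above invokes Theorems~\ref{thm:3} and~\ref{thm:6}, which are proved earlier in Section~\ref{sec:modul-distr-toric} and depend only on the modulus-distribution results, not on equidistribution — so this is in fact legitimate and not circular. One must simply be careful to cite the Section-3 results (Theorem~\ref{thm:3}, Theorem~\ref{thm:6}, Lemma~\ref{lem:measure minimum}, Proposition~\ref{prop:14}) rather than anything from Section~\ref{sec:equid-galo-orbits}. The only genuinely nontrivial verification is that $\ov D'$ as defined by \eqref{eq:16} is indeed a semipositive toric metrized $\R$-divisor over $D$ (concavity, boundedness of $\psi_{\ov D',v}-\Psi_D$, finiteness of the "non-canonical" places), after which everything is a citation of Proposition~\ref{prop:10} and the Lipschitz-continuity/convergence machinery already in place.
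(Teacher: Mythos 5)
Your proof is correct and follows essentially the same route as the paper's: quasi-canonicity of~$\ov D'$ comes from Proposition~\ref{prop:10}\eqref{item:24}, and the $\ov D'$-smallness of a $\ov D$-small net comes from Theorem~\ref{thm:6} (adelic KR-convergence $\bfnu_{p_l}\to\bfdelta_{\bfu}$) together with the Lipschitz continuity of~$\eta_{\ov D'}$ supplied by Lemma~\ref{lem:measure minimum}. The paper simply evaluates the limit more directly — it computes $\vartheta_{\ov D',v}(x)=\langle x,u_v\rangle$, hence $\vartheta_{\ov D'}\equiv 0$ since $\bfu\in H_\K$, so $\upmu^{\ess}_{\ov D'}(X)=0=\eta_{\ov D'}(\bfdelta_{\bfu})$ — whereas you route it through Corollary~\ref{cor:4} for~$\ov D'$, which gives the same identification; and your concern about circularity is unfounded for exactly the reason you state (Theorems~\ref{thm:3} and~\ref{thm:6} live in the modulus-distribution section and are independent of equidistribution). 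One small inaccuracy: for~$\ov D'$ the equality case in Lemma~\ref{lem:measure minimum} forces $\nu_v=\delta_{u_v}$, not merely $\exv[\nu_v]=u_v$, since $\supp(\nu_v)\subset F'_v=\{u_v\}$; this does not affect the argument.
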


\begin{proof} 
  The fact  that $\ov D'$ is quasi-canonical is given by
  Proposition~\ref{prop:10}. 

  Let $(p_{l})_{l\in I}$ be a $\ov D$-small net of algebraic points of
  $X_{0}$. By Theorem~\ref{thm:6}, the net of centered adelic measures
  $(\bfnu_{p_{l}})_{l\in I}$ converges to
  $\bfdelta_{\bfu}=(\delta_{u_{v}})_{v}$ with respect to the adelic
  KR-distance. By Lemma~\ref{lem:measure minimum}, the function
  $\eta_{\ov D'}$ is continuous with respect to this distance. Using
  \eqref{eq:12}, we deduce that
  \begin{displaymath}
    \lim_{l}\h_{\ov D'}(p_{l})= \lim_{l}\eta_{\ov
      D'}(\bfnu_{p_{l}})=\eta_{\ov D'}(\bfdelta_{\bfu})=0.
  \end{displaymath}
On the other hand, 
  $\vartheta_{\ov D',v}=u_{v}$ for each $v$. Since the critical point $\bfu$ lies
  in the subspace $H_{\K}$, we have that $\vartheta_{\ov
    D'}=\sum_{v}n_{v}u_{v}=0$. Hence, 
  \begin{displaymath}
 \upmu^{\ess}_{\ov
    D'}(X)=\max_{x\in\Delta_{D}}\vartheta_{\ov D'}(x)=0. 
  \end{displaymath}
Thus  $(p_{l})_{l\in I}$ is $\ov D'$-small, as stated.
\end{proof}

\begin{proof}[Proof of Theorem~\ref{thm:5}] 
  Suppose that the condition \eqref{item:16} holds. Given a generic
  $\ov D$-small net $(p_{l})_{l\in I}$  of algebraic points of
  $X_{0}$ and $v\in \fM_{\K}$, the net of measures
  $(\mu_{p_{l},v})_{l\in I}$ converges weakly with respect to the
  space $\cC(X_{{v}}^{\an})$.  Hence, the net of direct
  images $(\nu_{p_{l},v})_{l\in I}$ converges weakly with respect to
  the space $\Cc(N_{\R})$. By Corollary~\ref{cor:3 bis}, for
  each $v$,  the
  face $F_{v}$ contains only one point. 
  Proposition~\ref{prop:14} then implies that~$\ov D$ is
  monocritical, giving the condition~\eqref{item:18}.

  Now suppose that the condition \eqref{item:18} holds.  Let $(Y,E)$
  be the polarized toric variety associated to the polytope
  $\Delta_{D}$.  By the characterization of semipositive toric metrics
  in \cite[Theorem 4.8.1]{BurgosPhilipponSombra:agtvmmh}, the metric
  in $\ov D$ induces a semipositive toric metric on $E$, and we denote
  by $\ov E$ the corresponding toric metrized divisor. We have that
  $\psi_{\ov E,v}=\psi_{\ov D,v}$ for all $v$, and so $\ov E$ is also
  monocritical with the same critical point as~$\ov D$.

Let
$$
\ov E'=(E,\|\cdot\|_{v}')_{v\in\fM_{\K}}
$$ 
be the ample divisor $E$ on $Y$ equipped with the quasi-canonical
toric metric given by Proposition~\ref{prop:6}, with~$\ov D$ replaced
by~$\ov E$.  Let $(p_{l})_{l\in
  I}$ be a generic $\ov D$-small net of algebraic points of
$X_{0}=\T=Y_{0}$.  It is also a generic $\ov E$-small net of algebraic
points of $Y_{0}$. By Proposition~\ref{prop:6} with~$\ov D$ replaced
by~$\ov E$, it is also $\ov
E'$-small.

By Theorem~\ref{thm:4}, for each place~$v$ the net $(\mu
_{p_{l},v})_{l\in I}$ converges to the normalized Monge-Amp\`ere
measure $\mu_{v}=\frac{1}{\deg_{E}(Y)}{\chern_{1}(E,
  \|\cdot\|'_{v})^{\wedge n}}$ on $Y_{{v}}^{\an}$.  Consider the real
Monge-Amp\`ere measure $\cM(\psi _{\ov E',v})$ associated to the
$v$-adic metric in $\ov E'$ as in
\cite[Definition~2.7.1]{BurgosPhilipponSombra:agtvmmh}. By the
explicit formula \eqref{eq:16} and
\cite[Example~2.7.5]{BurgosPhilipponSombra:agtvmmh},
  \begin{displaymath}
    \cM(\psi _{\ov
    E',v})=\vol_{M}(\Delta_{D}) \delta _{u_{v}}=\frac{\deg_{E}(Y)}{n!}\delta _{u_{v}}.
  \end{displaymath}
Then \cite[Theorem 4.8.11]{BurgosPhilipponSombra:agtvmmh} implies that
 $\mu_{v}=\lambda_{\SS_{v},u_{v}}$.
 Therefore, the net of measures $(\mu _{p_{l},v})_{l\in I}$ on
 $X_{{v}}^{\an}$ converges to $\lambda_{\SS_{v},u_{v}}$, giving
 the condition~\eqref{item:16} and the last statement in the theorem.
\end{proof}

\begin{exmpl}\label{exm:4}
  Let $\ov D^{\can}$ be a big and nef toric divisor on~$X$ equipped with the
  canonical metric. Following Example~\ref{exm:9}, this toric metrized
  divisor is monocritical with critical point $\bfzero \in
  H_{\K}$. Hence, it satisfies the $v$-adic equidistribution property
  with limit measure $\lambda_{\SS_{v},0}$, for every $v\in \fM_{\K}$.
\end{exmpl}

In \cite{Bilu:ldspat}, Bilu gave an equidistribution theorem for
Galois orbits of sequences of points of small canonical height.  This
result is restricted to number fields and Archimedean places.
However, and in contrast with the previous example, this result holds
not just for generic, but for \emph{strict} sequences of points, that
is, sequences that eventually avoid any given proper torsion
subvariety.  This stronger version of the equidistribution property
was used in a crucial way in \emph{loc. cit.} to prove the Bogomolov
property for the canonical height.

Here we extend this version of the equidistribution property to
monocritical metrized $\R$-divisors on toric varieties
(Theorem~\ref{thm:7}) and deduce from it the Bogomolov property
(Theorem~\ref{thm:12} in the introduction, or Theorem~\ref{thm:2}
below).  Our proofs are similar to Bilu's and use Fourier analysis.
Hence, for the rest of the section we restrict to the case when~$\K$ is a
number field and we only study the equidistribution over the Archimedean
places.  Following Remark~\ref{rem:8}, we restrict without loss of
generality to sequences, instead of nets.

To formulate this extension, we have to modify slightly the notion of
strict sequence. First we recall some standard terminology: a
\emph{subtorus} of $\T$ is an algebraic subgroup of $\T$ that is
geometrically irreducible, a \emph{translate of a subtorus} is a
subvariety of $\T_{\ov \K}$ that is the orbit of a point
$p\in \T(\ov \K)$ by a subtorus, and a \emph{torsion subvariety} is
a translate of a subtorus by a torsion point of the group $\T(\ov
\K)\simeq (\ov \K^{\times})^{n}$.

\begin{defn}
  \label{def:12}
  A sequence $(p_{l})_{l\ge1}$ of algebraic points of
  $\T$ is \emph{strict} if, for every  translate of a subtorus $U
  \subsetneq \T_{\ov \K}$, there is $l_{0}\ge 1$ such that
  \begin{math}
p_{l}\notin U (\ov \K)
\end{math} for all $l\ge l_{0}$.  Equivalently, $(p_{l})_{l\ge 1}$ is
strict if, for every $m\in M\setminus \{0\}$ and every point $q\in
X_{0}(\ov \K)$, there is $l_{0}\ge 1$ such that
  \begin{math}
    \chi^{m}(p_{l})\ne  \chi^{m}(q) 
  \end{math}
 for all $l\ge l_{0}$.  
\end{defn}

\begin{thm}\label{thm:7}
  Let $X$ be a proper toric variety over a number field $\K$ and
  $\overline D$ a monocritical metrized $\R$-divisor on $X$. Then, for
  every strict $\ov D$-small sequence~$(p_{l})_{l\ge 1}$ of algebraic
  points of $X_{0}$ and every Archimedean place $v\in \fM_{\K}$, the
  sequence $(\mu_{p_l,v})_{l\ge1}$ converges to the
  probability measure $ \lambda _{\SS_{v},u_{v}}$, with $u_{v}\in
  N_{\R}$ the $v$-adic component of the critical point of $\ov D$.
\end{thm}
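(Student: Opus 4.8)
The plan is to reduce the problem, one Archimedean place at a time, to a one--dimensional statement by testing against the characters $\chi^{m}$, $m\in M$, and to treat that one--dimensional statement by Bilu's method. Fix an Archimedean place $v\in\fM_{\K}$. Since $\ov D$ is monocritical and $(p_{l})_{l\ge1}$ is $\ov D$-small, Theorem~\ref{thm:6} gives that $\nu_{p_{l},v}=(\val_{v})_{*}\mu_{p_{l},v}$ converges to $\delta_{u_{v}}$ in the Kantorovich--Rubinstein topology, in particular weakly; as $\val_{v}$ is proper for Archimedean $v$, the sequence $(\mu_{p_{l},v})_{l\ge1}$ is tight on $\T^{\an}_{v}$. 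Hence every subsequence has a further subsequence converging weakly to a probability measure $\mu$ on $\T^{\an}_{v}$, and, since $(\val_{v})_{*}\mu=\lim_{k}\nu_{p_{l_{k}},v}=\delta_{u_{v}}$, we get $\supp(\mu)\subset\val_{v}^{-1}(u_{v})$. The fibre $\val_{v}^{-1}(u_{v})$ is a principal homogeneous space under $\SS_{v}\simeq(S^{1})^{n}$, and the restrictions $\chi^{m}|_{\SS_{v}}$, $m\in M$, are precisely its characters, which separate Borel probability measures. So it suffices to prove that every such limit measure $\mu$ satisfies $\int\chi^{m}\,\dd\mu=0$ for all $m\in M\setminus\{0\}$: this identifies $\mu$ with $\lambda_{\SS_{v},u_{v}}$ of Definition~\ref{def:10}, and since every weak-$\ast$ limit point of the tight sequence then equals $\lambda_{\SS_{v},u_{v}}$, the whole sequence converges to it.

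Fix $m\in M\setminus\{0\}$ and set $\beta_{l}=\chi^{m}(p_{l})\in\G_{\mathrm{m},\K}(\ov\K)=\ov\K^{\times}$. Since $\chi^{m}$ is a Galois-equivariant morphism of tori, $(\chi^{m})_{*}\mu_{p_{l},w}=\mu_{\beta_{l},w}$ for every place $w$, so by weak continuity of push-forward $(\chi^{m})_{*}\mu=\lim_{k}\mu_{\beta_{l_{k}},v}$ along the chosen subsequence. Strictness of $(p_{l})_{l\ge1}$ (Definition~\ref{def:12}), together with the surjectivity of $\chi^{m}$, yields strictness of $(\beta_{l})_{l\ge1}$ in $\G_{\mathrm{m},\K}$: for every $c\in\ov\K^{\times}$ one has $\beta_{l}\ne c$ for $l$ large. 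Writing $u_{w}'=\langle m,u_{w}\rangle$, one has $\sum_{w}n_{w}u_{w}'=\langle m,\sum_{w}n_{w}u_{w}\rangle=0$, so there is a quasi-canonical toric metrized $\R$-divisor $\ov D_{m}$ on $\P^{1}_{\K}$ with metric functions $t\mapsto\Psi(t-u_{w}')$ ($\Psi$ the standard support function of an ample toric divisor on $\P^{1}$), which is monocritical with critical data $(u_{w}')_{w}$ and has $\upmu^{\ess}_{\ov D_{m}}(\P^{1})=0$. Using $\val_{w}\circ\chi^{m}=\langle m,-\rangle\circ\val_{w}$ and the toric height formula, $\h_{\ov D_{m}}(\beta_{l})=-\sum_{w}n_{w}\int\Psi(\langle m,u-u_{w}\rangle)\,\dd\nu_{p_{l},w}(u)$; since $u\mapsto\Psi(\langle m,u-u_{w}\rangle)$ is Lipschitz and $\nu_{p_{l},w}\to\delta_{u_{w}}$ in the Kantorovich--Rubinstein topology, Remark~\ref{rem:4} gives $\h_{\ov D_{m}}(\beta_{l})\to 0$. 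Thus $(\beta_{l})_{l\ge1}$ is a strict, $\ov D_{m}$-small sequence in $\G_{\mathrm{m},\K}(\ov\K)$.

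The one--dimensional input is: for a strict, $\ov D_{m}$-small sequence in $\G_{\mathrm{m},\K}(\ov\K)$ and an Archimedean place $v$, the measures $\mu_{\beta_{l},v}$ converge weakly to the uniform probability measure $\lambda^{(1)}_{v}$ on the circle $\{|z|_{v}=\e^{-u_{v}'}\}$. This is proved by Bilu's method, using the potential theory of \S\ref{sec:potent-theory-proj}: for $w\in\C_{v}$ with $|w|_{v}\ne\e^{-u_{v}'}$ one shows $\int\log|z-w|_{v}\,\dd\mu_{\beta_{l},v}(z)\to\log\max(|w|_{v},\e^{-u_{v}'})$, the right side being the logarithmic potential of $\lambda^{(1)}_{v}$, by combining the product formula (Proposition~\ref{prop:15}) with strictness (so that $w$ is eventually not a Galois conjugate of $\beta_{l}$, making the valuations of $\prod_{\sigma}(\sigma(\beta_{l})-w)$ computable at every place) and the vanishing of $\h_{\ov D_{m}}(\beta_{l})$ (which controls the $\log^{+}$-type contributions, these being first moments of the valuation already tamed by modulus concentration); unicity of a probability measure with prescribed logarithmic potential then gives the claim. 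Granting this, $(\chi^{m})_{*}\mu=\lambda^{(1)}_{v}$; as $\mu$ is supported on the compact set $\val_{v}^{-1}(u_{v})$, on which $\chi^{m}$ is bounded, we may integrate to obtain $\int\chi^{m}\,\dd\mu=\int_{\C_{v}}z\,\dd\lambda^{(1)}_{v}(z)=0$, since the uniform measure on a circle centred at the origin has barycentre $0$. As $m\in M\setminus\{0\}$ was arbitrary, $\mu=\lambda_{\SS_{v},u_{v}}$, which completes the proof.

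The main obstacle is the one--dimensional twisted Bilu statement of the third paragraph: it requires porting Bilu's logarithmic-potential argument to the twisted canonical metric, where the limit circle has the possibly transcendental radius $\e^{-u_{v}'}$ — so there is no algebraic rescaling reducing it to the classical case — and one must carefully bound the contributions of the finitely many remaining Archimedean and ramified places. A secondary point is to verify that Kantorovich--Rubinstein convergence of the modulus distributions is strong enough for those estimates, which it is because the delicate $\log^{+}$-contributions are first moments of the valuation. Everything else is formal: tightness and the reduction to Fourier coefficients in the first paragraph, and the compatibility of push-forward with the character maps in the second.
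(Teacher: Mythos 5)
Your first two paragraphs track the paper's argument faithfully: reduce by tightness and Fourier analysis to showing $\int\chi^{m}\,\dd\mu=0$ for all $m\ne0$, push forward by $\chi^{m}$, construct the twisted quasi-canonical divisor $\ov D_{m}$ on $\P^{1}_{\K}$ (the paper's $\ov D_{0}^{m}$), and verify via Theorem~\ref{thm:6} and Lipschitz continuity of the height functional that $(\beta_{l})=(\chi^{m}(p_{l}))_{l\ge1}$ is $\ov D_{m}$-small. Your closing Cauchy/barycentre computation is also the same as the paper's.

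The gap is exactly the step you flag as the ``main obstacle'': the one-dimensional equidistribution for $(\beta_{l})$. You propose to re-derive it by a Bilu-style logarithmic-potential argument, which you only sketch and admit needs work to handle the transcendental radius $\e^{-u_{v}'}$. This detour is unnecessary, and the observation that closes it is already implicit in your own text: you note that strictness of $(\beta_{l})$ means $\beta_{l}\ne c$ eventually, for every $c\in\ov\K^{\times}$. In $\G_{{\rm m},\K}$ this is precisely genericity, because the only proper translates of subtori are single algebraic points and the only proper closed subsets of $\G_{{\rm m},\ov\K}$ are finite. Since $\ov D_{m}$ is a quasi-canonical semipositive metrized divisor with $D$ ample, and $(\beta_{l})$ is a generic $\ov D_{m}$-small sequence in $\P^{1}_{0}$, you may apply Theorem~\ref{thm:4} directly to conclude $\mu_{\beta_{l},v}\to\lambda_{\SS_{v},\langle m,u_{v}\rangle}$. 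This is exactly what the paper does, and it sidesteps both the potential-theory machinery and the transcendental-radius concern entirely.
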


\begin{proof}
Let $(p_{l})_{l\ge 1}$ be a
  strict  $\ov D$-small sequence of algebraic points of
  $X_{0}$. 
  For each $m\in M\setminus \{0\}$ consider the character
  \begin{displaymath}
    \chi^{m}\colon \T\longrightarrow \G_{\textrm{m},\K}.
    \end{displaymath}
    Since $(p_{l})_{l\ge 1}$ is strict, the sequence
    $(\chi^{m}(p_{l}))_{l\ge 1}$ is generic.

    We embed $\G_{{\textrm{m},\K}}\hookrightarrow \P^{1}_{\K}$ as the
    principal open subset. Let 
    $D_{0}=\div(x_{0})$ be the divisor at infinity on $\P^{1}_{\K}$,
    that we equip with the quasi-canonical toric metric corresponding
    to the adelic family of functions $\psi_{\ov D_{0}^{m},v}\colon
    \R\to \R$ given by
\begin{displaymath}
  \psi_{\ov
 D_{0}^{m},v}(u)= \min(0, u-\langle m,u_{v}\rangle). 
\end{displaymath}
For each $v\in \fM_{\K}$, there is a commutative diagram
\begin{displaymath}
  \xymatrix{ \T_{v}^{\an} \ar[r]^{\chi^{m}}\ar[d]_{\val_{v}} & 
    \G_{\textrm{m},{v}}^{\an} \ar[d]^{\val_{v}} \\
    N_{\R} \ar[r]_{m}& \R
}
\end{displaymath}
The commutativity of this diagram implies that
$\nu_{\chi^{m}(p_{l}),v} = m_{*} \nu_{p_{l},v}$. By
Theorem~\ref{thm:6}, the sequence $(\bfnu_{p_{l}})_{l\ge 1}$ converges
in the adelic KR-topology to the centered adelic measure
$(\delta_{u_{v}})_{v}$ on $N_{\R}$. Hence, the sequence $(
\bfnu_{\chi^{m}(p_{l})})_{l\ge 1}$ converges in the adelic KR-topology
to the centered adelic measure $(\delta_{\langle m,u_{v}\rangle})_{v}$
on $\R$. By~\eqref{eq:12} and  Lemma~\ref{lem:measure minimum}, the sequence of points
$(\chi^{m}(p_{l}))_{l\ge 1}$ is $\ov D_{0}^{m}$-small.

Summarizing, the sequence $(\chi^{m}(p_{l}))_{l\ge 1}$ of algebraic
points of $\P^{1}_{0}$ is generic and small with respect to the
quasi-canonical toric metrized divisor $\ov D_{0}^{m}$.  Theorem
\ref{thm:4} then implies that the sequence of measures
$(\mu_{\chi^{m}(p_{l}),v})_{l\ge 1}$ on the analytification
$\P_{{v}}^{1,\an}\simeq \P^{1}(\C)$ converges
to~$\lambda_{\SS_{v},\langle m,u_{v}\rangle }$.

Assume now that $v$ is Archimedean. Since the space of probability
measures on $X(\C)$ is sequentially compact, by restricting to a
subsequence we can suppose without loss of generality
that~$(\mu_{p_{l},v})_{l \ge 1}$ converges to a measure~$\mu$.
Since the sequence of direct images
$((\val_{v})_{\ast}\mu_{p_l,v})_{l\ge 1}$
converges in the KR-topology to the Dirac measure on the point
$u_{v}\in N_{\R}$, we deduce that
\begin{displaymath}
 \supp (\mu )\subset \val_{v}^{-1}(u_{v})=\SS_{v}\cdot \e^{-u_{v}}. 
\end{displaymath}
Let $z$ be the standard affine coordinate of $\P^{1}(\C)$.  For each
$m\in M\setminus \{0\}$, let $z_{m}$ be a continuous function on
$\P^{1}(\C)$ that agrees with $z$ on a neighborhood of $S^{1}\cdot
\chi^{m}(\e^{-u_{v}})$. Hence $(\chi^{m})^{\ast}(z_{m})$ agrees with
the character $\chi^{m}$ on a neighborhood of $\SS_{v}\cdot
\e^{-u_{v}}$.  Then
\begin{multline*}
  \int \chi^{m} \dd\mu = 
  \int (\chi^{m})^{\ast}(z_{m}) \dd\mu=
  \lim_{l} \int (\chi^{m})^{\ast}(z_{m})  \dd\mu_{p_{l},v}
\\  =\lim_{l} \int z_{m}\dd (\chi^{m})_{\ast}\mu_{p_{l},v}=
  \lim_{l} \int z_{m}\dd \mu_{\chi^{m}(p_{l}),v}\\
  =\int z_{m}\dd
  \lambda _{S^{1},\langle m,u_{v}\rangle} = \int z\dd
  \lambda _{S^{1},\langle m,u_{v}\rangle} =0,
\end{multline*}
where the last equality comes from Cauchy's formula. 
Hence $ \int \chi^{m} \dd\mu=0$ for all $m\in M\setminus \{0\}$.  By
Fourier analysis, the only probability measure supported on
$\SS_{v}\cdot \e^{-u_{v}}$  satisfying this condition is $\lambda
_{\SS_{v},u_{v}}$. Thus $\mu = \lambda _{\SS_{v},u_{v}}$, concluding
the proof.
\end{proof}

\begin{rem}
  \label{rem:5}
  Our notion of strict sequence is stronger than the one in
  \cite{Bilu:ldspat}. Nevertheless, for the canonical height on a
  projective space, a small sequence of points is strict in our sense
  if and only if it eventually avoids any fixed translate of a
  subtorus with essential minimum equal to $0$. Such a translate of a
  subtorus is necessarily a torsion subvariety, see for instance
  Example \ref{exm:2}. Hence, a small sequence of points that is
  strict in the sense of Bilu \cite{Bilu:ldspat} is also strict in the
  sense of Definition \ref{def:12}. Thus Theorem~\ref{thm:7} applied
  to the canonically metrized divisor at infinity on a projective
  space specializes to \cite[Theorem 1.1]{Bilu:ldspat}.
\end{rem}

\begin{rem}
  \label{rem:10}
  To the best of our knowledge, even for the canonical metric it is
  still not know  if the equidistribution property for strict sequences
  holds for the non-Archimedean places of a global field.
\end{rem}

The toric Bogomolov conjecture can be stated as follows: let $X$ be a
toric variety and $D$ an ample divisor on $X$. Let $V\subset X_{0,\ov
  \K}$ be a subvariety which is not torsion.  Then there exists $\varepsilon >0$ such that the subset
of algebraic points of $V$ of canonical height bounded above by
$\varepsilon$, is not dense in $V$. Equivalently, if $V\subset X_{0,\ov
  \K}$ is a subvariety such that $\upmu^{\ess}_{\ov D^{\can
  }}(V)=0$, then $V$ is a torsion subvariety. 

This conjecture was proved by Zhang in the number field case
\cite{Zhang:plbas}.
Bilu obtained a proof of Zhang's theorem based
on his equidistribution theorem. In
what follows, we extend his approach to the general monocritical case
over a number field.

Recall that  $X$ denotes  a proper toric variety over a number field $\K$ and $\ov D$ a toric metrized
$\R$-divisor on $X$.  For a subset $V\subset X(\ov \K)$, we denote by
$\upmu^{\abs}_{\ov D}(V)$ the absolute minimum of the height of its algebraic
points. The fact that $\ov D$ is toric implies 
\begin{equation}
  \label{eq:78}
\upmu^{\ess}_{\ov D}(X)=\upmu^{\abs}_{\ov D}(X_{0}),
\end{equation}
see \cite[Lemma 3.9(1)]{BurgosPhilipponSombra:smthf}.  Therefore, for
any subvariety $V\subset X_{0,\ov \K}$,
\begin{equation} \label{eq:79}
  \upmu^{\ess}_{\ov D}(V)\ge \upmu^{\abs}_{\ov D}(V)\ge
  \upmu^{\abs}_{\ov D}(X_{0})=\upmu^{\ess}_{\ov D}(X). 
\end{equation}
This motivates the following definition.

\begin{defn}
\label{def:13}
A subvariety  $V\subset  X_{0,\ov \K}$ is \emph{$\ov
  D$-special} if 
\begin{displaymath}
  \upmu^{\ess}_{\ov D}(V)=\upmu^{\ess}_{\ov D}(X).
\end{displaymath}
In particular, an algebraic point $p$ of $X_{0}$ is $\ov D$-special if
and only if $\h_{\ov D}(p)=\upmu^{\ess}_{\ov D}(X)$.
\end{defn}

We also propose the following terminology. 

\begin{defn}\label{def:15} 
  The toric metrized $\R$-divisor $\ov D$ satisfies the
  \emph{Bogomolov property} if every $\ov D$-special subvariety of
  $X_{0,\ov \K}$ is a translate of a subtorus.
\end{defn}

We consider the problem of deciding if a given toric metrized
$\R$-divisor satisfies the Bogomolov property. The following result
corresponds to Theorem~\ref{thm:12} in the introduction, and shows
that the answer  is affirmative for monocritical metrics.

\begin{thm}
  \label{thm:2}
  Let $X$ be a proper toric variety over a number field $\K$ and
  $\overline D$ a monocritical metrized $\R$-divisor on $X$ with
  critical point $\bfu=(u_{v})_{v\in \fM_{\K}}$. Let $V$ be a $\ov
  D$-special subvariety of $ X_{0,\ov \K}$. Then $V$ is a translate of
  a subtorus.

Furthermore, if  $ u_v\in \val_{v}(\T(\K))\otimes \Q$ for all $v$, then
  $V$ is the translate of a subtorus by a $\ov D$-special point.
\end{thm}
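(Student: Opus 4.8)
The plan is to follow Bilu's strategy, using the strong equidistribution theorem for strict sequences (Theorem~\ref{thm:7}) to prove that a $\ov D$-special subvariety $V\subset X_{0,\ov\K}$ is a translate of a subtorus, and then to upgrade the conclusion when the critical point is rational. First I would recall, as in \eqref{eq:79}, that the $\ov D$-special hypothesis $\upmu^{\ess}_{\ov D}(V)=\upmu^{\ess}_{\ov D}(X)$ together with toricity forces $\upmu^{\ess}_{\ov D}(V) = \upmu^{\abs}_{\ov D}(X_0)$, so in particular there exists a generic $\ov D$-small sequence $(p_l)_{l\ge1}$ of algebraic points of $V$. I would argue by induction on $\dim V$; the case $\dim V = 0$ is trivial since a point is a translate of the trivial subtorus. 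For the inductive step, suppose $V$ is not a translate of a subtorus. The key idea is to produce a torsion translate $\gamma V \neq V$ of $V$ and then compare the equidistribution limits.

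The heart of the argument runs as follows. Pick a strict $\ov D$-small sequence $(p_l)_{l\ge1}$ of algebraic points of $V$; such a sequence exists because if every $\ov D$-small sequence in $V$ eventually lay in a fixed proper translate of a subtorus $U\subsetneq V$, then since $\upmu^{\ess}_{\ov D}(U) \ge \upmu^{\ess}_{\ov D}(X) = \upmu^{\ess}_{\ov D}(V)$ we would get $\upmu^{\ess}_{\ov D}(U) = \upmu^{\ess}_{\ov D}(X)$ and could replace $V$ by a component of $U$, running the induction there (one must be a little careful that $U$ need not be irreducible, but an irreducible component of maximal essential minimum still works, and it still equals $\upmu^{\ess}_{\ov D}(X)$). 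By Theorem~\ref{thm:7}, for each Archimedean place $v$ the measures $\mu_{p_l,v}$ converge to $\lambda_{\SS_v,u_v}$. Now choose a torsion point $\gamma\in\T(\ov\K)$ such that $\gamma V\neq V$; the height is invariant under multiplication by torsion points (since the canonical part of $\ov D$ is, and the correction at finitely many places is $\SS_v$-invariant and translation by $\gamma$ acts through a rotation of the torus preserving the relevant measures — more precisely $\h_{\ov D}(\gamma p) = \h_{\ov D}(p)$ because $\psi_{\ov D,v}$ depends only on $\val_v$ and $\val_v(\gamma q) = \val_v(q)$ for a torsion point), so $(\gamma p_l)_{l\ge1}$ is again $\ov D$-small and strict in $\gamma V$. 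Applying Theorem~\ref{thm:7} to both $V$ and $\gamma V$, and using that the translation map $x\mapsto \gamma x$ carries $\mu_{p_l,v}$ to $\mu_{\gamma p_l,v}$, one sees that the limit measure $\lambda_{\SS_v,u_v}$ is invariant under multiplication by $\gamma$. Since this holds for every torsion $\gamma$ with $\gamma V\neq V$, and since $V$ is cut out inside $X_0$ by equations, a standard argument (the support of $\lambda_{\SS_v,u_v}$ is a single $\SS_v$-orbit $\SS_v\cdot e^{-u_v}$, which is Zariski dense in $X_0$, hence in any subvariety containing it this forces the subvariety to be all of $X_0$ — contradiction unless $V$ is itself stable under all such $\gamma$) shows that $V$ must be stable under all torsion translations fixing it setwise, which forces $V$ to be a translate of a subtorus. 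I expect the main obstacle to be making this last step fully rigorous: one needs the classical fact (going back to Zhang/Bilu) that a subvariety of $\T$ stable under multiplication by a Zariski-dense set of torsion points — or whose equidistribution limit is translation-invariant under such points — is a translate of a subtorus. This is really where the toric combinatorics and the structure theory of subvarieties of tori enters, and it is the part that requires care rather than the equidistribution input itself.

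For the furthermore statement, assume now $u_v\in\val_v(\T(\K))\otimes\Q$ for all $v$. From the first part we know $V = p\cdot \T'$ for a subtorus $\T'\subset\T$ and some $p\in\T(\ov\K)$; I want to show one can choose $p$ to be $\ov D$-special, i.e. $\h_{\ov D}(p) = \upmu^{\ess}_{\ov D}(X)$. By Corollary~\ref{cor:3} (or Lemma~\ref{lem:measure minimum}), an algebraic point $q$ of $X_0$ is $\ov D$-special precisely when $\supp(\nu_{q,v})\subset F_v = \{u_v\}$ and $\exv[\nu_{q,v}]\in B_v$ for all $v$, i.e. when $\val_v(q')=u_v$ for every Galois conjugate $q'$ of $q$ and every $v$. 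Using the rationality hypothesis, for each $v$ we may choose (after multiplying $p$ by an element of $\T'$, which does not change $V$, and then by a suitable torsion point) a representative whose valuations at all places match $u_v$; the point is that $\val_\K := (\val_v)_v$ has image in $H_\K$ dense enough, and the rationality of the $u_v$ in $\val_v(\T(\K))\otimes\Q$ means there is an actual $\K$-rational point (up to a root) realizing the critical valuations, which one can then translate into $V = p\cdot\T'$ exactly when $\T'$ has the right "shape" — here one uses that $V$ being $\ov D$-special already constrains $\T'$. Concretely, I would: write $p_0\in\T(\K)\otimes\Q$ with $\val_v(p_0) = u_v$ for all $v$; observe $\h_{\ov D}(p_0) = \upmu^{\ess}_{\ov D}(X)$ by Corollary~\ref{cor:3}; then show $p_0\cdot\T' = V$ by checking that the generic $\ov D$-small sequences in $V$ and in $p_0\cdot\T'$ have the same valuation limits, hence (both being torsion translates of $\T'$ with the same essential minimum) must coincide after adjusting by a torsion point, which can be absorbed into $p_0$ since torsion points are $\ov D$-special and of the required valuation. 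The delicate point in this part is handling the "$\otimes\Q$": a genuine $\K$-point might only exist after extracting roots, so one may need to pass to a torsion translate or argue that the ambiguity is exactly by torsion, which is precisely what the statement allows ("translate of a subtorus by a $\ov D$-special point", and torsion points are $\ov D$-special).
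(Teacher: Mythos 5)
There is a genuine gap in the first part of your argument, and it concerns the crucial reduction step that makes Theorem~\ref{thm:7} applicable. That theorem requires a \emph{strict} sequence in the ambient torus $X_0$, meaning a sequence that eventually avoids \emph{every} proper translate of a subtorus of $\T_{\ov\K}$. If $V$ is contained in a proper translate of a subtorus $U\subsetneq X_0$ (which is precisely the situation you need to rule out), then \emph{no} sequence of points of $V$ can be strict, full stop. Your attempted workaround --- ``replace $V$ by a component of $U$, running the induction there'' --- is phrased with $U\subsetneq V$, which confuses two different things: the obstacle is not a translate of a subtorus sitting \emph{inside} $V$, but a translate $U$ with $V\subseteq U\subsetneq X_0$ sitting \emph{above} $V$. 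Passing to a subvariety of $V$ does not help, because the strictness obstruction lives in the ambient torus.

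The fix, which is the heart of the paper's proof, is to first take $U$ to be the \emph{minimal} translate of a subtorus containing $V$, use the chain of inequalities around \eqref{eq:78}--\eqref{eq:79} to conclude that $U$ itself is $\ov D$-special, and then invoke Proposition~\ref{prop:11}\eqref{item:17}: the pullback of $\ov D$ to $X_{\Sigma_Q}$ (the normalization of the closure of $U$) is again monocritical, with an explicitly translated critical point. Only after \emph{changing the ambient toric variety} in this way does one obtain a situation where $V$ is not contained in any proper translate of a subtorus, so that any generic sequence in $V$ is automatically strict in the new ambient torus, and Theorem~\ref{thm:7} applies. Without Proposition~\ref{prop:11} --- or a substitute for it --- your induction has no way to get started. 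Once that reduction is in place, your torsion-translation detour is unnecessary: equidistribution to $\lambda_{\SS_v,u_v}$ forces $\SS_v\cdot e^{-u_v}\subset V^{\an}_v$, and Zariski density of this set immediately gives $V=X_{0,\ov\K}$, which in the reduced situation \emph{is} a translate of a subtorus (namely $U$). For the ``furthermore'' part, your outline is roughly in the right direction, but the paper isolates this as Corollary~\ref{cor:6}, which itself uses the pullback monocriticality of Proposition~\ref{prop:11} together with the existence criterion for $\ov D$-special points in Proposition~\ref{prop:16}; these are the pieces that would need to be supplied to make your sketch rigorous.
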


Before giving the proof of this theorem, we study special points and, more
generally, special translates of subtori in the monocritical case.  We
first give a criterion for the existence of such points.

\begin{prop}\label{prop:16}
  Let $ X$ be a proper toric variety over $\K$ and $\ov D$ a
  monocritical metrized $\R$-divisor on $X$ with critical point
  $\bfu=(u_v)_{v\in \fM_{\K}}$. Then there exists a $\ov D$-special
  point if and only if
\begin{equation}\label{eq:76}
  u_v\in \val_{v}(\T(\K))\otimes \Q \quad \text{for all} \quad  v\in \fM_{\K}.
\end{equation}
If this is the case, then every $\ov D$-special point is of the form
$q^{{1}/{\ell}}$ with  $q\in X_{0}(\K)$ and $\ell\ge 1$.
\end{prop}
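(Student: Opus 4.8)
The plan is to use the characterization of $\ov D$-special points in terms of adelic measures supplied by Corollary~\ref{cor:3}, combined with the description of the minimum of $\eta_{\ov D}$ in the monocritical case from Proposition~\ref{prop:14}. Recall from Corollary~\ref{cor:3} that an algebraic point $p$ of $X_{0}$ satisfies $\h_{\ov D}(p)=\upmu^{\ess}_{\ov D}(X)$ if and only if $\supp(\nu_{p,v})\subset F_{v}$ and $\exv[\nu_{p,v}]\in B_{v}$ for all $v$. Since $\ov D$ is monocritical, Proposition~\ref{prop:14} gives $F_{v}=\{u_{v}\}$ for every $v$. Hence the condition on $p$ becomes simply: the discrete probability measure $\nu_{p,v}=(\val_{v})_{*}\mu_{p,v}$ is supported on the single point $u_{v}$, i.e.\ $\nu_{p,v}=\delta_{u_{v}}$, for every $v\in \fM_{\K}$. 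In other words, $\val_{v}(q)=u_{v}$ for every $v\in \fM_{\K}$ and every $q$ in the $v$-adic Galois orbit of $p$.

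For the ``only if'' direction: suppose a $\ov D$-special point $p$ exists. Choose a finite extension $\F/\K$ with $p\in X_{0}(\F)$, so $p\in \T(\F)$. The Galois orbit condition just described says that $\val_{w}(p)=u_{v}$ for every place $w$ of $\F$ above $v$; equivalently $\val_{\F}(p)$ equals the diagonal lift of $\bfu$ to $\bigoplus_{w\in\fM_{\F}}N_{\R}$. By Proposition~\ref{prop:13}, $\ov D_{\F}$ is monocritical with critical point $(u_{w})_{w}$ with $u_{w}=u_{v}$ for $w\mid v$, so this is consistent. Applying the norm map $\Norm_{\F/\K}\colon \T(\F)\to \T(\K)$ to $p$ and using the compatibility of the $\val$ maps with norms (the local degree count from \eqref{eq:100} and Proposition~\ref{prop:15}\eqref{item:19}), one gets $\val_{v}(\Norm_{\F/\K}(p))=\sum_{w\mid v}\frac{n_{w}}{n_{v}}\,[\text{local factor}]\cdot u_{v}$, which is a positive rational multiple of $u_{v}$. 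Hence $u_{v}\in\val_{v}(\T(\K))\otimes\Q$ for all $v$, giving \eqref{eq:76}. (One must be slightly careful with inseparability in the function-field case, but $\K$ is assumed a number field here, so $\F/\K$ can be taken separable and even Galois.)

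For the ``if'' direction and the final assertion: assume \eqref{eq:76} holds, so for each $v$ there is $q_{v}\in\T(\K)$ and $m_{v}\ge 1$ with $\val_{v}(q_{v})=\tfrac{1}{m_{v}}u_{v}$; since only finitely many $u_{v}$ are nonzero and only finitely many places are non-Archimedean with $\val_{v}(q)\ne 0$ for fixed $q$, one can argue that a single $q\in\T(\K)$ and a single integer $\ell\ge1$ work, i.e.\ $\val_{v}(q)=\ell\,u_{v}$ for all $v$ simultaneously — this uses that $\val_{v}(\T(\K))$ is a lattice in $N_{\R}$ for non-Archimedean $v$ and clearing denominators, together with the product formula constraint $\sum_v n_v u_v = 0$ which matches $\sum_v n_v\val_v(q) = 0$. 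Then any $\ell$-th root $p$ of $q$ in $X_{0}(\ov\K)$ satisfies $\val_{v}(p')=u_{v}$ for every conjugate $p'$ and every $v$ (the $\ell$-th root is unique up to $\ell$-torsion, which has trivial valuation), so $\nu_{p,v}=\delta_{u_{v}}=\delta_{F_v}$ for all $v$, and Corollary~\ref{cor:3} gives $\h_{\ov D}(p)=\upmu^{\ess}_{\ov D}(X)$; thus $p$ is $\ov D$-special and of the stated form $q^{1/\ell}$. Conversely, the paragraph above already showed every $\ov D$-special point $p$ has $\val_{v}(p)=u_{v}$ for all $v$, and applying $\Norm_{\F/\K}$ (or, when $p\in\T(\K)$ already, directly) together with clearing denominators exhibits $p$ as an $\ell$-th root of some $q\in X_{0}(\K)$.

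The main obstacle I expect is the bookkeeping in the ``if'' direction needed to pass from the pointwise statement ``$u_v \in \val_v(\T(\K))\otimes\Q$ for each $v$'' to a \emph{single} pair $(q,\ell)$ with $\val_v(q)=\ell u_v$ for all $v$ at once: one needs that the finitely many relevant local conditions can be solved simultaneously inside the global group $\T(\K)$, which requires knowing the image of $\val=(\val_v)_v$ on $\T(\K)\otimes\Q$ is the full hyperplane $H_{\K}$ (this is exactly \cite[Lemma 2.3]{BurgosPhilipponSombra:smthf}, already invoked in the proof of Proposition~\ref{p:independence}). With that density/surjectivity statement in hand the argument closes; without it one only gets an approximation, which is not enough for an exact special point.
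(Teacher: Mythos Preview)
Your approach is essentially the same as the paper's, and the ``only if'' direction via the norm map matches. The gap you correctly identify in the ``if'' direction --- passing from the local conditions $u_v\in\val_v(\T(\K))\otimes\Q$ to a single $q\in\T(\K)$ with $\val_v(q)=\ell u_v$ for all $v$ --- is not closed by the density statement \cite[Lemma~2.3]{BurgosPhilipponSombra:smthf} you invoke. Density of the image in the real hyperplane $H_{\K}$ says nothing about hitting a prescribed rational point exactly; you need the stronger statement that the image is a lattice of full rank. The paper uses Dirichlet's $S$-unit theorem: with $S$ a finite set containing the Archimedean places and those where $u_v\neq 0$, the image $\Lambda$ of $\val_S\colon \T(\K)_S\to H_{\K,S}$ is a lattice commensurable with $\Gamma=H_{\K,S}\cap\bigoplus_{v\in S}\val_v(\T(\K))$, so $\Lambda\otimes\Q=\Gamma\otimes\Q$. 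Condition~\eqref{eq:76} places $(u_v)_{v\in S}$ in $\Gamma\otimes\Q$, hence in $\Lambda\otimes\Q$, and clearing denominators produces the required $q\in\T(\K)_S$.

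For the final assertion, your sketch via the norm can be made to work, but the paper's argument is cleaner: having constructed one special point $p=q^{1/\ell}$, for any other special point $p'$ one has $\val_w(p'p^{-1})=0$ for all places $w$ of a common field of definition, so $p'p^{-1}$ is torsion by Kronecker's theorem, and therefore some positive power of $p'$ lies in $\T(\K)$.
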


\begin{proof}
  Suppose that there is a $\ov D$-special point $p\in X_{0}(\ov
  \K)$. Choose a finite normal extension $\F\subset \ov \K$ of $\K$ where $p$ is
  defined. 
Consider the norm of $p$ relative to this extension, given by 
\begin{displaymath}
  \norm_{\K}^{\F}(p)=\prod_{\tau \in \Gal(\F/\K)}
  \tau\big(p^{[\F:\K]_{i}}\big) 
\end{displaymath}
where $\Gal(\F/\K)$ and  $[\F:\K]_{i}$ are  the Galois group and
the  inseparable degree of the extension,  respectively. 

Let $v\in \fM_{\K}$. For every $\tau \in \Gal(\F/\K)$, there is a
place $w\in \fM_{\F}$ over $v$ such that $\val_{v}(\tau(p))=
\val_{w}(p)$.  By Corollary~\ref{cor:3} and Proposition~\ref{prop:13},
we have that $\val_{w}(p)=u_v$ for any such place.  It follows that
$\val_{v}(\tau(p))= u_{v}$ for all $\tau$.  Using that
$\#\Gal(\F/\K)\cdot [\F:\K]_{i}=[\F:\K]$, we deduce that
\begin{displaymath}
    \val_{v}( \norm_{\K}^{\F}(p)) = 
\sum_{\tau}
\val_{v}\big(\tau\big(p\big)^{ [\F:\K] _{i}}\big)= [\F:\K] u_{v} .
 \end{displaymath}
 Since $\norm_{\K}^{\F}(p) \in \T(\K)$, we get that $[\F:\K]
 u_v\in \val_{v}(\T(\K))$, proving the implication.

 Conversely, assume that the condition \eqref{eq:76} holds.
Let $S\subset \fM_{\K}$ be a finite set containing the
 Archimedean places and those places~$v$ where $u_v\not = 0$.  Set
 \begin{displaymath}
   \T(\K)_{S}=\{p\in \T(\K)\mid \val_{v}(p)=0 \text{ for all } v\notin
 S\}
 \end{displaymath}
 and let $H_{\K,S}$ be the subspace of $\bigoplus_{v\in S}N_{\R}$
 defined by the equation $\sum_{v\in S}n_{v}z_{v}=0$.
Moreover, consider the lattice
\begin{displaymath}
   \Gamma= H_{\K,S} \cap \bigoplus_{v\in S} \val_{v}(\T(\K)) 
  \end{displaymath}
and the map $ \val_{S}\colon \T(\K)_{S}\to \Gamma$ given by
  \begin{math}
\val_{S}(p)=(\val_{v}(p))_{v\in S}.
  \end{math}
  By Dirichlet's unit theorem \cite[Chapter IV, \S4, Theorem
  9]{Weil:bnt}, the image~$\Lambda$ of this map is a sublattice that is
  commensurable to~$\Gamma$. Thus $\Lambda\otimes\Q =\Gamma\otimes\Q$.
Condition \eqref{eq:76} implies that $(u_v)_{v\in S}\in
  \Gamma\otimes\Q = \Lambda \otimes \Q$.
Hence, there is an integer $\ell\ge 1$ such that
  \begin{displaymath}
     (\ell u_{v})_{v\in S}\in \Lambda.
  \end{displaymath}
In other terms,  there is $q\in \T(\K)_S$ such that $\val_{v}(q)=\ell u_{v}$
  for all $v\in S$. By Corollary~\ref{cor:3}, the point  $p=q^{1/\ell}\in
  \T(\ov \K)$ is $\ov D$-special, proving the reverse implication.

  To prove the last statement, suppose that the condition
  \eqref{eq:76} holds and consider an arbitrary $\ov D$-special point
  $p'\in X_{0}(\ov \K)$. Let $p$ be the $\ov D$-special point
  constructed above and $\F\subset \ov\K$ a finite extension of $\K$
  so that $p,p'\in \T(\F)$. Then $\val_{w}(p'p^{-1})=0 $ for all
  $w\in \fM_{\F}$. By Kronecker's theorem, the point $p'p^{-1}$ is torsion. We conclude that some positive power of $p'$ lies in
  $\T(\K)$, as stated.
\end{proof}

Next we characterize the translates of subtori that are $\ov
D$-special.  Let $U=T_{\ov \K}\cdot p$ be the translate of a subtorus
$T\subset \T$ by a point $p\in X_{0}(\ov \K)$.  The subtorus~$T$
corresponds to a saturated sublattice~$Q$ of~$N$; we denote by $\iota\colon Q\hookrightarrow
N$ the corresponding inclusion map.
Let $\F\subset \ov \K$ be a finite extension of $\K$ where~$p$ is
defined. For each $w\in \fM_{\F}$, we consider the affine
subspace of~$N_{\R}$ given by
\begin{displaymath}
  A_{U,w}= \val_{w}(p)+Q_{\R}.
\end{displaymath}
Indeed $  A_{U,w}=\val_{w}(U^{\an}_{{w}})$ and so this affine subspace
depends only on $U$ and not on a particular choice for the translating point $p$. 

As explained in \cite[\S 3.2]{BurgosPhilipponSombra:agtvmmh}, 
the normalization of the closure of $U$ in $X_{\ov \K}$ can be given a
structure of toric variety. 
Let $\Sigma $ be the  fan on $N_{\R}$ corresponding to~$X$ and
$\Sigma_{Q}$ the fan on $Q_{\R}$ obtained by  restricting
$\Sigma$ to this latter linear space. Then the inclusion $\iota\colon
Q_{\R}\hookrightarrow N_{\R}$ induces an equivariant map of toric
varieties
\begin{equation*}
  \varphi_{p,\iota}\colon
X_{\Sigma _{Q},\ov \K}\to X_{\ov \K}
\end{equation*}
extending the inclusion $U\hookrightarrow \T_{\K}$.

\begin{prop}\label{prop:11}
  Let $X$ be a proper toric variety over a number field $\K$
  and~$\overline D$ a monocritical metrized $\R$-divisor on $X$ with
  critical point $\bfu=(u_{v})_{v\in \fM_{\K}}$.  Let $U=T_{\ov
    \K}\cdot p\subset X_{0,\ov \K}$ be the translate of a subtorus
  $T\subset \T$ by a point $p\in X_{0}(\ov \K)$ defined over a finite
  extension $\F\subset \ov \K$ of~$\K$.
For a place~$w$ in~$\fM_{\F}$ denote by~$v(w)$ the place in~$\fM_{\K}$
below~$w$.
Then we have the following properties.
\begin{enumerate}
  \item \label{item:15} The translate $U$ is $\ov D$-special if and only if $u_{v(w)}\in A_{U,w}$ for all~$w\in \fM_{\F}$.
  \item \label{item:17} If the translate $U$ is $\ov D$-special, then the
    metrized $\R$-divisor $\varphi^{\ast}_{p,\iota}\ov D$ is
    monocritical and its critical point is~$(u_{v(w)} -
    \val_w(p))_{w \in \fM_{\F}}$.
  \end{enumerate}
\end{prop}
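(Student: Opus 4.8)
The plan is to transport the whole situation, via the finite map $\varphi_{p,\iota}$, to the proper toric variety $X_{\Sigma_Q}$ over $\K$, whose torus $T=(X_{\Sigma_Q})_0$ is mapped isomorphically onto $U$ by $\varphi_{p,\iota}$ (defined over $\F$ since $p\in X_0(\F)$). Write $\iota_\R\colon Q_\R\to N_\R$ for the induced injection and $\ov D'=\varphi_{p,\iota}^{*}\ov D$; this is a semipositive toric metrized $\R$-divisor on $X_{\Sigma_Q,\F}$ with $D'$ big, as $\Delta_{D'}$ is the image of $\Delta_D$ under the surjection dual to $\iota_\R$ and hence has nonempty interior. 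The first step is to record the pullback formula for metric functions from \cite[\S 3.2, \S 4]{BurgosPhilipponSombra:agtvmmh}: for $w\in\fM_\F$ and $u\in Q_\R$,
\[ \psi_{\ov D',w}(u)=\psi_{\ov D,v(w)}(\iota_\R(u)+\val_w(p))+c_w, \]
with constants $c_w$ that vanish for all but finitely many $w$ and satisfy $\sum_w n_w c_w=0$ (they record the discrepancy, a constant of $\F^{\times}$, between $\varphi_{p,\iota}^{*}s_D$ and the canonical section of $D'$). From this and the product formula I will derive two facts. First, setting $z_w=u_{v(w)}-\val_w(p)$, whenever $z_w\in Q_\R$ for all $w$ one has $\bfdelta_{\bfz}=(\delta_{z_w})_w\in\cH_\F$ (using $\sum_v n_v u_v=0$ and the product formula for $p$) and
\[ \eta_{\ov D'}(\bfdelta_{\bfz})=-\sum_w n_w\psi_{\ov D,v(w)}(u_{v(w)})=-\sum_v n_v\psi_{\ov D,v}(u_v)=\eta_{\ov D}(\bfdelta_{\bfu})=\upmu^{\ess}_{\ov D}(X), \]
the last equality since $\bfu$ is the critical point of $\ov D$ (Lemma~\ref{lem:measure minimum}). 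Second, pushing any $\bfnu'=(\nu'_w)_w\in\cH_\F$ forward by the affine maps $\alpha_w\colon u\mapsto\iota_\R(u)+\val_w(p)$ and averaging over $w\mid v$ produces $\bfnu=(\nu_v)_v\in\cH_\K$, with $\nu_v=\sum_{w\mid v}\tfrac{n_w}{n_v}(\alpha_w)_*\nu'_w$, satisfying $\eta_{\ov D}(\bfnu)=\eta_{\ov D'}(\bfnu')$.

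For \eqref{item:15} I will use that $\upmu^{\ess}_{\ov D}(U)=\upmu^{\ess}_{\ov D'}(X_{\Sigma_Q,\F})$ (because $\varphi_{p,\iota}$ identifies the dense torus of $X_{\Sigma_Q,\F}$ with $U$ compatibly with the metrized divisors, and the essential minimum of a proper variety equals that of any dense open subvariety), together with \eqref{eq:79}. If $u_{v(w)}\in A_{U,w}$ for all $w$, then each $z_w\in Q_\R$, and the computation above together with Lemma~\ref{lem:measure minimum} applied to $\ov D'$ gives $\upmu^{\ess}_{\ov D'}(X_{\Sigma_Q,\F})\le\eta_{\ov D'}(\bfdelta_{\bfz})=\upmu^{\ess}_{\ov D}(X)$; combined with $\upmu^{\ess}_{\ov D}(U)\ge\upmu^{\ess}_{\ov D}(X)$ from \eqref{eq:79}, this forces equality, so $U$ is $\ov D$-special. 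Conversely, if $U$ is $\ov D$-special, choose (Proposition~\ref{prop:4}) a $\ov D'$-small net $(t_l)_l$ in $X_{\Sigma_Q,\F}$; then $(\varphi_{p,\iota}(t_l))_l$ is a $\ov D_\F$-small net of algebraic points of $X_{0,\F}$, since heights pull back and $\upmu^{\ess}_{\ov D_\F}(X_\F)=\upmu^{\ess}_{\ov D}(X)=\upmu^{\ess}_{\ov D'}(X_{\Sigma_Q,\F})$. By Proposition~\ref{prop:13}, $\ov D_\F$ is monocritical with critical point $(u_{v(w)})_w$, so Theorem~\ref{thm:3} yields $\nu_{\varphi_{p,\iota}(t_l),w}\to\delta_{u_{v(w)}}$ for each $w$; since every $\nu_{\varphi_{p,\iota}(t_l),w}$ is supported on the closed affine subspace $A_{U,w}=\val_w(U^{\an}_w)$, the limit is too, whence $u_{v(w)}\in A_{U,w}$.

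For \eqref{item:17}, assume $U$ is $\ov D$-special, so $z_w\in Q_\R$ for all $w$ by \eqref{item:15}. Then $\bfdelta_{\bfz}\in\cH_\F$, and using \eqref{item:15} we get $\eta_{\ov D'}(\bfdelta_{\bfz})=\upmu^{\ess}_{\ov D}(X)=\upmu^{\ess}_{\ov D'}(X_{\Sigma_Q,\F})=\min_{\cH_\F}\eta_{\ov D'}$ (Corollary~\ref{cor:4}), so $\bfdelta_{\bfz}$ attains the minimum of $\eta_{\ov D'}$. To see it is the unique minimizer, let $\bfnu'=(\nu'_w)_w\in\cH_\F$ be any minimizer and let $\bfnu$ be its pushforward as above, so $\eta_{\ov D}(\bfnu)=\eta_{\ov D'}(\bfnu')=\min_{\cH_\K}\eta_{\ov D}$ (Corollary~\ref{cor:4}). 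Since $\ov D$ is monocritical, $\bfnu$ must equal its critical point $\bfdelta_{\bfu}$, i.e.\ $\nu_v=\delta_{u_v}$ for every $v$; as $\nu_v=\sum_{w\mid v}\tfrac{n_w}{n_v}(\alpha_w)_*\nu'_w$ is a convex combination with positive weights of probability measures, each $(\alpha_w)_*\nu'_w=\delta_{u_{v(w)}}$, and since $\alpha_w$ is injective with $\alpha_w(z_w)=u_{v(w)}$ this forces $\nu'_w=\delta_{z_w}$. Hence $\bfnu'=\bfdelta_{\bfz}$, so $\ov D'$ is monocritical with critical point $\bfz=(u_{v(w)}-\val_w(p))_{w\in\fM_\F}$.

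The main obstacle I anticipate is pinning down the pullback formula for $\psi_{\ov D',w}$ precisely — in particular tracking the normalization constants $c_w$ arising because $\varphi_{p,\iota}$, although $\iota_*$-equivariant, involves a translation by $p$, and verifying that these constants differ by a unit of $\F$ so that $\sum_w n_w c_w=0$ and they drop out of $\eta_{\ov D'}$. Once this is secured, the remaining arguments are formal: the identity $\eta_{\ov D}\circ(\text{pushforward})=\eta_{\ov D'}$, the characterization of monocriticality as uniqueness of the minimizer of $\eta$, the behaviour of the essential minimum under scalar extension and restriction to a dense open, and Theorem~\ref{thm:3}.
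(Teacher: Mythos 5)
Your proposal is correct, but it takes a noticeably different route from the paper in two places. First, the paper begins by reducing to the case $\F=\K$ (extending scalars and invoking Proposition~\ref{prop:13}), after which the pullback formula for metric functions comes straight out of \cite[Proposition~4.3.19]{BurgosPhilipponSombra:agtvmmh} with \emph{no} correction constants: $\psi_{\ov E,v}(z)=\psi_{\ov D,v}(\val_v(p)+\iota(z))$. So the ``main obstacle'' you flag --- tracking constants $c_w$ --- does not actually arise; it is dissolved by the existing toric functoriality result, and your bookkeeping of $\sum_w n_w c_w=0$ is unnecessary overhead. You could either cite the same proposition directly, or perform the same reduction to a $\K$-rational translating point that the paper does. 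Second, for the ``only if'' direction of \eqref{item:15} the paper argues purely through convex analysis: by Corollary~\ref{cor:4} it writes $\upmu^{\ess}_{\ov E}(X_{\Sigma_Q})$ and $\upmu^{\ess}_{\ov D}(X)$ as constrained minima of the linear functional $\bfu'\mapsto-\sum_v n_v\psi_{\ov D,v}(u'_v)$ over $H_\F\cap\bfA_U$ and $H_\F$ respectively, and reads off both directions of \eqref{item:15} (and \eqref{item:17}) at once from the uniqueness of the unconstrained minimizer. You instead go through modulus concentration (Theorem~\ref{thm:3}): take a $\ov D'$-small net, push forward by $\varphi_{p,\iota}$, get a $\ov D_\F$-small net, apply Theorem~\ref{thm:3} to conclude $\nu_{\cdot,w}\to\delta_{u_{v(w)}}$, and then a support argument locates $u_{v(w)}$ in $A_{U,w}$. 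This works --- Theorem~\ref{thm:3} indeed needs no genericity, and the support-in-a-closed-set argument under weak convergence is sound --- but it is heavier machinery than the paper's direct comparison of constrained minima. Your treatment of \eqref{item:17} via the convexity/extreme-point argument (a positive convex combination of probability measures equaling a Dirac forces each summand to be that Dirac) is a nice explicit rendering of what the paper leaves implicit, and your verification that the affine map $\alpha_w$ is injective so that $(\alpha_w)_*\nu'_w=\delta_{u_{v(w)}}$ determines $\nu'_w=\delta_{z_w}$ is exactly the right finishing touch. In short: correct, somewhat longer than the paper, with the one genuine worry you raised answered by the citation the paper uses.
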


\begin{proof}
  By passing to a suitable large finite extension of $\K$ and applying
  Proposition~\ref{prop:13}, we can reduce to the case when $U$ is the
  translate of a $\K$-rational point, that is, $U=T_{\ov \K}\cdot p$
  with $p\in X_{0}(\K)$. With this assumption, $\F=\K$ and we set
  $v\coloneqq w=v(w)$. 
 
  Since $\ov D$ is a semipositive toric metrized divisor with $D$ big,
  the virtual support function $\Psi _{D}$ is concave and its
  associated polytope has dimension $n$. Hence, there is $m\in M_{\R}$
  such that $\langle m, u\rangle >\Psi_{D} (u)$ for all $u\not =
  0$. Moreover, the metric functions~$\psi _{\ov D,v}$ are concave for
  all $v\in \fM_{\K}$.

  Consider the toric metrized $\R$-divisor $\ov E\coloneqq
  \varphi_{\iota,p}^{\ast}\ov D$ on the toric variety
  $X_{\Sigma_{Q}}$. By \cite[Proposition
  4.3.19]{BurgosPhilipponSombra:agtvmmh}, its virtual support function
  and metric functions are given, for $z\in Q_{\R}$, by
  \begin{displaymath}
    \Psi _{ E}(z)=\Psi_{D} (\iota(z)),\qquad
    \psi _{\ov E,v}(z)
=
\psi_{\ov D,v}    (\val_{v}(p)+ \iota(z)).
  \end{displaymath}
  Therefore $\Psi _{E}$ is concave and satisfies $\langle \iota^{\vee}
  m, z\rangle >\Psi_{E}(z)$ for all $z\in Q_{\R}\setminus
  \{0\}$. Hence, the $\R$-divisor $E$ is big.  Moreover, the metric
  functions $\psi _{\ov E,v}$ are concave and so $\ov E$ is
  semipositive.

  Since $U$ is identified with a dense open subset of $X_{\Sigma
    _{Q},\ov \K}$, we have 
  \begin{displaymath}
  \upmu^{\ess}_{\ov
    D}(U)=\upmu^{\ess}_{\ov E}(X_{\Sigma _{Q}}).  
  \end{displaymath}
Consider the affine
  subspace $\bfA_{U}=\bigoplus_{v}A_{U,v}$ of $\bigoplus_{v}N_{\R}$.
  By  Corollary~\ref{cor:4},
  \begin{multline*}
    \upmu^{\ess}_{\ov E}(X_{\Sigma
    _{Q}})=\min_{\bfu'\in H_{\F}\cap \bfA_{U}}
  \sum_{v}-n_{v}\psi_{\ov D, v}(u_{v}'),\qquad
  \upmu^{\ess}_{\ov D}(X)=\min_{{\bfu'\in H_{\F}}}
  \sum_{v}-n_{v}
\psi_{\ov D,v}(u_{v}').
  \end{multline*}
  Since $\ov D$ monocritical, the minimum in the right equality is
  attained only at~$\bfu' = ( u_{v} )_{v}$.
We conclude
  that
  $\upmu^{\ess}_{\ov E}(U)=\upmu^{\ess}_{\ov D}(X)$ if and only if $u_{v}\in
  A_{U,v}$ for all~$v\in \fM_{\K}$, proving both statements.
\end{proof}

\begin{cor}
  \label{cor:6}
  Let $X$ be a proper toric variety over a number field $\K$ and
  $\overline D$ a monocritical metrized $\R$-divisor on $X$ with
  critical point $\bfu=(u_{v})_{v\in \fM_{\K}}$, and suppose that $
  u_v\in \val_{v}(\T(\K))\otimes \Q$ for all $v\in \fM_{\K}$.  Then a
  translate of a subtorus of $X_{0}$ is $\ov D$-special if and only if
  it is the translate of a subtorus by a $\ov D$-special point.
\end{cor}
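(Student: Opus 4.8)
The plan is to establish the two implications separately. The implication ``$U$ is a translate of a subtorus by a $\ov D$-special point $\Rightarrow$ $U$ is $\ov D$-special'' is short: write $U = T_{\ov\K}\cdot p'$ with $p'$ a $\ov D$-special point of $X_0$ and $T\subset\T$ the subtorus attached to the saturated sublattice $Q\subset N$, and fix a finite extension $\F\subset\ov\K$ of $\K$ over which $p'$ is defined. Since $\ov D$ is monocritical, Proposition~\ref{prop:14} gives $F_v=\{u_v\}$ for all $v$, so Corollary~\ref{cor:3} forces $\supp(\nu_{p',v})=\{u_v\}$ and hence $\val_w(p')=u_{v(w)}$ for every $w\in\fM_{\F}$, where $v(w)\in\fM_{\K}$ is the place below $w$. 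Then $u_{v(w)}\in A_{U,w}=\val_w(p')+Q_\R$ for all $w$, and Proposition~\ref{prop:11}\eqref{item:15} shows that $U$ is $\ov D$-special.

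For the converse, let $U=T_{\ov\K}\cdot p\subset X_{0,\ov\K}$ be $\ov D$-special, with $T\subset\T$ attached to a saturated sublattice $Q\subset N$, $\iota\colon Q\hookrightarrow N$ the inclusion, and $p$ defined over a finite extension $\F\subset\ov\K$ of $\K$. The idea is to transport the problem to the proper toric variety $X_{\Sigma_Q}$ over $\F$ with torus $T$ (proper because $\Sigma_Q$ is the restriction of the complete fan $\Sigma$), via the equivariant map $\varphi_{p,\iota}\colon X_{\Sigma_Q,\ov\K}\to X_{\ov\K}$ which on tori is $t\mapsto t\cdot p$, and via the pulled-back metrized $\R$-divisor $\ov E=\varphi_{p,\iota}^{\ast}\ov D$. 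By (the proof of) Proposition~\ref{prop:11}, $E$ is big, $\ov E$ is semipositive, $\upmu^{\ess}_{\ov E}(X_{\Sigma_Q})=\upmu^{\ess}_{\ov D}(U)=\upmu^{\ess}_{\ov D}(X)$, and by part \eqref{item:17} of that proposition $\ov E$ is monocritical with critical point $\bfu'=(u_{v(w)}-\val_w(p))_{w\in\fM_{\F}}$, whose components lie in $Q_\R$.

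The crucial step, which I expect to be the main obstacle, is to check that $\bfu'$ satisfies the rationality hypothesis of Proposition~\ref{prop:16} over $\F$, i.e. that $u_{v(w)}-\val_w(p)\in\val_w(T(\F))\otimes\Q$ for every $w$. Writing $L_w\subset\R$ for the $\Q$-span of $\val_w(\F^\times)$, the $\Q$-spans of $\val_w(\T(\F))$ and of $\val_w(T(\F))$ are $L_w\cdot N_\Q$ and $L_w\cdot Q_\Q$ respectively. Since $|\cdot|_w$ extends $|\cdot|_{v(w)}$ on $\K$, one has $\val_{v(w)}(\T(\K))\subset\val_w(\T(\F))$, so the hypothesis of the corollary gives $u_{v(w)}\in\val_{v(w)}(\T(\K))\otimes\Q\subset L_w\cdot N_\Q$; and $\val_w(p)\in\val_w(\T(\F))\subset L_w\cdot N_\Q$ because $p\in\T(\F)$. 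Hence $u_{v(w)}-\val_w(p)\in L_w\cdot N_\Q\cap Q_\R$. Now the saturation of $Q$ in $N$ enters decisively: choosing a complement $N_\Q=Q_\Q\oplus Q'_\Q$ one verifies $L_w\cdot N_\Q\cap Q_\R=L_w\cdot Q_\Q$, so $u_{v(w)}-\val_w(p)\in\val_w(T(\F))\otimes\Q$, as required. The delicate point is precisely that membership in $Q_\R\cap(\val_w(\T(\F))\otimes\Q)$ is a priori weaker than membership in $\val_w(T(\F))\otimes\Q$, and saturation is what bridges the gap.

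With the hypothesis in place, Proposition~\ref{prop:16} applied to the monocritical pair $(X_{\Sigma_Q},\ov E)$ over $\F$ produces a $\ov E$-special point $p''\in T(\ov\K)$. I would then set $p'=\varphi_{p,\iota}(p'')=p''\cdot p\in X_0(\ov\K)$; by functoriality of heights under pullback, $\h_{\ov D}(p')=\h_{\ov E}(p'')=\upmu^{\ess}_{\ov E}(X_{\Sigma_Q})=\upmu^{\ess}_{\ov D}(X)$, so $p'$ is $\ov D$-special, while $U=T_{\ov\K}\cdot p=T_{\ov\K}\cdot p'$. This exhibits $U$ as the translate of the subtorus $T$ by a $\ov D$-special point and finishes the argument. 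Throughout, one must pass to the extension $\F$ because the translating point $p$ need not be $\K$-rational; Proposition~\ref{prop:13} ensures that being monocritical and the value of the critical point are unaffected by this base change.
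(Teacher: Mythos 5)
Your proof is correct and follows the same route as the paper's: you reduce to $X_{\Sigma_Q}$ over $\F$ via $\varphi_{p,\iota}$, invoke Proposition~\ref{prop:11} to get $\ov E$ monocritical, verify the rationality hypothesis of Proposition~\ref{prop:16} for the critical point of $\ov E$, and translate the resulting $\ov E$-special point back to a $\ov D$-special point translating $T$. Your explicit use of the saturation of $Q$ to obtain $\val_w(\T(\F))\otimes\Q \cap Q_\R = \val_w(T(\F))\otimes\Q$ is precisely what the paper's unexplained equality $A_{U,w}\cap\val_w(\T(\F))\otimes\Q = \val_w(p)+\val_w(T(\F))\otimes\Q$ rests on, so you have usefully filled in that step rather than diverged from the argument.
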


\begin{proof}
Clearly, the translate of a subtorus by a $\ov D$-special point
is~$\ov D$-special.
To prove the reverse implication, let $U$ be a $\ov D$-special
translate of a subtorus and write
  $U=T_{\ov \K}\cdot p$ as in the statement of
  Proposition~\ref{prop:11}. By this result, the toric metrized
  $\R$-divisor $\ov E=\varphi_{p,\iota}^{*}\ov D$ is monocritical and, for each $v\in \fM_{\K}$ and $w\in \fM_{\F}$
  over $v$, 
  \begin{displaymath}
    u_{v}\in   A_{U,w}\cap \val_{v}(\T(\K))\otimes \Q \subset  A_{U,w}
    \cap\val_{w}(\T(\F))\otimes \Q .
  \end{displaymath}
  Since $p\in X_{0}(\F)$,
  \begin{displaymath}
   A_{U,w} \cap
  \val_{w}(\T(\F))\otimes \Q
=
\val_{w}(p) + \val_{w}(T(\F)) \otimes \Q . 
  \end{displaymath}
 Hence $u_{v} - \val_{w}(p) \in \val_{w}(T(\F)) \otimes \Q.$
Extending the base field to $\F$ and restricting to $X_{\Sigma_{Q}}$,
Proposition~\ref{prop:16} implies that this toric variety
contains an $\ov E$-special point. Hence $U$ contains a $\ov D$-special
point and it is the translate of $T$ by this 
point, as stated.
\end{proof}

\begin{exmpl}
  \label{exm:2}
  Let $\ov D^{\can}$ be a nef and big toric $\R$-divisor on the
  proper toric
  variety~$X$, equipped with the canonical metric.  By 
  Example~\ref{exm:9}, it is monocritical with critical point
  $\bfzero\in H_{\K}$.
%
Hence, $p\in X_{0}(\ov \K)$ is $\ov D^{\can}$-special if and only if
$\val_{v}(p)=0$ for every $v\in \fM_{\K}$.
By Kronecker's theorem, this is also equivalent to the fact that $p$ is
   torsion. Hence, Corollary~\ref{cor:6} shows that a translate of a subtorus
  that is $\ov D^{\can}$-special is necessarily the translate of a
  subtorus by a torsion point, that is, a torsion subvariety. 
\end{exmpl}

\begin{proof}[Proof of Theorem~\ref{thm:2}]
    Let $U\subset X_{0,\ov \K}$ be the minimal translate of a subtorus
  containing the subvariety $V$ and let~$Q$ and~$\Sigma_{Q}$ be
  defined before Proposition~\ref{prop:11}.
By \eqref{eq:78} and \eqref{eq:79}, we
  have $\upmu^{\abs}_{\ov D}(U)=  \upmu^{\ess}_{\ov D}(U) $ and
  \begin{displaymath}
\upmu^{\ess}_{\ov D}(X)
=
\upmu^{\abs}_{\ov D}(X_0)
\le  \upmu^{\abs}_{\ov D}(U)\le \upmu^{\abs}_{\ov D}(V)\le
  \upmu^{\ess}_{\ov D}(V) = \upmu^{\ess}_{\ov D}(X).   
  \end{displaymath}
  Therefore, $U$ is $\ov D$-special.  By Proposition
  \ref{prop:11}\eqref{item:17}, 
  $\ov D$ pulls back to a monocritical metrized $\R$-divisor on
  $X_{\Sigma_{Q}}$, the normalization of the closure of $U$ in~$X_{\ov
    \K}$. Replacing~$X$ by this toric variety, we reduce to the case
  where $U=X_{0,\ov \K}$.

  Using Proposition~\ref{prop:4}, we choose a sequence $(p_{l})_{l\ge
    1}$ of algebraic points of $V$, that is generic in $V$ and satisfies
\begin{displaymath}
\lim_{l}\h_{\ov
  D}(p_{l})= \upmu^{\ess}_{\ov D}(V). 
\end{displaymath}
Since $V$ is not contained in any proper translate of a subtorus, this
sequence is strict and, since $V$ is $\ov D$-special, it is also $\ov
D$-small.

Applying Theorem~\ref{thm:7} to an Archimedean place $v\in \fM_{\K}$, we
obtain that the sequence of measures $(\mu_{p_{l},v})_{l\ge 1}$
converges to a measure 
whose support is the translate $\SS_{v}\cdot \e^{-u_{v}}$ of the
compact subtorus, with $u_{v}$ the $v$-adic coordinate of the critical
point of $\ov D$.  Hence $ \SS_{v}\cdot \e^{-u_{v}}\subset
V^{\an}_{{v}}$. Since $\SS_{v}\cdot \e^{-u_{v}}$ is dense in
$X_{v}^{\an}$ with respect to the Zariski topology, it follows that
$V=X_{0, \ov \K}$, proving the result.
\end{proof}

By Theorem~\ref{thm:2} and Example~\ref{exm:2}, the canonical toric
metrized $\R$-divisor~$\ov D^{\can}$  satisfies the Bogomolov property, and every $\ov
D^{\can}$-special subvariety is  torsion. Hence, Theorem
\ref{thm:2} extends Zhang's theorem to the general monocritical
case. 
On the other hand, in \S\ref{sec:count-bogom-prop} we will give examples of
non-monocritical metrized divisors  not satisfying the Bogomolov property.

\section[Examples]{Examples} \label{sec:examples}

The obtained criteria can be applied in concrete situations, to decide
if a given semipositive toric metrized $\R$-divisor satisfies
properties like modulus concentration or equidistribution.  In this
section, we consider translates of subtori with the canonical height,
and  toric metrized $\R$-divisors equipped with positive smooth
metrics at the Archimedean places and canonical metrics at the
non-Archimedean ones. We also give a family of
counterexamples to the Bogomolov property in the non-monocritical
case.

\subsection{Translates of subtori with the canonical
  height} \label{sec:transl-subt-with}

Let $X$ a proper toric variety of dimension $n$ over a global field
$\K$ and $D$ a big and nef toric $\R$-divisor on~$X$.
Let~$\Psi _{D}$ be its virtual support function.

We denote by $\ov D^{\can}$ this $\R$-divisor equipped with the
canonical metric as in Example \ref{exm:6}.  This toric metrized
$\R$-divisor satisfies that, for all $v\in \fM_{\K}$,
 \begin{displaymath}
\psi_{\ov
  D^{\can},v}=\Psi_{D} \and \vartheta_{\ov D^{\can},v}=0.
\end{displaymath}
Since $D$ is big, $\Delta _{D}$ has dimension $n$. Every point $x$ in the
interior of $\Delta _{D}$ maximizes the global roof function and 
$\partial \vartheta _{\ov D^{\can},v}({x})=\{0\}$. Therefore, for all
$v\in \fM_{\K}$, 
\begin{displaymath}
  B_{v}=\{0\}\and F_{v}=\{0\}.
\end{displaymath}
By Proposition \ref{prop:14}, the canonical metric is monocritical and
so, by Theorem \ref{thm:5}, $\ov D^{\can}$ satisfies the
equidistribution property at every place (Example \ref{exm:4}).

We next study the toric metrics on $D$ that are obtained as inverse
image by an equivariant map 
of a canonical metrized toric divisor on an projective space.

Let $v\in\fM_{\K}$.  If $v$ is Archimedean, we set $\lambda _{v}=1$
whereas, if $v$ is non-Archimedean, we set $\lambda _{v}$ as the
positive generator of the discrete subgroup $\val_{v}(\K^{\times})$ of
$ \R$. A piecewise affine function is said to be \emph{$\lambda
  _{v}$-rational} if all its defining affine functions $\langle
x,u\rangle +b$ satisfy $x\in M_{\Q}$ and $b\in \lambda _{v}\Q$.

Let $\P^{r}_{\K}$ be a standard projective space over $\K$ with
homogeneous coordinates $(z_0:\dots:z_r)$ and~$H$ the hyperplane at
infinity, defined by the equation $z_0=0$. Denote by~$\ov H^{\can}$
this toric divisor equipped with the canonical metric.  As seen in
\cite[Example 3.7.11]{BurgosPhilipponSombra:agtvmmh}, if $\psi \colon
N_{\R}\to \R$ is a concave $\lambda_{v}$-rational piecewise affine
function with $|\psi-\Psi_{D}|$ is bounded, then there is an integer
$r>0$ and a toric morphism $\iota \colon X \to \P^{r}_{\K}$ such that
\begin{displaymath}
  \psi =\psi _{\iota ^{\ast} \ov H^{\can},v}.
\end{displaymath}
Hence, any such function $\psi $  can be realized as the $v$-adic
metric function of a toric metrized divisor on $D$. This allows us to
construct many examples, both  monocritical and non-monocritical, of metrized
toric divisors.

In the next examples, we fix $\K=\Q$ and, as before, we denote by
$\ov H^{\can}$ the hyperplane at infinity with the canonical metric.

\begin{exmpl}\label{exm:5}
  Let $\iota\colon \G_{\textrm{m},\Q}\to \P^{2}_{\Q}$ be the map given
  by 
  \begin{displaymath}
    \iota(t)=(1:t/2:t).
  \end{displaymath}
  Let $X$ the normalization of the closure of
  $\iota(\G_{\textrm{m},\Q})$ and $\ov D=\iota^{\ast}(\ov
  H^{\can})$. Then $X=\P^{1}_{\Q}$ and $D$ is the divisor at infinity.

  We have $\Delta _{D}=[0,1]$.  As explained in \cite[Example
  5.1.16]{BurgosPhilipponSombra:agtvmmh}, for each $v\in \fM_{\Q}$ the
  graph of the local roof function associated to $\ov D$ is given by
  the upper envelope of the extended polytope
  \begin{displaymath}
    \conv((0,0),(1,\log|1/2|_{v}),(1,\log|1|_{v})) \subset
    \R\times \R.
  \end{displaymath}
  The graphs of these functions are represented in
  Figure~\ref{fig:localrf1}.
\captionsetup[subfigure]{labelformat=empty}
\begin{figure}[ht]
  \centering
  \begin{subfigure}[1]{0.3\textwidth}
    \input{ex_71_21.pdf_t}
    \caption{$v=2$}
  \end{subfigure}
  \begin{subfigure}[2]{0.3\textwidth}
    \input{ex_71_infty1.pdf_t}    
    \caption{$v=\infty$}
  \end{subfigure}
  \begin{subfigure}[1]{0.3\textwidth}
    \input{ex_71_other1.pdf_t}
    \caption{$v\ne\infty, 2$}
  \end{subfigure}
\caption{Local roof functions in Example \ref{exm:5}}
\label{fig:localrf1}
\end{figure}
Thus, for $x\in [0,1]$ we have $ \vartheta _{2}(x)=x\log(2)$ and $
\vartheta _{v}(x)=0$ for $v\not=2$. The global roof function is
$\vartheta (x)=x\log(2)$ and the only point that maximizes it is
$x=1$. Moreover, $    \partial \vartheta _{2}({1}) =(-\infty,\log(2)]$
and $ \partial \vartheta_{v} ({1})    =(-\infty,0]$ for $v\not=2$.
With Notation \ref{def:14}, we have 
\begin{alignat*}{2}
  B_{2}&= [0,\log(2)],&\quad
  F_{2}&= [-\infty,\log(2)],\\
  B_{v}&=[-\log(2),0],& F_{v}&=[-\infty,0]\ \text{ for }v\not = 2.
\end{alignat*}
By Corollary \ref{cor:3 bis}, this metrized divisor does not satisfy
the modulus concentration property at any place. \emph{A fortiori}, it
does not satisfy the equidistribution property at any place. 

Indeed, by \eqref{eq:10} we have $    \upmu^{\ess}_{\ov
  D}(X)=\log(2)$.
  Let $(\omega _{l})_{l\ge 1}$ be a sequence given by a choice of a
  primitive $l$-th root of the unity, $a\not =2$ a positive prime number and
  $r$ an integer with $\log(a)\le r\log(2)$. Choose any $r$-th
  root of $a$ and consider the generic sequences of points 
  \begin{displaymath}
    p_{l}=(1:\omega _{l})\and q_{l}=(1:2a^{-1/r}\omega_{l}) \quad
    \text{ for } l\ge 1.
  \end{displaymath}
For every $v\in \fM_{\Q}$, $l\ge 1$, $p\in
  \Gal(p_{l})_{v}$ and $q\in
  \Gal(q_{l})_{v}$  we have $    (\val_{v})_{\ast}(p)=0$ and 
  \begin{displaymath}
 (\val_{v})_{\ast}(q)=
 \begin{cases}
   \log(2)&\text{ if }v=2,\\
   \frac{-1}{r}\log(a)&\text{ if }v=a,\\
   -\log(2)+\frac{1}{r}\log(a)&\text{ if }v=\infty,\\
   0&\text{ if }v\not =2,a, \infty.\\
 \end{cases}
  \end{displaymath}
Either by computing the local roof functions of $\ov D$ or the Weil
height of the image of these points under the inclusion $\iota$, we
deduce  that
\begin{displaymath}
\h_{\ov D}(p_{l})=\log(2) \and \h_{\ov D}(q_{l})=\log(2).  
\end{displaymath}
Therefore both sequences are $\ov D$-small. For any place $v$, the
sequence $\mu _{p_{l},v}$ converges to $\lambda _{\SS_{v},0}$. By
contrast, if we denote $u_v=(\val_v)_\ast(q)$ for any $q\in
\Gal(q_{l})_{v}$ , then $\mu_{q_{l},v}$ converges to $\lambda
_{\SS_{v},u_v}$. This shows that neither the modulus concentration nor
the equidistribution properties hold for the places $2, a,
\infty$. Varying $a$, we deduce that these properties do not hold at
any place of $\Q$.

The metric of $\ov D$ at the Archimedean place is the canonical one.
The metrics at the non-Archimedean places can be interpreted in terms
of integral models. Let~$\cX$ be the blow up of $\P^{1}_{\Z}$ at the
point $(1:0)$ over the prime $2$. The fibre of the structural map
$\cX\to \Spec (\Z)$ over the point $2$ has two components: the
exceptional divisor of the blow up, that we denote by $E$, and the
strict transform of the fibre of $\P^{1}_{\Z}$, that we denote $Y$.
Consider the divisor
  \begin{displaymath}
 \cD=\ov \infty + Y,   
  \end{displaymath}
  where $\ov \infty$ denotes the 
  closure in $\cX$ of the point $(0:1)\in \P^{1}(\Q)$. The pair
  $(\cX,\cD)$ 
  is a model of $(X,D)$. For each non-Archimedean place $v$, this model
  induces an algebraic metric on $D$ that agrees with the $v$-adic
  metric of $\ov D$.   
\end{exmpl}

\begin{exmpl} \label{exm:8} Consider now the map
    $\iota\colon \G_{\textrm{m},\Q}\to \P^{2}_{\Q}$ given
  by
  \begin{displaymath}
   \iota(t)=(t^{-1}:1/2:t). 
  \end{displaymath}
Let $X$ be the  normalization of the closure
  of $\iota(\G_{\textrm{m},\Q})$ and $\ov D=\iota^{\ast}(\ov
  H^{\can})$. In this case, $X=\P^{1}_{\Q}$ and $D$ is the divisor at
  infinity plus the divisor at zero. 

  We have $\Delta _{D}=[-1,1]$. As before, we compute the local roof
  functions using \cite[Example
  5.1.16]{BurgosPhilipponSombra:agtvmmh}. Their graphs are  represented in
  Figure \ref{fig:localrf2}.
\captionsetup[subfigure]{labelformat=empty}
\begin{figure}[ht]
  \centering
  \begin{subfigure}[1]{0.3\textwidth}
    \input{ex_73_21.pdf_t}
    \caption{$v=2$}
  \end{subfigure}
  \begin{subfigure}[2]{0.3\textwidth}
    \input{ex_73_infty1.pdf_t}    
    \caption{$v=\infty$}
  \end{subfigure}
  \begin{subfigure}[1]{0.3\textwidth}
    \input{ex_73_other1.pdf_t}
    \caption{$v\ne\infty, 2$}
  \end{subfigure}
\caption{Local roof functions in Example \ref{exm:8}}
\label{fig:localrf2}
\end{figure}
For $x\in[0,2]$, we have $\vartheta _{2}(x)=(1 - |x|)\log(2)$ and $\vartheta _{v}(x)=0$ for $v\not = 2$.
  Thus, the global roof function is~$\vartheta(x)= (1 - |x|)\log(2)$.
  Its maximum is attained only at the point $x = 0$. In this case,
  $    \partial \vartheta _{2}(0)=[-\log(2),\log(2)]$ and $   \partial
  \vartheta _{v}(0)=\{0\}$ for $v\not = 2$. We deduce that
  \begin{equation}\label{eq:3}
    B_{2}=\{0\},\ F_{2}=[-\log(2),\log(2)]\and
    B_{v}=\{0\},\ F_{v}=\{0\} \text{ for }v\not = 2.
  \end{equation}
  By Corollary \ref{cor:3 bis}, $\ov D$ satisfies modulus
  concentration for all places except the place~$2$. This toric
  metrized divisor is not monocritical, and so we cannot apply
  Theorem~\ref{thm:5} in this case. Indeed, later we will see that
  $\ov D$ does not satisfy the equidistribution property at any other
  place of $\Q$ (Example~\ref{exm:3}).

  As in the previous example, the metric of $\ov D$ at the Archimedean
  place is the canonical one, and those at the non-Archimedean places
  can be interpreted in terms of integral models. Let $\cX$ be the
  blow up of $\P^{1}_{\Z}$ at the points $(1:0)$ and $(0:1)$ over the
  prime $2$. The fibre of the structural map $\cX\to \Spec (\Z)$ over
  the point $2$ has three components.
  Consider the divisor
  \begin{displaymath}
   \cD=\ov \infty + \ov 0 ,
  \end{displaymath}
 where $\ov \infty$ denotes the 
  closure in $\cX$ of the point $(0:1)\in \P^{1}(\Q)$ and $\ov 0$ the 
  closure of the point $(1:0)$. The pair
  $(\cX,\cD)$ 
  is a model of $(X,D)$. For each non-Archimedean place $v$, this model
  induces an algebraic metric on $D$ that agrees with the $v$-adic
  metric of $\ov D$. 
\end{exmpl}

\begin{exmpl}\label{exm:7} This time we consider the map $\iota\colon
  \G_{\textrm{m},\Q}\to \P^{3}_{\Q}$ given 
  by
  \begin{displaymath}
   \iota(t)=(1:t/2:t^2/2:t^3). 
  \end{displaymath}
Let $X$ be the  normalization of
  the closure 
  of $\iota(\G_{\textrm{m},\Q})$ and $\ov D=\iota^{\ast}(\ov
  H^{\can})$. In this case, $X=\P^{1}_{\Q}$ and $D$ is three times the divisor at
  infinity. 

We have  $\Delta _{D}=[0,3]$ and the local roof functions are
  represented in Figure \ref{fig:localrf3}. 
\captionsetup[subfigure]{labelformat=empty}
\begin{figure}[ht]
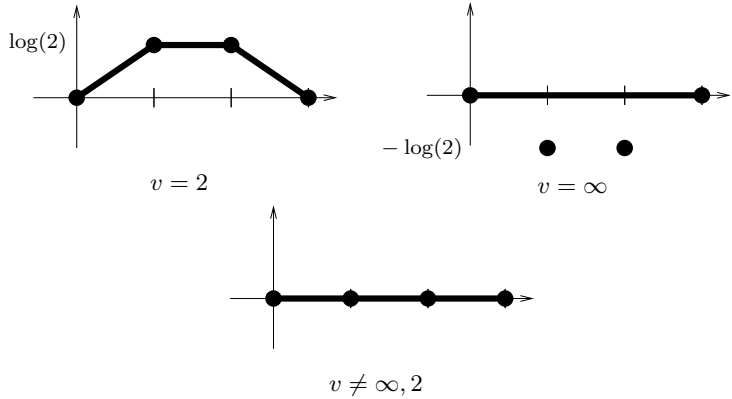

  \centering
  \begin{subfigure}[1]{0.40\textwidth}
    \input{ex_74_21.pdf_t}
    \caption{$v=2$}
  \end{subfigure}
  \begin{subfigure}[2]{0.40\textwidth}
    \input{ex_74_infty1.pdf_t}    
    \caption{$v=\infty$}
  \end{subfigure}
  \begin{subfigure}[1]{0.40\textwidth}
    \input{ex_74_other1.pdf_t}
    \caption{$v\ne\infty, 2$}
  \end{subfigure}
 \caption{Local roof functions in Example \ref{exm:8}}
 \label{fig:localrf3}
\end{figure}
They are given by $ \vartheta _{2}(x)= \log(2) \min (x,1,
3-x)$ and $ \vartheta _{v}(x)=0 $ for $v\not = 2$.
The global roof function is thus $\vartheta(x)= \log(2) \min (x,1,3-x)$, which is maximized at any point of the interval
$[1,2]$. Choosing the maximizing point $x=3/2$, we have $\partial
\vartheta _{v}({3/2})=\{0\}$ for all $v$.  

Thus $\ov D$ is monocritical, by Proposition~\ref{prop:14}.
By Corollary \ref{cor:3 bis} and Theorem~\ref{thm:5}, it satisfies both the modulus concentration and the
equidistribution properties for any place.  By Theorem \ref{thm:2}, it
also satisfies the Bogomolov property.
\end{exmpl}

\subsection{Positive Archimedean metrics} \label{sec:posit-arch-metr} 

The following result covers many of the examples 
considered in \cite{BurgosPhilipponSombra:agtvmmh,
  BurgosMoriwakiPhilipponSombra:aptv, BurgosPhilipponSombra:smthf}:
twisted Fubiny-Study metrics on projective spaces, metrics from
polytopes, Fubini-Study metrics on toric bundles, $\ell^{p}$-metrics
on toric varieties, and Fubini-Study metrics on weighted projective
spaces.  All of them consist of toric varieties over $\Q$ with a toric
divisor equipped with a positive smooth metric at the Archimedean
place and the canonical metric at the non-Archimedean ones.

\begin{thm}\label{thm:8} Let $X$ a proper
  toric variety over a number field $\K$ and $\ov D=(D,(\|\cdot\|_{v})_{v\in
    \fM_{\K}})$ a semipositive toric metrized $\R$-divisor with $D$
  big. 
We assume that, when  $v$ is Archimedean,   $\|\cdot\|_{v}$ is a
positive smooth metric on the principal open subset $X^{\an}_{0,{v}}$
whereas, when $v$ is non-Archimedean, it is the $v$-adic canonical metric of
$D$. Then $\ov D$ is monocritical. In particular, it satisfies the equidistribution
property for every place of $\K$.

When $\K=\Q$, the $v$-adic limit measure is $\lambda_{\SS_{v},0}$ for every
$v\in \fM_{\Q}$. 
\end{thm}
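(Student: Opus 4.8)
The plan is to reduce the statement to the single claim that $\ov D$ is monocritical, which I would verify through condition~\eqref{item:9} of Proposition~\ref{prop:14}: it suffices to produce one point $x\in\Delta_{D,\max}$ at which $0$ is a vertex of $\partial\vartheta_{\ov D}(x)$. Once monocriticality is in hand, the equidistribution assertion is Theorem~\ref{thm:5}, and the shape of the limit measure follows by locating the critical point of $\ov D$.

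First I would unwind the hypotheses through the toric dictionary of~\cite{BurgosPhilipponSombra:agtvmmh}. Since the non-Archimedean metrics of $\ov D$ are canonical, Example~\ref{exm:6} gives $\psi_{\ov D,v}=\Psi_D$ and $\vartheta_{\ov D,v}=0$ for every non-Archimedean $v$, so that $\vartheta_{\ov D}=\sum_{v\mid\infty}n_v\vartheta_{\ov D,v}$; moreover, as those metrics are canonical, the semipositivity of $\ov D$ forces $\Psi_D$ to be concave, so $D$ is nef, and since $D$ is big the polytope $\Delta_D$ has nonempty interior. At an Archimedean place $v$, a positive smooth metric on $X^{\an}_{0,v}$ corresponds to a function $\psi_{\ov D,v}$ which is smooth on $N_\R$ with negative definite Hessian; by the Legendre--Fenchel correspondence its dual $\vartheta_{\ov D,v}=\psi_{\ov D,v}^{\vee}$ is then smooth and strictly concave on $\inter\Delta_D$, with gradient map $\nabla\vartheta_{\ov D,v}=(\nabla\psi_{\ov D,v})^{-1}$ a diffeomorphism of $\inter\Delta_D$ onto $N_\R$. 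The consequence I need is that $\nabla\vartheta_{\ov D,v}(x)$ leaves every compact set as $x\to\partial\Delta_D$; more precisely, the one-sided directional derivative of $\vartheta_{\ov D,v}$ at every boundary point of $\Delta_D$, in every direction pointing into $\Delta_D$, equals $+\infty$ -- the Guillemin-type boundary behavior of roof functions of positive smooth toric metrics.

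Now I would assemble the global picture. A number field has at least one Archimedean place, so $\vartheta_{\ov D}$ is a nonempty positive linear combination of the Archimedean roof functions, each of which is strictly concave on $\inter\Delta_D$ and has $+\infty$ inward directional derivative at every boundary point of $\Delta_D$; hence $\vartheta_{\ov D}$ inherits both properties. In particular $\vartheta_{\ov D}$ cannot attain its maximum on $\partial\Delta_D$, so by strict concavity on the interior $\Delta_{D,\max}=\{x_{\max}\}$ for a single $x_{\max}\in\inter\Delta_D$, where $\vartheta_{\ov D}$ is differentiable. Being a maximum, $\nabla\vartheta_{\ov D}(x_{\max})=0$, whence $\partial\vartheta_{\ov D}(x_{\max})=\{0\}$; since $0$ is a vertex of $\{0\}$, Proposition~\ref{prop:14} shows that $\ov D$ is monocritical, and its last assertion gives $F_v=\{u_v\}$ with $u_v=\nabla\vartheta_{\ov D,v}(x_{\max})$ (using differentiability at the interior point $x_{\max}$), hence $u_v=0$ at every non-Archimedean place because there $\vartheta_{\ov D,v}\equiv0$. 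Theorem~\ref{thm:5} then yields the equidistribution property at every place with limit measure $\lambda_{\SS_v,u_v}$. Finally, when $\K=\Q$ there is a single Archimedean place $v_\infty$, with $n_v=1$ for all $v$, so from $u_v=0$ for $v\ne v_\infty$ and $\sum_v n_vu_v=0$ we get $u_{v_\infty}=0$, and every $v$-adic limit measure is $\lambda_{\SS_v,0}$.

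The main obstacle is the boundary statement for the Archimedean roof functions used above: one must extract from the \emph{positivity} of the metric on $X^{\an}_{0,v}$ (not merely its semipositivity) the fact that $\nabla\vartheta_{\ov D,v}$ blows up, with the correct inward-positive sign, as one approaches $\partial\Delta_D$. This is the only step that goes beyond the soft convex analysis of \S\ref{sec:auxil-results-conv} and \S\ref{sec:modul-distr-toric} and requires the finer structure of smooth positive toric metrics -- equivalently, the Guillemin boundary conditions for the associated symplectic potentials; I would handle it by invoking the corresponding results on smooth positive toric metrics and their real Monge--Amp\`ere measures from~\cite{BurgosPhilipponSombra:agtvmmh}. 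Everything else is a routine combination of the convexity facts of \S\ref{sec:auxil-results-conv}, Proposition~\ref{prop:14}, and Theorem~\ref{thm:5}.
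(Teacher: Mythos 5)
Your proposal follows essentially the same route as the paper: pass through the metric functions $\psi_{\ov D,v}$ to show that each Archimedean roof function is of Legendre type (smooth, strictly concave on $\inter\Delta_D$, with empty sup-differential at $\partial\Delta_D$, which is the paper's phrasing of your inward blow-up condition), observe that the non-Archimedean roof functions vanish, conclude that $\vartheta_{\ov D}$ has a unique interior maximizer $x_{\max}$ with $\partial\vartheta_{\ov D}(x_{\max})=\{0\}$, and finish with Proposition~\ref{prop:14} and Theorem~\ref{thm:5}. The boundary step you flag as the only non-soft ingredient is exactly what the paper delegates to \cite[Proposition 4.4.1 and Theorem 2.4.2(2)]{BurgosPhilipponSombra:agtvmmh}, and your explicit determination of the critical point when $\K=\Q$ (forcing $u_{v_\infty}=0$ via the product-formula constraint on $H_\K$) makes the last claim precise, which the paper leaves implicit.
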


\begin{proof}
  Since the metric is smooth and positive for $v$ Archimedean, the
  proof of \cite[Proposition 4.4.1]{BurgosPhilipponSombra:agtvmmh}
  implies that the metric function $\psi _{\ov D,v}$ is smooth and
  strictly concave, in the sense that its Hessian is negative
  definite.
Therefore $\psi _{\ov D,v}$ is of Legendre type in the
  sense of \cite[Definition 2.4.1]{BurgosPhilipponSombra:agtvmmh} and,
  by \cite[Theorem 2.4.2(2)]{BurgosPhilipponSombra:agtvmmh}, the local
  roof function $\vartheta _{\ov D,v}$ is of Legendre type.  In
  particular,  $\vartheta _{\ov D,v}$ is smooth and strictly concave  on the interior of
  $\Delta_{D}$ and the sup-differential at any point of the border
  of the polytope is empty. 

  For the non-Archimedean places, the metrics are canonical and so
  their local roof functions are zero.  Hence
\begin{displaymath}
  \vartheta_{\ov D}=\sum_{v\mid\infty}n_{v}\vartheta_{\ov D,v},
\end{displaymath}
this function is smooth and strictly concave on the
interior of~$\Delta_D$, and its sup-differential at any point of
the border of~$\Delta_D$ is empty. This implies that there is a unique maximizing point
$x_{\max}\in \Delta_{D}$, that it lies in the interior of the
polytope, and that $\partial
\vartheta_{\ov D}(x_{\max})=\{0\}$.  

The first assertion then follows from Proposition \ref{prop:14}. The
rest of the statement follows from Theorem \ref{thm:5}. 
\end{proof}

\begin{exmpl}\label{exm:11} Let $X=\P^{1}_{\Q}$ and $\ov D$ the
  divisor at infinity equipped with the Fubini-Study metric at the
  Archimedean place and the canonical metric at the non-Archimedean
  ones. By Theorem \ref{thm:8}, this toric metrized divisor satisfies
  the equidistribution property at every place. Moreover, the limit
  measure of the Galois orbits of any generic $\ov D$-small sequence
  is $\lambda_{\SS_{v},0}$.

  Recall that the canonical metric at the non-Archimedean places
  corresponds to the canonical model of $(\P^{1}_{\Q},\infty)$ given
  by $(\P^{1}_{\Z},\ov \infty)$, where $\ov \infty$ is the closure of
  the point $(0:1)\in \P^{1}(\Q)$. If we change the integral model,
  different phenomena may occur. For instance, if we consider the
  integral model of Example \ref{exm:5}, then the maximum of the
  global roof function is attained at the interior of the
  polytope. Since the global roof function is differentiable in the
  interior of the polytope, we deduce that the sup-differential is
  reduced to one point. By Proposition \ref{prop:14}, this new toric
  metrized divisor is also monocritical.

  By contrast, if we consider the divisor $D'=0+\infty$ with the
  Fubini-Study metric at the Archimedean place and the metrics induced
  by the integral model of Example~\ref{exm:8}, then the maximum of
  the global roof function is attained at the point zero and the
  sup-differential at this point is $[-\log(2),\log(2)]$. Since zero is
  not a vertex of this set, by Proposition \ref{prop:14} this divisor
  is not monocritical. Hence it does not satisfy the equidistribution
  property at the Archimedean place.
\end{exmpl}

\subsection{Counterexamples to the Bogomolov
  property} \label{sec:count-bogom-prop}

In this section, we give examples of toric metrized
divisors not satisfying the Bogomolov property. For simplicity,
we restrict to the case $\K=\Q$. As in~\S\ref{sec:transl-subt-with},
we denote by $\ov H^{\can}$  the canonical metrized divisor at
infinity on a projective space.

\begin{exmpl}\label{exm:10}
  Consider the map $\iota\colon
  \G_{\textrm{m},\Q}\times \G_{\textrm{m},\Q} \to \P^{3}_{\Q}$ given by
  \begin{displaymath}
    \iota(t_{1},t_{2})=(1:2:t_{1}:t_2).
  \end{displaymath}
  As in the examples in the previous section, we denote by~$X$ the
  normalization of the closure of the image of $\iota$ and $\ov
  D=\iota^{\ast}(\ov H^{\can})$. In this case, $X=\P^{2}_{\Q}$ and $D$
  is the divisor at infinity.

We have that  $\Delta _{D}$ is the standard simplex of $N_{\R}=\R^{2}$
and $\Psi_{D}\colon \R^{2}\to \R$ is the function given by 
  \begin{displaymath}
    \Psi _{D}(u_{1},u_{2})=\min(0,u_1,u_2).
  \end{displaymath}
  By
  \cite[Example 4.3.21]{BurgosPhilipponSombra:agtvmmh}, the local metric
  functions are given, for $(u_{1},u_{2})\in \R^{2}$, by
  \begin{displaymath}
    \psi _{\ov D,v}(u_1,u_2)=
    \begin{cases}
      \Psi_{D} \big(u_1+\log(2),u_2+\log(2)\big)-\log(2)&\text{ if }v=\infty,\\
      \Psi_{D} (u_1,u_2)&\text{ if }v\not =\infty.
    \end{cases}
  \end{displaymath}
  By \cite[Example 5.1.16]{BurgosPhilipponSombra:agtvmmh}, the local
  roof functions are given, for $(x_1,x_2)\in \Delta _{D}$, by
\begin{displaymath}
  \vartheta _{\ov D, v}(x_1,x_2)=
  \begin{cases}
    (1-x_1-x_2)\log(2)&\text{ if }v=\infty,\\
    0&\text{ if }v\not =\infty.
  \end{cases}
\end{displaymath}
Hence the global roof function agrees with $\vartheta _{\ov
  D,\infty}$. Its only maximizing point is $x_{\max}=(0,0)$.  Hence
$ \partial \vartheta _{\ov D,\infty}{(0,0)}
=(-\log(2),-\log(2))+\R^2_{\ge 0}$ and $ \partial \vartheta _{\ov
  D,v}{(0,0)} = \R^2_{\ge 0}$ for $v\not = \infty$. Thus
\begin{alignat*}{2}
  B_{\infty}&=[-\log(2),0]^2,&\quad
  F_{\infty}&=(-\log(2),-\log(2))+\R^2_{\ge 0},\\
  B_{v}&=[0,\log(2)]^2,& F_{v}&=\R^2_{\ge 0}\ \text{ for }v\not = \infty.
\end{alignat*}
We also have $ \upmu^{\ess}_{\ov D}(X)=\vartheta(0,0)=\log(2). $

Let $(z_0:z_1:z_2)$ be homogeneous coordinates of $X$ and consider the
curve $C$ of equation $z_0+z_1+z_2=0$. In what follows,  we will see that this
curve is a $\ov D$-special subvariety.  Since $C$ is not a translate
of a subtorus, this will show that $\ov D$ does not satisfy the
Bogomolov property.

For $l\ge 1$ choose a primitive $l$-th root of the unity $\omega
_{l}$. Let $z_{1,l}$ be a solution of the equation $z^{2}+z+\omega
_{l}=0$ and put $z_{2,l}=\omega _{l}/z_{1,l}$ for the other
solution. Then 
\begin{equation}\label{eq:34}
  z_{1,l}+z_{2,l}+1=0 \and z_{1,l}z_{2,l}=\omega _{l}. 
\end{equation}
In particular,  $p_{l}=(1:z_{1,l}:z_{2,l})$ is an algebraic point of
$C$. 

Let $v\in
\fM_{\Q}$ and $q=(1:q_{1}:q_{2})\in \Gal(p_{l})_{v}$. 
If $v\ne \infty$, then the conditions~\eqref{eq:34} imply that
\begin{equation}\label{eq:68}
  \val_{v}(q)=(0,0)\in B_{v}.
\end{equation}
If $v=\infty$,  then these same
conditions~\eqref{eq:34} give $ \max(|q_{1}|_{\infty},|q_{2}|_{\infty})\le
\frac{1+\sqrt{5}}{2}$. Thus
\begin{equation}
  \label{eq:69}
  \val_{\infty}(q)\in
  \Big(-\log\Big(\frac{1+\sqrt{5}}{2}\Big),-\log\Big(\frac{1+\sqrt{5}}{2}\Big)\Big)+ 
  \R^{2}_{\ge 0}\subset F_{\infty}.
\end{equation}
Moreover, by the product formula and \eqref{eq:68}, we have
\begin{equation}
  \label{eq:70}
\exv[\nu_{p_{l},\infty}]= \frac{1}{\# \Gal(p_{l})_{\infty}} \sum_{q\in  \Gal(p_{l})_{\infty}}\val_{\infty}(q)=(0,0)\in B_{\infty}. 
\end{equation}
By Corollary \ref{cor:3}, the conditions \eqref{eq:68}, \eqref{eq:69}
and \eqref{eq:70} imply that $ \h_{\ov D}(p_{l})=\upmu^{\ess}_{\ov
  D}(X).$ Since the sequence $(p_{l})_{l\ge 1}$ is generic in $C$,
we deduce $\upmu^{\ess}_{\ov D}(C)=\upmu^{\ess}_{\ov D}(X)$ and so $C$
is a $\ov D$-special subvariety.
\end{exmpl}

We generalize this example to a family of metrics on toric varieties
of dimension greater than or equal to $2$.

\begin{prop} \label{prop:2}
Let~$X$ be a proper toric variety over~$\Q$ of dimension $n\ge 2$
and~$D$ a big and nef $\R$-divisor on~$X$. 
Let $u_0\in N_{\R}$ and consider the metrized divisor~$\overline{D}^{u_0}$
over~$D$ defined by
$$ \psi_{\overline{D}^{u_0}, v}(u)
=
\begin{cases}
\Psi_D(u - u_0)
& \text{if } v = \infty,
\\
\Psi_D(u)
& \text{if } v \neq \infty.
\end{cases} $$
Then~$\overline{D}^{u_0}$ satisfies the Bogomolov property if and only
if~$u_0 = 0$.
\end{prop}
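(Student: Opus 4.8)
The statement is an equivalence, and I would prove the two implications separately, the forward one ($u_0 = 0$) being immediate and the backward one the substance. If $u_0 = 0$ then the two branches in the definition of $\psi_{\overline{D}^{u_0},v}$ coincide, so $\overline{D}^{0}$ is precisely the canonical metrized $\R$-divisor $\overline D^{\can}$ of Example~\ref{exm:6}; since $D$ is nef this is semipositive, and by Example~\ref{exm:9} it is monocritical with critical point $\bfzero$. Hence Theorem~\ref{thm:2} applies and gives that every $\overline{D}^{0}$-special subvariety of $X_{0,\ov\Q}$ is a translate of a subtorus, i.e.\ $\overline D^{\can}$ satisfies the Bogomolov property.

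Now assume $u_0 \neq 0$; the goal is to exhibit a $\overline{D}^{u_0}$-special subvariety of $X_{0,\ov\Q}$ which is not a translate of a subtorus, generalizing Example~\ref{exm:10}. First I would compute the relevant convex-analytic data. Since $\Psi_D^{\vee}$ is the zero function on $\Delta_D$ and $(\Psi_D(\,\cdot - u_0))^{\vee}(x) = \langle u_0, x\rangle$, the local roof functions are $\vartheta_{\overline{D}^{u_0},\infty}(x) = \langle u_0, x\rangle$ and $\vartheta_{\overline{D}^{u_0},v} = 0$ for $v \neq \infty$, so $\vartheta_{\overline{D}^{u_0}}(x) = \langle u_0, x\rangle$ and, by \eqref{eq:10}, $\upmu^{\ess}_{\overline{D}^{u_0}}(X) = \max_{x \in \Delta_D}\langle u_0, x\rangle$, a value attained exactly on a \emph{proper} face $\tau$ of $\Delta_D$ (proper since $u_0 \neq 0$ and $\dim \Delta_D = n$). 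Fixing $x_0 \in \ri(\tau)$ and writing $\cK$ for the negative normal cone of $\Delta_D$ at $x_0$, the recipe of Notation~\ref{def:14} gives $F_v = \cK$ and $B_v = \cK \cap (-u_0 - \cK)$ for $v \neq \infty$, and $F_\infty = u_0 + \cK$, $B_\infty = u_0 + B_v$; moreover $-u_0 \in \ri(\cK)$, so $0$ lies in the relative interior of $F_\infty$ and in each $B_v$, and in particular $0$ is never a vertex of $u_0 + \cK = \partial \vartheta_{\overline{D}^{u_0}}(x_0)$. Thus, by Proposition~\ref{prop:14}, $\overline{D}^{u_0}$ is \emph{not} monocritical, Theorem~\ref{thm:2} is unavailable, and an explicit construction is required.

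For the construction I would follow Example~\ref{exm:10}. By Corollary~\ref{cor:3}, an algebraic point $p$ of $X_0$ satisfies $\h_{\overline{D}^{u_0}}(p) = \upmu^{\ess}_{\overline{D}^{u_0}}(X)$ as soon as $\val_v(q) \in F_v$ and $\exv[\nu_{p,v}] \in B_v$ for all $v$ and all $q \in \Gal(p)_v$. I would choose coordinates $\T \simeq \G_{\textrm{m},\Q}^{n}$ (after passing to a finite-index sublattice of $N$ and a finite base extension, which only change $\cK$ harmlessly) so that $\cK$ contains a full-dimensional rational simplicial subcone, and take $V = \{1 + x_1 + x_2 = 0\}\times \G_{\textrm{m},\Q}^{\,n-2} \subset \T$, a non-torus subvariety; then, for $\ell \ge 1$ and a primitive $\ell$-th root of unity $\omega_\ell$, I would let $z_{1,\ell}, z_{2,\ell}$ be the roots of $Z^2+Z+\omega_\ell$ and choose the remaining coordinates of $p_\ell \in V(\ov\Q)$ among roots of unity so that $(p_\ell)_\ell$ is generic in $V$. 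As $z_{1,\ell} z_{2,\ell} = \omega_\ell$ is a unit and $z_{1,\ell},z_{2,\ell}$ are algebraic integers, all coordinates of every conjugate $q$ of $p_\ell$ are units at every finite place, so $\val_v(q) = 0 \in F_v$ and $\exv[\nu_{p_\ell,v}] = 0 \in B_v$ for $v \neq \infty$; by the product formula $\exv[\nu_{p_\ell,\infty}] = 0 \in B_\infty$; and $|z_{i,\ell}^{\sigma}|_\infty \le \tfrac{1+\sqrt5}{2}$ together with the other coordinates having modulus $1$ confines $\val_\infty(q)$ to a fixed bounded set, which lies in $F_\infty = u_0 + \cK$ once the construction is suitably normalized (using $0 \in \ri F_\infty$). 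Corollary~\ref{cor:3} then gives $\h_{\overline{D}^{u_0}}(p_\ell) = \upmu^{\ess}_{\overline{D}^{u_0}}(X)$ for all $\ell$; since $(p_\ell)_\ell$ is generic in $V$, combining \eqref{eq:79} with Proposition~\ref{prop:4} applied inside $V$ yields $\upmu^{\ess}_{\overline{D}^{u_0}}(V) = \upmu^{\ess}_{\overline{D}^{u_0}}(X)$. Hence $V$ is $\overline{D}^{u_0}$-special but not a translate of a subtorus, so $\overline{D}^{u_0}$ fails the Bogomolov property.

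The hard part is making the last step work at the stated level of generality, and I expect three points to need care: (i) when $\Delta_D$ or $u_0$ is irrational one must replace $\cK$ by a rational full-dimensional subcone whose relative interior still contains $-u_0$, and keep track of the base changes; (ii) the archimedean valuations of the chosen points must genuinely land inside $F_\infty = u_0 + \cK$, and when $u_0$ is ``small'' the relative neighbourhood of $0$ inside $F_\infty$ is correspondingly small, so the quadratic $Z^2+Z+\omega_\ell$ — whose roots are not close to the unit circle — must be replaced by one with unit roots of modulus as close to $1$ as required, furnished by Dirichlet's unit theorem; and (iii) the degenerate case $\dim\lin(\cK) = 1$, i.e.\ $\tau$ a facet of $\Delta_D$, in which no non-torus subvariety attains the essential minimum exactly, so that one must instead exhibit a non-torus curve together with a generic net of points whose $\overline{D}^{u_0}$-heights merely converge to $\upmu^{\ess}_{\overline{D}^{u_0}}(X)$, controlling $\Phi_v(\nu_{p_\ell,v}) \to 0$ through Lemma~\ref{lemm:1 bis} and Propositions~\ref{prop:2 bis} and~\ref{prop:5} in place of Corollary~\ref{cor:3}.
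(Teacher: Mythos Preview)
Your overall strategy matches the paper's: the case $u_0=0$ is immediate from Example~\ref{exm:9} and Theorem~\ref{thm:2}, and for $u_0\neq 0$ one computes the roof functions, identifies $F_\infty=u_0+\sigma_0$ and $F_v=\sigma_0$ for $v\neq\infty$ (where $\sigma_0=\partial 0(x_0)$ is your cone $\cK$), and then exhibits a non-torus curve carrying a generic sequence of points whose valuations satisfy the conditions of Corollary~\ref{cor:3}. Your convex-analytic computations are correct.

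Where the paper differs is in the construction of the curve, and its device dissolves all three of your flagged difficulties at once. Rather than fixing a splitting of $\T$ and taking $V=\{1+x_1+x_2=0\}\times\G_{\textrm m}^{n-2}$, the paper picks a primitive $n_0\in N\cap\sigma_0$, then $\varepsilon_0>0$ with $\ell_0\coloneqq\{xn_0\mid |x|\le\varepsilon_0\}\subset u_0+\sigma_0$, completes $n_0$ to a basis $a_0,\,b_0=n_0+a_0$ of a saturated rank-$2$ sublattice $V\subset N$, and defines the linear isomorphism $L\colon V_\R\to\R^2$ by $L(sa_0+tb_0)=k_0(s,t)$ for an integer $k_0\ge\varepsilon_0^{-1}$. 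If $S\subset X_0$ is the $2$-dimensional subtorus attached to $V$ and $\iota\colon S\to\G_{\textrm m}^2$ the toric morphism induced by $L$, the curve is $C_0=\overline{\iota^{-1}(\{x+y+1=0\})}$, and the points $p_l$ are preimages of the very same $(z_{1,l},z_{2,l})$ from Example~\ref{exm:10}. For $v$ non-Archimedean, $\val_v(\iota(q))=0$ together with injectivity of $L$ forces $\val_v(q)=0$. For $v=\infty$, the relation $z_{1,l}z_{2,l}=\omega_l$ puts $\val_\infty(\iota(q))$ on the anti-diagonal $\{(x,-x)\}\subset\R^2$, whose $L$-preimage is $\R n_0$; and since $\log\tfrac{1+\sqrt5}{2}<1\le\varepsilon_0 k_0$, one lands in $\ell_0\subset F_\infty$. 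Lemma~\ref{lem:measure minimum} then gives $\h_{\ov D^{u_0}}(p_l)=\upmu^{\ess}_{\ov D^{u_0}}(X)$ exactly.

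This eliminates (i): no base extension or sublattice passage is needed, the construction is over $\Q$; (ii): the ``smallness of $u_0$'' problem is absorbed by the integer scaling $k_0$ built into $L$, so no appeal to Dirichlet's unit theorem is required; and (iii): even when $\sigma_0$ is a ray (the facet case), the segment $\ell_0$ lies in the ray $F_\infty$ by the choice of $\varepsilon_0$, so one obtains a non-torus curve on which the height is \emph{equal} to the essential minimum --- your proposed convergence argument via Lemma~\ref{lemm:1 bis} is unnecessary. Your concern (iii) is thus an artifact of the fixed-coordinate construction you chose, not of the problem itself.
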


\begin{proof}
  When~$u_0 = 0$ we have $\overline{D}^{u_0}=\ov D^{\can}$. By Theorem
  \ref{thm:2} and Example \ref{exm:2}, this toric metrized divisor
  satisfies the Bogomolov property.

Suppose~$u_0 \neq 0$. The local roof functions of $\ov D^{u_{0}}$ are
given, for $x\in \Delta _{D}$, by
\begin{displaymath}
  \vartheta _{\ov D^{u_{0}},v}(x)=
  \begin{cases}
    \langle x,u_{0}\rangle& \ \text{ if }v=\infty,\\
    0& \ \text{ if }v\neq \infty.
  \end{cases}
\end{displaymath}
In particular, the global roof function~$\vartheta_{\ov D}$ coincides with~$\vartheta_{\ov D^{u_{0}},
  \infty}$.
Fix~$x_0$ in the relative interior of the convex subset of~$\Delta_D$ on which~$\vartheta_{\ov D}$ attains it
maximum value.
If we denote by~$\vartheta_0$ the constant function
equal to~$0$ defined on~$\Delta_D$, then~$\sigma_0 = \partial
\vartheta_0 (x_0)$ is a cone in~$N_{\R}$ containing~$-u_0$ in its
relative interior.
Moreover,
\begin{displaymath}
  \partial \vartheta _{\ov  D^{u_{0}},\infty}({x_0}) = u_0 +
  \sigma_0\and
  \partial \vartheta _{\ov D^{u_{0}},v}({x_0}) = \sigma_0 \text{ for }v\not = \infty.
\end{displaymath}
It follows that $0\in B_{v}$ for every $v$,
that $F_{\infty} = u_{0}+\sigma _{0}$ and that $F_{v} = \sigma
_{0}$ for $v\neq \infty.$

As in Example \ref{exm:10}, to prove that $\ov D^{u_0}$ does not
satisfy the Bogomolov property, it is enough to exhibit a curve~$C$
in~$X$ which is $\ov D$-special but not a translate of a subtorus.


We identify~$N_{\R}\simeq \R^n$.  Since~$X$ is proper and~$\sigma_0$
is a cone of the fan of $X$, there is a primitive vector $n_0 \in N$
in~$\sigma_0$.
It follows that there
is~$\varepsilon_0 > 0$ such that
$$ 
\ell_0
:=
\{ x n_0 \mid - \varepsilon_0 \le x \le \varepsilon_0 \}
\subset
u_0+\sigma_0.
$$ 
Choose a primitive vector~$a_0\in N$
such that $a_0$ and $n_0$ generate a saturated sublattice $V$ of
$N$. Put $b_0= n_0 + a_0$. Then $a_0$ and~$b_0$ form an integral basis of
$V$. Fix an integer $ k_0 \ge \varepsilon_0^{-1} $ and consider the
linear map~$L \colon V_{\R} \to \R^2$ defined by
$$ 
L(s a_0 + t b_0)= k_{0} \cdot (s, t). 
$$ 
Let $S$ be the toric surface in~$X_0$ associated to the saturated
sublattice $V$. The linear map $L$ induces a toric morphism $\iota
\colon S \to \G_{\textrm{m},\Q}^2$.  Let~$C$ be the curve
in~$\G_{\textrm{m},\Q}^2$ of equation $x + y + 1=0$ and denote by~$C_0$ the closure in~$X$ of the curve~$\iota^{-1}(C)$.

As in Example \ref{exm:10}, for $l\ge 1$ choose a primitive $l$-th
root of unity root $\omega _{l}$.
Let~$z_{1,l}$ be a solution of the equation $z^{2}+z+\omega _{l}=0$ and put $z_{2,l}=\omega
_{l}/z_{1,l}$. Hence
\begin{equation*}
  z_{1,l}+z_{2,l}+1=0 \and z_{1,l}z_{2,l}=\omega _{l}. 
\end{equation*}
In particular, $(z_{1,l},z_{2,l})\in C(\ov \Q)$.
Choose a point $p_{l}\in C_0(\ov \Q)$ such that $\iota(p_l)=(z_{1,l},z_{2,l})$. The sequence of points $(p_{l})_{l\ge
  0}$ is generic in $C_0$.

For every place $v$ there is a commutative diagram
\begin{displaymath}
  \xymatrix{
(\G_{\textrm{m}}^{2})^{\an}_{{v}}\ar[d]^{\val_{v}} & 
S^{\an}_{{v}} \ar[d]^{\val_{v}} \ar@{^{(}->}[r] \ar[l]_{\iota}&
X^{\an}_{0,{v}}\ar[d]^{\val_{v}} \\
\R^{2} & V_{\R} \ar@{^{(}->}[r] \ar[l]_{L}& N_{\R}
}
\end{displaymath}
Since $n_{0}=b_{0}-a_{0}$, we have
\begin{displaymath}
  \ell := L_{\R}(\ell_0) = \{(x,-x)\mid |x|\le \varepsilon _{0}k_{0}\}.
\end{displaymath}
Arguing as in Example \ref{exm:10}, for every non-Archimedean place
$v$ and every point $q\in \Gal(p_{l})_{v}$, we have
\begin{displaymath}
  \val_{v}(\iota (q))=0.
\end{displaymath}
Since $L$ is injective, $\val_{v}(q) = 0$ and therefore~$\nu_{p_l, v}
= \delta_0$.
In particular,
$$ \supp(\nu_{p_l, v}) = \{ 0 \} \subset F_v
\text{ and }
\exv [ \nu_{p_l, v}] = 0 \in B_v. $$
When $v=\infty$, the product formula implies that 
\begin{displaymath}
\exv[\nu_{p_{l},\infty}]= \frac{1}{\# \Gal(p_{l})_{\infty}} \sum_{q\in
  \Gal(p_{l})_{\infty}}\val_{\infty}(q)
=
0 \in B_{\infty}. 
\end{displaymath}
On the other hand, the facts that $|z_{1,l}|_{\infty}\, |z_{1,l}|_{\infty}=1$ and that   
$$ \frac{\sqrt{5}-1}{2}
\le \min \{ |z_{1,l}|_{\infty}, |z_{2,l}|_{\infty} \} \le \max \{
|z_{1,l}|_{\infty}, |z_{2,l}|_{\infty} \} \le \frac{1 +
  \sqrt{5}}{2}, $$ imply that $\val_{\infty}(\iota (q)) \in \ell$ for
every $q\in \Gal(p_{l})_{\infty}$. Thus
\begin{displaymath}
  \val_{\infty}(q)\in \ell_0 \subset u_0 + \sigma_0 = F_{\infty}.
\end{displaymath}
This implies that~$\supp(\nu_{p_l, \infty}) \subset F_{\infty}$.
By Lemma \ref{lem:measure minimum}, we have $\h_{\ov D}(p_{l})=\upmu_{\ov
  D}^{\ess}(X)$. Being the sequence $(p_{l})_{l\ge 1}$ generic in~$C_0$, we deduce  that $C_0$ is $\ov D$-special. Since $C_0$ is not a translate of a
subtorus, we conclude that $\ov D$ does not satisfy the Bogomolov
property, as stated. 
\end{proof}

\section{Potential theory on the projective line and small points} \label{sec:potent-theory-proj}

In this section, we apply potential theory on the projective line over
a number field, and in particular Rumely's Fekete-Szeg\H{o} theorem,
to produce interesting sequences of small points in the non-monocritical case.

In the absence of modulus concentration, this allows to produce a
wealth of non-toric measures that are limit measures of Galois orbits
of generic sequences of points of small height. These techniques also
allow to show that the absence of modulus concentration at a place can
affect the equidistribution property at another place.

\subsection{Limit measures in the absence of modulus concentration}\label{sec:limit-meas-absence-1}

We recall the basic objects of potential theory on the projective
line. For most of the details and precise definitions, we refer the
reader to \cite{Tsuji:ptmft} and \cite{BakerRumely:ptdbpl} for the
Archimedean and non-Archimedean cases, respectively.

Let~$\K$ be a number field and fix a place $v\in \fM_{\K}$.  
For a subset $E\subset \C_{v}$, we denote by $\ov E$ its closure in
$\A^{1,\an}_{{v}}$. Moreover, for~$r > 0$, put
\begin{displaymath}
 \ball_v(E, r) = \Big\{ z \in \C_v \, \Big|\  \inf_{a\in E}|z -
a|_v \le r \Big\}.
\end{displaymath}
In particular, for $a\in \C_{v}$ the set $\ball_{v}(a,r)$ is the closed
ball with center $a$ and radius~$r$. We set~$O_v = \ball_v(0, 1)$.

We denote by
\begin{displaymath}
 \delta_{v} \colon \A^{1,\an}_{{v}} \times
\A^{1,\an}_{{v}}\to \R
\end{displaymath}
the function defined by $ \delta_{v}(z,z')=|z-z'|_{v}$ for~$v$
Archimedean, and as the unique upper semicontinuous extension
of the function on~$\C_v \times \C_v$ defined by $(z, z') \mapsto |z-z'|_{v}$ for~$v$ non-Archimedean,
see \cite[Proposition 4.1]{BakerRumely:ptdbpl}.  

Given  a probability measure  $\mu$ on $\A^{1,\an}_{v}$, the \emph{energy
  integral} (with respect to the point at infinity) of $\mu$ is defined as
\begin{equation}\label{eq:18}
  I_{v}(\mu)=\int_{\A^{1,\an}_{v}\times \A^{1,\an}_{v}}
  -\log (\delta_{v}(z,z'))\dd \mu(z) \dd\mu(z').
\end{equation}
Let $K\subset \A^{1,\an}_{v}$ be a measurable subset. The
\emph{$v$-adic Robin constant} and \emph{capacity} (with respect to
the point at infinity) of $K$ are respectively defined as
\begin{equation}\label{eq:37}
  V_{v}(K)=\inf\{ I_{v}(\mu) \mid \supp(\mu)\subset K\} \quad
  \text{ and } \quad \capacity _{v}(K)= \e^{-V_{v}(K)}.
\end{equation}

In the non-Archimedean case, $\C_{v}$ is a proper subset of
$\A^{1,\an}_{v}$. In general, 
\begin{displaymath}
 \capacity_{v}(K\cap \C_{v})\le
\capacity_{v}(K), 
\end{displaymath}
but there is a particular case where the equality holds. If $K$ is a
closed strict affinoid domain, then $K\cap \C_{v}$ is an RL-domain in
the sense of \cite{Rumel:ctac.14012}. Moreover, $\ov{K\cap \C_{v}}=K$
and, by \cite[Corollary~6.26]{BakerRumely:ptdbpl} and \cite[Theorem
4.3.3]{Rumel:ctac.14012}, we have 
\begin{equation*}
 \capacity_{v}(K\cap \C_{v})=  \capacity_{v}(K).
\end{equation*}
For instance,  $O_v$ is a closed strict affinoid domain and 
\begin{equation}
  \label{eq:22}
  \capacity_{v}(O_{v})=\capacity_{v}(\overline{O}_v)=1.
\end{equation}
In the Archimedean case, $O_{v}=\overline{O}_v$ and these equalities are
also valid.

If $K$ is compact and $\capacity_{v}(K)>0$, then there exists a unique
probability measure, denoted by~$\rho _{K}$, supported on $K$
and realizing the infimum in \eqref{eq:37}, see \cite[\S III.2 and Theorem
III.32]{Tsuji:ptmft} for the Archimedean case and \cite[Propositions
6.6 and~7.21]{BakerRumely:ptdbpl} for the non-Archimedean one.
Hence
\begin{equation*}
  I_{v}(\rho _{K})=V_{v}(K).
\end{equation*}
This measure is called the \emph{equilibrium measure} of $K$. 
For~$K=\overline{O}_v$, it agrees with~$\lambda_{\SS_v,0}$, the Haar
probability measure on the unit circle when $v$ is Archimedean, and
the Dirac measure at the Gauss point of $\A^{1,\an}_{v}$ when $v$ is
non-Archimedean.

\begin{defn}
  \label{def:7}
  An \emph{adelic set} is a collection $\bfE=(E_{v})_{v\in \fM_{\K}}$
  such that $E_{v}$ is a subset of $ \C_{v}$ invariant under the
  action of the absolute $v$-adic Galois group $\Gal(\ov
  \K_{v}/\K_{v})$ for all $v$, and such that $E_{v}=O_{v}$ for all but
  a finite number of $v$.  We say that ${\bfE}$ is \emph{bounded}
  (respectively \emph{closed}, \emph{open}) if $E_{v}$ is bounded
  (respectively closed, open) for all $v$.
\end{defn}

Given an adelic set $\bfE=(E_{v})_{v\in \fM_{\K}}$, its \emph{(global)
  capacity} is defined as
\begin{displaymath}
  \capacity(\bfE)=\prod_{v\in \fM_{\K}}\capacity_{v}(E_{v})^{n_{v}}.
\end{displaymath}
By \eqref{eq:22}, this product actually runs over a finite set and so
the global capacity is well-defined. 

The following result shows that, in the non-monocritical case, there is
a wealth of measures not invariant under the action of the compact
torus, that can be obtained as limit measures of Galois orbits of
generic sequences of points of small height.

\begin{thm}
  \label{thm:13}
  Let $X=\P^{1}_{\K}$ and $\ov D$ the divisor at infinity
  equipped with a semipositive toric metric.  Let $B_{v},F_{v}$ be the
  associated subsets of $N_{\R}=\R$ as in~\eqref{eq:9}.  Let
  $\bfE=(E_{v})_{v\in\fM_{\K}}$ be a closed bounded adelic set with
  $E_v$ an RL-domain for every non-Archimedean place $v$, and such
  that $\capacity(\bfE)=1$. Assume that the following conditions hold:
  \begin{enumerate}
  \item \label{item:2} $\supp((\val_{v})_{*}\rho_{E_v})\subset F_{v}$  for all $v\in \fM_{\K}$;
  \item \label{item:6} $\exv[(\val_{v})_{*}\rho_{E_v}]\in B_{v}$  for all $v\in \fM_{\K}$;
\item \label{item:1} $\sum_{v\in
    \fM_{\K}}n_{v}\exv[(\val_{v})_{*}\rho_{E_v}]=0$. 
  \end{enumerate}
  Then there is a generic $\ov D$-small sequence $(p_{l})_{l\ge1}$
  of algebraic points of $X_{0}=\G_{{\rm m},\K}$ such that, for every $v\in
  \fM_{\K}$, the sequence of probability measures
  $(\mu_{p_{l},v})_{l\ge 1}$
  converges  to $\rho_{E_{v}}$.
\end{thm}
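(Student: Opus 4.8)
The plan is to reduce Theorem~\ref{thm:13} to Rumely's Fekete--Szeg\H{o} theorem with local conditions \cite{Rumely:fstsc}, exactly as the introduction suggests, and then to verify that the sequence of points it produces is both generic and $\ov D$-small. First I would recall the statement of Rumely's theorem in the form we need: given a closed bounded adelic set $\bfE$ with $\capacity(\bfE) > 1$ (with $E_v$ an RL-domain at the non-Archimedean places), and given a finite set of open neighborhoods, there exist infinitely many algebraic points $p$ of $\G_{{\rm m},\K}$ all of whose $v$-adic conjugates lie in $E_v$ for all $v$; moreover, by choosing the adelic set slightly larger than $\bfE$ one controls the distribution of the conjugates. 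Here our hypothesis is $\capacity(\bfE) = 1$, which is the borderline case, so the trick will be to fatten $\bfE$ slightly: for each $\varepsilon > 0$ replace $E_v$ by a closed RL-neighborhood $E_v^{\varepsilon}$ with $\capacity_v(E_v^{\varepsilon}) \le \capacity_v(E_v)\e^{\varepsilon}$ and $E_v^\varepsilon = E_v$ for all but finitely many $v$, so that the fattened adelic set has global capacity $>1$ and Rumely's theorem applies. Simultaneously one arranges that the equilibrium measures $\rho_{E_v^\varepsilon}$ converge weakly to $\rho_{E_v}$ as $\varepsilon \to 0$, which is a standard continuity property of equilibrium measures for a decreasing sequence of compacts (\cite[\S III]{Tsuji:ptmft} in the Archimedean case, \cite[Ch.~6--7]{BakerRumely:ptdbpl} in the non-Archimedean one).

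Next, for a suitable sequence $\varepsilon_l \to 0$, Rumely's theorem with local conditions produces for each $l$ an algebraic point $p_l$ of $\G_{{\rm m},\K}$ whose full Galois orbit at each place $v$ is contained in $E_v^{\varepsilon_l}$, and such that the discrete measures $\mu_{p_l,v}$ are within distance $\varepsilon_l$ (in a metric for the weak-$\ast$ topology) of $\rho_{E_v^{\varepsilon_l}}$; one also asks that $p_l$ avoid a finite list of subvarieties so the resulting sequence is generic. It then follows that $\mu_{p_l,v} \to \rho_{E_v}$ weakly for every $v$, and consequently $\nu_{p_l,v} = (\val_v)_* \mu_{p_l,v} \to (\val_v)_* \rho_{E_v}$ in $\cP$. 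To conclude that $(p_l)_{l\ge1}$ is $\ov D$-small, I would use the height formula \eqref{eq:12} together with Lemma~\ref{lem:measure minimum} and Corollary~\ref{cor:3}: by construction the measures $\nu_{p_l,v}$ are supported (for $l$ large, up to the small perturbation) near $F_v$ and have expected value near $B_v$, and the centered adelic measure condition $\sum_v n_v \exv[\nu_{p_l,v}] = 0$ holds automatically by the product formula. More precisely, one checks that the limiting centered adelic measure $\bfnu = ((\val_v)_* \rho_{E_v})_v$ lies in $\cH_\K$ (hypotheses \eqref{item:2}, \eqref{item:6}, \eqref{item:1} are exactly what is needed: \eqref{item:1} gives membership in $\cH_\K$, while \eqref{item:2} and \eqref{item:6} give $\supp(\nu_v)\subset F_v$ and $\exv[\nu_v]\in B_v$), so by Lemma~\ref{lem:measure minimum} $\eta_{\ov D}(\bfnu) = \upmu^{\ess}_{\ov D}(X)$. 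One then shows $\h_{\ov D}(p_l) = \eta_{\ov D}(\bfnu_{p_l}) \to \eta_{\ov D}(\bfnu) = \upmu^{\ess}_{\ov D}(X)$, using that $\eta_{\ov D}$ is continuous with respect to a suitable topology (Lipschitz for $W_\K$ by Lemma~\ref{lem:measure minimum}, but here we only have weak-$\ast$ convergence, so one must instead argue via the lower bound $\h_{\ov D}(p_l) \ge \upmu^{\ess}_{\ov D}(X)$ from \eqref{eq:79} and an upper bound coming from the explicit control of the supports of $\nu_{p_l,v}$ together with the formula $\h_{\ov D}(p_l) = -\sum_v n_v \int \psi_{\ov D,v}\, \dd\nu_{p_l,v}$ and upper semicontinuity of $-\psi_{\ov D,v}$, since $-\psi_{\ov D,v}$ is convex hence bounded on compacts).

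The main obstacle, I expect, is the passage from the strict-inequality hypothesis $\capacity(\bfE) > 1$ required by Rumely's theorem to the equality $\capacity(\bfE) = 1$ in our statement, while simultaneously keeping the equilibrium measures under control. The fattening argument needs care: one must choose the $E_v^\varepsilon$ so that (a) they are still RL-domains at non-Archimedean places, so that the capacity identity $\capacity_v(E_v^\varepsilon \cap \C_v) = \capacity_v(E_v^\varepsilon)$ holds and Rumely's hypotheses are met; (b) the global capacity strictly exceeds $1$; (c) the local equilibrium measures converge weakly as $\varepsilon\to 0$; and (d) the fattening does not destroy the inclusions $\supp((\val_v)_*\rho_{E_v^\varepsilon}) \subset F_v$ and $\exv[\cdot]\in B_v$, or at least only violates them by a controlled amount that still yields $\h_{\ov D}(p_l) \to \upmu^{\ess}_{\ov D}(X)$ in the limit. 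For (d) one uses that $F_v$ and $B_v$ are closed convex sets and that the relevant estimates degrade continuously; for the height convergence one combines the upper bound just described with the universal lower bound, squeezing $\h_{\ov D}(p_l)$ to the essential minimum. A secondary technical point is arranging genericity simultaneously with all the local conditions, but this is routine: Rumely's theorem produces infinitely many such points, and one discards those lying on any given proper subvariety, using a diagonal/directed argument as in the proof of Proposition~\ref{prop:4}.
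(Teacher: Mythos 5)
Your overall plan is in the right direction, and the height argument in the final part tracks the paper's proof closely (the centered adelic measure $\bfnu = ((\val_v)_*\rho_{E_v})_v$ lies in $\cH_\K$ by hypothesis~\eqref{item:1}, its components satisfy the support and expectation conditions by~\eqref{item:2} and~\eqref{item:6}, and Lemma~\ref{lem:measure minimum} identifies $\eta_{\ov D}(\bfnu)$ with $\upmu^{\ess}_{\ov D}(X)$). But there is a genuine gap in the middle of the argument, at the point where you pass from Rumely's Fekete--Szeg\H{o} theorem to the convergence of the measures $\mu_{p_l,v}$ to the equilibrium measures $\rho_{E_v}$.

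You write that Rumely's theorem ``produces for each $l$ an algebraic point $p_l$ \dots\ such that the discrete measures $\mu_{p_l,v}$ are within distance $\varepsilon_l$ \dots\ of $\rho_{E_v^{\varepsilon_l}}$''. This is not what Rumely's theorem gives. The Fekete--Szeg\H{o} theorem with splitting conditions produces, for an adelic set of global capacity $\ge 1$, infinitely many algebraic points whose full $v$-adic Galois orbits lie in a prescribed adelic neighborhood of $\bfE$; it makes no claim about how those conjugates are \emph{distributed} within the sets. The equidistribution of $\mu_{p_l,v}$ toward $\rho_{E_v}$ is a separate statement that needs a separate argument, and it is precisely here that the hypothesis $\capacity(\bfE)=1$ is used in an essential way: one integrates $-\log\delta_v(\cdot,\cdot)$ against $\mu_{p_l,v}\times\mu_{p_l,v}$, sums over $v$ using the product formula, and deduces that any cluster measure $\mu_v$ satisfies $I_v(\mu_v)\le V_v(\ov{E}_v)$, which forces $\mu_v=\rho_{\ov{E}_v}$ by uniqueness of the energy minimizer. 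This energy-integral step is what the paper's Proposition~\ref{prop:integers equidistribution} does, and your proposal omits it entirely. Note also that, had the global capacity been $>1$, equidistribution to the equilibrium measure would in general \emph{fail} -- the conjugates could concentrate elsewhere inside the sets -- so the ``fattened'' version of Rumely's theorem cannot by itself give the distributional control you claim.

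A secondary point: the fattening by RL-neighborhoods is unnecessary. Rumely's \cite[Theorem~2.1]{Rumely:fstsc} already applies under $\capacity(\bfE)\ge 1$, producing points whose conjugates lie in any prescribed adelic open neighborhood. The paper's Proposition~\ref{prop:7} applies this directly to the shrinking neighborhoods $(\ball_v(E_v,1/l))_v$; no modification of $\bfE$ is required, so the technical concerns you list under (a)--(d) do not arise. Once you have that the orbits lie in $\ball_v(E_v,1/l)$ and you supply the missing energy argument for equidistribution, the height computation you sketch (using that the orbits lie in a fixed bounded set, that $\psi_{\ov D,v}$ is continuous there, and that weak-$\ast$ convergence of measures with uniformly bounded support gives convergence of the integrals) does close the proof.
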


The proof of this theorem will be given after two preliminary
propositions. 
The next statement is a direct consequence of Rumely's version of the
Fekete-Szeg\H{o} theorem in~\cite[Theorem~2.1]{Rumely:fstsc}.

\begin{prop} \label{prop:7} Let $\bfE=(E_{v})_{v\in\fM_{\K}}$ be a
  closed bounded adelic set such that  $\capacity(\bfE)\ge 1$. There
  exists a generic sequence $(p_{l})_{l\ge1}$ of points of $\ov
  \K^{\times}$ satisfying
  \begin{equation*}
  \Gal(p_l)_v\subset \ball_{v}\Big(E_v,\frac{1}{l}\Big)
  \end{equation*}
  for all~$l\ge1$ and $v\in \fM_{\K}$. In particular,
  $\Gal(p_l)_v\subset E_v$ for every non-Archimedean place $v$ such
  that $E_{v}=O_{v}$.
 \end{prop}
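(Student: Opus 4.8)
The statement to prove is Proposition~\ref{prop:7}, which asserts the existence of a generic sequence of points in $\ov\K^\times$ whose $v$-adic Galois orbits converge to the adelic set $\bfE$, assuming $\capacity(\bfE)\ge 1$.

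\medskip

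The plan is to deduce this directly from Rumely's Fekete-Szeg\H{o} theorem with local rationality conditions, \cite[Theorem 2.1]{Rumely:fstsc}. That theorem says that if $\bfE$ is an adelic set of global capacity $\ge 1$ (satisfying the mild hypotheses on the $E_v$ it requires — compactness/closedness, Galois stability, and $E_v = O_v$ for all but finitely many $v$, all of which are part of our Definition~\ref{def:7} of a closed bounded adelic set), then for any finite set $S\subset\fM_\K$ and any choice of open neighbourhoods $U_v\supset E_v$ in $\C_v$ for $v\in S$, there exist \emph{infinitely many} points $\alpha\in\ov\K^\times$ (more precisely, whole Galois orbits) with $\Gal(\alpha)_v\subset U_v$ for $v\in S$ and $\Gal(\alpha)_v\subset E_v = O_v$ for $v\notin S$. (This is the number-field incarnation of Fekete-Szeg\H{o}: capacity $\ge 1$ forces infinitely many algebraic integers, up to denominators controlled away from the places in $S$, near $\bfE$.)

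\medskip

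Given this, I would argue as follows. Fix once and for all the finite set $S = \{v : E_v \ne O_v\}$, enlarged to include all Archimedean places; note $\ball_v(E_v,1/l)\supset E_v$ is an open neighbourhood of $E_v$ in $\C_v$ for each $v\in S$ and each $l\ge 1$. I would build the sequence $(p_l)_{l\ge 1}$ recursively: having chosen $p_1,\dots,p_{l-1}$, apply Rumely's theorem with the neighbourhoods $U_v = \ball_v(E_v,1/l)$ for $v\in S$; since it produces infinitely many admissible Galois orbits, I may select $p_l\in\ov\K^\times$ whose orbit satisfies $\Gal(p_l)_v\subset\ball_v(E_v,1/l)$ for $v\in S$, $\Gal(p_l)_v\subset O_v = E_v$ for $v\notin S$, and moreover $p_l\ne p_j$ for $j<l$ — the last is possible precisely because there are infinitely many choices and only finitely many to avoid. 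For $v\notin S$ one has $\ball_v(E_v,1/l) = \ball_v(O_v,1/l)\supseteq O_v = E_v$ automatically (in the non-Archimedean case $1/l\le 1$, so the ball of radius $1/l$ around $O_v$ equals $O_v$), so the required inclusion $\Gal(p_l)_v\subset\ball_v(E_v,1/l)$ holds at \emph{every} place $v\in\fM_\K$, and the sharper statement $\Gal(p_l)_v\subset E_v$ holds whenever $E_v = O_v$.

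\medskip

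It remains to check that the sequence so constructed is generic. Since $X = \P^1_\K$, a proper closed subset of $X$ is a finite set of closed points, hence meets $\ov\K^\times$ in finitely many Galois orbits. For such a subset $Y$, let $\{q_1,\dots,q_m\}$ be the algebraic points it contains; each $q_i$ is a fixed element of $\ov\K$. Because we arranged the $p_l$ to be pairwise distinct, at most finitely many of them can coincide with some $q_i$, so $p_l\notin Y(\ov\K)$ for $l$ large. This gives genericity in the sense of Definition~\ref{def:3}, completing the proof. The only genuine content is the invocation of Rumely's theorem; the recursive extraction and the genericity verification are routine, the main (mild) point being to confirm that the hypotheses in \emph{loc. cit.} — in particular the requirement that $E_v$ be suitably nice (compact, or an RL-domain) at each $v\in S$, which is why the full Theorem~\ref{thm:13} assumes $E_v$ is an RL-domain for non-Archimedean $v$ — are subsumed by our standing assumptions on $\bfE$.
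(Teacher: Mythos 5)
Your proposal is correct and follows essentially the same route as the paper: for each $l$, apply Rumely's Fekete-Szeg\H{o} theorem to the adelic neighbourhood $\bfU_l = (\ball_v(E_v,1/l))_v$ to obtain infinitely many admissible points, then extract a sequence recursively so that the $p_l$ are pairwise distinct, which gives genericity since proper closed subsets of $\P^1$ are finite. The added remarks (that $\ball_v(O_v,1/l)=O_v$ for non-Archimedean $v$ and $l\ge 1$, and the brief hypothesis check for Rumely's theorem) are consistent with, and slightly more explicit than, the paper's argument.
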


\begin{proof}
For $l\ge 1$,  consider the bounded adelic neighbourhood
$\bfU_{l}=(U_{l,v})_{v\in \fM_{\K}}$ of~$\bfE$ given by 
\begin{equation*}
    U_{l,v}= \ball_{v}\Big(E_v,\frac{1}{l}\Big).
\end{equation*}
By \cite[Theorem 2.1]{Rumely:fstsc}, there is an infinite number of
points $p\in \ov \K^{\times}$ such that
$\Gal(p)_{v}\subset U_{l,v}$ for all $v$. Inductively, for each
$l\ge1$ we choose $p_{l}$ as one of these points which is different
from $p_{l'} $ for $l'\le l-1$.
\end{proof}

In the notation of Proposition \ref{prop:7}, when the adelic set
$\bfE$ has capacity 1, the sequence of $v$-adic Galois orbits of the
points $p_{l}$ equidistribute according to the equilibrium measure of
the closure $\ov E_{v}$.

\begin{prop}
  \label{prop:integers equidistribution}
  Let $\bfE=(E_{v})_{v\in\fM_{\K}}$ be a closed bounded adelic set
  with $E_v$ an RL-domain for every non-Archimedean place $v$ and such
  that $\capacity(\bfE)=1$. Let $(p_{l})_{l\ge1}$ be a generic
  sequence of points of $\ov \K^{\times}$ with $ \Gal(p_l)_v\subset
  \ball_{v}(E_v,\frac{1}{l})$ for all~$l\ge1$ and~$v\in
  \fM_{\K}$. Then, for all~$v\in\fM_{\K}$, the sequence~$(\mu_{p_l,
    v})_{l \ge 1}$ converges to the equilibrium measure of~$\ov
  {E}_v$.
\end{prop}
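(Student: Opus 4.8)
The plan is to run the classical energy-minimisation argument for equidistribution of Galois orbits of algebraic numbers lying close to an adelic set of global capacity one, in the form of Rumely and Baker--Rumely, transcribed into the present notation. For an algebraic point $p\in\ov\K^{\times}$ I write $d=\#\Gal(p)_{v}$ (independent of $v$, as $\K$ is a number field) and
\begin{equation*}
  I_{v}^{\neq}(p)=\frac{1}{d(d-1)}\sum_{\substack{z,z'\in\Gal(p)_{v}\\ z\neq z'}}-\log\delta_{v}(z,z')
\end{equation*}
for the off-diagonal energy of $\mu_{p,v}$ (set it to $0$ when $d=1$). First I would note that $d_{l}:=\#\Gal(p_{l})_{v}\to\infty$: since $\bfE$ is bounded and $\Gal(p_{l})_{v}\subset\ball_{v}(E_{v},1)$ for every $v$, with $\ball_{v}(E_{v},1)=O_{v}$ for all but finitely many $v$, the points $p_{l}$ have uniformly bounded Weil height, so a subsequence of bounded degree would consist of infinitely many distinct algebraic points of bounded degree and bounded height, contradicting Northcott's theorem and genericity. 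Next, for each $l$ the product $\Delta_{l}=\prod_{z\neq z'\in\Gal(p_{l})_{v}}(z-z')$ is $\Gal(\ov\K/\K)$-invariant and nonzero, hence lies in $\K^{\times}$ (up to sign it is the discriminant of the minimal polynomial of $p_{l}$), and $|\Delta_{l}|_{v}=\prod_{z\neq z'\in\Gal(p_{l})_{v}}|z-z'|_{v}$ via $\jmath_{v}$; so the product formula (Proposition~\ref{prop:15}\eqref{item:20}) gives
\begin{equation*}
  \sum_{v\in\fM_{\K}}n_{v}\,I_{v}^{\neq}(p_{l})=0\qquad\text{for all }l .
\end{equation*}
Letting $S\subset\fM_{\K}$ be the finite set of Archimedean places together with those $v$ with $E_{v}\neq O_{v}$, for $v\notin S$ one has $\Gal(p_{l})_{v}\subset O_{v}$, hence $\delta_{v}\le1$ there and $I_{v}^{\neq}(p_{l})\ge0$; therefore $\sum_{v\in S}n_{v}\,I_{v}^{\neq}(p_{l})\le0$ for every $l$.

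It then suffices to show that, for a fixed place $v_{0}$ (which I add to $S$), every subsequence of $(\mu_{p_{l},v_{0}})_{l}$ has a further subsequence converging weak-$\ast$ to $\rho_{\ov E_{v_{0}}}$; since all these measures are supported in a fixed compact subset of $\A^{1,\an}_{v_{0}}$, this yields convergence of the full sequence. Given a subsequence, compactness lets me pass to a further one along which $\mu_{p_{l},v}$ converges weak-$\ast$ to a probability measure $\mu_{v}$ for each $v\in S$, with $\supp(\mu_{v})\subset\ov E_{v}$. The crucial point is the lower-semicontinuity bound $\liminf_{l}I_{v}^{\neq}(p_{l})\ge I_{v}(\mu_{v})$ for $v\in S$: on the relevant compact set the function $-\log\delta_{v}$ is lower semicontinuous and bounded below, hence an increasing pointwise limit of continuous functions $h_{v,k}$, and
\begin{equation*}
  I_{v}^{\neq}(p_{l})\ \ge\ \frac{d_{l}}{d_{l}-1}\int h_{v,k}\,\dd(\mu_{p_{l},v}\otimes\mu_{p_{l},v})\ -\ \frac{1}{d_{l}(d_{l}-1)}\sum_{z\in\Gal(p_{l})_{v}}h_{v,k}(z,z);
\end{equation*}
letting $l\to\infty$ (using $d_{l}\to\infty$ and $\mu_{p_{l},v}\otimes\mu_{p_{l},v}\to\mu_{v}\otimes\mu_{v}$) and then $k\to\infty$ (monotone convergence) gives the claim.

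Combining the bound $\sum_{v\in S}n_{v}I_{v}^{\neq}(p_{l})\le0$ with this estimate, and using that the $\liminf$ of a finite sum dominates the sum of the $\liminf$s, I obtain
\begin{equation*}
  0\ \ge\ \sum_{v\in S}n_{v}I_{v}(\mu_{v})\ \ge\ \sum_{v\in S}n_{v}V_{v}(\ov E_{v})\ =\ -\log\capacity(\bfE)\ =\ 0 ,
\end{equation*}
where the second inequality is the definition of the Robin constant together with $\supp(\mu_{v})\subset\ov E_{v}$, and the equality uses $V_{v}(\ov E_{v})=0$ for $v\notin S$ (as $\ov E_{v}=\ov O_{v}$ has capacity $1$) and $\capacity_{v}(\ov E_{v})=\capacity_{v}(E_{v})$, valid for the Archimedean places and for the RL-domains at the finite ones. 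Hence every inequality is an equality; in particular $I_{v_{0}}(\mu_{v_{0}})=V_{v_{0}}(\ov E_{v_{0}})$. Since $\capacity(\bfE)=1$ forces $\capacity_{v_{0}}(\ov E_{v_{0}})>0$, the equilibrium measure $\rho_{\ov E_{v_{0}}}$ is the unique probability measure supported on $\ov E_{v_{0}}$ realising the minimal energy, so $\mu_{v_{0}}=\rho_{\ov E_{v_{0}}}$, as desired. The main obstacle is exactly this semicontinuity step: the self-energy $I_{v}(\mu_{p_{l},v})$ of a discrete measure is infinite, so one is forced to work with the off-diagonal energy $I_{v}^{\neq}$ and to control the diagonal defect, which is where $d_{l}\to\infty$ enters; the non-Archimedean potential-theoretic inputs (the kernel $\delta_{v}$, equality of capacities for RL-domains, uniqueness of the equilibrium measure) are quoted from the references already cited in this section.
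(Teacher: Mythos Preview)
Your argument is correct and follows essentially the same route as the paper: both compute the off-diagonal ``discriminant'' energy of the Galois orbit, use the product formula to get a global identity, bound the places outside $S$ by the integrality of the conjugates, and then invoke semicontinuity of the energy together with the uniqueness of the equilibrium measure to pin down each limit. The only noteworthy difference is that you explicitly justify $d_{l}\to\infty$ via Northcott and genericity, whereas the paper asserts without comment that the off-diagonal product measure $\nu_{l,v}$ converges to $\mu_{v}\times\mu_{v}$ (which tacitly uses this); your version is slightly more careful on this point.
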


\begin{proof}
  By taking a subsequence, we can suppose without loss of generality
  that, for each~$v$ in~$\fM_{\K}$, the sequence~$(\mu_{p_l, v})_{l
    \ge 1}$ converges to a probability measure~$\mu_v$.


  For each~$l \ge 1$ and~$v\in \fM_{\K}$, put for short
  $G_{l,v}=\Gal(p_{l})_{v}$ and set
$$ 
d_{l, v}
= \frac{1}{\# G_{l,v} (\# G_{l,v} - 1)} \sum_{\substack{ q,  q' \in
  G_{l,v} \\  q \neq  q'}} \log | q -
 q'|_v .
$$
Consider also the probability measure on $\P^{1,\an}_{{v}}\times
\P^{1,\an}_{{v}}$ given by
$$ \nu_{l, v}
=\frac{1}{\# G_{l,v} (\# G_{l,v} - 1)} \sum_{\substack{ q, q' \in
    G_{l,v} \\ q \neq q'}} \delta_{ q} \times \delta_{q'}, $$ and note
that~$(\nu_{l, v})_{l \ge 1}$ converges to~$\mu_v \times \mu_v$.  The
function~$\log (\delta_v(\cdot, \cdot))$ is 
bounded from above on~${\ball_v(E_{v}, 1)} \times {\ball_v(E_{v},
  1)}$. Similarly as in the proof of Lemma~\ref{lemm:3}, this property
implies that
\begin{multline}
  \label{eq:32}
\limsup_{l \to \infty}  d_{l, v}
=
\limsup_{l \to \infty} \int_{\P^{1, \an}_{v} \times \P^{1, \an}_{v}}
\log (\delta_{v}(z, z')) \dd \nu_{l, v}(z, z')
\\ \le
- I_v(\mu_v)
\le
\log \capacity_v({E}_v).
\end{multline}

By the product formula, $\sum_{v \in \fM_{\K}} n_vd_{l, v} = 0$.  Let
$S\subset \fM_{\K}$ be a finite set of places containing the
Archimedean places and those where $E_{v}\ne O_{v}$. In particular,
$d_{l,v}\le 0$ for $v\notin S$.  Hence, for $v\in \fM_{\K}$,
\begin{displaymath}
  \begin{split}
    \liminf_{l \to \infty} d_{l, v}
& =
\liminf_{l \to \infty} \sum_{w \in \fM_{\K} \setminus \{ v \}} -\frac{n_w}{n_v}d_{l,
  w}
\\ & \ge 
\liminf_{l \to \infty} \sum_{w \in S \setminus \{ v \}} -\frac{n_w}{n_v}d_{l,
  w}
\\ & \ge 
-\sum_{w \in S \setminus \{ v \}} \frac{n_w}{n_v}\limsup_{l \to \infty} d_{l,
  w} 
\\ & \ge 
-\sum_{w \in  S\setminus \{ v \}} \frac{n_w}{n_v}\log(\capacity_w({E}_w))
\\ & \ge \log(\capacity_v({E}_v)).
  \end{split}
\end{displaymath}
Together with~\eqref{eq:32}, this implies~$I_v(\mu_v) = - \log
\capacity_v(E_v)$.  It follows that~$\mu_v$ is the
equilibrium measure of~${E}_v$,  completing the proof.
\end{proof}

\begin{proof}[Proof of Theorem \ref{thm:13}]
  Let $(p_{l})_{l\ge1}$ be a generic sequence of points of $\ov
  \K^{\times}$ as in Proposition \ref{prop:integers equidistribution},
  that exists thanks to Proposition \ref{prop:7}.
  Then Proposition \ref{prop:integers equidistribution} implies that,
  for every $v\in \fM_{\K}$, the sequence 
  of probability measures $(\mu_{p_{l},v})_{l\ge 1}$ converges
  to $\rho_{E_{v}}$.  Here we have to show that, under the present
  hypotheses, this sequence of points is $\ov D$-small.

  Let $s_{D}$ be the canonical section of $\mathcal{O}(D)$ with
  $\div(s_D)=D$. This is a global section vanishing only at
  infinity. Hence its $v$-adic Green function 
$$
g_{\ov  D,v}=-\log\|s_{D}\|_{v}
$$ 
is a continuous real-valued function on $
  \A^{1,\an}_{v}$. Let $S\subset \fM_{\K}$ be a finite set of places
  containing the Archimedean places, the places where the metric
  $\|\cdot\|_{v}$ differs from the canonical one, and those where
  $E_{v}\ne O_{v}$.

  By construction, $\Gal(p)_{v}\subset \ball_{v}(E_{v},1)$ for all
  $v$. In particular, for all $v\notin S$ we have $\Gal(p)_{v}\subset
  O_{v}$ and so $g_{\ov D,v}(q)=0$ for all $q\in \Gal(p)_{v}$. Hence
  \begin{displaymath}
    \h_{\ov D}(p_{l})=\sum _{v\in \mathfrak{M}_{\K}}\frac{n_{v}}{\#
      \Gal (p_{l})_{v}} \sum_{q\in \Gal(p_{l})_{v}}g_{\ov D,v}(q)=
  \sum_{v\in S}n_v\int  \wt g_{\ov D,v} \dd \mu _{p_{l},v}
  \end{displaymath}
  for any continuous function~$\wt g_{\ov D,v}$ on~$\P^{1,\an}_{{v}}$
  coinciding with $g_{\ov D,v}$ on the bounded subset $
  \ball_{v}(E_{v},1)$.

  The measures~$\mu _{p_{l},v}$ converge to~$\rho _{E_v}$ and are
  supported on the closure $\ov{ \ball_{v}(E_{v},1)}$.  Also, for all
  $v\notin S$, we have $\rho_{E_{v}}=\lambda_{\SS_{v},0}$ and $g_{\ov
    D,v}$ vanishes on the support of this measure. Hence
\begin{equation} \label{eq:49}
  \lim_{l\to \infty} \h_{\ov D}(p_{l})= \sum_{v\in S}n_v\int \wt
  g_{\ov D,v} \dd \rho _{E_v}
=
\sum_{v\in \mathfrak{M}_{\K}}n_v\int
  g_{\ov D,v} \dd \rho _{E_v}.
\end{equation}

By the condition~\eqref{item:1} and the fact that ${\bfE}$ is an adelic
set, we deduce that the collection  $\bfnu=((\val_{v})_{\ast}\rho
_{E_v})_{v\in \fM_{\K}}$ is a centered adelic measure (Definition~\ref{def:6}). 
Moreover,  $g_{\ov D,v}=-\psi _{\ov D, v}\circ \val_{v}$ on $\A^{1,\an}_{v}\setminus
  \{0\}$. 
By \eqref{eq:49} and the fact that the equilibrium measure does not
charge any individual point, we have
\begin{displaymath}
    \lim_{l\to \infty} \h_{\ov D}(p_{l})=-\sum_{v\in \mathfrak{M}_{\K}}n_v\int
  \psi _{\ov D,v} \dd (\val_{v})_{\ast}\rho _{E_v} =  \eta_{\ov
    D}(\bfnu). 
\end{displaymath}
 Lemma \ref{lem:measure minimum} together with  the conditions~\eqref{item:2}
and~\eqref{item:6} implies that $\eta_{\ov D}(\bfnu)=\upmu^{\ess}_{\ov
  D}(X)$. Hence the sequence~$(p_l)_{l \ge 1}$ is $\ov
D$-small, as stated, finishing the proof of the theorem.
\end{proof}

\subsection{Local modulus concentration and  
equidistribution}\label{sec:limit-meas-absence}


Corollary \ref{cor:3 bis} gives a criterion for a semipositive toric
metrized $\R$-divisor to satisfy the modulus concentration property at
a given place. Applying it, one can immediately give examples where
modulus concentration fails at that place. If this happens, then the
equidistribution property also fails at that place.

Can this absence of modulus concentration affect the
equidistribution property at another  place? The next result on the
projective line over a number field shows that this can be the case
under a rationality hypothesis. 

\begin{prop}\label{prop:8}
  Let $X=\P^{1}_{\K}$ be the projective line over a number field $\K$,
  $\ov D$ the divisor at infinity equipped with a semipositive toric metric, and
  $v_{0}\in \fM_{\K}$. For each $v\in \fM_{\K}$, let $B_v$ be the set introduced
  in Notation \ref{def:14}. Assume that there is a point $p\in
  X_{0}(\ov \K)=\ov \K^{\times}$ such that $\val_{v}(p)\in B_{v}$ for
  all $v\in \fM_{\K}$ and $\val_{v_{0}}(p)\in \ri(B_{v_{0}})$. 

  If $\ov D$ does not satisfy the modulus
  concentration property at~$v_{0}$, then $\ov D$ does not satisfy the
  equidistribution property at any place of $\K$.
\end{prop}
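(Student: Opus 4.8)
The plan is to dispose of the place $v_{0}$ directly and then, for an arbitrary place $w\neq v_{0}$, to produce two competing generic $\ov D$-small sequences. If $w=v_{0}$, then the $v_{0}$-adic equidistribution property would force, for every generic $\ov D$-small net $(p_{l})_{l}$, convergence of the measures $\mu_{p_{l},v_{0}}$ on $X^{\an}_{v_{0}}$; since $\val_{v_{0}}$ is continuous and proper on $\T^{\an}_{v_{0}}$, pushing forward then gives convergence of $(\nu_{p_{l},v_{0}})_{l}$ in the weak-$\ast$ topology with respect to $\Cc(N_{\R})$, which by Corollary~\ref{cor:3 bis} is the modulus concentration property at $v_{0}$, contrary to hypothesis. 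So from now on I fix $w\in\fM_{\K}$ with $w\neq v_{0}$, and aim to exhibit two generic $\ov D$-small sequences of algebraic points of $X_{0}=\G_{\mathrm{m},\K}$ whose $w$-adic Galois orbits converge to distinct probability measures on $X^{\an}_{w}$; concatenating them produces a generic $\ov D$-small net along which $(\mu_{\cdot,w})$ does not converge, as wanted.

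I next record the consequences of the hypotheses. Reading $\val_{v}(p)$ as the common value of the valuation on $\Gal(p)_{v}$, so that $\bfnu_{p}=(\delta_{\val_{v}(p)})_{v}$ is a centered adelic measure (hence $\sum_{v}n_{v}\val_{v}(p)=0$ by the product formula) with $\val_{v}(p)\in B_{v}\subset F_{v}$ for all $v$, Corollary~\ref{cor:3} shows that $p$ is $\ov D$-special. By Corollary~\ref{cor:3 bis}, the failure of modulus concentration at $v_{0}$ means that $F_{v_{0}}$ is not a single point; since $F_{v_{0}}$ is a face of the set $A_{v_{0}}\subset N_{\R}=\R$ (a point, a bounded interval, or a half-line) and the only proper faces of such a set are its endpoints, we get $F_{v_{0}}=A_{v_{0}}$, a set with nonempty relative interior. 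As $\val_{v_{0}}(p)\in\ri(B_{v_{0}})$ with $B_{v_{0}}\subset F_{v_{0}}$, the point $\val_{v_{0}}(p)$ cannot be an endpoint of $F_{v_{0}}$; hence $\val_{v_{0}}(p)\in\ri(F_{v_{0}})$ and there is $\varepsilon>0$ with $[\val_{v_{0}}(p)-\varepsilon,\val_{v_{0}}(p)+\varepsilon]\subset F_{v_{0}}$. This interval of valuations available at $v_{0}$ is the leverage provided by the failure of modulus concentration.

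The two sequences will come from Theorem~\ref{thm:13}, so it suffices to build two closed bounded adelic sets $\bfE=(E_{v})_{v}$ and $\bfE'=(E'_{v})_{v}$, each of global capacity $1$, with each factor at a non-Archimedean place an RL-domain, satisfying the three compatibility conditions \ref{item:2}, \ref{item:6} and~\ref{item:1} of that theorem, and with $\rho_{E_{w}}\neq\rho_{E'_{w}}$. I take $\bfE$ and $\bfE'$ to agree off $w$. At a place $v$ where $\val_{v}(p)=0$ and the metric is canonical I use $E_{v}=E'_{v}=O_{v}$, of capacity $1$, equilibrium measure $\lambda_{\SS_{v},0}$, and $(\val_{v})_{\ast}\rho_{O_{v}}=\delta_{0}=\delta_{\val_{v}(p)}$; the remaining places form a finite set. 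At such an exceptional place $v\neq v_{0},w$ I take $E_{v}=E'_{v}$ to be the circle $\{\,|z|_{v}=\e^{-\val_{v}(p)}\,\}$, respectively a closed ball $\ball_{v}(c_{v},r_{v})$ with $|c_{v}|_{v}=\e^{-\val_{v}(p)}>r_{v}$ when $v$ is non-Archimedean, whose equilibrium measure is supported in $\val_{v}^{-1}(\val_{v}(p))\subset F_{v}$, has expected value $\val_{v}(p)\in B_{v}$, and has capacity arbitrarily close to $\e^{-\val_{v}(p)}$. At $w$, I set $E_{w}$ equal to a proper closed arc $\Gamma$ of the circle $\{\,|z|_{w}=\e^{-\val_{w}(p)}\,\}$, respectively a closed ball $\ball_{w}(c_{w},r_{w})$ inside $\val_{w}^{-1}(\val_{w}(p))$ when $w$ is non-Archimedean, and $E'_{w}=\e^{\i\beta}\Gamma$ for a small $\beta\neq0$, respectively a disjoint translate $\ball_{w}(c'_{w},r_{w})$; these two sets have the same $w$-adic capacity and the same pushforward $\delta_{\val_{w}(p)}$ under $\val_{w}$, so they satisfy the three conditions at $w$, while their equilibrium measures are translates of one another with different supports, whence $\rho_{E_{w}}\neq\rho_{E'_{w}}$. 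Finally, using $\sum_{v}n_{v}\val_{v}(p)=0$, the product of the $\capacity_{v}(E_{v})^{n_{v}}$ over all $v\neq v_{0}$ can be arranged to lie in a prescribed neighbourhood of $\e^{-n_{v_{0}}\val_{v_{0}}(p)}$, and it remains to choose $E_{v_{0}}=E'_{v_{0}}$, a compact subset (an RL-domain if $v_{0}$ is non-Archimedean) of $\val_{v_{0}}^{-1}\big([\val_{v_{0}}(p)-\varepsilon,\val_{v_{0}}(p)+\varepsilon]\big)$ whose equilibrium measure pushes forward to a measure supported in $F_{v_{0}}$ with expected value $\val_{v_{0}}(p)\in B_{v_{0}}$ and with the prescribed $v_{0}$-adic capacity; for this one uses arcs, and unions of two arcs placed symmetrically at radii $\e^{-\val_{v_{0}}(p)\pm s}$ with $s\le\varepsilon$, whose capacities sweep an interval of positive reals containing $\e^{-\val_{v_{0}}(p)}$ in its interior. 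Tuning these data forces $\capacity(\bfE)=\capacity(\bfE')=1$.

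Granting the constructions, Theorem~\ref{thm:13} produces generic $\ov D$-small sequences $(p_{l})_{l}$ and $(p'_{l})_{l}$ in $X_{0}$ with $\mu_{p_{l},w}\to\rho_{E_{w}}$ and $\mu_{p'_{l},w}\to\rho_{E'_{w}}$; since these limits differ, their concatenation is a generic $\ov D$-small net along which $(\mu_{\cdot,w})$ fails to converge, so $\ov D$ does not satisfy the $w$-adic equidistribution property. As $w\neq v_{0}$ was arbitrary and the case $w=v_{0}$ was treated at the outset, $\ov D$ satisfies the equidistribution property at no place of $\K$. The delicate step, and the only place where the failure of modulus concentration at $v_{0}$ is genuinely used, is the last part of the construction: meeting the three compatibility conditions of Theorem~\ref{thm:13} together with the global capacity normalization forces the admissible sets at $v_{0}$ to realize a $v_{0}$-adic capacity slightly exceeding $\e^{-\val_{v_{0}}(p)}$, which is possible thanks to the interval $[\val_{v_{0}}(p)-\varepsilon,\val_{v_{0}}(p)+\varepsilon]\subset F_{v_{0}}$ but impossible were $F_{v_{0}}$ to collapse to the single point $\val_{v_{0}}(p)$.
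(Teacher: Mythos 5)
Your general strategy --- reduce the case $w=v_0$ to Corollary~\ref{cor:3 bis}, then for $w\neq v_0$ produce two generic $\ov D$-small sequences whose $w$-adic Galois orbits converge to distinct measures --- is sound, and your reading of the hypotheses (in particular that $\val_{v_0}(p)\in\ri(F_{v_0})$ and that there is an interval $[\val_{v_0}(p)-\varepsilon,\val_{v_0}(p)+\varepsilon]\subset F_{v_0}$) is correct and matches the paper. But the route you then take, applying Theorem~\ref{thm:13} to two custom adelic sets, has a genuine gap at the place $v_0$, and this is not a routine verification you omitted: it is precisely the difficulty that the paper's proof is engineered to avoid.

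Concretely, Theorem~\ref{thm:13} requires its condition~\eqref{item:6} for the place $v_0$: the expected value $\exv[(\val_{v_0})_*\rho_{E_{v_0}}]$ must lie in $B_{v_0}$, and together with condition~\eqref{item:1} and your choices at the other places (where each $\exv$ equals $\val_v(p)$), it must in fact equal $\val_{v_0}(p)$. Note that $B_{v_0}$ can be a single point even when $F_{v_0}$ is an interval (Example~\ref{exm:8} has $B_2=\{0\}$, $F_2=[-\log 2,\log 2]$), so this is an exact constraint, not an open one. At the same time you need $\capacity_{v_0}(E_{v_0})$ strictly larger than $\e^{-\val_{v_0}(p)}$ to compensate for the sub-unit capacity of the arc (or ball) you use at $w$. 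For the sets you propose --- ``arcs, and unions of two arcs placed symmetrically at radii $\e^{-\val_{v_0}(p)\pm s}$'' --- the equilibrium measure with respect to the point at infinity is not balanced between the two radii: for two full concentric circles of radii $r_1<r_2$ the equilibrium measure sits entirely on the outer circle, so $(\val_{v_0})_*\rho$ is $\delta_{-\log r_2}$, which is biased away from $\val_{v_0}(p)$; for a pair of arcs the bias persists (mass tilts toward the outer arc, where the influence of the pole at $\infty$ is smaller). The same issue arises with the candidate $E_{v_0}=R^{-1}([-2c,2c])$ built from the Joukowski map $R(z)=z+1/z$: its equilibrium measure with respect to $\infty$ lives only on the outer boundary of the unbounded complementary component (the unit circle plus the outer whiskers $(1,\e^{\delta}]$, $[-\e^{\delta},-1)$), so $(\val_{v_0})_*\rho$ is supported in $[-\delta,0]$ with strictly negative mean. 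So Theorem~\ref{thm:13} simply does not apply to the adelic sets you describe, and the tuning you invoke cannot be carried out: the ``prescribed capacity plus prescribed barycenter'' problem at $v_0$ is exactly where the failure of modulus concentration should help, and it is not resolved by your construction.

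The paper sidesteps this entirely, and this is the genuinely different key idea. Rather than demanding the equilibrium measure of the set at $v_0$ have the right barycenter, it applies Proposition~\ref{prop:7} (Rumely's Fekete--Szeg\H{o}) to an adelic set $\bfE$ of global capacity $1$ and then passes to preimages $q_l\in R^{-1}(p_l)$ through the degree-two map $R$. By construction $\Gal(q_l)_w\subset\val_w^{-1}(0)$ for $w\neq v_0$, so $\exv[\nu_{q_l,w}]=0$ for those $w$, and the condition $\exv[\nu_{q_l,v_0}]=0$ is then forced by the product formula applied to the honest Galois orbits $\nu_{q_l,w}$ --- not by any property of an equilibrium measure at $v_0$. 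The $\ov D$-smallness of $(q_l)_l$ then follows from Lemma~\ref{lem:measure minimum} applied to the limit in the KR-topology, and the non-equidistribution at $v$ is read off by comparing the support $\ov{R^{-1}(E_v)}\subsetneq\SS_v$ with the limit $\lambda_{\SS_v,0}$ of the roots-of-unity sequence. If you want to salvage your approach, you would need either to drop the appeal to Theorem~\ref{thm:13} in favor of this direct product-formula argument, or to exhibit at $v_0$ a compact set whose equilibrium measure pushes to a measure with barycenter exactly $\val_{v_0}(p)$ and capacity strictly bigger than $\e^{-\val_{v_0}(p)}$ --- which is not what the union of two symmetric arcs gives.
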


\begin{proof} 
  Assume that $\ov D$ does not satisfy the modulus concentration
  property at $v_{0}$. Let $v\in \fM_{\K}$. If $v=v_0$ then clearly
  $\ov D$ does not satisfy the equidistribution property at $v$, so we
  can suppose that $v\ne v_{0}$.  Extending scalars to a suitable
  large number field and translating by the point $p$, we can also
  reduce to the case when $0\in \ri(B_{v_{0}})$ and $0\in B_{w}$ for
  all $w\in \fM_{\K}$.

Let $F_{v_0}$ and $A_{v_{0}}$ be the convex sets given in Notation
\ref{def:14}. 
By Corollary \ref{cor:3 bis}, the set $F_{v_{0}}$ is not a single
point.  
Since $0\in \ri(B_{v_{0}})$ and 
  $F_{v_{0}}$ is the minimal face of $A_{v_{0}}$ containing $B_{v_0}$,
  there is $\delta>0$ such  
  that the set $F_{v_{0}}$ contains the interval~$[-\delta, \delta]$. Set
  $$
  c=\frac{\e^\delta+\e^{-\delta}}{2}>1
  $$
  and consider the closed bounded adelic set~$\bfE = ( E_w)_{w \in \fM_{\K}}$ given by
\begin{align*}
E_{v_{0}} =&
\begin{cases} 
[-2c,2c] &\text{if~$v_{0}$ is Archimedean}, \\[1mm]
\ball_{v_{0}}(2,c) &\text{if $v_{0}$ is non-Archimedean},
\end{cases} \\
E_{v} =&
\begin{cases} \displaystyle
[-2/c,2/c] & \text{if~$v$ is Archimedean}, \\[1mm]
\displaystyle
\ball_v(2,1/c)& \text{if $v$ is non-Archimedean},
\end{cases} 
\end{align*}
and, for $w\not=v_0,v$,
$$
E_{w} =
\begin{cases} 
[-2,2] & \text{if~$w$ is Archimedean}, \\[1mm]
O_w=\ball_{w}(0,1) & \text{if $w$ is non-Archimedean}.
\end{cases}
$$
The local capacities of these sets are
$$ 
\capacity_{v_{0}}(E_{v_{0}}) = c, \quad 
\capacity_{v}(E_{v}) = 1/c \and \capacity_{w}(E_{w})=1 \quad \text{for
}w\ne v_{0},v,
$$
see for instance \cite[\S 3]{Rumely:fstsc}. Hence,  the global capacity of~$\bfE$ is~$1$.

Consider the map $R\colon \P^{1}_{\K}\to \P^{1}_{\K}$ given by
$R(z)=z+\frac{1}{z}$. Using the expression~$R(z)-2=\frac{(z-1)^2}{z}$,
one checks that, for $w$ non-Archimedean,
$$R^{-1}(E_w) \subset 
\begin{cases}
\{z\in\C_{v_0}\mid|z-1|_{v_0}^2\le c|z|_{v_0}\} & \text{ if } w=v_0,\\
\{z\in\C_v\mid|z-1|_v^2\le c^{-1}|z|_v\}& \text{ if } w=v,\\
\{z\in\C_w\mid|z^2+1|_w\le|z|_w\}& \text{ if } w\not=v_0,v.
\end{cases}
$$
Using that $z=\frac{1}{2}(R(z)\pm\sqrt{R(z)^2-4})$, one also
checks that, for  $w$ Archimedean, 
\begin{displaymath}
R^{-1}(E_w)\subset 
\begin{cases}
\{z\in\C_{v_0}\mid c-\sqrt{c^2-1}\le |z|_{v_0} \le c+\sqrt{c^2-1} \} & \text{ if } w=v_0,\\
\{z\in\C_w\mid |z|_{w} = 1 \} & \text{ if } w\not=v_0.\\
\end{cases}
\end{displaymath}
Note that  $c-\sqrt{c^2-1}=\e^{-\delta}$ and
$c+\sqrt{c^2-1}=\e^\delta$. 
We
deduce from the previous analysis that, regardless  whether $v_{0}$
or $w$ are Archimedean or not, we have 
$$
R^{-1}(E_{v_0})\subset \val_{v_0}^{-1}([-\delta,\delta]) \and
R^{-1}(E_w)\subset\val_w^{-1}(0) \text{ for } w\not=v_0.
$$

We represent in Figure \ref{fig:Rinv} the inverse images by $R$ of the
sets $E_{v_0}$, $E_{v}$ and $E_{w}$ in the Archimedean case.  The
point $x$ therein is~$x= c^{-1} + i \sqrt{1 - c^{-2}}$.

\captionsetup[subfigure]{labelformat=empty}
\begin{figure}[ht]
  \centering
  \begin{subfigure}[1]{0.40\textwidth}
    \input{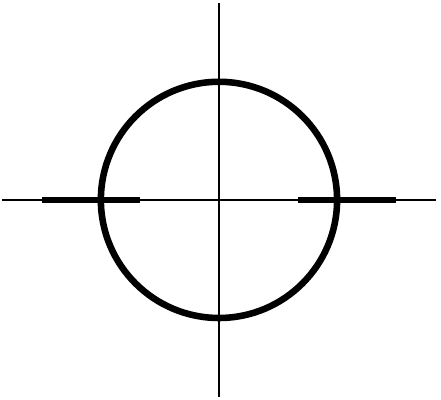_t}
    \caption{$R^{-1}(E_{v_0})$}
  \end{subfigure}
   \begin{subfigure}[1]{0.40\textwidth}
     \input{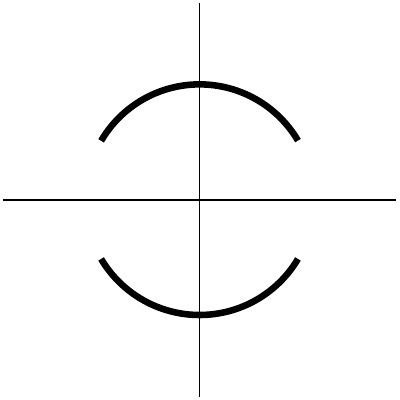_t}    
     \caption{$R^{-1}(E_{v})$}
   \end{subfigure}
   \begin{subfigure}[1]{0.40\textwidth}
     \input{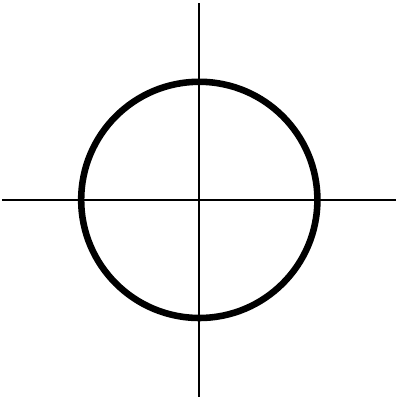_t}
     \caption{$R^{-1}(E_{w})$}
   \end{subfigure}
 \caption{Inverse images by $R$ of the sets $E_{v_0}$, $E_{v}$ and $E_{w}$ for
  $v_{0}$, $v$ and  $w\ne v,v_0$  Archimedean}
 \label{fig:Rinv}
\end{figure}

Let~$(p_l)_{l \ge 1}$ be a generic sequence as given by
Proposition~\ref{prop:7} applied to the adelic set $\bfE$.  For each
$l\ge1$, choose a point $q_{l}\in R^{-1}(p_{l})$. After restricting to
a subsequence, we can assume that the sequence $(\mu_{q_{l},w})_{l\ge
  1}$ converges to a probability measure $\mu_{w} $ on $\P^{1,\an}_{v}$, for all
$w\in \fM_{\K}$.  By construction, for each $w$ the supports of the
direct image measures $\nu_{q_{l},w}=(\val_{w})_{*} \mu_{q_{l},w}$,
$l\ge 1$, are contained in a fixed bounded subset of
$N_{\R}=\R$. Therefore, this sequence of measures converges in the
KR-topology to the direct image~$(\val_{w})_{\ast}\mu_{w} $.

Let $S\subset \fM_{\K}$ be the finite subset consisting of the
Archimedean places plus $v_0$ and $v$. If $w \not = v_0$, then
$\Gal(q_{l})_{w}\subset \val_{w}^{-1}(0)$ and $\exv[\nu_{q_{l},w}]=0$.
Thus
\begin{displaymath}
\exv[(\val_{w})_{\ast}(\mu_{w} )] =\lim_{l}\exv[\nu_{q_{l},w}]=0 .  
\end{displaymath}
Hence, thanks to the convergence in the KR-topology and the product
formula,
\begin{displaymath}
  \exv[(\val_{v_0})_{\ast}(\mu_{v_0} )]=\lim_{l}
  \exv[\nu_{q_{l},v_0}]= \lim_{l}\sum_{\substack{w\in
      S\\w\not = v_0}}-\exv[\nu_{q_{l},v_0}]=0.
\end{displaymath}
Thus $\exv[(\val_{w})_{\ast}(\mu_{w} )]=0\in B_{w}$ for all $w\in
\fM_{\K}$.  By construction, it is also clear that
$\supp((\val_{w})_{\ast}\mu _{w})\subset F_{w}$ for all $w$. By Lemma
\ref{lem:measure minimum}, the sequence $(q_l)_{l\ge 1}$ is $\ov
D$-small.

We have thus constructed a generic $\ov D$-small sequence such that
its $v$-adic Galois orbit converges to a measure $\mu_{v}$ whose
support is contained in the closure $\ov{R^{-1}(E_{v})}$.  Observe
that
\begin{displaymath}
  \ov{R^{-1}(E_{v})}\subsetneq \SS_{v}=\val^{-1}(0)
\end{displaymath}
because, in
the Archimedean case, it does not contain the points $1$ and $-1$,
whereas in the non-Archimedean case, it does not contain the Gauss
point. 

On the other hand, the sequence $(\omega _{l})_{l\ge 1}$ given by the
choice of a primitive $l$-th root of unity is also $\ov D$-small, but
its $v$-adic Galois orbit converges to the measure~$ \lambda_{\SS,0}$. The support of this measure is strictly bigger than that of
$\mu_{v} $.  We deduce that $\ov D$ does not satisfy the $v$-adic
equidistribution property, as stated.
\end{proof}

\begin{exmpl}
  \label{exm:3}
  Let $X=\P^{1}_{\Q}$ and $\ov D$ the divisor at infinity plus the
  divisor at zero, equipped with the semipositive toric metric from
  Example \ref{exm:8}.  As explained therein, $\ov D$ does not satisfy
  modulus concentration at the place $v_{0}=2$ and, by \eqref{eq:3},
  we have $0\in \ri(B_{v})$ for all $v\in \fM_{\Q}$.
  Theorem~\ref{thm:13} implies that $\ov D$ does not satisfy the
  equidistribution property for any place of $\Q$.
\end{exmpl}

\begin{rem} \label{rem:3} A rationality hypothesis like the condition
  that the sets $B_{v}$ contain the image by the valuations map of an algebraic
  point, is necessary for the conclusion of Proposition \ref{prop:8}
  to hold.  Indeed, suppose that, for a given non-Archimedean place
  $v$, we have $B_{v}=F_{v}=\{u_{v}\}$ with $u_v\not \in \val_{v}(\ov
  \K_{v}^{\times})$.  By the tree structure of the Berkovich
  projective line, this implies that $\val_{v}^{-1}(u_v)$ consists of
  single point, of type III in Berkovich's classification \cite [\S
  1.4]{BakerRumely:ptdbpl}. Hence, the $v$-adic modulus concentration
  at $v$ given by Corollary \ref{cor:3 bis}, easily implies that the
  $v$-adic Galois orbits of $\ov D$-small sequences of algebraic
  points concentrate around this point of type III, regardless of the
  structure of the set $B_{v_{0}}$.
\end{rem}


\newcommand{\noopsort}[1]{} \newcommand{\printfirst}[2]{#1}
  \newcommand{\singleletter}[1]{#1} \newcommand{\switchargs}[2]{#2#1}
  \def\cprime{$'$}
\providecommand{\bysame}{\leavevmode\hbox to3em{\hrulefill}\thinspace}
\providecommand{\MR}{\relax\ifhmode\unskip\space\fi MR }
\providecommand{\MRhref}[2]{%
  \href{http://www.ams.org/mathscinet-getitem?mr=#1}{#2}
}
\providecommand{\href}[2]{#2}

\end{document}